\newenvironment{itemizeb}
{\begin{itemize}\itemsep=2pt}{\end{itemize}}
\newtheorem{theorem}{Theorem}[section]
\newtheorem{lemma}[theorem]{Lemma}
\newtheorem{proposition}[theorem]{Proposition}
\newtheorem{corollary}[theorem]{Corollary}
\theoremstyle{definition}
\newtheorem{definition}[theorem]{Definition}
\newtheorem{remark}[theorem]{Remark}
\newtheorem{ipotesi}[theorem]{Assumption}
\numberwithin{equation}{section}
\newcommand{\N}{\mathbb{N}} 
\newcommand{\Z}{\mathbb{Z}} 
\newcommand{\R}{\mathbb{R}} 
\newcommand{\bA}{\mathbf{A}} 
\newcommand{\bC}{\mathbf{C}} 
\newcommand{\bB}{\mathbf{B}} 
\newcommand{\bS}{\mathbf{S}} 
\newcommand{\p}{\mathbf{p}} 
\newcommand{\bG}{\mathbf{G}} 
\newcommand{\mass}{\mathbf{M}} 
\newcommand{\Rc}{\mathscr{R}} 
\newcommand\res{\mathop{\hbox{\vrule height 7pt width .3pt depth 0pt\vrule height .3pt width 5pt depth 0pt}}\nolimits}
\renewcommand{\flat}{\mathscr{F}}
\newcommand{\F}{\mathscr{F}} 
\newcommand{\M}{\mathbf{M}} 
\newcommand{\reg}{\mathrm{Reg}} 
\newcommand{\sing}{\mathrm{Sing}} 
\newcommand{\bE}{\mathbf{E}} 
\newcommand{\cS}{\mathcal{S}} 
\newcommand{\modp}{{\rm mod}(p)} 
\newcommand{\Ha}{\mathcal{H}} 
\newcommand{\eps}{\varepsilon} 
\newcommand{\spt}{\mathrm{spt}} 
\newcommand{\dist}{\mathrm{dist}} 
\newcommand{\diam}{\mathrm{diam}} 
\newcommand{\Lip}{\mathrm{Lip}} 
\renewcommand{\epsilon}{\varepsilon}
\def\XXint#1#2#3{{\setbox0=\hbox{$#1{#2#3}{\int}$ }
		\vcenter{\hbox{$#2#3$ }}\kern-.6\wd0}}
\newcommand{\mres}{\mathbin{\vrule height 1.6ex depth 0pt width 
		0.13ex\vrule height 0.13ex depth 0pt width 1.3ex}}
\newcommand{\pa}{\partial}
\newcommand{\ssubset}{\subset\joinrel\subset}
\newcommand{\weakstar}{\stackrel{*}{\rightharpoonup}}
\DeclareMathOperator{\Div}{div} 
\def\a#1{\left\llbracket{#1}\right\rrbracket}
\newcommand{\abs}[1]{\lvert#1\rvert} 
\newcommand{\Abs}[1]{\left\lvert#1\right\rvert} 
\newcommand{\norm}[1]{\left\lVert#1\right\rVert} 
\newcommand{\bH}{\mathbf{H}}
\title[Singularities of area minimizing hypersurfaces modulo $2k+1$]{Area minimizing hypersurfaces modulo $p$: a geometric free-boundary problem}
\author[C. De Lellis]{Camillo De Lellis}
\address{School of Mathematics, Institute for Advanced Study, 1 Einstein Dr., Princeton NJ 05840, USA}
\email{camillo.delellis@math.ias.edu}
\author[J. Hirsch]{Jonas Hirsch}
\address{Mathematisches Institut, Universit\"at Leipzig, Augustusplatz 10, D-04109 Leipzig, Germany}
\email{hirsch@math.uni-leipzig.de}
\author[A. Marchese]{Andrea Marchese}
\address{Dipartimento di Matematica, Universit\`a degli Studi di Trento, Via Sommarive 14, I-38123 Povo (TN), Italy}
\email{andrea.marchese@unitn.it}
\author[L. Spolaor]{Luca Spolaor}
\address{Department of Mathematics, UC San Diego, AP\&M, La Jolla, California, 92093, USA}
\email{lspolaor@ucsd.edu}
\author[S. Stuvard]{Salvatore Stuvard}
\address{Dipartimento di Matematica, Universit\`a degli Studi di Milano, Via Saldini 50, I-20133 Milano (MI), Italy}
\email{salvatore.stuvard@unimi.it}
\begin{document}
\usetikzlibrary{arrows.meta, positioning}
\begin{abstract}
We consider area minimizing $m$-dimensional currents $\modp$ in complete $C^2$ Riemannian manifolds $\Sigma$ of dimension $m+1$. For odd moduli we prove that, away from a closed rectifiable set of codimension $2$, the current in question is, locally, the union of finitely many smooth minimal hypersurfaces coming together at a common $C^{1,\alpha}$ boundary of dimension $m-1$, and the result is optimal. For even $p$ such structure holds in a neighborhood of any point where at least one tangent cone has $(m-1)$-dimensional spine. These structural results are indeed the byproduct of a theorem that proves (for any modulus) uniqueness and decay towards such tangent cones. The underlying strategy of the proof is inspired by the techniques developed by Simon in \cite{Simon} in a class of \emph{multiplicity one} stationary varifolds. The major difficulty in our setting is produced by the fact that the cones and surfaces under investigation have arbitrary multiplicities ranging from $1$ to $\lfloor \frac{p}{2}\rfloor$. 
\end{abstract}

	\maketitle

\tableofcontents

%

\section{Introduction} \label{sec:intro}

In this paper we consider currents $\modp$ (where $p\geq 2$ is a fixed integer), for which we follow the definitions and the terminology of \cite{Federer69} and \cite{DLHMS}. In particular, given an open set $\Omega \subset \R^{m+n}$ and a relatively closed subset $C \subset \Omega$ which is a Lipschitz neighborhood retract, we denote by $\Rc_m (C)$  (resp. $\F_m (C)$) the space of those $m$-dimensional integer rectifiable currents $T\in \Rc_m (\Omega)$ (resp. $m$-dimensional integral flat chains $T \in \F_m (\Omega)$) with compact support $\spt (T)$ contained in $C$.
Currents modulo $p$ in $C$ are defined introducing an appropriate family $\F^p_K$ of pseudo-distances on $\F_m(C)$, indexed by $K\subset C$ compact, see \cite[Section 1.1]{DLHMS} and Appendix \ref{appendix-flat}.
Two flat chains $T$ and $S$ in $C$ are then congruent modulo $p$ if there is a compact set $K\subset C$ with $\spt (T-S)\subset K$ and such that $\F^p_K (T-S) =0$. The corresponding congruence class of a fixed flat chain $T$ will be denoted by $[T]$, whereas if $T$ and $S$ are congruent we will write
$T \equiv S\, \modp$ or $T=S\,\modp$.
The symbols $\Rc_m^p(C)$ and $\F_m^p(C)$ will denote the quotient groups obtained from $\Rc_m(C)$ and $\F_m(C)$ via the above equivalence relation. The boundary $\partial^p$ is defined accordingly as an operator on equivalence classes.  
In what follows the closed set $C$ will always be (a subset of) a sufficiently smooth submanifold, more precisely a complete  submanifold $\Sigma$ of $\mathbb R^{m+n}$ without boundary.

\begin{definition} \label{def:am_modp}
	Let $p\geq 2$, $\Omega \subset \R^{m+n}$ be open, and let $\Sigma \subset \R^{m+n}$ be a complete submanifold without boundary of dimension $m+\bar{n}$ and class $C^{2}$. We say that an $m$-dimensional integer rectifiable current $T \in \Rc_{m}(\Sigma)$ is \emph{area minimizing} $\modp$ in $\Sigma \cap \Omega$ if
	\begin{equation}\label{e:am_mod_p}
	\mass (T) \leq \mass (T + W) \qquad \mbox{for any $W \in \Rc_{m}(\Omega\cap \Sigma)$ which is a boundary $\modp$}.
	\end{equation}
\end{definition}

Recalling \cite{Federer69}, it is possible to introduce a suitable notion of mass and support $\modp$ for classes $[T]$ $\modp$.
With such terminology we can talk about mass minimizing classes $[T]$, because \eqref{e:am_mod_p} can be rewritten as
\begin{equation} \label{e:am_classes}
\mass^p ([T]) \leq \mass^p ([T] + \partial^p [Z]) \qquad \mbox{for all $[Z]$ with $\spt^p (Z)\subset \Omega \cap \Sigma$.} 
\end{equation}

The set of interior regular points, denoted by $\reg (T)$, is then the relatively open set of points $x\in \spt^p (T)$ in a neighborhood of which $T$ can be represented by a regular oriented submanifold of $\Sigma$ with constant multiplicity, cf. \cite[Definition 1.3]{DLHMS}. Its ``complement'', i.e.
\begin{equation} \label{e:singular set}
\sing (T) := (\Omega \cap  \spt^p (T)) \setminus (\reg (T) \cup \spt^p (\partial T))\, ,
\end{equation}
is the set of interior singular points.

\subsection{Structural results}
In the work \cite{DLHMS} (building upon its companion paper \cite{DLHMS_linear}) we have shown that (when $\Sigma$ is of class $C^{3,\alpha}$) $\sing (T)$ can have Hausdorff dimension at most $m-1$. For odd $p$, we have proved the stronger conclusion that $\sing (T)$ is countably $(m-1)$-rectifiable and has locally finite $\mathcal{H}^{m-1}$ measure in $\Omega\setminus\spt^p(\partial T)$. In fact, there exists a representative, not renamed, such that
\begin{itemize}
\item $T$ is a locally integral current in $\Omega$ with $\spt (\partial T)\cap \Omega\setminus\spt^p(\partial T) \subset \sing (T)$;
\item $\partial T = p \a{\sing (T)}$ in $\Omega\setminus\spt^p(\partial T)$ for some suitable orientation of $\sing (T)$. \footnote{Note that, while in \cite{DLHMS} we state that the multiplicity is an integer multiple of $p$, in fact Proposition \ref{lem:structure_cones} implies that the multiplicity is precisely $p$, up to choosing the orienting vector field $\tau$ appropriately, cf. Remark \ref{r:multiplicity=p}.}
\end{itemize}
Roughly speaking, at $\sing (T)$ $p$ sheets of the smooth submanifold $\reg (T)$ come together: $\sing (T)$  is ``optimally placed'' to minimize the mass of $T$ and the problem of mass minimization $\modp$ can be thought of as a ``geometric free boundary problem''. A classical free boundary is however a more regular object, motivating the following definition.

\begin{definition}\label{def:free-boundary}
Given an open set $U$ we say that $\sing (T)$ is a classical free boundary in $U$ if the following holds for some positive $\alpha$. 
\begin{itemize}
    \item[(i)] $\sing (T)\cap U$ is an orientable $C^{1,\alpha}$ $(m-1)$-dimensional submanifold of $U\cap \Sigma$; 
    \item[(ii)] $\reg (T) \cap U$ consists of $N\leq p$ connected $C^{1,\alpha}$ orientable submanifolds $\Gamma_i$ with $C^{1,\alpha}$ boundary $\partial \Gamma_i \cap U = \sing (T)\cap U$;
    \item[(iii)] There are $k_i\in \{1, \ldots , \lfloor \frac{p}{2}\rfloor\}$ such that, after suitably orienting $\sing (T) \cap U$ and $\Gamma_i$, 
\begin{align*}
&S \res U := \sum_i k_i \a{\Gamma_i} \equiv T\res U\, \modp\\
&\partial S \res U= \sum_i k_i \a{\sing (T)\cap U} = p \a{\sing (T)\cap U}\, .
\end{align*}
\end{itemize}
A subset $A \subset \sing (T)$ is locally a classical free boundary if for every $q \in A$ there exists an open neighborhood $U \ni q$ such that $\sing (T)$ is a classical free boundary in $U$. We let $\mathcal{S}\subset \sing (T)$ be the smallest (relatively) closed set such that $\sing (T)\setminus \mathcal{S}$ is locally a classical free boundary.
\end{definition}

Note that in the case of hypersurfaces ($\bar n = \dim (\Sigma) - m =1$) the Hopf maximum principle implies that the sheets $\Gamma_i$ ``join transversally'', i.e. if $\nu_i$ are tangent fields to $\Gamma_i$ orthogonal to $\sing (T)$ and pointing ``inward'', then $\{\nu_i (y)\}$ are all distinct at every $y\in \sing (T)\cap U$. Furthermore a simple first variation argument shows the following balancing condition:
\begin{equation}\label{e:balance}
\sum_i k_i \nu_i (y) =0\, .
\end{equation}

The first main result of the present paper is the following.

\begin{theorem}\label{t:odd}
Let $p$ be odd and $\Sigma, T$, and $\Omega$ as in Definition \ref{def:am_modp}. If $\dim (\Sigma)=\dim (T)+1 = m+1$, then $\sing (T)$ is locally a classical free boundary outside of a relatively closed $\mathcal{S}$ which is countably $(m-2)$-rectifiable and has locally finite $\Ha^{m-2}$ measure.
\end{theorem}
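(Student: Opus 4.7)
The strategy is to identify the ``small'' set $\mathcal{S}_0 \supseteq \mathcal{S}$ as the set of points $q\in \sing (T)$ at which \emph{no} tangent cone has spine of maximal dimension $m-1$, and then to prove (a) that $\mathcal{S}_0$ is relatively closed, (b) that every point of $\sing (T)\setminus \mathcal{S}_0$ is locally a classical free boundary in the sense of Definition \ref{def:free-boundary}, and (c) that $\mathcal{S}_0$ is countably $(m-2)$-rectifiable with locally finite $\mathcal{H}^{m-2}$ measure. Because $p$ is odd and $\bar{n}=1$, a tangent cone with $(m-1)$-dimensional spine must be a ``book'' of half-hyperplanes $H_1,\dots,H_N$ meeting along a common $(m-1)$-plane with multiplicities $k_i \in \{1,\dots,\lfloor p/2 \rfloor\}$ subject to the balancing \eqref{e:balance} and to $\sum_i k_i = p$ (oddness of $p$ rules out any hyperplane counted with even multiplicity). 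The $(m-1)$-rectifiability and local $\mathcal{H}^{m-1}$-finiteness of $\sing(T)$ from \cite{DLHMS} force a book-type tangent cone to exist at $\mathcal{H}^{m-1}$-a.e.\ $q\in \sing (T)$, so at the outset one already knows $\mathcal{H}^{m-1}(\mathcal{S}_0)=0$.

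At each $q \in \sing(T)\setminus \mathcal{S}_0$ the proof goes through the uniqueness-and-decay theorem advertised in the abstract: the book-type tangent cone $\mathbf{C}$ at $q$ is unique and $T$ converges to it with a polynomial rate in the scale $r$. Once uniqueness and polynomial decay are in place, the classical ``decay $\Rightarrow$ $C^{1,\alpha}$ parametrization'' argument, applied separately to the spine $\sing(T)$ and to each regular sheet $\Gamma_i$ emanating from it, yields a $C^{1,\alpha}$ parametrization of a neighborhood of $q$ in $\sing(T)$ and of each of the $\Gamma_i$. Combining with \eqref{e:balance}, with the identity $\partial T = p\a{\sing (T)}$, and with the constraint $\sum_i k_i = p$ verifies all the properties (i)-(iii) of Definition \ref{def:free-boundary} in a neighborhood of $q$.

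The hard part, and the technical heart of the paper, is the uniqueness-and-decay theorem itself. Its multiplicity-one prototype is Simon's theorem \cite{Simon}, whose engine is a three-annulus/Lojasiewicz-type argument on the ``moduli space'' of admissible tangent cones. Its extension to the present setting has to confront two new difficulties: different book-type cones have different combinatorial structure (number of sheets $N$ and multiplicity vector $(k_1,\dots,k_N)$ with $\sum_i k_i = p$), so the decay has to be proved uniformly across the whole moduli of admissible cones; and the associated linear problem is genuinely multi-valued and vector-valued, rather than the scalar harmonic problem that governs Simon's setting. This is precisely where the free-boundary viewpoint of the abstract becomes essential, and where the bulk of the analytic work lies.

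Finally, the upgrade from $\mathcal{H}^{m-1}(\mathcal{S}_0)=0$ to countable $(m-2)$-rectifiability with locally finite $\mathcal{H}^{m-2}$ measure is obtained by combining the quantitative polynomial decay from the uniqueness theorem with a Federer-style dimension reduction and a quantitative stratification/covering argument: the polynomial decay rate supplies uniform packing estimates on the set of points whose tangent cones fail to have an $(m-1)$-dimensional spine, reducing its effective dimension by one and yielding the claimed rectifiability and measure bound. Since $\mathcal{S}\subset \mathcal{S}_0$ by the minimality built into Definition \ref{def:free-boundary}, the estimate transfers to $\mathcal{S}$ and completes the proof.
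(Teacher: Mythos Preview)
Your proposal is essentially correct and follows the paper's route: identify $\mathcal{S}$ with the stratum $\mathcal{S}^{m-2}$, prove the classical free-boundary structure on $\sing(T)\setminus\mathcal{S}^{m-2}$ via the uniqueness-and-decay Theorem~\ref{t:main} (and its consequence Corollary~\ref{c:struttura}), and then bound $\mathcal{S}^{m-2}$ by quantitative stratification.

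Two places where the paper is sharper than your outline are worth flagging. First, to make the identification $\mathcal{S}=\mathcal{S}^{m-2}$ work you need to know that no \emph{flat} tangent cone occurs at a singular point; this is where oddness of $p$ enters, via White's $\varepsilon$-regularity theorem \cite{White86} (a flat cone would have integer multiplicity $<p/2$, forcing regularity). Your remark that ``oddness of $p$ rules out any hyperplane counted with even multiplicity'' is not quite the right mechanism. Second, for step (c) the paper does not run a direct Federer reduction from the decay estimate; instead it proves (Appendix~\ref{app:NV}, Proposition~\ref{p:strata vs q-strata}) that the decay Theorem~\ref{t:main} together with the compactness of non-flat cones for odd $p$ forces $\mathcal{S}^{m-2}\cap\bB_1 = \mathcal{S}^{m-2}_\eta$ for some fixed $\eta>0$, and then invokes the Naber--Valtorta rectifiability theorem \cite{NV}. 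Your ``packing estimates'' description is in the right spirit but the actual argument goes through this quantitative-stratum identification.
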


\begin{remark}
    While we have set $C^{1,\alpha}$ to be the regularity requirement for classical free boundaries when working with general ambient manifolds of class $C^2$, the regularity of the classical free boundary part of $\sing (T)$ can be improved to $C^\infty$ or analytic, if the ambient manifold allows to, by using, for instance, the techniques of \cite{KNS}.
\end{remark}

Since (a representative $\modp$ of) an area-minimizing current $T$ $\modp$ induces a stable varifold outside of $\spt^p(\partial T)$, under the assumption that $\dim (\Sigma) = \dim (T) +1$ and for every moduli $p$, the groundbreaking theorem proved in \cite{Wic} for stable varifolds of codimension $1$ gives the following:
\begin{itemize}
    \item[(a)] either a nontrivial portion of $\sing (T)$ is a classical free boundary;
    \item[(b)] or the Hausdorff dimension of $\sing (T)$ is at most $m-7$.
\end{itemize}
The latter statement, however, still leaves the possibility that, in case (a), the complement in $\sing (T)$ of the classical free boundary is pretty large (in fact for stable varifolds of dimension $m$ and codimension $1$ it is not yet known that the singular set has zero $\mathcal{H}^m$ measure!).
After this work was completed, it was pointed out to us by Minter and Wickramasekera that in fact Theorem \ref{t:odd} (except for the countable $(m-2)$ rectifiability and local finiteness of ${\mathcal H}^{m-2}$ measure of $\mathcal{S}$), as well as Theorems \ref{t:even} and \ref{t:main} below, follow from the theory developed in \cite{Wic} in a relatively direct way, in particular from the decay results of \cite[Section 16]{Wic}, once our Proposition \ref{lem:structure_cones} below is known. The crucial point is that, though the statements of the theorems of \cite[Section 16]{Wic} do not literally apply to our case because the ``$\alpha$-structural hypothesis'' is not satisfied (cf. the introduction of \cite{Wic} for the precise statement of the latter), a closer inspection of the inductive arguments given there shows that the $\alpha$-structural hypothesis is only used in a suitably weaker form that is implied by our Proposition \ref{lem:structure_cones}. The details of this alternative approach are contained in \cite{MW}.

The exact counterpart to Theorem \ref{t:odd} in the case $p$ is even is proved in our other paper \cite{DLHMSS-even}. The main stumbling block is the existence of \emph{flat} singular points, namely singular points having a tangent cone supported in an $m$-dimensional plane (see \cite[Section 7]{DLHMS} for the terminology). Such points {\em do} exist, as can be shown already for $m=2$ and $p=4$ using the structure result of White \cite{White79} (cf. \cite[Example 1.6]{DLHMS}). The main contribution of \cite{DLHMSS-even} is precisely that, under the same assumptions of Theorem \ref{t:odd} with $p$ even, the set of flat singular points has Hausdorff dimension at most $m-2$, and it is countable when $m=2$. Additionally, the $(m-2)$-rectifiability of the set of flat singular points was proved by Skorobogatova in \cite{Sko}, and more recently the fourth- and fifth-named authors together with Skorobogatova obtained in \cite{SSS25} the precise description of the local structure of $T$ in the neighborhood of a flat singular point when the dimension is $m=2$. In the context of the present paper, we limit ourselves to observe that, for arbitrary $p \geq 3$, odd or even, the condition that $\sing(T)$ is a classical free boundary in the neighborhood of some $q \in \sing(T)$ is \emph{equivalent} to a condition which can be checked at the level of the linearization, namely a suitable property of (a priori, one of) the tangent cone(s) to $T$ at $q$. We record this fact in the following theorem.

\begin{theorem}\label{t:even}
Let $p\geq 3$ be arbitrary (i.e. odd or even) and $\Sigma, T$, and $\Omega$ as in Definition \ref{def:am_modp}. If $\dim (\Sigma)=m+1$ then $\sing (T)$ is a classical free boundary in a neighborhood of $q\in \sing (T)$ if and only if one tangent cone $\bC$ to $T$ at $q$ is not flat \footnote{We say that a tangent cone $\bC$ to $T$ at $q$ is \emph{flat} if $\spt (\bC)$ is contained in an $m$-dimensional linear subspace of $T_q\Sigma$. Moreover it is well known that the directions in which the cone is translation-invariant form a vector space, commonly called the spine of $\bC$.} and it is invariant with respect to translations along $(m-1)$ linearly independent directions.
\end{theorem}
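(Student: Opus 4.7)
The plan is to derive Theorem~\ref{t:even} from the main decay theorem (Theorem~\ref{t:main}) together with the structural classification of tangent cones with $(m-1)$-dimensional spine (Proposition~\ref{lem:structure_cones}). The two implications have rather different difficulty and I would split the argument accordingly.

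For the easy implication, I would assume $\sing(T)$ is a classical free boundary in a neighborhood $U$ of $q$ and let $\mathbf{C}$ be any tangent cone at $q$. Blowing up each sheet $\Gamma_i$ at $q$ produces a tangent half-hyperplane $H_i$ sharing as common boundary the tangent space $L := T_q \sing(T)$, so $\mathbf{C} = \sum_i k_i \a{H_i}$ with spine containing the $(m-1)$-dimensional plane $L$. The proof that $\mathbf{C}$ is non-flat would go by contradiction: if all the $H_i$ lay in a common $m$-plane $P$, then by the Hopf transversality property recalled in the excerpt, the sheets $\Gamma_i$ would have smoothly matching tangent planes at $q$, so $\bigcup_i \Gamma_i$ would be a single smooth submanifold through $q$, contradicting $q\in\sing(T)$.

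For the converse and substantive direction, I would start from a tangent cone $\mathbf{C}$ at $q$ that is non-flat with spine of dimension $m-1$. Proposition~\ref{lem:structure_cones} would then give the explicit representation $\mathbf{C}=\sum_{i=1}^{N}k_i\a{H_i}$, where the $H_i$ are distinct half-hyperplanes of $T_q\Sigma$ sharing a common $(m-1)$-plane $V$ as boundary, $k_i\in\{1,\ldots,\lfloor p/2\rfloor\}$, $N\geq 2$, $\sum_i k_i=p$, and $\sum_i k_i\nu_i=0$. Theorem~\ref{t:main} would then provide, on some ball $B_r(q)$, uniqueness of the tangent cone and quantitative $C^{1,\alpha}$ decay of $T$ toward $\mathbf{C}$: concretely, there exist $C^{1,\alpha}$ submanifolds $\Gamma_1,\ldots,\Gamma_N\subset\Sigma$ with common $C^{1,\alpha}$ boundary $\gamma$ of dimension $m-1$, all passing through $q$, such that $T\res B_r(q)\equiv \sum_i k_i\a{\Gamma_i}\,\modp$. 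At each point of $\Gamma_i\setminus\gamma$ the current $T$ is a smooth sheet of constant multiplicity $k_i$ (via Allard-type regularity), so $\bigcup_i(\Gamma_i\setminus\gamma)=\reg(T)\cap B_r(q)$; at each $y\in\gamma$ the decay toward $\mathbf{C}$ forces a tangent cone supported on at least two half-hyperplanes through $y$, so $y\in\sing(T)$. Hence $\gamma=\sing(T)\cap B_r(q)$, and the modular boundary identity $\partial^p \mathbf{C}=0$ would upgrade by the graphical decay and approximation to $\partial\bigl(\sum_i k_i\a{\Gamma_i}\bigr)=p\a{\gamma}$ on $B_r(q)$, yielding all three clauses of Definition~\ref{def:free-boundary}.

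The main obstacle is entirely absorbed into the proof of Theorem~\ref{t:main}: the Simon-type analysis providing uniqueness of the tangent cone and power-law decay toward cones made of half-hyperplanes with arbitrary multiplicities $k_i\in\{1,\ldots,\lfloor p/2\rfloor\}$ is the central technical contribution of the paper. Once that decay theorem is available together with the cone classification of Proposition~\ref{lem:structure_cones}, the translation from quantitative convergence to the qualitative regularity asked for in Definition~\ref{def:free-boundary} is essentially bookkeeping.
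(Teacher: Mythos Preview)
Your strategic outline matches the paper's: the easy implication via the Hopf transversality remark after Definition~\ref{def:free-boundary}, and the hard implication via Proposition~\ref{lem:structure_cones} together with Theorem~\ref{t:main}. However, there is a real gap in your execution of the hard direction. Theorem~\ref{t:main} only gives uniqueness of the tangent cone and power-rate excess decay \emph{at the single point $q$}; it does \emph{not} directly produce the submanifolds $\Gamma_i$ and the boundary $\gamma$ that you claim. The paper devotes a separate argument (Corollary~\ref{c:struttura} and its proof in Section~\ref{sec:structure}) to this passage, and it is not bookkeeping. Three ingredients you omit are essential:
\begin{itemize}
\item One must re-apply Theorem~\ref{t:main} at \emph{every} nearby point of density $\geq p/2$ (after checking, via continuity of $q\mapsto (O_q)_\sharp T_{q,1}$, that the hypotheses persist) and then use the resulting two-point decay estimates to show that $\{\Theta_T\geq p/2\}$ is contained in a $C^{1,1/4}$ graph over the spine $V_0$.
\item The no-holes Proposition~\ref{prop:no-holes} (together with Lemma~\ref{lem:technical_no_hole}, which genuinely uses the $\modp$ structure) is needed to show this graph is \emph{all} of $\{\Theta_T\geq p/2\}$, i.e.\ the singular set is a full $(m-1)$-manifold and not a proper closed subset of one. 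Your sketch gives no mechanism preventing gaps in $\gamma$.
\item Over each half-plane $\bH_{0,i}$ the graphical parametrization yields $\kappa_{0,i}$ ordered sheets $u_{i,1}\leq\dots\leq u_{i,\kappa_{0,i}}$; one needs the Hopf maximum principle (applied a second time, now in the hard direction) to force these sheets to coincide, producing a \emph{single} $\Gamma_i$ of multiplicity $\kappa_{0,i}$ rather than several distinct sheets with smaller multiplicities.
\end{itemize}
Without these steps the decay at $q$ alone does not control the structure of $\spt(T)$ in a full neighborhood, and your claim that ``concretely, there exist $C^{1,\alpha}$ submanifolds $\Gamma_1,\ldots,\Gamma_N$'' is exactly the conclusion to be proved, not a restatement of Theorem~\ref{t:main}.
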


We anticipate that an important starting point of the theory developed in \cite{DLHMSS-even} consists of establishing the uniqueness of the tangent plane at a flat singular point. The latter was not yet known upon completion of the present manuscript, and the proof was later obtained by Minter and Wickramasekera in \cite{MW}. In fact, the decay estimates to the unique tangent plane stated in \cite{MW} are not quite sufficient to carry on the arguments in \cite{DLHMSS-even} when the ambient $\Sigma$ is not Euclidean space $\R^{m+1}$: in that case, the needed estimates are deduced in our paper \cite{DLHMSS-uniqueflat}. In the present paper, we show instead preliminarily the validity of the following corollary, whose content is, of course, superseded by the uniqueness theory developed in \cite{MW} and \cite{DLHMSS-uniqueflat}. 

\begin{corollary}\label{c:flat-singular-points}
Let $p=2Q$ be even and $\Sigma,T$, and $\Omega$ be as in Definition \ref{def:am_modp}. Assume one tangent cone to $T$ at $q$ is of the form $Q\a{\pi}$ for some $m$-dimensional plane $\pi$. Then {\em every} tangent cone to $T$ at $q$ is of the form $Q\a{\pi'}$ for some $m$-dimensional plane $\pi'$.
\end{corollary}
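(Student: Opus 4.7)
The plan is to argue by contradiction by combining Proposition \ref{lem:structure_cones} (the structural classification of mod-$p$ area-minimizing cones of maximal density in codimension $1$) with Theorem \ref{t:even}. Let $\bC$ be any tangent cone to $T$ at $q$; by constancy of density along tangent cones, $\Theta(\bC, 0) = \Theta(T, q) = Q = \lfloor p/2 \rfloor$, which is the maximum density attained at an interior point of a mod-$p$ area-minimizing current (since multiplicities modulo $p$ can always be normalized to lie in $\{0,1,\ldots,Q\}$).

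If $\spt(\bC)$ is contained in some $m$-plane $\pi'$, then $\bC$ is a scale-invariant, boundaryless mod-$p$ current supported in $\pi'$, hence necessarily of the form $k\a{\pi'}$ for an integer multiplicity $k \in \{1,\ldots,Q\}$; the density constraint then forces $k = Q$, yielding $\bC = Q\a{\pi'}$ as required. If, on the other hand, $\spt(\bC)$ is not contained in any $m$-plane, Proposition \ref{lem:structure_cones} applied to $\bC$ asserts that $\bC$ splits as $\bC_0 \times \R^{m-1}$, where $\bC_0$ is a non-flat $1$-dimensional mod-$p$ minimizing cone in $\R^2$ consisting of $N \geq 3$ rays whose multiplicities sum to $p$ and satisfy the balance condition \eqref{e:balance}. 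In particular, $\bC$ has an $(m-1)$-dimensional spine.

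Applying Theorem \ref{t:even} to this non-flat tangent cone with $(m-1)$-spine, we conclude that $\sing(T)$ is a classical free boundary in a neighborhood of $q$ in the sense of Definition \ref{def:free-boundary}. The tangent cone at $q$ is then uniquely determined by the $C^{1,\alpha}$ local structure of $T$ given by that definition, and this unique tangent cone is non-flat (its support is the union of the tangent half-planes $T_q \Gamma_i$, which by the balance \eqref{e:balance} and the assumption $N \geq 3$ cannot lie inside a single $m$-plane). This contradicts the hypothesis that the flat cone $Q\a{\pi}$ is also a tangent cone at $q$. Hence the second case cannot occur, and every tangent cone must be of the form $Q\a{\pi'}$ for some $m$-plane $\pi'$.

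The hard part is the classification encoded in Proposition \ref{lem:structure_cones}: one must rule out non-flat mod-$p$ minimizing cones in codimension $1$ of maximal density $Q$ whose spine has dimension strictly below $m-1$ (the mod-$p$ analogues of Simons-type cones). Once this rigidity is available, the corollary follows essentially for free by combining constancy of density along tangent cones with the uniqueness content of Theorem \ref{t:even}; the subtler question of uniqueness of the plane $\pi'$ itself is deliberately left open and constitutes one of the main remaining obstacles to extending Theorem \ref{t:odd} to even moduli.
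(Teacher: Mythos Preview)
Your argument has a genuine gap in the application of Proposition \ref{lem:structure_cones}. That proposition takes as \emph{input} the hypothesis that the spine of $\bC$ has dimension $m-1$, and then describes its structure; it does \emph{not} assert that every non-flat area-minimizing cone $\modp$ of density $Q$ must have $(m-1)$-dimensional spine. Your final paragraph appears to recognise this (``one must rule out non-flat mod-$p$ minimizing cones \ldots\ whose spine has dimension strictly below $m-1$''), but you then treat it as if Proposition \ref{lem:structure_cones} supplies that rigidity. It does not, and indeed such a global classification is not known: there may well exist non-flat area-minimizing cones $\modp$ in codimension $1$ with $\Theta(0)=Q$ and spine of dimension $\le m-2$ (mod-$p$ analogues of Simons-type cones).

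The paper's proof avoids this obstacle entirely. It does not attempt to classify all tangent cones. Instead, given a non-flat $Z\in\mathrm{Tan}(T,q)$, it first shows (using Proposition \ref{lem:structure_cones} only to handle the $(m-1)$-spine case via Theorem \ref{t:even}, and White's theorem for lower multiplicities) that $\dim_{\Ha}(\sing Z)\le m-2$, whence $Z$ is in fact a \emph{classical} area-minimizing integral hypersurface with $\partial Z=0$. Then a connectedness argument on the map $r\mapsto (\eta_{q,r})_\sharp T$ produces non-flat tangent cones $Z_k$ whose spherical cross-sections are arbitrarily close (in $\hat\flat^p$) to that of a flat cone $Q\a{\pi}$. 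Since each $Z_k$ is a classical minimizer converging to $Q\a{\pi}$ in the integral flat topology, codimension-one regularity forces $Z_k$ to be smooth, hence flat, contradicting $Z_k\in\mathrm{Tan}_{nf}$. The key idea you are missing is precisely this passage from mod-$p$ to classical minimality for $Z$, combined with the continuity argument that manufactures nearly-flat non-flat tangent cones.
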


For ``small moduli'' $p\in \{2,3,4\}$ much stronger conclusions are available. When $p=2$, it is simple to use classical arguments to rule out the existence of cones $\bC$ with $(m-1)$-dimensional and $(m-2)$-dimensional spines in $\mathbb R^{m+1}$. Thus using \cite{SchoenSimon} and \cite{NV}, one can conclude that $\sing (T)$ is $(m-7)$-rectifiable and has locally finite $\mathcal{H}^{m-7}$ measure. Even for minimizers of general uniformly elliptic integrands the dimension of $\sing (T)$ is strictly less than $m-2$, see \cite{ASS}. In higher codimension there are again no cones $\bC$ with $(m-1)$-dimensional spine, but there are cones with $(m-2)$-dimensional spine, and thus one can conclude, using \cite{NV}, that the singular set is $(m-2)$-rectifiable and has locally finite $\mathcal{H}^{m-2}$-measure.

The case $p=3$ is special as well as there is (up to rotations) a unique cone ${\rm mod} (3)$ with $(m-1)$-dimensional spine in $\mathbb R^{m+n}$ for any $n$. Moreover, it follows from \cite{Taylor} that in codimension $1$ there is no cone ${\rm mod} (3)$ with $(m-2)$-dimensional spine. In particular in that pioneering work Taylor proved that,
for $p=3$, $m=2$ and $\Sigma = \mathbb R^3$, the entire singular set is locally a classical free boundary. On the other hand combining \cite{Simon}, \cite{Taylor}, \cite{NV}, and classical regularity theory, it is possible to reach the following. 

\begin{theorem}\label{t:p=3}
Let $p=3$, and let $\Sigma, \Omega$, and $T$ be as in Definition \ref{def:am_modp}. If $\dim (\Sigma)= \dim (T)+1=m+1$, then $\mathcal{S}$ is empty for $m\leq 2$, and it is $(m-3)$-rectifiable with locally finite $\mathcal{H}^{m-3}$ measure for $m\geq 3$.
If $\dim (\Sigma)\geq \dim (T) + 2=m+2$, then $\mathcal{S}$ is $(m-2)$-rectifiable and has locally finite $\mathcal{H}^{m-2}$ measure.  
\end{theorem}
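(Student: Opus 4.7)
The plan is a case analysis by codimension: start from the structural result of Theorem~\ref{t:odd} and refine it in the hypersurface case by combining the scarcity of cones modulo $3$ with the quantitative stratification of Naber--Valtorta.

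Because $p=3$ is odd, Theorem~\ref{t:odd} already guarantees that $\mathcal{S}$ is closed, countably $(m-2)$-rectifiable, and of locally finite $\Ha^{m-2}$ measure. In the higher-codimension case $\dim(\Sigma)\geq m+2$ this is exactly the desired conclusion, so nothing more is required beyond a mild application of \cite{NV} to upgrade ``countably rectifiable'' to genuine rectifiability with the stated quantitative bound.

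For the hypersurface case $\dim(\Sigma)=m+1$ the task is to improve the dimension bound by one. I would fix $q\in\mathcal{S}$ and examine its possible tangent cones. Three inputs conspire to restrict the spine: by Theorem~\ref{t:odd}, no tangent cone at $q$ is flat, since $p$ is odd; by Theorem~\ref{t:even}, if some tangent cone at $q$ had an $(m-1)$-dimensional spine then $q$ would lie in the classical free boundary part of $\sing(T)$, contradicting $q\in\mathcal{S}$; and by Taylor's classification \cite{Taylor}, there is no cone modulo $3$ in codimension one whose spine has dimension exactly $m-2$. Consequently every tangent cone at a point of $\mathcal{S}$ has spine of dimension at most $m-3$, so $\mathcal{S}$ is contained in the $(m-3)$-th Almgren stratum of $\sing(T)$. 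Applying the Naber--Valtorta rectifiability theorem \cite{NV} to this stratum then yields $(m-3)$-rectifiability of $\mathcal{S}$ together with locally finite $\Ha^{m-3}$ measure. For $m\leq 2$ the stratum is vacuous, since spines cannot have negative dimension, and hence $\mathcal{S}=\emptyset$, recovering Taylor's original theorem in the case $m=2$.

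The main obstacle I anticipate is verifying that \cite{NV} genuinely applies in the present framework: the theorem is phrased for stationary integral varifolds, whereas here cones modulo $3$ may carry nontrivial multiplicities up to $\lfloor p/2\rfloor$. The quantitative cone-splitting and $\varepsilon$-regularity inputs required by \cite{NV} must therefore be extracted from the decay estimates of Simon \cite{Simon}, suitably adapted to the free-boundary/multiplicity setting that constitutes the main technical achievement of the present paper. Classical regularity theory for stable minimal hypersurfaces then closes the argument on $\reg(T)$.
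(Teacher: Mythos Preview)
Your codimension-one argument is essentially the paper's own: rule out spines of dimension $m-1$ (free boundary part), rule out spines of dimension $m-2$ via Taylor \cite{Taylor}, conclude $\mathcal S\subset\mathcal S^{m-3}$, and then invoke \cite{NV} (after checking, as in Appendix~\ref{app:NV}, that the classical stratum coincides with a quantitative one). The only difference is that you appeal to Theorems~\ref{t:odd} and~\ref{t:even} to handle the $(m-1)$-spine case, whereas the paper deliberately uses only pre-existing tools: for $p=3$ every multiplicity $\kappa_{0,i}$ satisfies $\kappa_{0,i}<3/2$, hence all are equal to $1$, so Allard's theorem disposes of flat tangent cones and Simon's original multiplicity-one theory \cite{Simon} already gives the free-boundary structure at cones with $(m-1)$-spine. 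This is why the paper disclaims originality for Theorem~\ref{t:p=3}.

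The higher-codimension case, however, has a genuine gap: Theorem~\ref{t:odd} is stated and proved only under the hypothesis $\dim(\Sigma)=m+1$, so you cannot invoke it when $\dim(\Sigma)\ge m+2$. (The whole machinery of the paper, from the graphical parametrization onward, is built in codimension one in $\Sigma$.) The paper's proof bypasses this precisely through the observation above: because all multiplicities equal $1$ when $p=3$, Allard and Simon apply directly in arbitrary codimension, which is enough to show $\mathcal S\subset\mathcal S^{m-2}$ and then feed into the Naber--Valtorta argument. Your concluding worry about multiplicities up to $\lfloor p/2\rfloor$ is therefore misplaced here: for $p=3$ that floor is $1$, and this is exactly what makes the higher-codimension case accessible without the paper's new decay theorem.
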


We do not claim any originality in Theorem \ref{t:p=3} and the statement above has been included for completeness, while for the reader's convenience we include the short argument in the Appendix; see Appendix \ref{s:appendimi}. Finally, for $p=4$ (and in codimension $1$) \cite{White79} shows that minimizers of uniformly elliptic integrands are represented by {\em immersed manifolds} outside of a closed set of zero $\mathcal{H}^{m-2}$ measure.  

We finally notice that the structure theorems are optimal, in the sense that a simple modification of a classical example in \cite{Taylor} yields the following

\begin{proposition}\label{p:example}
For each $p\geq 3$ there is a $2$-dimensional integer rectifiable current $T$ in $\bB_2 \subset \mathbb R^3$ which is an area-minimizing representative $\modp$ and whose singular set consists of a $1$-dimensional circle which is a classical free boundary.  
\end{proposition}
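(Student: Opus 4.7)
\emph{Plan.} The strategy is to construct $T$ as a rotationally symmetric $2$-current in $\bB_2\subset \R^3$, modeled on Taylor's classical mod-$p$ cone with $1$-dimensional spine but with the spine bent into a closed circle. Let $C$ be the unit circle in the $xy$-plane inside $\bB_2$ and let $\Pi = \{(r,0,z) : r>0\}$ denote a meridian half-plane. In $\Pi$, place the $1$-dimensional mod-$p$ Taylor-type network $N$ centered at the point $(1,0)$: for odd $p$, this consists of $p$ arcs emanating from $(1,0)$ at equal angular separations $2\pi j/p$, each with multiplicity $1$; for general $p\geq 3$, use the analogous configuration with integer multiplicities $k_i \in \{1,\ldots,\lfloor p/2\rfloor\}$ satisfying $\sum_i k_i = p$ and the balancing $\sum_i k_i \nu_i = 0$. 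Rotate $N$ under the $SO(2)$-action fixing the $z$-axis, and restrict to $\bB_2$, to obtain the desired current $T \in \Rc_2(\bB_2)$.

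\emph{Structure of the singular set.} By construction, each sheet of $T$ is a surface of revolution of one of the arcs of $N$, hence smooth away from $C$, while the $p$ sheets come together along the circle $C$. At every $q\in C$, the tangent cone to $T$ coincides with the planar mod-$p$ Taylor cone with spine $T_q C$, and the balance $\sum_i k_i \nu_i = 0$ is built into the construction. Hence all the criteria of Definition~\ref{def:free-boundary} are verified: $\sing(T) = C$ is a smooth $1$-dimensional submanifold of $\bB_2$, $\reg(T)\cap U$ consists of smooth orientable sheets meeting $C\cap U$ with the correct multiplicities, and the structural identities of \eqref{e:balance} hold by symmetry.

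\emph{Area-minimization and main obstacle.} The essential point is to prove that $T$ is area-minimizing $\modp$ in $\bB_2$. By symmetrization under the $SO(2)$-action (performed at the level of slices via the mod-$p$ slicing theory, cf.~\cite{DLHMS}, which does not increase mass and preserves the mod-$p$ class), it suffices to consider rotationally symmetric competitors; these correspond bijectively to $1$-dimensional currents $N'$ in $\Pi$, with $\mass(T) = 2\pi \int_\Pi r \, d|N'|$. The area-minimization problem thereby reduces to a $1$-dimensional \emph{weighted}-length minimization in $\Pi$ with weight $r$: minimize $\int r \, d\ell$ over planar networks in $\Pi$ congruent to $N$ mod $p$. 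Choosing the arcs of $N$ to be weighted geodesics (solutions of $(r\dot r)' = 1$ and $(r\dot z)' = 0$) ensures that $N$ satisfies the Euler--Lagrange equations, while the branch-point balance at $(1,0)$ becomes $\sum_i k_i \tau_i = 0$ (since $r(1,0) = 1 > 0$), recovering exactly the equiangular Taylor condition on tangent directions. Global minimality of $N$ for the weighted mod-$p$ problem then follows by adapting Taylor's classical planar mod-$p$ calibration, essentially by multiplying the calibrating $1$-form by $r$. The principal technical obstacle is precisely this adaptation: the weight $r$ distorts global length-minimizing behavior, so that the naive straight-ray Euclidean Taylor network would \emph{not} give an area-minimizing rotational current. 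The heart of the proof is therefore the construction of a weighted mod-$p$ calibration on $\Pi$ and the verification of its calibrating properties against all admissible competitor networks.
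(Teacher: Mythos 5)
Your construction correctly identifies the rotational reduction to a weighted 1-dimensional problem in the meridian half-plane, and the identification of the generating arcs as weighted geodesics (for the density $r\,ds$, equivalently the conformal metric $r^2(dr^2+dz^2)$) is exactly the right local structure. However, your argument has a genuine gap that is not a minor technicality but the crux of the matter: you never establish the global minimality of the candidate network, and you acknowledge this yourself by calling the weighted calibration ``the heart of the proof'' and ``the principal technical obstacle.'' Multiplying a Euclidean mod-$p$ calibrating form by $r$ does not yield a closed form, and there is no obvious replacement; without this step you have at best a stationary configuration, not a minimizer, and the proposition asserts minimality.

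The paper's proof avoids the calibration entirely by reversing the logical direction. Rather than building a candidate $T$ and then proving it minimal, it prescribes a rotationally symmetric boundary cycle $S$ mod $p$, takes \emph{any} area-minimizer $T$ for that boundary (existence is automatic by compactness), and then proves that this minimizer is itself rotationally invariant. The rotational invariance comes from a $\theta$-slicing argument: integrating $\mass(\langle T,\theta,\alpha\rangle\mres r)$ over $\alpha\in(-\pi,\pi)$ is dominated by $\mass(T)$, so some slice has weighted mass at most $\frac{1}{2\pi}\mass(T)$, and rotating that slice gives a competitor with no larger mass; minimality forces equality and hence symmetry. Once the minimizer is known to be the rotation of a $1$-dimensional mass-minimizer $T_0$ in the half-plane, one observes that $T_0$ is a union of finitely many weighted geodesic arcs meeting at finitely many branch points, and then simply chooses the boundary data $\sum_i\kappa_i\a{P_i}$ (e.g.\ $p$ distinct points $P_i$ with $\kappa_i=1$, not all on a single weighted geodesic) so that at least one branch point is unavoidable. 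No candidate needs to be written down, and no calibration is required. To repair your approach you would either have to actually construct the weighted calibration --- a nontrivial task you have deferred --- or, more cleanly, replace the ``construct-and-calibrate'' scheme with the ``existence-and-symmetry'' scheme above. Finally, your symmetrization sentence (that one can reduce to rotationally symmetric competitors) is correct in spirit but needs exactly the slicing estimate that the paper proves; it is not a black box from \cite{DLHMS}.
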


Finally, we remark that the theory initiated in the present paper has been later extended to the case when $\dim(\Sigma) \geq m+2$ by the work of the first-named author with Minter and Skorobogatova in \cite{DLMSk}.

\subsection{Uniqueness of tangent cones} Both Theorem \ref{t:odd} and Theorem \ref{t:even} are in fact corollaries of the following quantitative uniqueness/decay which holds for all $p$'s. To simplify our statements, from now on a cone $\bC$ as in Theorem \ref{t:even} will be said to have an $(m-1)$-dimensional spine and we isolate an assumption which will be recurrent throughout the paper. For the notation used in the assumption we refer to Section \ref{sec:notation}, in particular we will use a second notion of flat distance $\hat\flat^p$ which has important technical advantages over the original one used by Federer; see Appendix \ref{appendix-flat}.

\begin{ipotesi}\label{ass:cone}
$p\in \mathbb N\setminus \{0,1,2\}$, and $\bC_0$ is
an $m$-dimensional area-minimizing cone $\modp$ in $\mathbb R^{m+n}$ with $(m-1)$-dimensional spine and supported in an $(m+1)$-dimensional linear space; see Definition \ref{def:a.m.cones}. 
$\Sigma, T$, and $\Omega$ are as in Definition \ref{def:am_modp} with $\dim (\Sigma)=m+1$. $\eta>0$ and $q\in \spt^p (T)$ are such that $\bB_1 (q)\subset \Omega\setminus \spt^p (\partial T)$ and, setting $T_{q,1} := (\eta_{q,1})_\sharp T$ for $\eta_{q,\lambda} (\bar q) := \lambda^{-1} (\bar q-q)$,
\begin{align}
&\spt (\bC_0) \subset T_q \Sigma\\ \label{multi ass}
& \Theta_T(q) \geq \frac{p}{2}\\
& \hat\flat^p_{\bB_1}(T_{q,1} - \bC_0) \leq  \eta \\
&\bA + \bE_0:= \|A_{\Sigma}\|_{L^\infty (\bB_1 (q))} + \int_{\bB_1} \dist^2 (\bar q, \spt (\bC_0))\, d\|T_{q,1}\| (\bar q) \leq  \eta\, .
\end{align}
\end{ipotesi}

\begin{theorem}[Uniqueness of cylindrical blow-ups]\label{t:main}
Let $p\in \mathbb N\setminus \{0,1,2\}$ and $\bC_0$ be as in Assumption \ref{ass:cone}. There are constants $\bar \eta>0$ and $C$ depending only on $p,m,n$, and $\bC_0$ with the following property. If $T, \Sigma, \Omega$ and $q$ are as in Assumption \ref{ass:cone} with $\eta=\bar \eta$,
then the tangent cone $\bC$ to $T$ at $q$ is unique, has $(m-1)$-dimensional spine, and moreover the following decay estimates hold for every radius $r\leq 1$
    \begin{equation}\label{e:decay-always}
    \frac{1}{r^{m+2}}\int_{\bB_r (q)}\dist^2(\bar q - q, \spt(\bC)) \, d\|T\| (\bar q) \leq C\, (\bE_0 + \bA^{1/2}) r^{\frac{1}{2}} \, ,
    \end{equation}
    \begin{equation}\label{e:decay-flat}
    \hat\flat^p_{\bB_1} ((\eta_{q,r})_\sharp T - \bC) \leq C (\bE_0^{1/2}+\bA^{1/4}) r^\frac{1}{4}\, .
    \end{equation}
    In particular $\hat\flat^p_{\bB_1} (\bC-\bC_0) \leq C (\bE_0^{1/2}+\bA^{1/4}) + \bar\eta$.
\end{theorem}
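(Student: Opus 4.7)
\smallskip

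\textbf{Proof plan for Theorem \ref{t:main}.} The overall strategy follows Simon's blueprint \cite{Simon} for uniqueness of cylindrical tangent cones, adapted to the free-boundary structure produced by the $\modp$ minimality. Write $V\subset T_q\Sigma$ for the spine of $\bC_0$. By Proposition \ref{lem:structure_cones}, the cone $\bC_0$ is, in the transverse plane $V^\perp\cap T_q\Sigma\cong\R^2$, a finite union of $N\le p$ half-lines emanating from the origin with multiplicities $k_1,\ldots,k_N\in\{1,\ldots,\lfloor p/2\rfloor\}$ satisfying the balancing relation \eqref{e:balance} on $V$. The family of admissible cones with $(m-1)$-spine and the same configuration as $\bC_0$ is therefore finite dimensional: up to translations of the spine in $V^\perp$ and rotations of $T_q\Sigma$, $\bC_0$ is rigid.

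The first task is to obtain a graphical parametrization of $T_{q,r}$ over any nearby admissible cone $\bC$ at every scale $r\le 1$. Away from the spine, the sheets of $T$ are close to smooth multiplicity-$k_i$ planes and the Allard-type regularity yields a $C^{1,\alpha}$ graphical representation. In a neighborhood of the spine, the structural theorem from \cite{DLHMS} together with the classical free-boundary description of Definition \ref{def:free-boundary} permits us to parametrize $T$ as the union of $N$ half-graphs $\Gamma_i$ with boundaries glued along a $C^{1,\alpha}$ free-boundary interface $\Lambda$, a small perturbation of $V$. This is the step where the arbitrary multiplicities $k_i$ produce the main new difficulty: the usual graphical estimates must be complemented by estimates for the free boundary $\Lambda$ itself, and the Hopf-type transversality plus the balancing law \eqref{e:balance} have to be invoked to rule out degenerations where two sheets collapse together.

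Once the graphical representation is in place, I would prove a one-step excess-improvement lemma: if $T_{q,1}$ is $\eta$-close to some admissible cone $\bC$ in $L^2$-excess $\bE(\bC,1)$, then at a fixed smaller scale $\theta\in(0,1)$ there exists another admissible cone $\bC'$ (obtained by an infinitesimal rotation of $\bC$ in the family) with
\[
\bE(\bC',\theta)\le \tfrac{1}{2}\,\theta\bigl(\bE(\bC,1)+\bA\bigr),\qquad |\bC'-\bC|^2\le C\bigl(\bE(\bC,1)+\bA\bigr).
\]
The proof of this lemma is where the heart of Simon's argument enters: one linearizes the minimality condition to obtain a system of elliptic PDEs for the graph functions together with a free-boundary condition on $\Lambda$ encoding \eqref{e:balance}; compactness then forces the rescaled limit to be a Jacobi field of $\bC_0$. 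All such Jacobi fields are generated by the finite dimensional group of translations and rotations preserving the cylindrical structure, so after subtracting the corresponding motion one gains a genuine improvement. This is essentially a Łojasiewicz–Simon inequality for the cross-sectional configuration, which here is finite dimensional and therefore has the optimal exponent $\theta=\tfrac12$.

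Iterating the improvement lemma along a geometric sequence of radii produces cones $\bC_{r}$ with $|\bC_{r}-\bC_{r/2}|$ summable, hence a unique tangent cone $\bC$; the telescoping sum yields the algebraic $L^2$ decay \eqref{e:decay-always} with exponent $\tfrac12$, and passing from $L^2$ excess to flat distance via the graphical parametrization gives the square-root loss leading to the exponent $\tfrac14$ in \eqref{e:decay-flat}. The final bound $\hat\flat^p_{\bB_1}(\bC-\bC_0)\le C(\bE_0^{1/2}+\bA^{1/4})+\bar\eta$ follows by summing $|\bC_{2^{-k}}-\bC_{2^{-(k+1)}}|$ and using the initial hypothesis. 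The principal obstacle is the one-step improvement lemma: the combined treatment of the bulk sheets (arbitrary multiplicities, hence a multi-valued linear theory) and the free boundary $\Lambda$ (a genuine PDE with a transmission-type condition dictated by \eqref{e:balance}) is the essentially new analytic ingredient, and it must be carried out uniformly in the admissible cone $\bC$ and robustly with respect to the ambient curvature $\bA$.
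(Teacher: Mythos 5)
Your plan shares the broad Simon-style skeleton with the paper — a one-step excess-improvement lemma iterated over dyadic scales — but there is a fundamental circularity and several genuine gaps that would need to be filled before this could be accepted as a proof.

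The most serious problem is the step where you ``parametrize $T$ as the union of $N$ half-graphs $\Gamma_i$ with boundaries glued along a $C^{1,\alpha}$ free-boundary interface $\Lambda$'' by invoking ``the classical free-boundary description of Definition \ref{def:free-boundary}.'' In the paper, the classical free-boundary structure of $\sing(T)$ is Theorem \ref{t:unified}, which is a \emph{consequence} of Theorem \ref{t:main} proved afterwards in Section \ref{sec:structure}. At the point of proving Theorem \ref{t:main} one only knows, from Assumption \ref{ass:cone}, that $T_{q,1}$ is $L^2$-close to a fixed cone $\bC_0$ and close to it in flat distance. There is no a priori $C^{1,\alpha}$ interface, no Hopf transversality, and no balancing identity \eqref{e:balance} available: all of those are features of the classical free boundary that one is trying to establish. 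The paper deliberately avoids the spine region until the very end, parametrizing $T$ only on good Whitney cubes away from $V$ (Theorem \ref{thm:graph_v1}) and controlling what happens near $V$ indirectly via the Hardt--Simon monotonicity argument (Theorem \ref{t:Hardt-Simon-main}) and the no-hole property (Proposition \ref{prop:no-holes}). Your argument has no substitute for this.

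A second gap is specific to the multiplicity issue that the whole paper is built to handle. When $\kappa_{0,i}>1$, the graphical parametrization of $T$ over $\bH_{0,i}$ consists of $\kappa_{0,i}$ sheets, and so does the parametrization of $\bC$; but a priori there is no pairing between them. The paper devotes two full sections (\ref{sec:local} and \ref{sec:global}) to constructing a consistent selection $h(i,j)$ via a Harnack-type rigidity lemma and a global ordering algorithm — this is precisely what makes the difference functions $w_{i,j}$ well-defined and their blow-ups controllable. Your plan refers vaguely to ``the usual graphical estimates complemented by estimates for the free boundary,'' but without a selection mechanism the linearized problem is not even well posed: one cannot write down which graph of $\bC$ each graph of $T$ is a perturbation of. Relatedly, your improvement lemma allows modifying $\bC$ only by translations and rotations; the paper must also allow the cone to change cross-sectionally (the new cone $\bC'$ in Theorem \ref{t:decay} is built from a linear $p$-multifunction $l^+$, not merely a Euclidean motion of $\bC$), and must introduce binding functions $\xi$ and the correction $\varpi$ (Definition \ref{def:binding functions} and Theorem \ref{thm:est spine}) to absorb the nonunique one-sided matching near the spine.

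Finally, the claim that the improvement lemma is ``essentially a Łojasiewicz--Simon inequality for the cross-sectional configuration, which here is finite dimensional and therefore has the optimal exponent $\theta=\tfrac12$'' is not an argument. The cross-section of $\bC_0$ being a finite union of rays does not, by itself, give a Łojasiewicz--Simon inequality for the ambient area functional near $\bC_0$: one needs a compactness/linearization step that produces harmonic blow-ups satisfying the right boundary conditions on $V$, which is exactly what Proposition \ref{p:scoppia} and Lemma \ref{l:media} provide, via the cylindrical test-vector identity \eqref{e:Simon-test} and the harmonic reflection of the average $\omega$. The exponent $\tfrac12$ in \eqref{e:decay-always} is then extracted from the decay of harmonic functions (Proposition \ref{p:decad-lineare}), not from a Łojasiewicz gradient inequality. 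If you want to pursue an LS-type route it would be a genuinely different approach, but you would then have to actually prove the gradient inequality for this class of singular cones and arbitrary multiplicities, which is not addressed in your proposal.
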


Our proof of Theorem \ref{t:main} is influenced by the pioneering work of Simon \cite{Simon}. In fact we use many of the tools developed there, but the main difference is that in our case the current $T$ as well as the cones $\bC$ and $\bC_0$ in the above statement come with multiplicities. This major issue forces us to significantly modify the arguments of \cite{Simon} by developing new tools and ideas: for this reason, even in the few points where we could have directly adapted the proofs of \cite{Simon}, we have opted for giving all the details from scratch, making the presentation self-contained.



\subsection{Plan of the paper and description of the strategy}

In Section \ref{sec:notation} we introduce some relevant notation, recall the relation between area minimizing currents $\modp$ and varifolds with bounded mean curvature, deal with some technical assumptions about the ambient manifold $\Sigma$ and finally introduce the modified flat distance $\hat{\flat}^p$. As already mentioned above, the latter has some technical advantages over the flat distance originally introduced by Federer, as it behaves in a much better way with respect to the operation of restriction. Section \ref{sec:cones} recalls the definition of tangent cones, their spines, and the usual stratification of $\spt (T)$ according to the maximal dimension of the spines of the tangent cones at the given point. We then analyze the tangent cones $\bC$ with $(m-1)$-dimensional spine. Two elementary facts will play an important role. First of all, any such $\bC$ can be described as the union of finitely many, but at least $3$, half-hyperplanes $\bH_i$ meeting at a common $(m-1)$-dimensional subspace $V$ and counted with appropriate multiplicities $\kappa_i$. Secondly, if the modulus $p$ is odd, then the angle formed by a pair $(\bH_i, \bH_j)$ of consecutive half-hyperplanes is necessarily smaller than $\pi - \vartheta_0 (p)$, where $\vartheta_0 (p)$ is a positive geometric constant depending only on $p$. This is effectively the reason why for odd moduli our conclusion is stronger.

In Section \ref{sec:excess} we state the most important result of the paper, namely the Decay Theorem \ref{t:decay}: the latter states, roughly speaking, that if the current $T$ is sufficiently close, at a given scale $\rho$, to a cone $\bC$ as above around a point $q$ where $T$ has density at least $\frac{p}{2}$, at a scale $\delta \rho$ the distance to a suitable cone $\bC'$ with the same structure will decay by a constant factor. This is the counterpart of a similar decay theorem proved by Simon in his pioneering work on cylindrical tangent cones \cite{Simon} of multiplicity $1$ under the assumption that the cross section satisfies a suitable integrability condition, which in turn is a far-reaching generalization of the work of Taylor in \cite{Taylor} for the specific case of $2$-dimensional area-minimizing cones $\mod$ $3$ in $\mathbb R^3$ with $1$-dimensional spines (to our knowledge, the first theorem of the kind ever proved in the literature for a ``singular cone'').
While our paper builds on the foundational work of Simon \cite{Simon} on cylindrical tangent cones, substantial work is needed to deal with the fact that the multiplicities are allowed to be larger than $1$. In order to perform our analysis, the theorem is proved for cones $\bC$ which in turn are sufficiently close to a fixed reference cone $\bC_0$. While $\bC_0$ is assumed to be area-minimizing $\modp$, both $\bC$ and $\bC'$ are not. Theorem \ref{t:main} is then proved by iterating Theorem \ref{t:decay} (accomplished in Section \ref{sec:iterazione}), while Theorems \ref{t:odd} and \ref{t:even} are a relatively simple consequence of Theorem \ref{t:main}, and their proofs are given in Section \ref{sec:structure}. The latter also contains the proof of Corollary \ref{c:flat-singular-points} and Proposition \ref{p:example}. The remaining part of the paper is devoted to prove Theorem \ref{t:decay}. 

As in many similar regularity proofs (starting from the pioneering work of De Giorgi \cite{DG}) the main argument is a ``blow-up'' procedure: after scaling, we focus on a sequence of area-minimizing currents $T_k$ which are close at scale $1$ to cones $\bC_k$, which in turn converge to a reference cone $\bC_0$. $\bC_k$ and $\bC_0$ are assumed to share the same spine $V$. The distance between $T_k$ and $\bC_k$ (which is measured in an $L^2$ sense) is the relevant parameter and will be called {\em excess}, cf. Definition \ref{d:excess}, and denoted by $\bE_k$. The distance between $\bC_k$ and $\bC_0$ is not assumed to be related to $\bE_k$. The overall idea is then to approximate the currents $T_k$ and $\bC_k$ with Lipschitz graphs over the halfplanes $\bH_{0,i}$ forming $\bC_0$, consider the differences between these graphs, renormalize them by $\bE_k^{-\sfrac 12}$, and study their limits. These are proved to be harmonic (an idea that dates back to De Giorgi), while the remarkable insight of Simon's work \cite{Simon} is used to prove suitable estimates (and compatibility relations) at the spine $V$. This blow-up procedure is accomplished in Section \ref{sec:bu} (cf. Definition \ref{d:scoppia}, Corollary \ref{c:scoppia}, and Proposition \ref{p:scoppia}), while in Section \ref{sec:harm} we use the elementary properties of harmonic functions to prove a suitable decay of the blow-up limits, cf. Proposition \ref{p:decad-lineare}. A fundamental realization of Simon is that, in order to accomplish the above program, one needs to introduce an additional object, for which we propose the term {\em binding function}, and whose role will be explained in a moment.

As already mentioned, the biggest source of complication is that the multiplicities $\kappa_{0,i}$  of the halfplanes $\bH_{0,i}$ forming the support of $\bC_0$ are typically larger than $1$. In particular it is necessary to use $\kappa_{0,i}$ (not necessarily all distinct) functions to approximate the portions of the current $T_k$ which are close to $\bH_{0,i}$. Likewise, it is necessary to use $\kappa_{0,i}$ functions to describe the portions of $\bC_k$ which are close to $\bH_{0,i}$. Notice that while we know that the number $N_i$ of {\em distinct} functions needed in the representation ranges between $1$ and $\kappa_{0,i}$ and that the multiplicities of the corresponding graphs are positive integers $\kappa_{i,j}$ which sum up to $\kappa_{0,i}$, any choice of coefficients respecting these conditions is possible, and moreover the choice might be different for $T_k$ and $\bC_k$ and depend on $k$. 

In order to produce graphical parametrizations of the current $T_k$ at appropriate scales, we take advantage of the $\eps$-regularity result proved by White in \cite{White86}, but we also need to show that each such parametrization is close to one of the linear functions describing the cone $\bC_k$. This major issue is absent in Simon's work \cite{Simon} thanks to the multiplicity one assumption, and we address it in the three separate Sections \ref{sec:graph}, \ref{sec:local}, and \ref{sec:global}. The relevant ``graphical approximation theorems'' which follow from this analysis and will be used later are Theorem \ref{thm:graph v2} and Corollary \ref{cor:reparametrization}.

First of all in Section \ref{sec:graph} we show how to use \cite{White86} to gain a graphical parametrization of $T= T_k$, cf. Theorem 
\ref{thm:graph_v1}. Inspired by \cite{Simon} we subdivide the support of the current in regions $Q$ of size comparable to their distance $d_Q$ from the spine of $\bC_0$. For practical reasons, $d_Q$ will range in a dyadic scale and we will put an order relation on all the regions according to whether a region $Q'$ is lying ``above'' the region $Q$, cf. Definition \ref{def:whitney} for the precise definition. We then apply the regularity theorem of \cite{White86} on any ``good'' region, i.e. any $Q$ with the property that at $Q$ and at every region above $Q$ the current $T$ is sufficiently close to $\bC = \bC_k$. A simple argument (which uses heavily the fact that the codimension of $T$ in $\Sigma$ is $1$), allows to ``patch'' together the graphical approximations across different regions to achieve $p= \sum_i \kappa_{0,i}$ ``sheets'' which approximate efficiently the current.

In section \ref{sec:local} we show that on each region $Q$ each ``graphical sheet'' of $T$ is close to some sheet of $\bC$, cf. Lemma \ref{l:local_selection}: the main ingredient is an appropriate Harnack-type estimate for solutions of the minimal surface equation, cf. Lemma \ref{lem.harmonic rigidity}. While at this stage the choice might depend on the region $Q$, in Section \ref{sec:global} an appropriate selection algorithm allows to bridge across different regions and show that there is a single sheet of $\bC$ to which each single graphical sheet of $T$ is close on every region $Q$, cf. Lemma \ref{l:global_selection}. The latter selection algorithm will in fact be used again twice later on. An important thing to be noticed is that, since we use a one-sided excess, there might be some sheets of $\bC$ which are not close to any of the graphical sheets of $T$: this phenomenon, which is not present in \cite{Simon}, is due to the fact that the multiplicities $\kappa_{0,i}$ might be higher than $1$, and forces us to introduce an intermediate cone $\tilde{\bC}$ which consists of those sheets of $\bC$ which are close to at least one graphical sheet of $T$. 
 
We next appropriately modify the key idea of \cite{Simon} that the remainder in the classical monotonicity formula can be used to improve the estimates near the spine of the cone $\bC_0$. In Section \ref{sec:Hardt-Simon} this is first done to estimate the distance of $T$ to suitable shifted copies of $\tilde{\bC}$, centered at points of high density of $T$, cf. Theorem \ref{t:Hardt-Simon-main}. It is in this section that we exploit crucially a reparametrization of the graphical sheets of $T$ over $\tilde{\bC}$ (cf. Corollary \ref{cor:reparametrization}) and, in particular, the fact that $\tilde{\bC}$ does not contain any ``halfspace of $\bC$ far from $T$''. In Section \ref{sec:binding} the $\modp$ structure allows us to prove the so-called ``no-hole condition'', namely some point of high density of $T$ must be located close to any point of the spine of $\tilde\bC$ (which, we recall, is the same as the spine of $\bC$ and $\bC_0$), cf. Proposition \ref{prop:no-holes}. The latter is combined with Theorem \ref{t:Hardt-Simon-main} to prove that, upon subtracting some suitable piecewise constant functions with a particular cylindrical structure (the {\em binding functions} of Definition \ref{def:binding functions}), the graphical sheets enjoy good estimates close to the spine, cf. Theorem \ref{thm:est spine}. However, again caused by multiplicities $\kappa_{0,i}$, unlike in \cite{Simon}, we need to introduce a suitable correction to the binding functions, and a crucial point is that the size of the latter can be estimated by the product of the excess $\bE^{\sfrac 12} = \bE_k^{\sfrac 12}$ and the distance of $\bC = \bC_k$ to $\bC_0$.\\ 

\noindent\textbf{Acknowledgments.} C.D.L. acknowledges support from the National Science Foundation through the grant FRG-1854147. J.H. was partially supported by the German Science Foundation DFG in context of the Priority Program SPP 2026 “Geometry at Infinity”. L.S. acknowledges the support of the NSF grant DMS-1951070. A.M. and S.S. acknowledge support of the project PRIN 2022PJ9EFL "\textit{Geometric Measure Theory: Structure of Singular Measures, Regularity Theory and Applications in the Calculus of Variations}", funded by the European Union under NextGenerationEU and by the Italian Ministry of University and Research.

\section{Preliminaries} \label{sec:notation}

In this section we fix the main notation in use throughout the paper and recall some preliminary facts

\subsection{Notation}

The following notation is of standard use in Geometric Measure Theory; see e.g. \cite{Simon83,Federer69}. More notation will be introduced in the main text when the need arises.

\begin{itemizeb}\leftskip 0.8 cm\labelsep=.3 cm	

\item[$\bB_R(q)$] open ball in $\R^{m+n}$ centered at $q \in \R^{m+n}$ with radius $R > 0$;

\item[$B^k_r(x)$] open disc in $\R^k$ (or in a $k$-dimensional linear subspace of $\R^{m+n}$) centered at $x \in \R^k$ with radius $r>0$;

\item[$\dist(\cdot,E)$] distance function from a subset $E \subset \R^{m+n}$, defined by $\dist(q,E) := \inf\lbrace\abs{q-\bar q}\,\colon\, \bar q \in E\rbrace$;

\item[$\omega_k$] Lebesgue measure of the unit disc in $\R^k$;

\item[$|E|$] Lebesgue measure of $E \subset \R^{m+n}$;

\item[$\Ha^k$] $k$-dimensional Hausdorff measure in $\R^{m+n}$;

\item[$\mu \mres E$] restriction of the measure $\mu$ to the Borel set $E$: it is defined by $(\mu \mres E) (F) := \mu(E \cap F)$ for all Borel sets $F$;

\item[$A_\Sigma$] second fundamental form of a submanifold $\Sigma \subset \R^{m+n}$ of class $C^2$;

\item[$\mathscr{F}_m$, ($\mathscr{F}_{m}^p$)] integral flat chains (modulo $p$) of dimension $m$;

\item[$\Rc_m$, ($\Rc_{m}^p$)] integer rectifiable currents (modulo $p$) of dimension $m$; we write $T = \llbracket M, \vec{\tau}, \theta \rrbracket$ if $T$ is defined by integration with respect to $\vec{\tau} \, \theta \, \Ha^m \res M$ for a countably $m$-rectifiable set $M$ with locally finite $\Ha^m$ measure, oriented by the Borel measurable unit $m$-vector field $\vec{\tau}$ with locally integrable (with respect to $\Ha^m \mres M$) multiplicity $\theta$;

\item[$k \a{\Gamma}$] integer rectifiable current $\a{\Gamma,\vec{\tau},k}$ defined by integration over an oriented embedded submanifold $\Gamma \subset \R^{m+n}$ of class $C^1$ (or a rectifiable set with locally finite Hausdorff measure) with orientation $\vec{\tau}$;

\item[${[ T ]}$] $\modp$ equivalence class of $T \in \mathscr{F}_m$;

\item[$\mass$, ($\mass^p$)] mass functional (mass modulo $p$);

\item[$\|T\|$, ($\|T\|_p$)] Radon measure associated to a current $T$ (to a class $\left[ T \right]$) with locally finite mass (mass modulo $p$); $\|T\|$ will also be used for the corresponding integral varifold if $T\in \Rc_m$;

\item[$\partial T$, $\partial^p{[T]}$] boundary of the current $T$, boundary $\modp$ of the class $[T]$. The latter is defined by $\partial^p[T] := [\partial T]$; 

\item[$\spt(T)$, $\spt^p(T)$] support and support $\modp$ of $T$. The latter is defined as the intersection of the supports of all chains $\tilde T \in [T]$;



\item[$H_V$] generalized mean curvature of a varifold $V$ with locally bounded first variation;

\item[$H_T$] same as $H_{\|T\|}$ for an integer rectifiable current $T$ whose associated varifold $\|T\|$ has locally bounded first variation;

\item[$f_\sharp T$, $f_\sharp V$] push-forward of the current $T$, of the varifold $V$, through the map $f$;

\item[$\langle T, f, z \rangle$] slice of the current $T$ with the function $f$ at the point $z$;

\item[$\eta_{q,R}$] the map $\R^{m+n} \to \R^{m+n}$ defined by $\eta_{q,R}(\bar q) := R^{-1}(\bar q-q)$;

\item[$\Theta^k(\mu,q)$] $k$-dimensional density of the measure $\mu$ at the point $q$, defined by $\Theta^k(\mu,q) := \lim_{r\to0^+} \frac{\mu(\bB_r(q))}{\omega_k\,r^k}$ whenever the limit exists;

\item[$\Theta_T(q)$] same as $\Theta^m(\|T\|,q)$ if $T$ is an $m$-current with locally finite mass.

\end{itemizeb}

\subsection{Varifolds and currents}

We follow \cite{DLHMS}, and recall that an integer rectifiable current $T$ which is area minimizing $\modp$ in $\Omega \cap \Sigma$ as in Definition \ref{def:am_modp} is always a \emph{representative $\modp$} in $\Omega$. This means that if $T = \llbracket M, \vec{\tau}, \theta \rrbracket$ then 
\[
\|T\| \mres \Omega \leq \frac{p}{2} \, \Ha^m \mres (M\cap \Omega) \qquad \mbox{in the sense of Radon measures}\,,
\]
or equivalently that
\[
\abs{\theta} \leq \frac{p}{2} \qquad \mbox{$\Ha^m$-a.e. on $M\cap\Omega$}\,.
\]
We also recall (cf. \cite[Lemma 5.1]{DLHMS}) that the varifold $\|T\|$ induced by $T$ is stationary in the open set $\Omega \setminus \spt^p (\pa T)$ with respect to variations that are tangent to $\Sigma$, that is 
\begin{equation} \label{e:stationarity_tangent}
\delta\|T\| (\chi) = 0 \qquad \mbox{for all $\chi \in C^1_c(\Omega \setminus \spt^p(\pa T),\R^{m+n})$ tangent to $\Sigma$}\,,
\end{equation}
and more generally that 
\begin{equation} \label{e:H in L infty}
\delta \|T\| (\chi) = - \int \chi \cdot H_T \, d\|T\| \qquad \mbox{for all $\chi \in C^1_c(\Omega \setminus \spt^p(\pa T),\R^{m+n})$}
\end{equation}
with $\|H_T\|_{L^\infty} \leq \|A_\Sigma\|_{L^\infty(\Omega \cap \spt(T))}$. 

\subsection{The ambient manifold and preliminary reductions} 

Since the results of this paper depend on a local analysis of the current $T$ at its interior singular points, we can always assume to be working in a small ball $\bB_{\rho}(q)$ centered at some point $q \in \Sigma$. The regularity of $\Sigma$ guarantees that if $\rho$ is sufficiently small then $\Sigma \cap \bB_\rho(q)$ is the graph of a $C^2$ function of $m+1$ variables, and that $\overline{\Sigma \cap \bB_\rho(q)}$ is a Lipschitz deformation retract of $\R^{m+n}$. As observed in \cite{DLHMS}, equivalence classes $\modp$ in $\Sigma \cap \bB_\rho(q)$ do not depend on $\Sigma$. In other words, in these circumstances it does not matter what the shape of $\Sigma$ is outside of $\bB_\rho(q)$, and thus we can assume without loss of generality that $\Sigma$ is in fact an entire graph of $m+1$ variables. Also, since we are only interested in \emph{interior} singularities of $T$, we can assume that $(\pa T) \mres \bB_\rho(q) = 0\;\modp$. Furthermore, since the singularities we are interested in belong to the top-dimensional stratum $\sing^*(T)$ (see \eqref{def:top dimensional} below), we can always assume as a consequence of Proposition \ref{lem:structure_cones} that the $m$-dimensional density of $T$ at $q$ is $\Theta_T(q) = \sfrac{p}{2}$, and in fact $\Theta_T (q) \geq \sfrac{p}{2}$ is sufficient for our purposes. By a standard blow-up procedure, we shall work on the rescaling $T_{q,\rho} := (\eta_{q,\rho})_\sharp T$, and thus also the ambient manifold will be translated and rescaled to $\Sigma_{q,\rho}:=\rho^{-1}(\Sigma-q)$. Notice that, as $\rho \to 0^+$, the manifolds $\Sigma_{q,\rho}$ approach the tangent space $T_q\Sigma$: for this reason, we can further assume without loss of generality that the second fundamental form $A_\Sigma$ of $\Sigma$ satisfies a global bound of the form $\|A_\Sigma\|_{L^\infty(\Sigma)} =: \mathbf{A} \leq c_0$ for a (small) dimensional constant $c_0$. We summarize these reductions in an assumption, which will be taken as a hypothesis in most of our subsequent statements.  

\begin{ipotesi}\label{assumption:manifolds and currents}
We establish the following set of assumptions.
\begin{itemize}
    \item[($\Sigma$)]  $\Sigma$ is an entire $(m+1)$-dimensional graph of a function $\Psi \colon T_0\Sigma \to (T_0\Sigma)^\perp$ of class $C^2$, and $\mathbf{A}:=\|A_\Sigma\|_{L^\infty(\Sigma)} \leq c_0$.
    
    \item[($T$)] $T$ is a current in $\Rc_m(\Sigma)$ such that:
    \begin{itemize}
        \item[($T1$)] $T$ is area minimizing $\modp$ in $\Sigma \cap \Omega$ for $\Omega=\bB_{2R_0} (0)$, where $R_0$ is a geometric constant;
        \item[($T2$)] $0 \in \spt(T)$ and $\Theta_T(0) \geq \sfrac{p}{2}$;
        \item[($T3$)] $\partial T =0 \, \modp$ in $\Omega=\bB_{2R_0}(0)$.
    \end{itemize}
    
\end{itemize}

\end{ipotesi}

\subsection{The modified \texorpdfstring{$p$}{p}-flat distance} Throughout the paper, we will often make use of a modified version of the flat distance $\modp$ which is better behaved than $\flat^p$ with respect to localization, and thus well suited to applications in regularity statements. More precisely, if $\Omega \subset \R^{m+n}$ is open, $C \subset \Omega$ is a relatively closed subset, $T,S \in \mathscr{F}_m(C)$, and $W \ssubset \Omega$ is open, we set

\begin{equation} \label{flat-simon}
\begin{split}
    \hat\flat^p_{W}(T-S) := \inf\Big\lbrace & \|R\|(W) + \|Z\|(W) \, \colon \, R \in \mathscr{R}_m(\Omega),\, Z \in \mathscr{R}_{m+1}(\Omega) \\  
    & \qquad \mbox{such that $T-S = R + \partial Z + pP$ for some $P \in \mathscr{F}_m(\Omega)$} \Big\rbrace\,. 
\end{split}
\end{equation} 

A complete discussion on the necessity of this alternative notion of $p$-flat distance and its relationship with the classical $\flat^p$ is contained in Appendix \ref{appendix-flat}.

\section{Tangent cones} \label{sec:cones}

Using the fact that $\|T\|$ has locally bounded variation we can define (see \cite[Sections 7 and 8]{DLHMS}) the set of tangent cones to $\|T\|$ at every point $q\in \spt (T)\setminus \spt^p (\partial T)$, and stratify 
$\spt (T)\setminus \spt^p (\partial T)$ as 
\[
\mathcal{S}^0 \subset \mathcal{S}^1 \subset \ldots \subset \mathcal{S}^{m-1} \subset \mathcal{S}^m = \spt (T) \setminus \spt^p(\pa T)\, ,
\]
according to the maximal dimension of their \emph{spines}: more precisely, $\mathcal{S}^k$ is the subset of points $q\in \spt (T) \setminus \spt^p (\partial T)$ with the property that no tangent cone to $\|T\|$ at $q$ has spine of dimension $k+1$. 

Of particular importance is the set
\begin{equation} \label{def:top dimensional}
\sing^*(T) := \cS^{m-1} \setminus \cS^{m-2}\, .
\end{equation}
All points $q \in \sing^*(T)$ are characterized by the following two properties:
\begin{itemize}
	\item[(a)] no tangent cone to $\|T\|$ at $q$ is supported in an $m$-dimensional subspace of $T_q\Sigma$;
	\item[(b)] there is at least one tangent cone to $\|T\|$ at $q$ with spine of dimension $m-1$.
\end{itemize}

The following is an obvious corollary of Theorem \ref{t:main}:

\begin{corollary}\label{c:sing^*}
Let $p\geq 2$, $T$, $\Omega$, and $\Sigma$ be as in Definition \ref{def:am_modp} and assume $\dim (\Sigma) = \dim (T)+1$.
Then a point $q$ belongs to $\sing^* (T)$ if and only if (b) above holds.
\end{corollary}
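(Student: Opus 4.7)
The direction $q \in \sing^*(T) \Rightarrow$ (b) is immediate from the very definition of $\sing^*(T)$, so the real content of the corollary is that (b) alone implies (a): the mere existence of \emph{one} tangent cone to $T$ at $q$ with $(m-1)$-dimensional spine rules out the existence of any flat tangent cone. My plan is to extract this from the uniqueness part of Theorem \ref{t:main}.

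Given a tangent cone $\bC_0$ to $\|T\|$ at $q$ with $(m-1)$-dimensional spine, I would first verify that, after a suitable rescaling, all the hypotheses of Assumption \ref{ass:cone} are met so that Theorem \ref{t:main} can be invoked. Proposition \ref{lem:structure_cones} guarantees that $\bC_0$ is area minimizing $\modp$, is supported in an $(m+1)$-dimensional linear subspace of $T_q\Sigma$, has the ``union of half-hyperplanes'' structure described in the introduction, and has density at the origin equal to $p/2$. Since density is scale invariant and the tangent cone realizes the density at $q$, this yields $\Theta_T(q) = p/2$, taking care of the density hypothesis. By definition of tangent cone, there is a sequence $r_k \downarrow 0$ along which $T_{q, r_k} \to \bC_0$ in $\hat\flat^p_{\bB_1}$; standard compactness for area minimizing currents $\modp$ upgrades this to convergence of the associated (stationary) integral varifolds, and hence for $k$ large both $\hat\flat^p_{\bB_1}(T_{q, r_k} - \bC_0)$ and the $L^2$-excess $\bE_0$ of $T_{q, r_k}$ with respect to $\bC_0$ are arbitrarily small. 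Rescaling turns $\|A_\Sigma\|_\infty$ into $r_k\|A_\Sigma\|_\infty$, so the curvature term $\bA$ of the rescaled manifold $\Sigma_{q, r_k}$ also tends to zero. Finally, $\spt(\bC_0) \subset T_q\Sigma$ holds automatically, since tangent cones live in the tangent plane.

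Fixing $k$ sufficiently large so that all the above quantities lie below the threshold $\bar\eta$ furnished by Theorem \ref{t:main}, I would then apply that theorem to the rescaled current $T_{q, r_k}$ in the rescaled ambient manifold $\Sigma_{q, r_k}$ at the origin. The conclusion is that the tangent cone to $T_{q, r_k}$ at $0$ — equivalently, to $T$ at $q$ — is unique and has $(m-1)$-dimensional spine. Since any flat tangent cone would, by definition, have spine of dimension $m$, its existence is incompatible with this uniqueness together with the $(m-1)$-dimensional-spine conclusion. Therefore (a) holds and $q \in \sing^*(T)$.

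The only mildly delicate point in the argument is upgrading the flat convergence $T_{q, r_k} \to \bC_0$ along the chosen sequence to the $L^2$-type control needed to make $\bE_0$ small; this is, however, a routine consequence of standard compactness results for stationary integral varifolds with uniform mass bounds, so no real obstacle arises and the corollary follows by essentially direct application of Theorem \ref{t:main}.
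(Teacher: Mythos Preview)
Your proof is correct and is exactly the argument the paper has in mind when it calls Corollary \ref{c:sing^*} ``an obvious corollary of Theorem \ref{t:main}'': verify Assumption \ref{ass:cone} at a suitable small scale and read off from Theorem \ref{t:main} that the tangent cone is unique with $(m-1)$-dimensional spine, which precludes any flat tangent cone. One very minor remark: the fact that $\bC_0$ is area minimizing $\modp$ is not an output of Proposition \ref{lem:structure_cones} but rather an input---it follows from $\bC_0$ being a tangent cone (cf.\ \cite[Corollary 7.3]{DLHMS}); Proposition \ref{lem:structure_cones} is what gives you $\Theta_{\bC_0}(0)=p/2$ and the structural description.
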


More importantly, by \cite{White86} when $p$ is odd and $\dim (\Sigma) = \dim (T)+1$ the existence of one flat tangent cone at $q$ guarantees the regularity of the point $q$: in that case we thus have $\reg (T) = \cS^m\setminus \cS^{m-1}$ and $\sing^* (T) = \sing (T)\setminus \cS^{m-2}$. Moreover \cite{NV} implies that $\cS^{m-2}$ is $(m-2)$-rectifiable; in Appendix \ref{app:NV} we will show that, as a consequence of the theory to be developed, it also has locally finite $\mathcal{H}^{m-2}$ measure. Therefore Theorem \ref{t:odd} and Theorem \ref{t:even} can be unified in the following single statement

\begin{theorem}\label{t:unified}
Let $p\geq 2$, $T$, $\Omega$, and $\Sigma$ be as in Definition \ref{def:am_modp} and assume $\dim (\Sigma) = \dim (T)+1$. Then $\sing^* (T)$ is locally a classical free boundary.
\end{theorem}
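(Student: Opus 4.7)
The plan is to obtain Theorem \ref{t:unified} as a direct consequence of the quantitative uniqueness and decay in Theorem \ref{t:main}, by applying that theorem at every point of $\sing^*(T)$ in a neighborhood of a given base point and harvesting a $C^{1,\alpha}$ structure from the Hölder control on tangent cones that comes out of the decay. The case $p=2$ is immediate: in codimension one there are no area-minimizing cones $\mod 2$ with $(m-1)$-dimensional spine, so $\sing^*(T)=\emptyset$. Assume $p\geq 3$ and fix $q\in \sing^*(T)$. By definition of $\sing^*(T)$, some tangent cone $\bC_0$ to $T$ at $q$ has $(m-1)$-dimensional spine; since $\dim(\Sigma)=m+1$ it is supported in $T_q\Sigma$, and by Proposition \ref{lem:structure_cones} it is area-minimizing $\modp$ with $\Theta_T(q)=p/2$. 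Choosing $r_k\downarrow 0$ with $(\eta_{q,r_k})_\sharp T \to \bC_0$, the smallness conditions of Assumption \ref{ass:cone} at scale $r_k$ hold for $k$ large (flat convergence gives the $\hat\flat^p$-bound via the comparison of Appendix \ref{appendix-flat}, mass semicontinuity gives the excess bound, and $\bA=r_k\|A_\Sigma\|_\infty$ is small). Theorem \ref{t:main} then yields a unique tangent cone $\bC_q$ at $q$ with $(m-1)$-dimensional spine together with the decay \eqref{e:decay-always}--\eqref{e:decay-flat}.

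Next I would verify that the hypotheses of Theorem \ref{t:main} hold uniformly at every nearby point $q'\in U\cap \sing^*(T)$, for a sufficiently small neighborhood $U$ of $q$. Upper semicontinuity of the density together with Proposition \ref{lem:structure_cones} forces $\Theta_T(q')=p/2$, and the decay \eqref{e:decay-flat} at $q$ implies that at every sufficiently small scale $r$ the rescaled current $(\eta_{q',r})_\sharp T$ is $\hat\flat^p$-close to a small rotation $\bC_0^{q'}$ of $\bC_q$; thus all parameters in Assumption \ref{ass:cone} stay below $\bar\eta$ uniformly in $q'$. A second application of Theorem \ref{t:main}, now at every such $q'$, produces a unique tangent cone $\bC_{q'}$ with $(m-1)$-dimensional spine $V_{q'}$ and with locally uniform decay constants. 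Comparing the decay bounds at $q'$ and $q''$ at scale $r\simeq |q'-q''|$ yields a Hölder estimate
\[
\hat\flat^p_{\bB_1}(\bC_{q'}-\bC_{q''})\leq C\,|q'-q''|^\alpha,
\]
so that the spines $V_{q'}$, the half-hyperplanes, and the corresponding multiplicities depend Hölder continuously on the base point $q'$.

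The $C^{1,\alpha}$ free boundary structure now follows by classical Campanato-type arguments. The Hölder control of $V_{q'}$ and the $L^2$ decay \eqref{e:decay-always} together imply that $\sing^*(T)\cap U$ is an orientable $(m-1)$-dimensional $C^{1,\alpha}$ submanifold with tangent $V_{q'}$ at $q'$; White's $\varepsilon$-regularity \cite{White86} then gives smoothness of $\reg(T)\cap U$, and the flat decay implies that each connected component $\Gamma_i$ of $\reg(T)\cap U$ is a $C^{1,\alpha}$ graph over a half-hyperplane $\bH_{0,i}$ of $\bC_q$ with boundary $\sing^*(T)\cap U$. The balancing \eqref{e:balance} and the identity $\sum_i k_i=p$ follow from $\partial^p T=0$ in $U$ together with Proposition \ref{lem:structure_cones}. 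The main obstacle in this plan is the propagation step above: verifying Assumption \ref{ass:cone} uniformly in $q'\in U$ and, crucially, using the no-hole condition (Proposition \ref{prop:no-holes}) to show that $\sing^*(T)\cap U$ actually fills the spine of $\bC_q$ rather than being a proper subset of it; the remainder of the argument is by now routine in this circle of ideas.
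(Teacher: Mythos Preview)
Your proposal is correct and follows essentially the same approach as the paper's proof of Corollary~\ref{c:struttura} in Section~\ref{sec:structure}, from which Theorem~\ref{t:unified} is deduced. The paper supplies the details you flag as obstacles: the no-hole condition is exploited via an iterative bisection (Proposition~\ref{prop:no-holes} applied at dyadically decreasing scales) to show that $\{\Theta_T\geq p/2\}$ coincides with a $C^{1,1/4}$ graph over the spine, and the $C^{1,\alpha}$ regularity of each sheet up to the boundary is obtained by a Whitney-type patching of White's local graphical parametrizations followed by Schauder estimates (plus, for the sharper conclusion (i) of Corollary~\ref{c:struttura}, a Hopf maximum principle argument collapsing the $\kappa_{0,i}$ sheets over each page into one).
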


An important point in our analysis is that the tangent cones to the varifold $\|T\|$ are in fact induced by corresponding tangent cones to the  current $T$, cf. \cite[Theorem 5.2]{DLHMS}. We establish next the following terminology.

\begin{definition}[Area minimizing cones $\modp$] \label{def:a.m.cones}
An integer rectifiable current \footnote{We remark explicitly that, following our definitions, integer rectifiable cones are, in fact, only \emph{locally} integer rectifiable currents, in the sense that their restriction to any compact set in $\R^{m+n}$ is integer rectifiable. Nonetheless, in what follows we will avoid being too pedantic on the distinction between being integer rectifiable and being locally integer rectifiable whenever that property refers to a cone.} $\bC$ will be called an \emph{area minimizing cone} $\modp$ in $\R^{m+n}$ provided the following conditions hold:
\begin{itemize}
\item[(a)] $\bC$ is locally area minimizing $\modp$ in $\R^{m+n}$, i.e. it is area minimizing $\modp$ in every open set $W \ssubset \R^{m+n}$;
\item[(b)] $\pa \bC = 0 \, \modp$;
\item[(c)] the associated varifold $\|\bC\|$ is a cone, namely $(\eta_{0,\lambda})_{\sharp} \|\bC\| = \|\bC\|$ for all $\lambda > 0$. 
\end{itemize}
The spine of an area minimizing cone $\modp$ is the spine of the associated varifold $\|\bC\|$.
\end{definition}

As shown in \cite[Corollary 7.3]{DLHMS} we then have 

\begin{proposition}
Let $p$, $\Omega$, $\Sigma$, and $T$ be as in Definition \ref{def:am_modp}. Let $q\in \spt^p (T)\setminus \spt^p (\partial T)$ and consider any sequence $r_k\downarrow 0$. Up to subsequences $(\eta_{q,r_k})_\sharp T$ converges locally to a cone $\bC$ which is area minimizing $\modp$ and is supported in the plane $\pi := T_q \Sigma$. Moreover $(\eta_{q,r_k})_\sharp \|T\|$ converges to $\|\bC\|$ in the sense of varifolds.
\end{proposition}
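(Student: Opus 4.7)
The plan is to follow the classical construction of tangent cones, adapted to the mod $p$ setting; in our context this reduces, up to the mod-$p$ bookkeeping, to the argument of \cite[Corollary 7.3]{DLHMS}.

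First I would establish uniform local mass bounds for the rescaled currents $T_{q,r_k} := (\eta_{q,r_k})_\sharp T$. By \eqref{e:H in L infty} the varifold $\|T\|$ has generalized mean curvature bounded in $L^\infty$ by $\|A_\Sigma\|_{L^\infty}$ away from $\spt^p(\partial T)$. The almost-monotonicity formula for integer rectifiable varifolds with bounded mean curvature then yields that $r \mapsto e^{C\|A_\Sigma\|_\infty r}\,\|T\|(\bB_r(q))/(\omega_m r^m)$ is nondecreasing on a fixed interval depending on $\dist(q, \spt^p(\partial T))$. Consequently the density $\Theta_T(q)$ exists and is finite, and the rescalings $T_{q,r_k}$ have uniformly locally bounded mass on every compact set of $\R^{m+n}$ (for $k$ sufficiently large).

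Next I would extract a convergent subsequence. Each $T_{q,r_k}$ is area minimizing $\modp$ in $\Sigma_{q,r_k} := r_k^{-1}(\Sigma - q)$, and the ambient manifolds converge in $C^2_{\mathrm{loc}}$ to the affine space $T_q\Sigma$ with second fundamental forms going to zero. Moreover $\partial^p T_{q,r_k} = 0$ in any fixed ball once $k$ is large enough. Combining the uniform mass bound with the flat compactness theorem $\modp$ (in the $\hat\flat^p$-topology introduced in Appendix \ref{appendix-flat}), we can pass to a subsequence, not relabeled, converging locally in $\hat\flat^p$ to a locally integer rectifiable current $\bC$ with $\partial^p \bC = 0$. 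A standard closure argument for area-minimizing classes $\modp$ — where admissible competitors constructed against $\bC$ in $T_q\Sigma$ are transferred back to competitors in $\Sigma_{q,r_k}$ via the nearest point projection, using the $C^2_{\mathrm{loc}}$ convergence of $\Sigma_{q,r_k}$ — shows that $\bC$ is area minimizing $\modp$ in $T_q\Sigma$ and supported in $T_q\Sigma$.

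To conclude that $\bC$ is a cone, I would use the monotonicity identity: for fixed $0 < t < s$,
\begin{equation*}
\frac{\|T\|(\bB_{sr_k}(q))}{\omega_m (sr_k)^m} - \frac{\|T\|(\bB_{tr_k}(q))}{\omega_m (tr_k)^m}
= \int_{\bB_{sr_k}(q)\setminus \bB_{tr_k}(q)} \frac{|(\bar q - q)^\perp|^2}{|\bar q - q|^{m+2}}\,d\|T\|(\bar q) + O(\|A_\Sigma\|_\infty r_k).
\end{equation*}
As $k\to\infty$ the left-hand side tends to $\Theta_T(q) - \Theta_T(q) = 0$ and the error term vanishes, forcing the radial component $\bar q^{\perp}$ to be zero $\|\bC\|$-almost everywhere in the annulus $\bB_s \setminus \bar\bB_t$; since $s,t$ are arbitrary, $\bC$ is a cone. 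The varifold convergence $\|T_{q,r_k}\|\to\|\bC\|$ finally follows from Allard's compactness theorem for integral varifolds with uniformly bounded mean curvature (the mean curvatures of $\|T_{q,r_k}\|$ scale like $r_k$ and vanish in the limit); the integral limit must agree with $\|\bC\|$ because the underlying currents already converge in $\hat\flat^p$ and their multiplicities match by the constancy of density along the monotone quantity.

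The step I expect to be most delicate is the closure property for area-minimizing currents $\modp$ under simultaneous convergence of the ambient manifolds and the currents themselves. The difficulty is to guarantee that a competitor $\bC + W$ in $T_q\Sigma$, with $W$ a mod $p$ boundary of compact support, can be lifted to competitors for the $T_{q,r_k}$ in $\Sigma_{q,r_k}$ with controlled mass; this is precisely where the good localization properties of $\hat\flat^p$ (as opposed to Federer's original $\flat^p$) play a decisive role, as detailed in Appendix \ref{appendix-flat}.
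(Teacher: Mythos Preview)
Your proposal is correct and aligned with the paper's treatment: the paper does not give an independent proof of this proposition but simply invokes \cite[Corollary~7.3]{DLHMS}, and your sketch is precisely a recapitulation of that argument (monotonicity for mass bounds, compactness and closure for minimizers $\modp$ from \cite[Proposition~5.2]{DLHMS}, the radial-vanishing computation for the cone property, and varifold convergence with no mass drop). The point you flag as most delicate---the closure of the area-minimizing class $\modp$ under convergence of both currents and ambient manifolds, together with the identification of the varifold limit with $\|\bC\|$---is exactly the content of \cite[Proposition~5.2]{DLHMS}, so your caution there is well placed but the gap is already filled by that reference.
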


The following proposition is the starting point of our analysis and gives the geometric structure of $m$-dimensional area minimizing cones $\modp$ with spine of dimension $m-1$.

\begin{proposition} \label{lem:structure_cones}
	Let $p \geq 2$ be an integer, and let $\bC$ be an $m$-dimensional area minimizing cone $\modp$ in $\R^{m+n}$ with spine $V$ of dimension $m-1$. Let $V^\perp$ be the orthogonal complement of $V$. Then:
	\begin{itemize}
		\item[(i)] $\bC = \bC' \times \a{V} $ for some $1$-dimensional area minimizing cone $\modp$ in $V^\perp$ (where we assume to have fixed a choice of a constant orientation $\vec{\tau}$ on $V$);
		\item[(ii)] There exist $N\geq 3$ distinct vectors $v(1), \ldots, v(N)\in \mathbb{S}^n \subset V^\perp$ and $N$ positive integers $\kappa(1), \ldots, \kappa(N) \in \left[1,\sfrac{p}{2}\right) \cap \mathbb{Z}$ such that
		\begin{itemize}
			\item[•] if $\ell(i) := \{t \,v(i) : t\geq 0\} \subset V^\perp$ is oriented in such a way that $\partial \a{\ell(i)} = - \a{0}$, then either $\bC' = \sum_{i=1}^N \kappa(i) \a{\ell(i)} $ or $\bC' =
			- \sum_{i=1}^N \kappa(i) \a{\ell(i)}$;
			\item[•] $\sum_{i=1}^N \kappa(i) v(i) = 0$. 
		\end{itemize}
	\item[(iii)] $\sum_{i=1}^N \kappa(i) = p$, and hence the $m$-dimensional density of $\bC$ at $0$ is $\Theta_{\bC}(0) = \frac{p}{2}$.
	\end{itemize}
	Moreover, when $ p$ is odd, the set of area minimizing cones $\modp$ with spine of dimension $m-1$ is compact with respect to the flat topology.
\end{proposition}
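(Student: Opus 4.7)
The plan is to address the three structural assertions (i)--(iii) by first splitting off the spine and reducing to a $1$-dimensional problem, and then handling compactness for odd $p$ at the end.

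\textbf{Item (i).} I would start from the observation that $V$ being the spine of $\|\bC\|$ means $(\tau_v)_\sharp \|\bC\| = \|\bC\|$ for every $v\in V$. Combined with the integer rectifiable structure of the $m$-dimensional cone $\bC$, this upgrades to $V$-translation invariance of $\bC$ itself once a constant orientation $\vec{\tau}$ along $V$ has been fixed (the two choices of orientation are responsible for the global sign ambiguity in the statement). Slicing $\bC$ by the projection $\pi^\perp\colon \R^{m+n}\to V^\perp$ and observing that, by the invariance, all slices are translates of a single integer rectifiable current, one identifies $\bC$ with a product $\bC' \times \a{V}$ for some $1$-dimensional integer rectifiable cone $\bC'\subset V^\perp$. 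The area-minimizing $\modp$ property and the vanishing of $\partial \bC\,\modp$ both transfer to $\bC'$ by Fubini over the product.

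\textbf{Items (ii) and (iii).} Rectifiability plus the cone condition force $\bC'$ to be supported in a finite union of rays $\ell(1),\dots,\ell(N)$ from the origin, each carrying a non-zero integer multiplicity; orienting each ray outward (so that $\partial \a{\ell(i)} = -\a{0}$) and absorbing signs gives $\kappa(i)\in\Z_{>0}$, and the minimal-mass representative property of $\bC'$ in its $\modp$-class gives $\kappa(i)\leq p/2$. The identity $\partial \bC' = -\bigl(\sum_i \kappa(i)\bigr)\a{0} \equiv 0\, \modp$ yields the congruence
\[
\textstyle\sum_i \kappa(i) \equiv 0 \, \modp,
\]
and testing the first variation of the stationary varifold $\|\bC\|$ with a constant vector field near $0$ yields the balancing identity $\sum_i \kappa(i) v(i) = 0$. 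The case $N=1$ is immediately excluded by balancing; $N=2$ forces $v(2)=-v(1)$ and $\kappa(1)=\kappa(2)=p/2$, which for odd $p$ is impossible and for even $p$ endows $\bC$ with an extra direction of translation invariance, enlarging the spine to dimension $m$ against hypothesis, so $N\geq 3$. For the strict bound $\kappa(i)<p/2$, the case of odd $p$ is automatic; for even $p$ I would exploit that any $\kappa(i)=p/2$ can be reoriented without changing $[\bC']$ (since $-p/2\equiv p/2\,\modp$), and show that the resulting merging with an opposite ray either strictly lowers the mass of a representative of $[\bC']$, contradicting minimality, or again produces a spine-enlargement configuration. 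Finally, (iii) follows from the density upper bound $\Theta^m(\bC,0)\leq p/2$ available for area-minimizing currents $\modp$ (monotonicity combined with a coning competitor, as in \cite{DLHMS}): since $\Theta^m(\bC,0) = \frac{1}{2}\sum_i \kappa(i)$, we obtain $\sum_i \kappa(i)\leq p$, and combining this with the congruence and positivity pins down $\sum_i \kappa(i)=p$.

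\textbf{Compactness for odd $p$.} Given a sequence $\{\bC_k\}$ of such cones, the uniform structural description above yields a mass bound $\mass(\bC_k \res K)\leq C(p,K)$ on every compact $K$; together with $\partial \bC_k = 0\, \modp$, the standard compactness of integer rectifiable currents produces a flat-convergent subsequence to some $\bC_\infty$. Lower semicontinuity of mass transfers area-minimality $\modp$ to $\bC_\infty$, and the cone property passes through dilations. The only nontrivial point is that the spine of $\bC_\infty$ is still exactly $(m-1)$-dimensional, i.e.\ that the $1$-dimensional cross section does not degenerate to a flat cone. This is precisely where odd parity is decisive: the constraint $\kappa(i)\leq (p-1)/2$ together with $\sum_i \kappa(i)=p$ quantitatively forbids the cross section from collapsing onto two opposite rays (which would require both multiplicities equal to $p/2\notin \Z$). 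I expect this spine-stability step and the strict inequality $\kappa(i)<p/2$ for even $p$ to be the main technical obstacles; the product decomposition, the balancing identity, and the density upper bound are more routine applications of classical GMT tools.
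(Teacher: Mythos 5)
The most serious issue is in your handling of item (iii). You invoke a density upper bound $\Theta^m(\bC,0) \leq p/2$ as if it were available off the shelf for area-minimizing currents $\modp$, but no such bound holds in general: the Simons cone in $\R^8$ is area-minimizing $\mod 2$ and has vertex density strictly larger than $1 = p/2$. For the particular class at hand (cones with $(m-1)$-dimensional spine), the bound is precisely the content of item (iii), so assuming it is circular. The paper instead proves $\sum_i \kappa(i) < 2p$ by a competitor construction: supposing $\sum_i \kappa(i) \geq 2p$, one chooses a hyperplane $\pi_0$ through $0$ meeting the rays only at the origin, picks a sub-collection of rays on one side with total multiplicity exactly $p$ (which then has vanishing boundary $\modp$ and, by the additivity $\|\bC'\| = \|W\| + \|S\|$, is itself area minimizing $\modp$), and replaces it with the strictly shorter barycenter tree $Z = \sum_j m^+(j)\,\a{[z,v^+(j)]}$, where $z \neq 0$ because all chosen endpoints lie strictly on one side of $\pi_0$. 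Combined with $\sum_i \kappa(i) \equiv 0 \modp$ this pins down $\sum_i \kappa(i)=p$; you have no substitute for this step.

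A second gap is in item (ii). You write that orienting the rays outward and ``absorbing signs'' gives $\kappa(i)\in\Z_{>0}$, but this does not establish that the integer weights on the oriented rays are all of the \emph{same} sign. The stationary varifold structure only gives the support as rays $\ell(i)$ with multiplicities $\kappa(i)$; passing to the current one has $\bC' = \sum_i \tilde\kappa(i)\a{\ell(i)}$ with $|\tilde\kappa(i)|=\kappa(i)$, and it must be ruled out that both signs occur. The paper handles this with another competitor: if $\tilde\kappa(1)<0<\tilde\kappa(2)$ with $v(1)\neq -v(2)$ (possible since $N\geq 3$), then adding the boundaryless current $S=\a{[0,v(1)]}-\a{[0,v(2)]}+\a{[v(1),v(2)]}$ to $\bC'$ strictly decreases mass in $B_1^{n+1}$ since $|v(1)-v(2)|<2$, contradicting minimality. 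Relatedly, the strict inequality $\kappa(i)<p/2$, which you flag as an obstacle, is dispatched in the paper only \emph{after} (iii): with $\sum_i\kappa(i)=p$, a weight $\kappa(i)=p/2$ together with $\sum_j\kappa(j)v(j)=0$ forces equality in the triangle inequality and hence $\spt(\bC')$ contained in a line, contradicting singularity. Finally note that the splitting $\bC=\bC'\times\a{V}$ from \cite[Lemma 8.5]{DLHMS} holds a priori only $\modp$ and is upgraded to an equality of currents \emph{after} (ii) delivers $\kappa(i)<p/2$; your (i) cannot be closed independently of (ii) as your write-up suggests.
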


\begin{remark}\label{r:multiplicity=p}
 As already observed in the Introduction, combined with
\cite[Corollary 1.10]{DLHMS}, Proposition \ref{lem:structure_cones} allows to conclude that when $p$ is odd the multiplicity of $\partial T$ and $\sing (T)$
is precisely $p$, up to a suitable choice of the orientation of the rectifiable set $\sing (T)$.
\end{remark}

\begin{remark}
We notice explicitly that the compactness claimed at the end of the proposition fails if $p$ is an even integer, as in that case there area area minimizing cones $\modp$ with spine of dimension $m-1$ that are arbitrarily close, with respect to the flat distance in $\bB_1$, to $m$-planes with multiplicity $\sfrac{p}{2}$. 
\end{remark}

\begin{proof}

First, observe that as a direct consequence of \cite[Lemma 8.5]{DLHMS} we can conclude that
\begin{equation} \label{e:splitting_modp}
\bC = \bC' \times \a{V} \, \modp\,,
\end{equation}
where $\bC'$ is a one-dimensional area minimizing cone $\modp$ in $V^\perp \simeq \R^{n+1}$ with trivial spine. In particular, the associated varifold $\|\bC'\|$ is stationary in $\R^{n+1}$ and singular. Hence, $\|\bC'\|$ consists of the union of $N \geq 3$ distinct half-lines $\ell(i)$ with multiplicities $\kappa(i)$ such that, if $\ell(i) = \left\lbrace t\,v(i) \, \colon \, t \geq 0 \right\rbrace$ for some $v(i) \in \mathbb{S}^n \subset \R^{n+1}$ then $\sum_i\kappa(i)\,v(i) = 0$. Furthermore, $1 \leq \kappa(i) \leq \sfrac{p}{2}$ for all $i$ due to $\bC'$ being area minimizing $\modp$. Observe that we cannot exclude the case $\kappa(i) = \sfrac{p}{2}$ yet. Next, let $\a{\ell(i)}$ be the multiplicity one integral current supported on $\ell(i)$ and oriented so that $\pa \a{\ell(i)} = - \a{0}$. Since $\pa^p[\bC']=0$, we may then conclude that $\bC' = \sum_{i} \tilde\kappa(i) \, \a{\ell(i)}\,\modp$ with $|\tilde\kappa(i)| = \kappa(i)$ for every $i$, with the equality holding in the sense of classical currents if $\kappa(i) \ne \sfrac{p}{2}$ for every $i$. In order to complete the proof of (ii), we will show that it is
\begin{eqnarray*}
&\mbox{either} \qquad \tilde\kappa(i) = \kappa(i) \qquad & \mbox{for every $i \in \{1,\ldots,N\}$}\\
&\mbox{or} \qquad \tilde\kappa(i) = - \kappa(i) \qquad & \mbox{for every $i \in \{1,\ldots,N\}$}\,.
\end{eqnarray*}

Indeed, suppose towards a contradiction and w.l.o.g. that $\tilde \kappa(1) < 0 < \tilde \kappa(2)$. Since $N \ge 3$, we can also assume w.l.o.g. that the vectors $v(1)$ and $v(2)$ do not lie on the same line, and thus $v(1) \neq - v(2)$. We consider then the current 
\[
S := \a{\left[0, v(1)\right]} - \a{\left[ 0, v(2) \right]} + \a{\left[v(1),v(2)\right]}\,,
\] 
where $[x,y]$ denotes the segment $\left[x,y \right] := \{ x + t(y-x) \, \colon \, t \in \left[0,1\right]  \}$ in $\R^{n+1}$ and $\a{\left[x,y \right]}$ is the associated current with multiplicity one endowed with the natural orientation. Since $\partial S = 0$, the current 
\[
W := \bC' + S
\]
has the same boundary of $\bC'$, but
\[
\|W\|(B_1^{n+1}) - \|\bC'\|(B_1^{n+1}) = |v(1) - v(2)| - 2 < 0\,,
\]
thus contradicting the minimality of $\bC'$, cf. Figure \ref{figura-0}.
\begin{figure}[H]
\begin{tikzpicture}[
    scale=1.6,
    point/.style={circle, fill=black, inner sep=0.8pt},
    ray/.style={solid, thick},
    seg/.style={solid, thick, -{Stealth[scale=1.0]}},
    label/.style={font=\footnotesize},
    mult/.style={font=\scriptsize, midway}
]

\begin{scope}[local bounding box=figA]
    \coordinate (O) at (0,0);
    \coordinate (v1) at (40:1);   
    \coordinate (v2) at (100:1);  
    \coordinate (v3) at (180:1);  
    \coordinate (v4) at (240:1);  
    \coordinate (v5) at (330:1);  
    
    \draw[thin] (O) circle (1);
    
    \draw[seg] (O) -- (v1) node[mult, below right=-3pt] {$\kappa(1)=-1$};
    \draw[seg] (O) -- (v2) node[mult, below left=-5pt ] {$\kappa(2)=1$};
    \draw[seg] (O) -- (v3);
    \draw[seg] (O) -- (v4);
    \draw[seg] (O) -- (v5);
    
    \node[point, label={[label]below left:$0$}] at (O) {};
    \node[point] at (v1) {} node[above right=42pt] {$v(1)$};
    \node[point] at (v2) {} node[above=46pt] {$v(2)$};
    \node[point, label={[label]left:$v(3)$}] at (v3) {};
    \node[point, label={[label]below:$v(4)$}] at (v4) {};
    \node[point, label={[label]right:$v(5)$}] at (v5) {};
    
    \node[below=0.1 of figA.south] {\small $\mathbf{C}'$};
\end{scope}

\begin{scope}[local bounding box=figB, shift={(3.5,0)}]
    \coordinate (O) at (0,0);
    \coordinate (v1) at (40:1);
    \coordinate (v2) at (100:1);
    \coordinate (v3) at (180:1);
    \coordinate (v4) at (240:1);
    \coordinate (v5) at (330:1);
    
    \draw[thin] (O) circle (1);
    
    \draw[dashed, thick] (O) -- (v3);
    \draw[dashed, thick] (O) -- (v4);
    \draw[dashed, thick] (O) -- (v5);
    
    \draw[ray] (O) -- (v1);
    \draw[ray] (O) -- (v2);
    
    \draw[seg] (O) -- (v1) node[mult, right] {$+1$};
    \draw[seg] (v2) -- (O) node[mult, left] {$+1$};
    \draw[seg] (v1) -- (v2) node[mult, below] {$+1$};
    
    \node[point, label={[label]below left:$0$}] at (O) {};
    \node[point, label={[label]above right:$v(1)$}] at (v1) {};
    \node[point, label={[label]above left:$v(2)$}] at (v2) {};
    \node[point, label={[label]left:$v(3)$}] at (v3) {};
    \node[point, label={[label]below:$v(4)$}] at (v4) {};
    \node[point, label={[label]right:$v(5)$}] at (v5) {};
    
    \node[below=0.1 of figB.south] {\small $S = [\![0,v(1)]\!] - [\![0,v(2)]\!] + [\![v(1),v(2)]\!]$};
\end{scope}

\begin{scope}[local bounding box=figC, shift={(7,0)}]
    \coordinate (O) at (0,0);
    \coordinate (v1) at (40:1);
    \coordinate (v2) at (100:1);
    \coordinate (v3) at (180:1);
    \coordinate (v4) at (240:1);
    \coordinate (v5) at (330:1);
    
    \draw[thin] (O) circle (1);
    
    \draw[seg] (O) -- (v3);
    \draw[seg] (O) -- (v4);
    \draw[seg] (O) -- (v5);
    \draw[dashed, thick] (O) -- (v1); 
    \draw[dashed, thick] (O) -- (v2); 
    \draw[seg] (v1) -- (v2) node[mult, below] {$+1$};
    
    \node[point, label={[label]below left:$0$}] at (O) {};
    \node[point, label={[label]above right:$v(1)$}] at (v1) {};
    \node[point, label={[label]above left:$v(2)$}] at (v2) {};
    \node[point, label={[label]left:$v(3)$}] at (v3) {};
    \node[point, label={[label]below:$v(4)$}] at (v4) {};
    \node[point, label={[label]right:$v(5)$}] at (v5) {};
    
    \node[below=0.1 of figC.south] {\small $W: = \mathbf{C}' + S$};
\end{scope}
\end{tikzpicture}
\caption{All the multiplicities $\kappa(i)$ have the same sign}\label{figura-0}
\end{figure}
\medskip

		
	 Next, we prove (iii). Of course, $\Theta_{\bC}(0) = \Theta_{\bC'}(0)$, so it suffices to work on $\bC'$. Assume w.l.o.g. that $\tilde\kappa(i) > 0$ for every $i$, so that $\tilde\kappa(i) = \kappa(i)$. Also observe that since $\pa^p[\bC'] = 0$ it must be
	 \[
	2\, \Theta_{\bC'}(0) = \sum_{i} \kappa(i) = \nu\,p
	 \]
	 for some positive integer $\nu$. The proof will be complete once we show that $\sum_{i} \kappa(i) \leq p$, so that it must necessarily be $\nu = 1$. By contradiction, assume that $\sum_{i} \kappa(i) \geq 2\,p$. Let $\pi_0$ be a hyperplane in $V^\perp$ having the property that $\pi_0 \cap \ell(i) = \{0\}$ for every $i$. The hyperplane $\pi_0$ divides $\mathbb{S}^n$ into two relatively open hemispheres $S^\pm$ such that $v(i) \in S^+ \cup S^-$ for every $i$. Let
	 \[
	 \{ v^+(1), \ldots, v^+(J) \} = \{v(i)\} \cap S^+\,, \qquad \{ v^-(1), \ldots, v^-(L) \} = \{v(i)  \} \cap S^-\,,
	 \]
	and define accordingly the positive integers $\kappa^+(j)$ for $1 \leq j \leq J$ and $\kappa^-(l)$ for $1 \leq l \leq L$ and the halflines $\ell^+ (j)$ and $\ell^- (l)$, cf. Figure \ref{figura-1}. 
	
	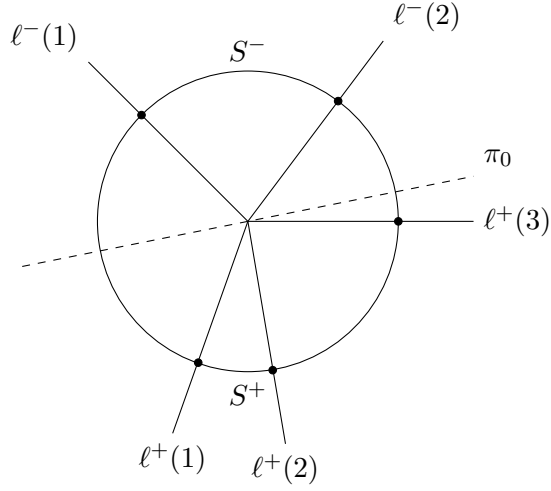
\begin{figure}
\begin{tikzpicture}
\draw (0,0) circle [radius = 2];
\draw[dashed] (-3, -0.6) -- (3, 0.6);
\node[above right] at (3,0.6){$\pi_0$};
\node[above] at (0,2){$S^-$};
\node[below] at (0,-2){$S^+$};
\draw (0,0) -- (-2.12,2.12);
\node[above left] at (-2.12,2.12){$\ell^- (1)$};
\draw (0,0) -- (3,0);
\node[right] at (3,0){$\ell^+ (3)$};
\draw (0,0) -- (1.8,2.4);
\node[above right] at (1.8,2.4){$\ell^- (2)$};
\draw (0,0) -- (0.5, -2.96);
\node[below] at (0.5,-2.96){$\ell^+ (2)$};
\draw (0,0) -- (-1,-2.82);
\node[below] at (-1, -2.82){$\ell^+ (1)$};
\draw[fill] (-1.414,1.414) circle [radius=0.05];
\draw[fill] (2,0) circle [radius=0.05];
\draw[fill] (1.2, 1.6) circle [radius=0.05];
\draw[fill] (0.33,-1.98) circle [radius=0.05];
\draw[fill] (-0.66,-1.88) circle [radius=0.05];
\end{tikzpicture}
\caption{The two hemispheres $S^+$ and $S^-$ and the rays $\ell^+ (j)$ and $\ell^- (l)$. The points $v^+ (j)$ and $v^- (l)$ are the intersections of the rays with the appropriate hemisphere.}\label{figura-1}
\end{figure}

	Since $\sum_i \kappa(i) \geq 2\, p$, the hyperplane $\pi_0$ can be chosen so that
	\[
	\sum_{j=1}^J \kappa^+(j) \geq p\,,
	\]
	 and in fact we can select integers $1 \leq m^+(j) \leq \kappa^+(j)$ such that
	 \begin{equation} \label{boundary splitting}
	  \sum_{j=1}^J m^+(j) = p\,.
	 \end{equation}
	 If we define $W= \sum_{j=1}^J m^+ (j) \a{\ell^+ (j)}$ and $S= \bC'-W$, we can observe that $\|\bC'\|= \|W\|+\|S\|$, $W+S = \bC'\; \modp$ and $\partial^p [W]=0$ and thus conclude that both $W$ and $S$ are area-minimizing currents $\modp$. 
	 
Now, let $z \in \R^{n+1}$ be such that
\[
\sum_{j=1}^J m^+(j) \, \abs{v^+(j) - z} = \min_{y} \sum_{j=1}^J m^+(j) \, \abs{v^+(j) - y}\,.
\]	 
The point $z$ lies in the convex hull of $\{v^+(1)\,, \ldots\,, v^+(J)\}$; therefore, since all vectors $v^+(j)$ belong to $S^+$, it is necessarily $z \neq 0$. In particular, 
\begin{equation} \label{mass drop}
\sum_{j=1}^J m^+(j) \abs{v^+(j) - z} < \sum_{j=1}^J m^+(j)\,.
\end{equation}
We define the integral current
\[
Z := \sum_{j=1}^J m^+(j) \a{[z,v^+(j)]}\, ,
\]
cf. Figure \ref{figura-2}.
Because of \eqref{boundary splitting}, $\pa Z \mres B_1 = \pa W \mres B_1 \, \modp$, but $\mass(Z \mres B_1) < \mass(W \mres B_1)$ due to \eqref{mass drop}. This contradicts the minimality of $W$, thus completing the proof of (iii).

\begin{figure}
\begin{tikzpicture}
\draw (0,0) circle [radius = 2];
\draw[dashed] (-3, -0.6) -- (3, 0.6);
\node[above right] at (3,0.6){$\pi_0$};
\node[below] at (-1.7,-1.42){$S^+$};
\draw[thick] (0,0) -- (2,0);
\draw[thick] (0,0) -- (0.33,-1.98);
\draw[thick] (0,0) -- (-0.66,-1.88);
\draw[fill] (2,0) circle [radius=0.05];
\node[right] at (2,0){$v^+ (3)$};
\draw[fill] (0.33,-1.98) circle [radius=0.05];
\node[below right] at (0.33,-1.98){$v^+ (2)$};
\draw[fill] (-0.66,-1.88) circle [radius=0.05];
\node[below] at (-0.66,-1.88){$v^+(1)$};
\draw (2,0) -- (0.553,-1.286) -- (-0.66,-1.88);
\draw (0.33,-1.98) -- (0.553,-1.286);
\node[below right] at (0.553,-1.286){$z$};
\draw[fill] (0.553,-1.286) circle [radius=0.05];
\end{tikzpicture}
\caption{The restriction of currents $W$ and $Z$ in $B_1^{n+1}$: in this example, the modulus is $p=3$ and the multiplicities $m^+ (j)$ are all equal to $1$. The current $W$ is represented by the thicker lines connecting the points $v^+ (j)$ to the center of the circle, while the current $Z$ is represented by the lighter lines connecting the points $v^+ (j)$ to the point $z$.}\label{figura-2}
\end{figure}
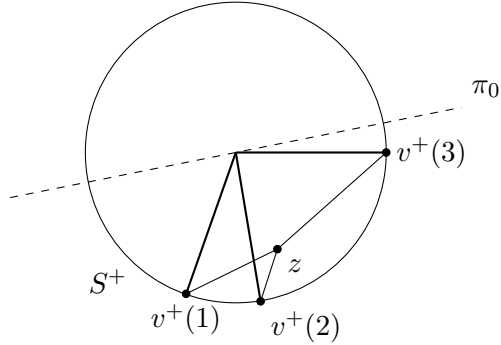

Next, we can use (iii) to prove that necessarily $\kappa(i) < \sfrac{p}{2}$ for every $i$. Indeed, since $\sum_{i} \kappa(i) = p$, the existence of $i$ such that $\kappa(i) = \sfrac{p}{2}$ is incompatible with the stationarity condition $\sum_{i} \kappa(i) v(i) = 0$, unless $\spt(\bC')$ is contained in a line. Since $\bC'$ is singular, the latter condition cannot hold, and the proof of (ii) is complete.

Now we can use (ii) to upgrade \eqref{e:splitting_modp} to the equality in the sense of rectifiable currents
\begin{equation} \label{e:splitting}
\bC = \bC' \times \a{V}
\end{equation}
claimed in (i). Indeed, the proof of \cite[Lemma 8.5]{DLHMS} shows that once a constant orientation is chosen on $V$ so that
\[
\bC' \times \a{V} = \sum_{i=1}^N \tilde\kappa(i) \, \a{{\bf H}(i)}
\] 
for $m$-dimensional half-planes $\bH(i)$ with boundary $V$ and $\tilde\kappa(i) < 0$ for all $i$ or $\tilde\kappa(i) > 0$ for all $i$, then we can represent $\bC$ as
\begin{equation} \label{e:final representation cones}
\bC =  \sum_{i=1}^N \theta(i) \, \a{{\bf H}(i)}
\end{equation}
with $\abs{\theta(i)} = \abs{\tilde\kappa(i)}$ for every $i$. On the other hand, since $\abs{\tilde\kappa(i)} < \sfrac{p}{2}$ for every $i$ and $\bC = \bC' \times \a{V} \, \modp$, it has to be $\theta(i) = \tilde\kappa(i)$ for every $i$, which completes the proof of (i).

Finally, observe that the class of area minimizing cones $\modp$ is compact by \cite[Proposition 5.2]{DLHMS}. Hence, the compactness of the subset of area minimizing cones $\modp$ with spine of dimension $m-1$ when $p$ is an odd integer is a consequence of the following elementary observation: there exists a constant $c(p) > 0$ such that
\begin{equation} \label{far from planes}
\inf\left\lbrace\flat^p_{\overline{\bB}_1} (\bC - \mathbf{Q}) \, \colon \, \mbox{$\mathbf{Q}$ is a multiple $\modp$ of an $m$-plane} \right\rbrace \geq c(p)
\end{equation}
for every such cone $\bC$. Indeed, should \eqref{far from planes} fail there would be an oriented $m$-dimensional plane $\varpi \subset \R^{m+n}$, an integer $q \in \{1,\ldots,\frac{p-1}{2}\}$, and a sequence $\{\bC_l\}_{l=1}^\infty$ of  area minimizing $m$-cones $\modp$ with $(m-1)$-dimensional spines such that $\flat^p_{\overline{\bB}_1}(\bC_l,q\,\a{\varpi}) \to 0$ as $l \to \infty$. By \cite[Proposition 5.2]{DLHMS}, the varifolds induced by $\bC_l$ converge to the varifold induced by $q\a{\varpi}$ and in particular $\omega_m q=\lim_l \|\bC_l\| (\bB_1) = \omega_m \frac{p}{2}$, contradicting that $q$ is an integer.
\end{proof}

\section{Excess-decay}  \label{sec:excess}

The main analytic estimate towards the proof of Theorem \ref{t:main} is an excess decay in the spirit of the work of Simon \cite{Simon}, see Theorem \ref{t:decay} below. Before coming to its statement we introduce some terminology.

\begin{definition}[Open books]\label{d:books}
We call {\em open book} a closed set of $\mathbb R^{m+n}$ of the form
$\bS = \bS' \times V$, where $V$ is an $(m-1)$-dimensional linear subspace and $\bS'$ consists of a finite number of halflines $\ell_1, \ldots , \ell_N$ originating at $0$, all contained in a single $2$-dimensional subspace, and all orthogonal to $V$.
The half spaces $\bH_i = \ell_i \times V$ will be called the {\em pages} of the book $\bS$, and $\sphericalangle(\bS)$ will denote the \emph{minimal} opening angle between two pages of $\bS$. The symbol $\mathscr{B}^p$ will denote the set of open books with at most $p$ pages.

We say that the book is nonflat if it is not contained in a single $m$-dimensional plane. 
\end{definition}

\begin{remark}\label{r:books_and_cones}
Note that the support of a cone $\bC_0$ as in Assumption \ref{ass:cone} is necessarily a nonflat open book in $\mathscr{B}^p$. Moreover, except for the trivial case in which the book consists of a single page, we can assign orientations and multiplicities to $\bS$ so that it becomes the support $\modp$ of a cone $\bC$ with $\partial \bC =0\, \modp$. However, such assignment of multiplicities is clearly not unique and in general there is no choice which would make $\bC$ area minimizing.
\end{remark}

\begin{definition}\label{d:excess}
The excess $\bE$ of a current $T$ with respect to an open book $\bS$ in a ball $\bB_R(q)$ is 
	\begin{equation} \label{e:excess}
	\bE (T, \bS, q, R) := R^{-(m+2)}\int_{\bB_R(q)} {\rm dist}^2 (\bar q,\bS)\, d\|T\| (\bar q)\, \, . 
	\end{equation}
\end{definition}

As anticipated, the main estimate of our paper is contained in the decay Theorem \ref{t:decay} below. 
Before stating it we need to introduce the following quantity. 

\begin{definition}\label{d:opening}
Consider a cone $\bC_0$ which is nonflat, area-minimizing representative $\modp$, has $(m-1)$-dimensional spine $V$ and is contained in $\pi_0:= T_q \Sigma$. Let $\bS_0=\spt (\bC_0)$ be the corresponding open book, $\bH_{0,i}$ its pages, and $\kappa_{0,i}$ positive coefficients so that 
\[
\bC_0 = \sum_{i=1}^{N_0} \kappa_{0,i} \a{\bH_{0,i}}\, .
\]
We say that a representative $\modp$ cone $\bC$ is {\em coherent} with $\bC_0$ if $\spt (\bC)\subset \pi_0$ and there is a rotation $O$ of $\pi_0$ with the following properties:
\begin{itemize}
    \item[(i)] $O_\sharp \bC$ is a nonflat cone with spine $V$;
    \item[(ii)] The pages of the open book $\bS := \spt (\bC)$ can be ordered as $\bH_{i,j}$ with $1\leq i \leq N_0$, $1\leq j \leq J (i)$ so that
    \[
    \bC = \sum_i \sum_j \kappa_{i,j} \a{\bH_{i,j}}
    \]
    for positive coefficients $\kappa_{i,j}$ such that $\sum_j \kappa_{i,j} = \kappa_{0,i}$;
    \item[(iii)] The angles $\theta (O, i,j)$ between the pages $O(\bH_{i,j})$ and $\bH_{0,i}$ are all smaller than $\frac{1}{4}\sphericalangle (\bS_0)$.
\end{itemize}
If $\bC$ is coherent with $\bC_0$ and $\mathcal{O}$ is the collection of rotations of $\pi_0$ which satisfy the conditions above, we denote by $\vartheta (\bC, \bC_0)$ the quantity
\[
\min_{O\in \mathcal{O}} \left(|O-{\rm Id}| + \max_{i,j} \{\theta (O,i,j)\}\right)\, .
\]
\end{definition}

While $\vartheta (\bC_0, \bC)$ is equivalent to the flat distance $\hat\flat_{\bB_1} (\bC -\bC_0)$, it presents some technical advantages as it makes it easier to iterate Theorem \ref{t:decay}.


\begin{theorem}[Decay estimate]\label{t:decay} 
	Let $p \geq 2$ be an integer and let $\bC_0$ be as in Assumption \ref{ass:cone} with $q=0$. There are constants $C$, $\eta>0$, and $\rho > 0$, depending only on $m,n,p$, and $\bC_0$ with the following property. Assume that $\Sigma$ and $T$ are as in the Assumption \ref{assumption:manifolds and currents} with $R_0=1$, that $\hat\flat^p_{\bB_1}(T-\bC_0) \leq \eta$ and that $\bC$ is a representative $\modp$ cycle (not necessarily area-minimizing), with the following properties:
	\begin{itemize}
	\item[(i)] $\bC$ is a cone coherent with $\bC_0$ and $\vartheta (\bC, \bC_0) \leq \eta$;
	\item[(ii)] $\max\{ \bE (T,\spt (\bC), 0, 1), \bA^{\sfrac 12}\}\leq \eta$.
	\end{itemize}
	Then there is a representative $\modp$ cycle $\bC'$ which is a cone coherent with $\bC_0$ and satisfies the following estimates:
	\begin{align}
	&\max \{\bE (T, \spt (\bC'), 0, \rho), (\rho \bA)^{\sfrac 12}\} \leq \rho^{1/2} \max\{\bE (T,\spt (\bC), 0, 1),\bA^{\sfrac 12}\}\, , \label{e:decay}\\
	&\hat\flat^p_{\bB_1} ((\eta_{0, \rho})_\sharp T-\bC') \leq C (\bE (T,\spt (\bC), 0, 1) + \bA)^{\sfrac 12}\, ,\label{e:flat-C'}\\
	&\vartheta (\bC', \bC_0) \leq \vartheta (\bC, \bC_0) + C (\bE (T,\spt (\bC), 0, 1) + \bA)^{\sfrac 12}\, .\label{e:incremento-angolo}
	\end{align}
\end{theorem}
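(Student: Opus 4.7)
The plan is to prove Theorem~\ref{t:decay} by a contradiction/blow-up argument in the spirit of De Giorgi--Simon, using the machinery developed in Sections \ref{sec:graph}--\ref{sec:binding} to handle the multiplicities. Assume, for contradiction, that there exist sequences $\{T_k\}$, $\{\Sigma_k\}$, $\{\bC_k\}$ satisfying the hypotheses with $\eta=\eta_k\to 0$, but for which no cone $\bC'$ coherent with $\bC_0$ satisfies the conclusion with given constants $C,\rho$. Set $\bE_k := \bE(T_k,\spt(\bC_k),0,1)$ and $\bA_k := \|A_{\Sigma_k}\|_\infty$, and let $\bm{\mu}_k := \max\{\bE_k^{1/2},\bA_k^{1/2}\}\to 0$. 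By the graphical approximation results (Theorem~\ref{thm:graph v2} and Corollary~\ref{cor:reparametrization}), on the good region where $T_k$ and $\bC_k$ are close to $\bC_0$, one can represent both $T_k$ and $\bC_k$ as Lipschitz multi-graphs over the pages $\bH_{0,i}$ of $\bC_0$: for each $i$ the current $T_k$ is given by $\kappa_{0,i}$ sheets (grouped according to how they match pages $\bH_{i,j}^{(k)}$ of $\bC_k$ with multiplicities $\kappa_{i,j}^{(k)}$). Let $u_k^{i,j,l}$ (with $l=1,\dots,\kappa_{i,j}^{(k)}$) denote the differences between the sheets of $T_k$ and the corresponding page of $\bC_k$, renormalized by $\bm{\mu}_k^{-1}$.

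The next step is to produce a limit. Using Theorem~\ref{thm:est spine} (estimates near the spine with the binding correction) together with the interior estimates from the graphical approximation and the bound on $\bA_k$, the family $\{u_k^{i,j,l}\}$ is uniformly bounded in a suitable $L^2$ sense and equicontinuous on compact subsets of the relatively open pages. Up to a subsequence one gets $u_k^{i,j,l}\to u^{i,j,l}$; the key point, as in Section~\ref{sec:bu} (cf.~Proposition~\ref{p:scoppia}), is that the minimality of $T_k$ together with the bound on $\bA_k$ forces each limit $u^{i,j,l}$ to solve the linearization of the minimal surface equation on its page, hence to be harmonic. At the spine $V$, the no-hole condition (Proposition~\ref{prop:no-holes}) together with Theorem~\ref{thm:est spine} transfers to the limit a compatibility condition: modulo binding functions (i.e.\ page-wise constants compatible with the coherence structure) the traces on $V$ of the various $u^{i,j,l}$ are all equal, and the first-variation balance \eqref{e:balance} survives at the linearized level.

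One then applies the linear decay result for harmonic functions satisfying these spine conditions (Proposition~\ref{p:decad-lineare}): there exists $\rho>0$, depending only on $\bC_0,m,n,p$, so that each $u^{i,j,l}$ can be approximated on $B_\rho$ by a linear function $L^{i,j,l}$ whose gradient along $V^\perp$ is tangent to $\pi_0$, with $L^2$-decay of order $\rho^{m+2+1}$, i.e.\ better than the naive $\rho^{m+2}$ by a factor $\rho^2$, which after taking the square root beats the desired factor $\rho^{1/2}$ with plenty of room. These linear functions $L^{i,j,l}$, via exponentiation inside $\pi_0$ and then projection onto $\Sigma$, define rotated pages $\bH_{i,j}^{\prime,(k)}$, and assigning to them the multiplicities $\kappa_{i,j}^{(k)}$ produces a cycle $\bC_k'$ that is coherent with $\bC_0$ by the smallness of the rotations (condition~(iii) of Definition~\ref{d:opening} is preserved for $k$ large because the perturbation of each angle is $O(\bm{\mu}_k)$, which is much smaller than $\sphericalangle(\bS_0)$). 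Unwinding the normalization yields the excess decay~\eqref{e:decay} together with the flat bound~\eqref{e:flat-C'} and the angular increment~\eqref{e:incremento-angolo}, contradicting the assumed failure.

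The main obstacle is the step in which the blow-up limit is shown to be harmonic with the correct compatibility conditions at the spine. Because the multiplicities $\kappa_{0,i}$ may exceed one, a single page supports several sheets of $T_k$ that may collapse onto it at different rates, and there are pages of $\bC_k$ that are not close to any sheet of $T_k$ (captured by the intermediate cone $\tilde{\bC}$ in Section~\ref{sec:global}). Controlling the binding correction—ensuring that its size is bounded by $\bE_k^{1/2}\vartheta(\bC_k,\bC_0)$ (Theorem~\ref{thm:est spine}) rather than just $\bE_k^{1/2}$—is essential, since it is precisely this improvement that allows the rotated cone $\bC_k'$ to still be coherent with $\bC_0$ and to satisfy \eqref{e:incremento-angolo}. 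All other steps (graphical decomposition, harmonic limit, linear decay) are standard once the spine behaviour is tamed by the binding functions.
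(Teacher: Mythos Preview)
Your strategy matches the paper's: blow-up, graphical parametrization with linear selection, spine estimates via binding functions, harmonic limit, and linear decay. A few points deserve correction.

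First, the paper does not normalize by $\bm{\mu}_k=\max\{\bE_k^{1/2},\bA_k^{1/2}\}$ in a single blow-up; it splits into two regimes. When $\bA\le\eta_5\bE$ it runs the blow-up of Definition~\ref{d:scoppia} (where $\bA_k/\bE_k\to 0$ is explicitly assumed) and constructs $\bC'$ via Corollary~\ref{c:decaduto-A<<E}, which packages the compactness, harmonicity, and linear decay in one statement. When $\bA>\eta_5\bE$ it performs no blow-up at all: one sets $\bC'=\tilde\bC$ and uses the hypothesis $\bA^{1/2}\le\eta$ to force $(\rho\bA)^{1/2}$ to be the maximum on both sides of \eqref{e:decay}; see \eqref{e:massimo-giusto}--\eqref{e:massimo-2}. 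Your unified normalization could in principle be made to work, but the strong $L^2$ convergence and the nonconcentration arguments in Proposition~\ref{p:scoppia} are written assuming $\bA_k/\bE_k\to 0$ and would need to be re-verified with $(\bE_k+\bA_k)/\bm{\mu}_k^2$ merely bounded.

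Second, you have misidentified what controls \eqref{e:incremento-angolo}. The bound of order $(\bE+\bA)^{1/2}\hat\flat^p_{\bB_{R_0}}(\bC-\bC_0)$ in Theorem~\ref{thm:est spine} is on the correction $\varpi$, not on the binding function $\xi$ (which only satisfies $\|\xi\|_\infty^2\le C(\bE+\bA)$). Its purpose is to guarantee $\bar\varpi^k\to 0$ in the blow-up (Corollary~\ref{c:scoppia}(iii)), so that the limiting spine estimate \eqref{bu:nonconcentration w12} involves only $\bar\xi$. The angular increment comes instead from the structure of $\bC'$: it is obtained from $\tilde\bC$ (whose support is contained in $\spt(\bC)$, hence $\vartheta(\tilde\bC,\bC_0)\le\vartheta(\bC,\bC_0)$) by a rotation $O$ and a linear perturbation $l^+$ of size $O(\bE^{1/2})$, see Corollary~\ref{c:decaduto-A<<E}(i) and \eqref{e:controllo-angolo}.

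Finally, \eqref{e:flat-C'} is not a byproduct of the excess decay. The paper proves it separately at the end of Section~\ref{sec:iterazione}, by extending the multigraph $u$ Lipschitz across the spine, writing $T-\tilde\bC$ as the sum of a graphical piece (controlled in $L^1$ by the $w$-estimates) and a remainder supported in a tube around $V$, and retracting the latter onto $V$ via a homotopy whose mass is controlled by $\int |x|\,d\|T\|$ near the spine, hence by $(\bE+\bA)^{1/2}$ through Theorem~\ref{thm:graph_v1}(iv).
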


The proof of the above theorem will occupy most of the remaining sections of this paper. Before coming to them, we collect here a series of technical facts about $L^2$, $L^\infty$, and flat distances, and how to compare them in our setting. The proofs of all of them are deferred to the appendix.

\subsection{Flat-excess comparison lemmas}

We start with a qualitative comparison between the excess and the flat distance.

\begin{lemma}
\label{lem:qualitative flat_excess}
Let $\bC$ be a representative of a $\modp$ cycle whose support is a nonflat open book $\bS \in \mathscr{B}^p$ and such that $\Theta_{\bC}(0) = \frac{p}{2}$. Then there is $\eta_1 = \eta_1 (\bS)>0$ with the following property. If $T_j \in \Rc_m(\bB_2)$ are such that
\begin{equation} \label{compactness assumption}
\sup_j \|T_j\|(\bB_{\sfrac{3}{2}}) < \infty\,, \qquad (\pa T_j) \mres \bB_{\sfrac{3}{2}} = 0 \; \modp\,,
\end{equation}
and
\begin{equation} \label{small flat assumption}
\hat\flat^p_{\bB_1} (T_j - \bC) < \eta_1 \qquad \forall j\, ,
\end{equation}
then
\begin{equation} \label{qualitative flat_excess}
\lim_{j\to\infty} \bE (T_j, \bS,0, 1) = 0  \quad \implies \quad \lim_{j\to \infty} \hat\flat^p_{\bB_1}(T_j - \bC) = 0\,.
\end{equation}
\end{lemma}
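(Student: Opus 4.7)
The natural strategy is a compactness-contradiction argument: assume the implication fails, extract a flat-convergent subsequence, identify the limit, and show it must equal $\bC$.

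First I would apply a compactness theorem for flat chains modulo $p$: the uniform mass bound $\sup_j \|T_j\|(\bB_{3/2}) < \infty$ together with $(\partial T_j)\res \bB_{3/2} = 0\,\modp$ (hence a trivial boundary-mass bound) permits extracting a subsequence, not relabeled, converging in $\hat\flat^p_{\bB_r}$ for every $r<\sfrac{3}{2}$ to some $T_\infty \in \Rc_m(\bB_{3/2})$ with $\partial T_\infty = 0\,\modp$ in $\bB_{3/2}$. The hypothesis $\hat\flat^p_{\bB_1}(T_j - \bC) < \eta_1$ then passes to the limit, giving $\hat\flat^p_{\bB_1}(T_\infty - \bC) \leq \eta_1$.

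Next I would show that $\spt^p(T_\infty) \cap \bB_1 \subset \bS$. This is where the excess hypothesis enters: the function $\bar q \mapsto \dist^2(\bar q, \bS)$ is continuous and nonnegative, and the associated varifolds $\|T_j\|$ converge (up to a further subsequence) to a Radon measure $\mu$ with $\|T_\infty\| \le \mu$ on $\bB_1$; lower semicontinuity and $\bE(T_j,\bS,0,1) \to 0$ force
\[
\int_{\bB_1} \dist^2(\bar q, \bS)\, d\|T_\infty\|(\bar q) = 0\, ,
\]
hence $T_\infty \res \bB_1$ is a rectifiable $m$-cycle $\modp$ supported in $\bS$. Since $\bS$ is a finite union of half-hyperplanes $\bH_1,\dots,\bH_N$ meeting along the $(m-1)$-spine $V$, by standard constancy-type arguments for rectifiable cycles in such a stratified set, there exist integers $\theta_i$ with $|\theta_i|\le \sfrac{p}{2}$ and a balancing relation $\sum_i \theta_i\,\vec v_i \equiv 0\,\modp$ at $V$ (where $\vec v_i$ denotes the oriented unit direction of the halfline generating $\bH_i$ from $V$) such that
\[
T_\infty \res \bB_1 \;=\; \sum_{i=1}^N \theta_i \a{\bH_i \cap \bB_1} \quad \modp\, .
\]
The same structure representation holds for $\bC\res\bB_1$ with some coefficients $\kappa_i$.

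Finally I would use a discreteness argument to conclude $T_\infty = \bC$ in $\bB_1$. The difference $T_\infty - \bC$ is a $\modp$ cycle supported in $\bS\cap \bB_1$, represented by integer coefficients $\theta_i - \kappa_i$ on the pages $\bH_i$; its $\hat\flat^p_{\bB_1}$-norm is bounded by $2\eta_1$. However, because $\bS$ is a nonflat open book with $\Theta_{\bC}(0)=\sfrac{p}{2}$, any nonzero such $\modp$ cycle supported in $\bS\cap \bB_1$ carries a definite amount of mass: indeed the difference of coefficients on any single page produces, in $\bB_1$, mass at least $\omega_m$, which (combined with the balancing at $V$) yields a uniform lower bound $c_0(\bS,p)>0$ on its $\hat\flat^p_{\bB_1}$-norm, for example by pushing it through a suitable retraction onto $\bS\cap \bB_1$ and using $\mass^p$. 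Choosing $\eta_1 := c_0(\bS,p)/4$ then forces $T_\infty\res\bB_1 = \bC\res\bB_1$ $\modp$, hence $\hat\flat^p_{\bB_1}(T_j - \bC) \to 0$ along the subsequence; as the limit is unique, the whole sequence converges, contradicting the failure assumption.

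The main obstacle is the lower bound on the $\hat\flat^p_{\bB_1}$-norm of a nontrivial $\modp$ cycle supported in $\bS\cap \bB_1$. One cannot directly invoke a mass-distance inequality since $\hat\flat^p$ allows filling by an $(m+1)$-chain; the crucial point is that such a filling $Z$ would have to leave $\bS$ (because $\bS$ has dimension $m$) yet project back to $\bS$ with controlled mass only if the original cycle is trivial, so the non-flatness of $\bS$ and the balancing at the spine $V$ must be used to rule out cancellations. This is the technical heart of the argument and will rely on the structure theorem (Proposition \ref{lem:structure_cones}) together with a careful accounting of multiplicities on the pages meeting at $V$.
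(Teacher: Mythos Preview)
Your strategy is essentially the paper's own: compactness to extract a limit $T_\infty$, vanishing excess forces $\spt(T_\infty)\cap\bB_1\subset\bS$, then constancy on each page gives $T_\infty\res\bB_1=\sum_i\theta_i\a{\bH_i}$ $\modp$, and a discreteness argument pins down $T_\infty=\bC$. The paper isolates the last two steps as a separate ``zero-excess'' lemma (Lemma~\ref{l:zero_excess}).

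Two remarks. First, the condition you call ``balancing'' is not $\sum_i\theta_i\vec v_i\equiv 0$ $\modp$; the cycle condition $\partial T_\infty=0$ $\modp$ at the spine gives $\sum_i\theta_i\equiv 0\pmod p$ (a scalar condition on multiplicities with appropriate signs), while $\sum_i\kappa_i v_i=0$ is the \emph{stationarity} condition, which plays no role here. Second, you are overworking the final discreteness step. There is no need to bound the $\hat\flat^p$-norm of a nontrivial cycle in $\bS$ via retractions or mass estimates: since each $\theta_i$ ranges over the finite set of residues $\modp$, there are only finitely many $\modp$ classes of currents of the form $\sum_i\theta_i\a{\bH_i\cap\bB_1}$, and the $\hat\flat^p_{\bB_1}$-distance between any two non-congruent ones is strictly positive (by Corollary~\ref{cor:flat hat and congruence}). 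Taking $2\eta_1$ to be the minimum of these finitely many positive numbers over classes not congruent to $\bC$ finishes the argument immediately. Your discussion of fillings leaving $\bS$ is unnecessary.
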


The following lemma is a quantitative estimate of the (modified) $p$-flat distance between a representative $\modp$ $T$ and a cone $\bC$ in terms of the $L^1$ distance of $T$ from the open book $\spt(\bC)$. By the Cauchy-Schwartz inequality, it implies a corresponding estimate with respect to the $L^2$ distance (which involves also the mass of $T$), and thus it can be regarded as a quantitative version of Lemma \ref{lem:qualitative flat_excess}.

\begin{lemma}\label{lem.flat-L^2 estimate}
If $\bC$ and $\bS$ are as in Lemma \ref{lem:qualitative flat_excess}, then there are $\eta_2=\eta_2(\bS)>0$ and $C=C(\bS)>0$ with the following property. If $T \in \Rc_m(\bB_R)$, $R \leq 1$, with 
\begin{align} \label{hp0:flat_L2 estimate}
&(\pa T) \mres \bB_1 = 0 \; \modp\,,\\ \label{hp1:flat_L2 estimate}
&\dist(q,\bS) < \eta_2\,R \quad \mbox{for all $q \in \spt(\pa^p(T \mres \bB_R))$}\,,\\ \label{hp2:flat_L2 estimate}
& \hat\flat^p_{\bB_R}(T-\bC) < \eta_2 R^{m+1}\,
\end{align}
then 
\begin{equation}\label{eq.flat_L2 estimate} 
\hat\flat^p_{\bB_{\sfrac R2}}(T- \bC) \le C \int_{\bB_R} \dist(\cdot,\bS)\, d\norm{T}  \, . 
\end{equation}
\end{lemma}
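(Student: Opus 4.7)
The plan is to build an explicit competitor for $\hat\flat^p_{\bB_{R/2}}(T-\bC)$ via a nearest-point projection onto the open book $\bS$. Since $\bS = \bS'\times V$ is a finite union of $m$-dimensional half-planes $\bH_i$ meeting along the $(m-1)$-dimensional spine $V$, the nearest-point map $\pi\colon\R^{m+n}\to\bS$ is single-valued and locally Lipschitz outside a codimension-one union $\mathcal N$ of bisector half-hyperplanes through $V$; on each Voronoi cell of $\bH_i$, $\pi$ coincides with the orthogonal projection onto the affine hull of $\bH_i$, and is in particular $1$-Lipschitz. Using the coarea formula I would pick a radius $r$ close to $R$ at which the slice $\langle T,|\cdot|,r\rangle$ is integer rectifiable, set $\tilde T:=T\res\bB_r$, and decompose $\partial\tilde T = pS + \sigma$ with $S$ a flat chain and $\sigma$ an integer rectifiable chain supported within $C\eta_2 R$ of $\bS$ (the support control on $\sigma$ transferring from \eqref{hp1:flat_L2 estimate} after absorbing a $\modp$-balancing multiple of $p$ into $S$). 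I would then form the cone
\[
Z := h_\sharp\bigl(\a{[0,1]}\times\tilde T\bigr),\qquad h(t,x):=(1-t)x+t\pi(x).
\]
Since the velocity of $h$ has length $\dist(x,\bS)$ and $\pi$ is tangentially $1$-Lipschitz,
\[
\mass(Z)\leq C\int_{\bB_R}\dist(x,\bS)\,d\|T\|(x),
\]
and for $\eta_2$ small the homotopy of $\sigma$ stays in a thin tube around $\partial\bB_r$ and never enters $\bB_{R/2}$, so $h_\sharp(\a{[0,1]}\times\sigma)$ contributes no mass to $\bB_{R/2}$. The homotopy formula $\partial Z = \pi_\sharp\tilde T-\tilde T-h_\sharp(\a{[0,1]}\times\partial\tilde T)$ then rearranges, modulo $p$ and on $\bB_{R/2}$, to $\tilde T-\bC \equiv (\pi_\sharp\tilde T-\bC) - \partial Z$, so everything reduces to showing $\pi_\sharp\tilde T\equiv\bC\,\modp$ on $\bB_{R/2}$.

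For this identification, outside $V$ the set $\bS$ is a smooth $m$-manifold, so on each page $\bH_i\cap\bB_{R/2}$ the pushforward $\pi_\sharp\tilde T$ is an integer multiple $n_i\,\a{\bH_i\cap\bB_{R/2}}$, the multiplicity being locally constant by top-dimensional closedness of $\pi_\sharp\tilde T$ away from $\pi(\spt\partial\tilde T)$ (whose support is pinned near $\bS$ by \eqref{hp1:flat_L2 estimate}). The residual $\sum_i(n_i-\kappa_i)\,\a{\bH_i\cap\bB_{R/2}}$ coincides, modulo $\partial Z$ and multiples of $p$, with $\tilde T-\bC$; invoking a rigidity lemma — namely that nonzero $\modp$ integer combinations of the fixed planar chains $\a{\bH_i\cap\bB_{R/2}}$ are $\hat\flat^p_{\bB_{R/2}}$-separated from zero by a positive constant $c(\bS)$ — together with \eqref{hp2:flat_L2 estimate} and the mass bound on $Z$, one forces $n_i\equiv\kappa_i\,\modp$ for every $i$ as soon as $\eta_2$ is chosen smaller than $c(\bS)/2$. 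Hence $\pi_\sharp\tilde T-\bC = p\tilde P$ on $\bB_{R/2}$ for some flat chain $\tilde P$, and the rearrangement above produces the competitor $(0,-Z,\tilde P - h_\sharp(\a{[0,1]}\times S))$ in the definition of $\hat\flat^p$, realizing \eqref{eq.flat_L2 estimate}.

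The main obstacle is the integer rigidity step: converting the quantitative flat smallness into an exact equality of multiplicities modulo $p$ rests on the separation lemma described above, which is the only place where the integrality of coefficients (as opposed to real ones) is essential; the smallness threshold $\eta_2$ is then tuned against the constant $c(\bS)$ coming from this lemma. A secondary technicality is the failure of $\pi$ to be Lipschitz along the spine $V$; this is harmless since $V$ has dimension $m-1$ and the bisector set has $\Ha^m$-measure zero, so these defect sets do not contribute to the mass computations for $\pi_\sharp\tilde T$ and $Z$.
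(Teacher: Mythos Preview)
Your overall architecture---retract $T$ onto $\bS$ via a straight-line homotopy, bound the mass of the filling $Z$ by $\int\dist(\cdot,\bS)\,d\|T\|$, and then invoke a rigidity lemma to identify the retracted current with $\bC$---is exactly the strategy the paper follows, and your ``separation lemma'' is precisely Lemma~\ref{l:zero_excess}. The difference, and the gap, lies in the retraction you use.

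The nearest-point projection $\pi$ onto an open book is not merely non-Lipschitz at the spine: it is \emph{discontinuous} across every bisector half-hyperplane between adjacent pages, since a point on a bisector has two distinct nearest points, one on each page. Push-forward of currents and the homotopy formula both require a Lipschitz map; your dismissal of the bisectors as an $\Ha^m$-null set addresses only the mass of $Z$, not the validity of $\partial Z=\pi_\sharp\tilde T-\tilde T-h_\sharp(\a{[0,1]}\times\partial\tilde T)$. If one splits $\tilde T$ over the Voronoi cells and applies the (cell-wise Lipschitz) homotopy separately, the boundary of $Z$ picks up extra contributions from the slices of $\tilde T$ along the bisectors, and these do not cancel in pairs because the two one-sided homotopies carry the same slice to \emph{different} pages. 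These error terms involve slice masses, which are not controlled by $\int\dist(\cdot,\bS)\,d\|T\|$. The paper's proof resolves this by constructing a \emph{globally} $L$-Lipschitz retraction $F\colon\R^{m+n}\to\bS$ that still satisfies $|F(q)-q|\le C(\bS)\dist(q,\bS)$: one first retracts the unit sphere of $V^\perp$ onto the finite cross-section by smoothly cutting off the nearest-point map, and then extends $1$-homogeneously. With such an $F$ in place of $\pi$, your argument goes through essentially as written; the paper happens to use polyhedral approximation rather than slicing to control the boundary piece, but that is a secondary technical choice.
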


The next two lemmas give $L^\infty$-type estimates in the case of area-minimizing currents.

\begin{lemma}\label{lem.L^infty-flat estimate}
Let $\Sigma$ be as in Assumption \ref{assumption:manifolds and currents}. There is a constant $C_0 = C_0(m)$ with the following property. Let $T$ be area minimizing $\modp$ in $\Sigma \cap \bB_{3R}$ with $(\partial T) \mres \bB_{3R} = 0\;\modp$.  If $S\in \Rc_m(\bB_{3R})$ and $\spt(T), \spt(S)$ both intersect $\bB_R$ then, setting $d(\cdot) := \dist(\cdot, \spt(S))$, one has
	\[ \min\{1,d(q)\} \, d^m(q) \le C_0 \,\hat\flat^p_{\bB_{2R}}(T-S) \qquad \mbox{for every $q\in \spt(T) \cap \overline{\bB}_R$}\,.
	\]
\end{lemma}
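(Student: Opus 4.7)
My plan is to carry out a standard monotonicity-plus-competitor comparison: a lower mass bound coming from the monotonicity formula at $q$, matched against an upper bound obtained by comparing $T$ with a competitor built from the flat decomposition realizing $\hat\flat^p$. Fix $q\in\spt(T)\cap\overline{\bB}_R$ and set $d:=d(q)$, assuming $d>0$ (else there is nothing to prove). Since $\spt(S)\cap\bB_\rho(q)=\emptyset$ for every $\rho<d$, one has $S\mres\bB_\rho(q)=0$. For each $\epsilon>0$ pick a flat decomposition $T-S = R' + \pa Z + pP$ with $R'\in\Rc_m(\bB_{3R})$, $Z\in\Rc_{m+1}(\bB_{3R})$ and $\|R'\|(\bB_{2R}) + \|Z\|(\bB_{2R}) \le \hat\flat^p_{\bB_{2R}}(T-S)+\epsilon$. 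Slicing yields, for a.e.\ $\rho\in(0,d)$,
\[
W_\rho := T\mres\bB_\rho(q) - R'\mres\bB_\rho(q) + \langle Z, |\cdot-q|,\rho\rangle \;\equiv\; \pa(Z\mres\bB_\rho(q)) \modp,
\]
so $W_\rho$ is a $\modp$-boundary supported in $\overline{\bB_\rho(q)}$.

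To use area-minimality, the competitor must live in $\Sigma$. By Assumption \ref{assumption:manifolds and currents}, $\Sigma$ is an entire $C^2$-graph with $\bA\le c_0$, hence the nearest-point projection $\pi_\Sigma\colon\R^{m+n}\to\Sigma$ is a well-defined Lipschitz retract with $\Lip(\pi_\Sigma)\le 1+C\bA$ in a neighborhood of $\Sigma$ containing $\bB_{3R}$. Set $\widetilde{W}_\rho := (\pi_\Sigma)_\sharp W_\rho$, which is a $\modp$-boundary in $\Sigma\cap\bB_{2R}$ (up to enlarging slightly). Since $\spt(T)\subset\Sigma$ means $\pi_\Sigma$ fixes $T\mres\bB_\rho(q)$, the area-minimality of $T$ applied to the admissible competitor $T-\widetilde{W}_\rho$ yields, for a.e.\ $\rho\in(0,d)$,
\[
\|T\|(\bB_\rho(q)) \le (1+C\bA)\bigl(\|R'\|(\bB_\rho(q)) + \|\langle Z,|\cdot-q|,\rho\rangle\|\bigr).
\]
On the other hand, the monotonicity formula for varifolds with generalized mean curvature bounded by $\bA$, applied to $\|T\|$ (which has $\pa T = 0\modp$ in $\bB_{3R}$ and $q\in\spt(T)$ satisfies $\Theta_T(q)\ge\sfrac{1}{2}$), gives the classical lower bound $\|T\|(\bB_\rho(q))\ge c_m\rho^m$ for every $\rho\le\rho^\ast$, where $\rho^\ast>0$ depends only on $m$ and $c_0$.

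Integrating the competitor inequality on $(0, d^\ast)$ with $d^\ast:=\min\{d,\rho^\ast\}$, and using the coarea bound $\int_0^{d^\ast} \|\langle Z,|\cdot-q|,\rho\rangle\|\,d\rho\le\|Z\|(\bB_{d^\ast}(q))$, yields
\[
\tfrac{c_m}{m+1}(d^\ast)^{m+1} \le (1+C\bA)\bigl(d^\ast\|R'\|(\bB_{2R}) + \|Z\|(\bB_{2R})\bigr) \le C\bigl(\hat\flat^p_{\bB_{2R}}(T-S)+\epsilon\bigr).
\]
Sending $\epsilon\to 0$ and distinguishing cases closes the proof: if $d\le\rho^\ast$ then $d^\ast=d$, so $d^{m+1}\le C\hat\flat^p$, which implies $\min(1,d)d^m\le d^{m+1}\le C\hat\flat^p$; if $d>\rho^\ast$ then $(\rho^\ast)^{m+1}\le C\hat\flat^p$ gives a uniform lower bound on $\hat\flat^p$, and combining with the fact that $d\le 2R$ is bounded (using the implicit scale from Assumption \ref{assumption:manifolds and currents}, where $R\le R_0$ is a dimensional constant) leads to $\min(1,d)d^m\le C\hat\flat^p$ with $C=C(m)$.

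The main technical obstacle is the projection step. One must verify that the push-forward $\widetilde W_\rho$ is indeed an admissible $\modp$-competitor in $\Sigma$, and that the mass distortion introduced by $\pi_\Sigma$ amounts only to a factor $1+O(\bA)$. Both facts are routine for entire $C^2$-graphs with small second fundamental form, but must be executed carefully to guarantee that the final constant depends only on $m$, as asserted.
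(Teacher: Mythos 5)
Your argument is essentially the paper's: both proofs decompose $T-S$ via the flat-norm realization, slice $Z$ by distance from $q$, retract the resulting competitor onto $\Sigma$ to invoke area-minimality $\modp$, and match this against the monotonicity lower bound $\|T\|(\bB_\rho(q))\gtrsim\rho^m$. The only cosmetic difference is that the paper selects a single good radius $\sigma\in(d/4,d/2)$ by averaging the slice mass, while you integrate the mass inequality over $\rho\in(0,d^\ast)$ and use $\int_0^{d^\ast}\mass(\langle Z,|\cdot-q|,\rho\rangle)\,d\rho\le\|Z\|(\bB_{d^\ast}(q))$; these are trivially interchangeable (do cap at $d/2$ rather than $d$ to ensure $\bB_\rho(q)\subset\bB_{2R}$, exactly as the paper does with $\sigma<d/2$).
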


\begin{lemma}\label{lem.L^2 controls L^infty}
Let $\Sigma$ be as in Assumption \ref{assumption:manifolds and currents}, and let $K \subset \R^{m+n}$ be a compact set with $0 \in K$. If $T$ is area minimizing $\modp$ in $\Sigma \cap \bB_1$ with $(\pa T) \mres \bB_1 = 0\; \modp$ then	
\begin{equation}\label{eq.L^2 controls L^infty}		
\dist^{m+2}(q,K) \le C_0 \int_{\bB_1} \dist^2(\cdot,K) d \norm{T} \qquad \mbox{for every $q \in \spt(T) \cap \overline{\bB}_{\sfrac12}$}\,,	
\end{equation}
for a constant $C_0$ depending on $m$.
\end{lemma}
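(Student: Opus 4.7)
Fix $q \in \spt(T) \cap \overline{\bB}_{\sfrac12}$ and set $d := \dist(q,K)$. The inequality is trivial if $d=0$, so assume $d>0$. Since $0 \in K$, the triangle inequality forces $d \le |q| \le \sfrac12$, and in particular $\bB_{d/2}(q) \subset \bB_{3/4} \subset \bB_1$. A second application of the triangle inequality gives
\[
\dist(\bar q, K) \;\ge\; \dist(q,K) - |\bar q - q| \;\ge\; \tfrac{d}{2} \qquad \text{for every } \bar q \in \bB_{d/2}(q).
\]

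The plan is then to bound the integral from below by restricting to this small ball. Using the pointwise estimate above,
\[
\int_{\bB_1} \dist^2(\cdot, K)\, d\|T\| \;\ge\; \int_{\bB_{d/2}(q)} \dist^2(\cdot, K)\, d\|T\| \;\ge\; \frac{d^2}{4}\, \|T\|\bigl(\bB_{d/2}(q)\bigr).
\]
To close the proof it suffices to show a mass lower bound of the form $\|T\|(\bB_r(q)) \ge c\, r^m$ for all $r \le \sfrac12$, with $c$ depending only on $m$. Inserting this with $r = d/2$ yields $\int_{\bB_1} \dist^2(\cdot,K)\, d\|T\| \ge c'\, d^{m+2}$, which is the required estimate.

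The mass lower bound comes from the monotonicity formula. By Assumption \ref{assumption:manifolds and currents}, $T$ induces an integral varifold with generalized mean curvature bounded by $\bA \le c_0$ (cf. \eqref{e:H in L infty}) and no boundary in $\bB_1$. Standard almost-monotonicity therefore yields that $e^{\Lambda r}\, r^{-m}\, \|T\|(\bB_r(q))$ is monotone nondecreasing in $r$ for some universal $\Lambda = \Lambda(m,c_0)$, so that for $r \le \sfrac12$
\[
\|T\|(\bB_r(q)) \;\ge\; e^{-\Lambda/2}\, \omega_m\, \Theta_T(q)\, r^m.
\]
Since $T$ is area minimizing $\modp$ without boundary in $\bB_1$ and $q \in \spt(T)$, a representative of $[T]$ has integer multiplicity at $\mathcal{H}^m$-a.e.\ point of $\spt(T)$, and monotonicity forces $\Theta_T(q) \ge 1$ at every such point of the support. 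This provides the desired constant $c = c(m)$, and the main potential obstacle (controlling $\Theta_T(q)$ from below at an arbitrary support point) is handled uniformly because $c_0$ is itself a fixed dimensional constant.
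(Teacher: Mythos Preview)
Your proof is correct and follows essentially the same approach as the paper's: both set $\rho = \tfrac{1}{2}\dist(q,K)$, observe that $\dist(\cdot,K) \ge \rho$ on $\bB_\rho(q)$, and combine this with the mass lower bound $\|T\|(\bB_\rho(q)) \ge c\,\rho^m$ coming from almost-monotonicity to obtain the inequality. The paper's version is terser, but the argument is identical.
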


Finally, we record the validity of the following immediate corollary of Lemmas \ref{lem.flat-L^2 estimate} and \ref{lem.L^infty-flat estimate}.

\begin{corollary} \label{flat-excess improved}
Let $\bC$ and $\bS$ be as in Lemma \ref{lem:qualitative flat_excess}. There are $\eta_2=\eta_2(\bS)>0$ and $C=C(\bS)>0$ with the following property. Let $\Sigma$ be as in Assumption \ref{assumption:manifolds and currents}, and let $T$ be area minimizing $\modp$ in $\Sigma \cap \bB_{3}$ and such that $(\pa T) \mres \bB_{3} = 0 \, \modp$. If $R \leq 1$ is such that
\begin{equation} \label{hp: flat sotto soglia}
    \hat\flat^p_{\bB_{R}}(T-\bC) < \eta_2\, R^{m+1}\,,
\end{equation}
then
\begin{equation} \label{e:flat-excess improved}
    \hat\flat^p_{\bB_{R/2}}(T-\bC) \leq C\, R^{m+1}\, \bE(T,\bS,0,R)^{\frac12}\,.
\end{equation}

\end{corollary}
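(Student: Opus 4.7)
The corollary is a direct concatenation of the two previous lemmas closed by Cauchy--Schwarz. The strategy is: use Lemma \ref{lem.L^infty-flat estimate} to convert the flat-smallness hypothesis into a pointwise ($L^\infty$) control of $\dist(\cdot, \bS)$ on a concentric subball; plug this bound into Lemma \ref{lem.flat-L^2 estimate} to produce an $L^1$-type estimate; and finally upgrade $L^1$ to $L^2$ (the excess $\bE^{1/2}$) via Cauchy--Schwarz combined with the monotonicity mass bound $\|T\|(\bB_{R/2})\leq C R^m$ available for area-minimizing currents $\modp$ in a $C^2$ ambient with small second fundamental form.

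Concretely, since $T$ is area minimizing in $\Sigma\cap\bB_3\supset\Sigma\cap\bB_{3R/2}$ (using $R\leq 1$), Lemma \ref{lem.L^infty-flat estimate} applied with external radius $R/2$ gives
\[
\min\{1,\dist(q,\bS)\}\,\dist(q,\bS)^m \leq C_0\,\hat\flat^p_{\bB_R}(T-\bC) < C_0\,\eta_2\,R^{m+1}
\qquad\forall\, q\in\spt(T)\cap\overline{\bB}_{R/2}\,,
\]
and hence $\dist(q,\bS) \leq (C_0\eta_2)^{1/(m+1)}R$. With $\eta_2$ taken small enough, this pointwise bound is below the slice threshold of Lemma \ref{lem.flat-L^2 estimate}; moreover $\partial T=0\,\modp$ in $\bB_3$ forces the support of $\partial^p(T\mres\bB_{R/2})$ to lie in $\spt(T)\cap\partial\bB_{R/2}\subset\overline{\bB}_{R/2}$, precisely where we have just obtained the control. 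The remaining flat-smallness hypothesis of Lemma \ref{lem.flat-L^2 estimate} at scale $R/2$ is $\hat\flat^p_{\bB_{R/2}}(T-\bC)<\eta_2^{L\ref{lem.flat-L^2 estimate}}(R/2)^{m+1}$, and this follows from $\hat\flat^p_{\bB_{R/2}}\leq\hat\flat^p_{\bB_R}<\eta_2 R^{m+1}$ after absorbing a factor $2^{m+1}$ by further shrinking $\eta_2$.

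Invoking Lemma \ref{lem.flat-L^2 estimate} and finishing with Cauchy--Schwarz therefore yields
\[
\hat\flat^p_{\bB_{R/4}}(T-\bC) \leq C\int_{\bB_{R/2}}\dist(\cdot,\bS)\,d\|T\| \leq C\,\|T\|(\bB_{R/2})^{1/2}\left(\int_{\bB_R}\dist^2\,d\|T\|\right)^{1/2} \leq C R^{m+1}\bE(T,\bS,0,R)^{1/2}\,.
\]
The stated inner radius $R/2$ is recovered from the $R/4$ obtained above by an entirely analogous argument performed at a slightly enlarged intermediate scale $\sigma R$ with $\sigma\in(1/2,1)$ in place of $R/2$ (legitimate after a corresponding shrinking of $\eta_2$), absorbing the resulting dimensional constants into $C(\bS)$. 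The only real difficulty is this bookkeeping of small parameters: $\eta_2(\bS)$ must be chosen smaller than the thresholds of both Lemmas \ref{lem.L^infty-flat estimate} and \ref{lem.flat-L^2 estimate}, modulated by the dimensional factors arising from the passage between scales.
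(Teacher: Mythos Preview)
Your argument is correct and is exactly what the paper intends (the paper gives no details beyond calling it an ``immediate corollary of Lemmas \ref{lem.flat-L^2 estimate} and \ref{lem.L^infty-flat estimate}''). Two small remarks: the mass bound $\|T\|(\bB_{R/2})\leq CR^m$ does not follow from monotonicity alone (no upper bound on $\|T\|$ is assumed), but rather from the flat-closeness hypothesis combined with minimality via the standard competitor argument; and your fix to pass from inner radius $R/4$ to $R/2$ does not work as written, since taking $\sigma>\tfrac12$ would force Lemma \ref{lem.L^infty-flat estimate} to use $\hat\flat^p_{\bB_{2\sigma R}}$, which the hypothesis does not control. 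This scale discrepancy is purely cosmetic and irrelevant for the iterative applications in the paper.
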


\section{Graphical parametrization}\label{sec:graph}

This section is dedicated to construct a ``multigraph'' approximation of $T$ under the assumption that its excess with respect to a nonflat open book is sufficiently small. Before proceeding we recall that the notation $W^\perp$ and ${\mathbf{p}}_W$ will be extensively used for the orthogonal complement of $W$ and the orthogonal projection onto $W$. We start by detailing the assumptions on the current $T$ which will be relevant for the rest of this section.

\begin{ipotesi}\label{a:graphical}
$\Sigma$ and $T$ satisfy the requirements of Assumption \ref{assumption:manifolds and currents} with $\Omega = \bB_{2R_0}(0)$, where $R_0 \gg 1$ is a large constant which depends only on $m$. $\pi_0$ denotes the tangent space $T_0\Sigma$. $\bC_0$ is an $m$-dimensional area minimizing cone as in Assumption \ref{ass:cone}, so that $\bS_0:=\spt(\bC_0)$ is a non-flat open book in $\mathscr{B}^p$. We will assume that $\bS_0 \subset \pi_0$, and we will call $V \subset \pi_0$ the spine of $\bS_0$. $\bC$ is a representative of a $\modp$ cycle whose support is a nonflat open book $\bS \in \mathscr{B}^p$ contained in $\pi_0$, with the same spine $V$ as $\bS_0$, and such that $\Theta_{\bC}(0) = \sfrac{p}{2}$. 
\end{ipotesi}
\begin{ipotesi}\label{a:graphical2}
Furthermore, we assume that
\begin{equation} \label{e:hyp_flat_T_C0}
\hat\flat^p_{\bB_{R_0}}(T-\bC_0) < \eta_{\bS_0}\,, \qquad \hat\flat^p_{\bB_{R_0}}(\bC-\bC_0) < \eta_{\bS_0}\,,
\end{equation}
where, denoting $\theta(\bS_0) := \tan(\sphericalangle(\bS_0)/2)$,
\begin{equation} \label{e:the flat smallness parameter}
\eta_{\bS_0} := \min\left\lbrace\eta_1(\bS_0), \eta_2(\bS_0), \frac{\theta(\bS_0)}{2^{M(m+1)}} \right\rbrace\,,
\end{equation}
for some large number $M$ to be chosen (depending only on $m$).
\end{ipotesi}

It will be useful to set some notation for the rest of this section, more precisely
\begin{equation} \label{e:who are C0 and S0}
    \bC_0 = \sum_{i=1}^{N_0} \kappa_{0,i}\, \a{\bH_{0,i}}\,, \qquad \bS_0 = \bigcup_{i=1}^{N_0} \bH_{0,i}\,,
\end{equation}
where $\bH_{0,i}=\ell_{0,i} \times V$ and $\ell_{0,i}=\{t\,v_{0,i} \, \colon \, t \ge 0\}$, $v_{0,i} \in \mathbb{S}^1 \subset V^\perp \cap \pi_0 =: V^{\perp_0}$, with $v_{0,i}$ pairwise distinct. Moreover, we assume without loss of generality that 
\begin{align*}
\pi_0 &= \{0_{n-1}\} \times \R^{m+1}\\
V &=\{0_{n-1}\} \times \{0_2\} \times \R^{m-1}\\
V^\perp &= \mathbb R^{n+1} \times \{0_{m-1}\}\\ 
V^{\perp_0} &= \{0_{n-1}\}\times \mathbb R^2 \times \{0_{m-1}\}\, .
\end{align*}
Thus every $q \in \R^{m+n}$ will be given canonical coordinates $q=(x,y)$, with $(x,0) \in V^\perp$ and $(0,y) \in V$ and for brevity we shall often identify $x=(x,0)$ and $y=(0,y)$. In particular, $|x|$ will always denote the distance of $q$ from $V$.

\begin{remark} \label{rmk:structure of C and S}

We note explicitly that the hypothesis \eqref{e:hyp_flat_T_C0}, together with the choice of $\eta_{\bS_0}$ specified in \eqref{e:the flat smallness parameter} imply that the cone $\bC$ and its support $\bS$ have the following structure:

\begin{equation} \label{e:who are C and S}
    \bC = \sum_{i=1}^{N_0} \sum_{j=1}^{\kappa_{0,i}} \a{\bH_{i,j}}\,, \qquad \bS = \bigcup_{i=1}^{N_0} \bigcup_{j=1}^{\kappa_{0,i}} \bH_{i,j}\,,
\end{equation}

where $\bH_{i,j}=\ell_{i,j} \times V$ with $\ell_{i,j} \subset V^{\perp_0}$ with possible repetitions.

\end{remark}

\subsection{Multigraphs, Whitney domains, and main approximation}
The Lipschitz approximation of $T$ will be reached through  the following notion of \emph{$p$-multifunction over an open book}.

\begin{definition}\label{def:functions on cones}
Let $V$, $\pi_0$, and $\bS$ be as in Assumptions \ref{a:graphical} and \ref{a:graphical2} and Remark \ref{rmk:structure of C and S}. Given a subset $U \subset \left[ 0, \infty \right) \times V$, a $p$-multifunction $u$ on $U$ over $\bS$ is a collection of functions $\{u_{i,j}\}$ such that:
\begin{itemize}
    \item[(a)] $i\in \{1,\ldots,N_0\}$ and for each $i$ the index $j$ ranges between $1$ and $\kappa_{0,i}$;
    \item[(b)] for every $i$ and $j$ we let $U_{i,j} := \left\lbrace z=(x,y)\in \bH_{i,j}: (|x|, y) \in U   \right\rbrace$ and 
    \begin{equation} \label{e:le funzioni uij}
    u_{i,j} \colon U_{i,j} \subset \bH_{i,j} \to \bH_{i,j}^{\perp}\, .
    \end{equation}
\end{itemize}
For every $k\in \mathbb N$ and $\alpha \in (0,1]$ we say that a $p$-multifunction $u$ on $U$ over $\bS$ is of class $C^{k,\alpha}$ (shortly $u \in C^{k,\alpha}(U)$) if $u_{i,j} \in C^{k,\alpha}(U_{i,j})$ for all $i$ and $j$. Moreover, for $z \in U_{i,j}$ we set
\begin{equation} \label{localized hoelder seminorm}
\left[ D u_{i,j} \right]_{\alpha}(z) := \inf_{R>0} \sup\left\lbrace  \frac{\abs{Du_{i,j}(z_1)-Du_{i,j}(z_2)}}{\abs{z_1-z_2}^\alpha} \, \colon \, z_1\neq z_2 \in U_{i,j} \cap \bB_R(z)\right\rbrace\,.
\end{equation}
Furthermore, for every $\zeta\in U$ we set
\[
\abs{u(\zeta)}:= \max_{i,j} \{|u_{i,j} (z_{i,j})|\}\,, \qquad \mbox{where $z_{i,j}=(x,y) \in U_{i,j}$, with $(|x|,y)=\zeta$}\,,
\] 
and define analogously $\abs{Du(\zeta)}$, and $\left[Du\right]_{\alpha}(\zeta)$. Finally, we define
\begin{equation} \label{holder norm}
\|u\|_{C^{1,\alpha}(U)} := \sup_{\zeta=(t,y) \in U} \left( t^{-1}\abs{u(\zeta)} + \abs{Du(\zeta)} + t^\alpha\left[Du\right]_\alpha(\zeta)  \right)\,,
\end{equation}
If $u \in C^{1,\alpha}(U)$, given any orientation on $V$ which naturally induces orientations on the half spaces $\bH_{i,j}$ for every $i$ and $j$, we set
\begin{equation} \label{def:graph}
\bG_{\bS}(u) := \sum_{i =1}^{N_0}\sum_{j=1}^{\kappa_{0,i}} \bG_{u_{i,j}}\,,
\end{equation}
where $\bG_{u_{i,j}}:=\a{M_{i,j}}$ is the multiplicity-one current on
\begin{equation} \label{poly graphs}
M_{i,j} := \left\lbrace z + u_{i,j}(z) \, \colon \, z \in U_{i,j} \right\rbrace
\end{equation}
with the standard orientation induced by that of $U_{i,j} \subset \bH_{i,j}$.     
\end{definition}

\begin{remark} \label{rmk:quando S e' S0}
In this section, we shall only be working with $p$-multifunctions over the cone $\bS_0$. In this case, Definition \ref{def:functions on cones} applies \emph{verbatim} with the identification $\bH_{i,j}=\bH_{0,i}$ for all $j \in \{1,\ldots,\kappa_{0,i}\}$. In particular, if $u = \{u_{i,j}\}$ is a $p$-multifunction on $U$ over $\bS_0$ then we shall simply denote $U_i$ the common domain of the functions $u_{i,j}$ for $j \in \{1,\ldots,\kappa_{0,i}\}$. 
\end{remark}

The next step before stating the main theorem of the section is to identify the domain on which the $p$-multigraph approximation of $T$ is going to be defined. This will consist of a union of cubes in a \emph{Whitney}-type decomposition of (a subset of) $\left[0, \infty \right) \times V$ with suitably good properties. Preliminarily, consider the half-cube $[0,2]\times [-2,2]^{m-1} \subset \left[ 0, \infty \right) \times V$, and the collection $\mathcal{Q}$ of sub-cubes defined as follows. First, we partition $[0,2]$ into the dyadic intervals $\{ [2^{-k},2^{-k+1}] \}_{k \ge 0}$. Then, we further divide each layer $[2^{-k},2^{-k+1}] \times [-2,2]^{m-1}$ into $2^{mM} \cdot 2^{(m-1)(k+2)}$ congruent sub-cubes of side-length $2^{-(k+M)}$, where $M$ is as in Assumption \ref{a:graphical2}, cf. Figure \ref{figura-5}. Notice that 
\begin{equation} \label{e:dist v diam} 
\frac{2^{M+1}}{\sqrt{m}} \,\diam(Q) \ge \max_{z\in Q} \dist(z,V)\ge \min_{z\in Q} \dist (z,V) \ge \frac{2^M}{\sqrt{m}} \,\diam(Q) \quad \forall Q\in \mathcal{Q}\, .
\end{equation} 

For any $Q \in \mathcal{Q}$, we shall denote $c_Q=(t_Q, y_Q)$ the center of $Q$ and $d_Q$ the diameter of $Q$. 

\begin{figure}
\begin{tikzpicture}
\foreach \x in {1,...,4}
\draw (0, {2+0.5*\x}) -- (6,{2+0.5*\x}) (0,{1+0.25*\x}) -- (6,{1+0.25*\x}) (0,{0.5+0.125*\x}) -- (6,{0.5+0.125*\x}) (0,{0.25+0.0625*\x}) -- (6,{0.25+0.0625*\x}) (0,{0.125+0.03125*\x}) 
-- (6,{0.125+0.03125*\x});
\draw (0,0.125) -- (6,0.125);
\foreach \y in {0,...,12}
\draw ({0.5*\y},2) -- ({0.5*\y},4); 
\foreach \y in {0,...,24}
\draw ({0.25*\y},1) -- ({0.25*\y},2);
\foreach \y in {0,...,48}
\draw ({0.125*\y},0.5) -- ({0.125*\y},1);
\foreach \y in {0,...,96}
\draw ({0.0625*\y},0.25) -- ({0.0625*\y},0.5);
\foreach \y in {0,...,192}
\draw ({0.03125*\y},0.125) -- ({0.03125*\y},0.25);
\fill[black] (0,0) -- (6,0) -- (6,0.125) -- (0,0.125) -- (0,0);
\draw[very thick] (0,0) -- (6,0);
\end{tikzpicture}
\label{figura-5}\caption{The Whitney decomposition of $[0,2] \times [-2,2]^{m-1}$. In the above example the parameter $M$ equals $2$.} 
\end{figure}
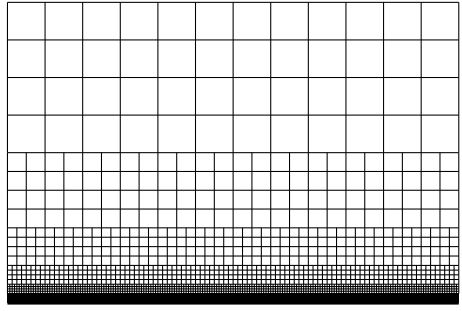

\begin{definition} \label{def:whitney}
We establish the following partial order relation in $\mathcal Q$: if $Q,Q' \in \mathcal Q$, we say that $Q$ \emph{is below} $Q'$, and we write $Q \preceq Q'$, if and only if $\mathbf{p}_V(Q) \subset \mathbf{p}_V(Q')$.
Let $T$ and $\bC$ be as in Assumptions \ref{a:graphical} and \ref{a:graphical2}, with $\bS = \spt(\bC) \in \mathscr{B}^p$, and let $\tau \in \left( 0, 1 \right)$. The {\em Whitney domain} of $[0,2] \times [-2,2]^{m-1}$ associated to $(T,\bS,\tau)$, denoted by $\mathcal{W}=\mathcal{W}(T,\bS,\tau)$, is the subfamily of $Q \in \mathcal{Q}$ such that
\begin{equation}
\bE\left(T,\bS,y_{Q'},\bar M d_{Q'}\right) < \tau^2 \qquad \forall Q \preceq Q'\,,
\end{equation}
where $\bar M = 2^{M+2}/\sqrt{m}$.
\end{definition}

\begin{remark}\label{rem_whitney_houston}
Note that $Q \in \mathcal W$ and $Q \preceq Q'$ imply $Q' \in \mathcal W$, and that every cube $\hat Q \in \mathcal Q$ for which there are no cubes above $\hat Q$ (henceforth called \emph{cubes in the top sub-layer}) belongs to $\mathcal W$ as soon as $\bE(T,\bS,0,R_0)$ is suitably small (depending on $\tau$).
\end{remark}

Since we will often deal with suitable dilations of the cubes in $\mathcal{Q}$ we introduce the following notation. For $\lambda> 0$, $\lambda Q$ is the cube with the same center $c_Q$ as $Q$ and diameter $d_{\lambda Q} = \lambda \, d_Q$. Considering $U= \lambda Q$ as in Definition \ref{def:functions on cones}, we let $\lambda Q_i$ be the corresponding domains $U_i\subset \bH_{0,i}$, as described in Definition \ref{def:functions on cones}(b) and Remark \ref{rmk:quando S e' S0}. Given a Whitney domain $\mathcal W = \mathcal W (T,\bS,\tau)$ and $\lambda > 0$, we shall also denote by $U_{\lambda \mathcal{W}}$ the union 
\begin{equation}
    U_{\lambda \mathcal W} := \bigcup_{Q \in \mathcal W} \lambda Q\,,
\end{equation}
and, setting $U_{\mathcal W} = U_{1\,\mathcal W}$, we define the ``distance'' function $\varrho_{\mathcal W} \colon \left[ -2, 2 \right]^{m-1} \to \left[ 0, 2 \right]$ as
\begin{equation} \label{oh dear vrho}
    \varrho_{\mathcal W}(y) := \inf \left\lbrace t \, \colon \, (t,y) \in U_{\mathcal W}  \right \rbrace\,.
\end{equation}
The {\em graphicality region} $R_{\mathcal W}$ is the rotationally invariant set 
\begin{equation} \label{rotation of Whitney}
    R_{\mathcal W} := \left\lbrace q=(x,y) \in \R^{m+n} \, \colon \, y \in \left[-2,2\right]^{m-1} \quad \mbox{and} \quad \varrho_{\mathcal W}(y)\leq |x|\leq 2     \right\rbrace\,.
\end{equation}

\begin{theorem}[Graphical parametrization] \label{thm:graph_v1}
Let $T,\Sigma, \bC,\bS, \bC_0$, and $\bS_0$ be as in Assumptions \ref{a:graphical} and \ref{a:graphical2}. For any $\beta \in \left(0, \frac{1}{2} \right)$ there are $\tau > 0$ and $\eps_1 > 0$, depending on $(m,n,p,\bS_0, \beta)$ with the following property. If
\begin{align} 
      \bA+\bE(\bC,\bS_0,0,R_0)\; \leq\; & \eps_1^2\,, \label{e:hyp_graph_cone}\\
\mbox{and}\qquad      \bE:=\bE(T,\bS,0,R_0)\; \leq\; & \eps_1^2 \label{e:hyp_graph_current}\,,
\end{align}
then there is a $p$-multifunction $u = \{u_{i,j}\}$ over $\bS_0$ of class $C^{1,\frac12}$ on $U_{4 \mathcal W}$ with $\mathcal W = \mathcal W(T,\bS,\tau)$ and with $u_{i,j} \colon (U_{4\mathcal W})_i \subset \bH_{0,i} \to \bH_{0,i}^{\perp_0}$ for all $i$ and $j$ such that, for some constant $C_2 = C_2 (m,n,p,\bS_0)$,
\begin{itemize}
\item[(i)] every $Q \in \mathcal Q$ with $d_Q \ge C_2\, \frac{\bE^{1/(m+2)}}{\beta} $ belongs to $\mathcal W$;
\item[(ii)] $\|u\|_{C^{1,\frac12}(U_{4\mathcal W})} \leq \beta$;
\item[(iii)] $T \mres R_{\mathcal W} = \mathbf{G}_{\bS_0}(v) \mres R_{\mathcal W} $, where $v$ is the $p$-multifunction on $U_{4\mathcal W}$ over $\bS_0$ defined by 
\begin{equation} \label{e:function to manifold}
v_{i,j}(z) := u_{i,j}(z) + \Psi(z + u_{i,j}(z))
\end{equation}
($\Psi$ is the map detailed in Assumption \ref{assumption:manifolds and currents});
\item[(iv)] the following estimate holds:
\begin{equation} \label{e:L^2 estimate}
    \int_{\bB_2 \setminus R_{\mathcal W}} |x|^2 \, d\|T\|(q) + \int_{\bB_2 \setminus R_{\mathcal W}} |x|^2 \, d\|\bC\|(q) \leq C_2 \, \beta^{-(m+2)}\, \bE\, ,
\end{equation}
where, for $q \in \bB_2 \setminus R_{\mathcal W}$, $|x|$ denotes, as usual, the distance of $q$ from the spine $V$.
\end{itemize}
\end{theorem}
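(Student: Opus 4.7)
The plan is to use White's $\varepsilon$-regularity theorem \cite{White86} on each cube of the Whitney family $\mathcal W$ to build local multigraphs over the pages of $\bS_0$, and then to patch them together across neighbouring cubes through the ordering of sheets available in codimension $1$.

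I would first establish (i) by a simple excess-monotonicity argument: for any $Q \in \mathcal Q$ and any $Q' \succeq Q$,
\[
\bE(T,\bS,y_{Q'},\bar M d_{Q'}) \leq \frac{R_0^{m+2}}{(\bar M d_{Q'})^{m+2}}\bE \leq \frac{C(m,M,R_0)}{d_Q^{m+2}}\bE\, .
\]
Choosing $\tau=\tau(\beta)$ (essentially $\tau \sim c\,\beta^{(m+2)/2}$) then forces any $Q$ with $d_Q \geq C_2 \bE^{1/(m+2)}/\beta$ into $\mathcal W$, giving (i).

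For each $Q \in \mathcal W$ and each $i \in \{1,\ldots,N_0\}$, consider a ball $B^Q_i$ of radius $\sim 2 d_Q$ centered at the lifted point in $\bH_{0,i}$ sitting above $c_Q$; this ball lies in $\bB_{\bar M d_Q}(y_Q)$ and, thanks to the angular separation $\sphericalangle(\bS_0)$ between the pages of $\bS_0$ and the smallness of $\hat\flat^p_{\bB_{R_0}}(\bC-\bC_0)$ granted by \eqref{e:the flat smallness parameter}, is disjoint from every page of $\bS$ other than those approximating $\bH_{0,i}$ and from the spine $V$. On $B^Q_i$ the current $T$ is therefore $L^2$-close to a hyperplane (the one spanned by $\bH_{0,i}$) carrying total multiplicity $\kappa_{0,i} < p/2$ (by Proposition \ref{lem:structure_cones}(ii)). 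White's $\varepsilon$-regularity theorem then produces a decomposition of $T\mres B^Q_i$ (modulo $p$) as a disjoint union of smooth area-minimizing hypersurfaces in $\Sigma$, each a multiplicity-one graph of a $C^{1,1/2}$ function over $\bH_{0,i}$; the integer multiplicities with which they appear in $T$ sum to $\kappa_{0,i}$. Using the ordering of graphs along the normal to $\pi_0$ inside $\Sigma$ (available because the ambient codimension is $1$, via the Hopf maximum principle applied to the minimal surface equation), I would then list the graphs with their multiplicities to obtain exactly $\kappa_{0,i}$ ordered functions $u^Q_{i,1}\leq\cdots\leq u^Q_{i,\kappa_{0,i}}$ on $4Q_i$ of class $C^{1,1/2}$. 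Elliptic Schauder estimates give a norm bound of order $(\bE+\bA)^{1/2}$ which, after fine-tuning $\tau$ in terms of $\beta$, is $\leq \beta$, proving (ii). On the overlap of two cubes $Q,Q'\in\mathcal W$, the ordered families $\{u^Q_{i,j}\}_j$ and $\{u^{Q'}_{i,j}\}_j$ necessarily coincide because each $u^Q_{i,j}$ is intrinsically the $j$-th ordered graphical sheet of $T$ over $\bH_{0,i}$ counted with multiplicity; this defines the global $p$-multifunction $u$ on $U_{4\mathcal W}$. Item (iii) follows from the fact that the graphical sheet actually lying in $\Sigma$ is the image of the graph of $u_{i,j}$ under $z\mapsto z + u_{i,j}(z) + \Psi(z+u_{i,j}(z))$.

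Finally, for the $L^2$-estimate (iv) I would exploit the defining property of $\mathcal W$: every bad $Q$ sits below some maximal bad $Q^\star$ with
\[
\int_{\bB_{\bar M d_{Q^\star}}(y_{Q^\star})}\dist^2(\cdot,\bS)\,d\|T\| \geq \tau^2 (\bar M d_{Q^\star})^{m+2}\, .
\]
A Besicovitch-type covering of the balls $\bB_{\bar M d_{Q^\star}}(y_{Q^\star})$ gives $\sum_{\text{max. bad}} d_{Q^\star}^{m+2} \leq C\tau^{-2}\bE$. The set $\bB_2\setminus R_{\mathcal W}$ is covered, after rotation to each page of $\bS_0$, by the lifted bad cubes; on each of them $|x|\leq C\,2^{M}d_{Q^\star}$, while monotonicity (for $T$) and a direct computation (for $\bC$) bound the masses by $C d_{Q^\star}^m$. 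Summing these contributions and inserting $\tau \sim c\,\beta^{(m+2)/2}$ yields the stated bound $C_2\beta^{-(m+2)}\bE$. The hardest step is the one decomposing $T\mres B^Q_i$ into exactly $\kappa_{0,i}$ ordered graphical sheets: this relies crucially on the codimension-$1$ assumption (to linearly order the sheets), on the bound $\kappa_{0,i}<p/2$ (so that White's regularity applies uniformly to each multiplicity), and on the angular separation of the pages of $\bS_0$ guaranteed by \eqref{e:the flat smallness parameter}.
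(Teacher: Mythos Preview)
Your proposal follows the same overall strategy as the paper (the key technical work is packaged in Lemma~\ref{lem:White}), but it glosses over what is actually the most delicate step: verifying that over each page $\bH_{0,i}$ the projected multiplicity of $T$ is exactly $\kappa_{0,i}$.

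The local excess hypothesis $\bE(T,\bS,y_Q,\bar M d_Q)<\tau^2$ only localizes $\spt(T)$ near $\bS$; it says nothing about the degree of the projection $(\mathbf{p}_i)_\sharp T$ onto $\bH_{0,i}$. The constancy lemma $\modp$ tells you this degree is some constant $\kappa_{Q_i}\in(-p/2,p/2]$, but nothing in your argument forces $\kappa_{Q_i}=\kappa_{0,i}$. White's theorem will still apply as long as $|\kappa_{Q_i}|<p/2$ (which itself needs justification, since $\kappa_{Q_i}=p/2$ is not excluded a priori), but if $\kappa_{Q_i}\neq\kappa_{0,i}$ you end up with the wrong number of sheets and the $p$-multifunction over $\bS_0$ is not well-defined. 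Your reference to Proposition~\ref{lem:structure_cones}(ii) only says that the \emph{cone} $\bC_0$ has multiplicity $\kappa_{0,i}<p/2$ on $\bH_{0,i}$; it does not transfer this to $T$ at the scale of the cube $Q$.

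The paper handles this in Step~four of Lemma~\ref{lem:White}. One first checks $\kappa_{Q'_i}=\kappa_{0,i}$ for a cube $Q'$ in the \emph{top} sub-layer, where the scale is comparable to $1$ and the flat-distance hypothesis $\hat\flat^p_{\bB_{R_0}}(T-\bC_0)<\eta_{\bS_0}$ from Assumption~\ref{a:graphical2} can be invoked directly: a slicing argument in a cylinder over $\lambda Q'_i$ (for a well-chosen $\lambda\in(16,32)$) compares $(\mathbf{p}_i)_\sharp T$ with $\kappa_{0,i}\a{\lambda Q'_i}$ and shows their $\hat\flat^p$-distance is strictly smaller than $\Ha^m(\lambda Q')$, which forces the multiplicities to agree. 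One then propagates $\kappa_{Q_i}=\kappa_{0,i}$ down to an arbitrary $Q\in\mathcal W$ along the chain of cubes above it, using that the dilated cubes $32Q$ of consecutive elements overlap. This is precisely where the flat smallness and the specific form of $\eta_{\bS_0}$ in \eqref{e:the flat smallness parameter} enter; your proposal cites the angular separation of $\bS_0$ but never actually uses the flat hypothesis to pin down multiplicities.
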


\subsection{White's epsilon-regulary theorem}
In what follows, we will use the shorthand notation ${\rm P}_0$, $\mathbf{p}_i$,
and $\mathbf{p}_i^\perp$ for the orthogonal projections $\mathbf{p}_{\pi_0}$, $\mathbf{p}_{\bH_{0,i}}$ and $\mathbf{p}_{\bH_{0,i}^\perp}$ (where, with a slight abuse of notation, we are identifying $\bH_{0,i}$ with the $m$-dimensional linear plane containing it).
We shall then set $T' := ({\rm P}_0)_\sharp T$; furthermore, given a cube $Q \in \mathcal Q$, letting $Q_i$ denote the corresponding cube in $\bH_{0,i}$ and $\lambda Q_i$ its dilation with center $c_{\lambda Q_i}$ and diameter $d_{\lambda Q_i}$ we define
\begin{equation}\label{e:tilted_cylinders}
    {\rm C}(\lambda Q_i,\delta) := \left\lbrace q \in \pi_0 \, \colon \, \mathbf{p}_i(q) \in \lambda Q_i \quad \mbox{and} \quad |\mathbf{p}_i^\perp (q)| \leq \delta \bar M \, d_{\lambda Q_i}     \right\rbrace\,,
\end{equation}
and
\begin{equation} \label{e:family of tilted cylinders}
    \mathcal{C}(\lambda Q,\delta) := \bigcup_{i=1}^{N_0} {\rm C}(\lambda Q_i,\delta)\,.
\end{equation}
In other words, for each $i$ the set ${\rm C}(\lambda Q_i,\delta)$ is a cylinder in $\pi_0$ with cross section $\lambda Q_i$, axis orthogonal to $\bH_{0,i}$ and height $2\,\bar M \delta\,d_{\lambda Q_i}$, while $\mathcal{C}(\lambda Q,\delta)$ is the union of all such cylinders. With these notation in place, we can state and prove the following lemma, which is the key technical step towards the proof of the main theorem of this section, Theorem \ref{thm:graph_v1}. 

\begin{lemma}\label{lem:White}
Let $\bC_0$ and $\bS_0$ be as in Assumptions \ref{a:graphical} and \ref{a:graphical2}. There exists $\delta_0=\delta_0(m,p,\bS_0)$ with the following property. Let $\delta \in \left( 0, \delta_0\right]$ be arbitrary, and set
\begin{equation} \label{e:first upper bound on tau}
\bar{\eps}_1^2 = \bar{\tau}^2 := \left( \frac{\delta}{16 C_0}\right)^{m+2} \,,
\end{equation}
where $C_0$ is the constant from Lemma \ref{lem.L^2 controls L^infty}. If $T$ and $\bC$ are as in Assumptions \ref{a:graphical} and \ref{a:graphical2}, $\bC$ satisfies \eqref{e:hyp_graph_cone} for some $\eps_1 < \bar{\eps}_1$, and 
\begin{equation} \label{e:ipotesi di cubo}
    Q \in \mathcal{W}(T, \bS, \tau) \qquad \mbox{for some $\tau < \bar\tau$}\,,
\end{equation}
then:
\begin{itemize}
    \item[(a)] $\spt (T)\cap \{(x,y): (|x|,y)\in Q\} \subset {\rm P}_0^{-1} (\mathcal{C} (4Q, \delta))$;
    \item[(b)] there exists a $p$-multifunction $u^Q = \{u^Q_{i,j}\} \in C^{1,\frac12}(4Q)$ over $\bS_0$ with $u^Q_{i,j} \colon 4 Q_i \subset \bH_{0,i} \to \bH_{0,i}^{\perp_0}$ for all $i$ and $j$ and
\begin{equation} \label{e:Holder estimate on functions}
     \| u^Q \|_{C^{1,\frac12}(4Q)} \leq C_1 \, \delta
\end{equation}
for some constant $C_1 = C_1(m,p) > 0$, and such that
\begin{equation} \label{T' is graphical}
    T' \mres \mathcal{C}(4Q,\delta) = \mathbf{G}_{\bS_0}(u^Q) \,.
\end{equation}
\end{itemize}
\end{lemma}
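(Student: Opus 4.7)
The plan is to combine two ingredients: the $L^\infty$ height control of Lemma \ref{lem.L^2 controls L^infty}, which will give (a), and White's $\varepsilon$-regularity theorem from \cite{White86}, which will give (b) once $T$ is known to lie in a thin neighborhood of each page of $\bS_0$. Throughout, I would exploit the geometric fact, given by \eqref{e:dist v diam}, that $t_Q \ge (2^M/\sqrt{m})\, d_Q$ with $M$ large, so that on any ball of radius $O(\bar M d_Q)$ around $y_Q$ the pages $\bH_{0,i}$ of $\bS_0$ are pairwise separated by at least $c(\bS_0)\, d_Q$; by the flat smallness built into \eqref{e:the flat smallness parameter}, the same separation holds for the pages $\bH_{i,j}$ of $\bS$.

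\textbf{Proof of (a).} The plan is to rescale by $\eta_{y_{Q'},\bar M d_{Q'}}$ for a suitable $Q' \succeq Q$ (possibly $Q$ itself, provided $M$ is large enough that the rescaled ball $\bar\bB_{1/2}$ covers the set $\{(x,y):(|x|,y)\in Q\}$, otherwise a parent cube in $\mathcal W$). The rescaled $\Sigma$ still has small second fundamental form, the rescaled current is area-minimizing $\modp$ inside $\bB_1$ with $\partial = 0 \, \modp$, and the rescaled excess is bounded by $\tau^2 \leq \bar\tau^2 = (\delta/(16\,C_0))^{m+2}$. Lemma \ref{lem.L^2 controls L^infty} applied with $K$ equal to the rescaled $\bS$ then yields $\dist(q,\bS) \leq (\delta/16)\,\bar M d_Q$ for every $q\in \spt(T)$ above $Q$. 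Since distinct pages $\bH_{i,j}$ of $\bS$ are separated on the relevant region by much more than $\delta \bar M d_Q$ once $\delta\le \delta_0(\bS_0)$ is chosen small in terms of $\sphericalangle(\bS_0)$, each such $q$ is within $\delta\bar M d_Q$ of exactly one page $\bH_{0,i}$. Combining with the inclusion $\spt(T)\subset \Sigma$ and the $C^2$-graph structure of $\Sigma$ over $\pi_0$ (used to transfer the control from $\R^{m+n}$ to $\pi_0$ via $\mathbf{p}_0$) yields (a).

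\textbf{Proof of (b).} With (a) in hand, I would then apply White's $\varepsilon$-regularity theorem from \cite{White86} separately on each slab $\mathbf{p}_0^{-1}(\mathrm{C}(4Q_i,\delta))$. In each such slab the tilt excess of $T$ over the plane $\bH_{0,i}$ is controlled by the sum of the rescaled $\modp$ excess and $\bA$, both below the White threshold; this produces a disjoint union of finitely many $C^{1,1/2}$ graphs over $4Q_i$ with integer multiplicities $\theta_{i,j}$ and a $C^{1,1/2}$-norm bounded by $C_1\delta$. Projecting these sheets to $\pi_0$ via $\mathbf{p}_0$ (which is a diffeomorphism on each sheet because of the small $\bA$) yields functions $u^Q_{i,j}$ with values in $\bH_{0,i}^{\perp_0}$, whose graphs represent $T' \mres \mathcal C(4Q,\delta)$.

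\textbf{Main obstacle.} The principal subtlety I expect is fixing the correct number of sheets and their multiplicities. White's theorem a priori only provides sheets with integer multiplicities $\theta_{i,j}$, whose sum must be determined. I plan to pin this down via the smallness of $\hat\flat^p_{\bB_{R_0}}(T-\bC_0)$: the constancy theorem $\modp$ applied on $\mathrm{C}(4Q_i,\delta)$ to the difference $T' - \sum_j \theta_{i,j}\a{\bH_{0,i}}$ forces $\sum_j \theta_{i,j}\equiv \kappa_{0,i} \pmod p$, and then the representative $\modp$ bound $|\theta_{i,j}|\leq p/2$ together with the coherent orientation of $\bC$ with $\bC_0$ (giving $\theta_{i,j}>0$) and $\kappa_{0,i}< p/2$ from Proposition \ref{lem:structure_cones}(ii) upgrade this to the exact equality $\sum_j \theta_{i,j} = \kappa_{0,i}$. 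Splitting each sheet of multiplicity $\theta_{i,j}$ into $\theta_{i,j}$ coincident multiplicity-$1$ copies then produces the $p$-multifunction $\{u^Q_{i,j}\}_{j=1,\ldots,\kappa_{0,i}}$ claimed in the statement, and \eqref{e:Holder estimate on functions} follows from the White bound.
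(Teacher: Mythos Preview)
Your outline captures the correct two ingredients---the $L^\infty$ height bound from Lemma~\ref{lem.L^2 controls L^infty} for part~(a), and White's $\varepsilon$-regularity for part~(b)---and you correctly identify the determination of the multiplicities $\sum_j\theta_{i,j}=\kappa_{0,i}$ as the main obstacle. However, your proposed resolution of that obstacle has a genuine gap. You plan to invoke the smallness of $\hat\flat^p_{\bB_{R_0}}(T-\bC_0)$ together with constancy on $\mathrm{C}(4Q_i,\delta)$ to pin down the multiplicity. But the flat smallness is only known at scale $R_0\gg 1$, whereas $Q$ may have arbitrarily small diameter $d_Q$. To transfer the information, one must slice a decomposition $T'-\bC_0=R+\partial S+pZ$ by a Lipschitz function adapted to $Q$, and the slice term $\langle S,f,\lambda\rangle$ picks up a factor $\mathrm{Lip}(f)\sim d_Q^{-1}$, which destroys the smallness when $d_Q$ is small. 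The paper handles this by first establishing $\kappa_{Q'_i}=\kappa_{0,i}$ only for cubes $Q'$ in the \emph{top sub-layer} (where $d_{Q'}\sim 1$ and the slicing argument goes through with a Lipschitz constant depending only on $M$ and $\theta(\bS_0)$), and then propagating the multiplicity down the partial order $\preceq$ using that $32\hat Q$ and $32\tilde Q$ overlap for consecutive cubes, so the constancy multiplicities must agree along any chain from $Q$ up to the top layer.

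A secondary issue is the order of operations: the paper establishes $\kappa_{Q_i}=\kappa_{0,i}$ (in particular $|\kappa_{Q_i}|<p/2$) \emph{before} invoking \cite[Theorem~4.5]{White86}, because that multiplicity bound is part of the hypothesis of White's theorem in the $\modp$ setting. Your proposal applies White first and then determines multiplicities, which would require checking that White's result applies without the $|\kappa_{Q_i}|<p/2$ assumption. Finally, your claim that ``coherent orientation of $\bC$ with $\bC_0$ gives $\theta_{i,j}>0$'' conflates the orientation of the cone $\bC$ with that of the sheets of $T$; in the paper the positivity of the sheet multiplicities comes out of the Schoen--Simon structure theorem \emph{after} $\kappa_{Q_i}=\kappa_{0,i}\in[1,p/2)$ is established.
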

\begin{proof}
First, let us observe that since the manifold $\Sigma$ is the graph of $\Psi$ on $\pi_0$, $({\rm Id} + \Psi)\circ {\rm P}_0$ is the identity map on $\Sigma$. Letting $\delta_{\R^{m+n}}$ denote the standard Euclidean metric on $\R^{m+n}$, we can then define $g_\Psi:= ({\rm Id} + \Psi)^\sharp \delta_{\R^{m+n}}$ to be the associated pull-back metric on $\pi_0$. The current $T'=({{\rm P}_0})_\sharp T $ is supported on $\pi_0$ and area minimizing mod(p) in $\pi_0 \cap \bB_{2R_0}$ with respect to the area functional relative to the metric $g_\Psi$, which falls in the class of elliptic functionals in the sense of Almgren.

\smallskip

We are going to divide the proof into several steps.

\smallskip

\textit{Step one.} Let $\delta_0 < \eta_{\bS_0}$. We claim that for every $Q \in \mathcal W(T,\bS,\tau)$ the following holds:
\begin{equation} \label{e:intorno_uncinato}
  \spt \left(  T_{y_Q,\bar{M}d_Q} \mres (\bB_2 \setminus \overline{B}_{1/8}(V)) \right) \subset \left\lbrace q=(x,y) \in \R^{m+n} \, \colon \, \dist(q,\bS_0) < \delta\, |x|\right\rbrace\,.
\end{equation}
Indeed, applying Lemma \ref{lem.L^2 controls L^infty} with $K=\bS$ and $T_{y_Q,\bar{M}d_Q}$ in place of $T$, we find that
\begin{equation} \label{triangle1}
\dist^{m+2}(q,\bS) \leq C_0 \int_{\bB_4} \dist^2(q',\bS) \, d\|T_{y_Q,\bar{M}d_Q}\|(q') \qquad \mbox{for every $q \in \spt(T_{y_Q,\bar{M}d_Q} ) \cap \overline{\bB}_2$}\,.
\end{equation}

On the other hand, since $\bS=\spt(\bC)$, Lemma \ref{lem.L^2 controls L^infty} implies that for any point $w \in \bS \cap \overline{\bB}_4$ 
\begin{equation} \label{triangle2}
\dist(w,\bS_0)^{m+2} \leq C_0 \int_{\bB_8} \dist^2(\cdot, \bS_0)\, d\|\bC\|\,.
\end{equation}

Thus, putting together \eqref{triangle1} and \eqref{triangle2} we deduce that for any $q \in \spt(T_{y_Q,\bar M d_Q} ) \cap \bB_2$

\begin{equation} \label{triangle final}
\begin{split}
    \dist(q,\bS_0) & \leq C_0 \, \left( \bE(T,\bS,y_Q,\bar M d_Q)^{1/(m+2)} + \bE(\bC,\bS_0,0,R_0)^{1/(m+2)}    \right) \\
& \leq C_0\, (\tau^{2/(m+2)} + \eps_1^{2/(m+2)}) < \frac{\delta}{8}\,.    
    \end{split}
\end{equation}
In particular, if $q = (x,y) \notin B_{1/8}(V)$ then $|x| > 1/8$, and thus the above estimate gives
\begin{equation} \label{triangle final final}
     \dist(q,\bS_0) < \delta\, |x| \qquad \mbox{for $q=(x,y) \in \spt(T_{y_Q,\bar M d_Q}) \cap (\bB_2 \setminus \overline{B}_{1/8}(V))$}\,.
\end{equation}
We observe in passing that  \eqref{e:intorno_uncinato} immediately implies conclusion (a).

\smallskip

\textit{Step two.} Since $\delta$ is smaller than $\theta := \tan(\sphericalangle(\bS_0)/2)$, \eqref{e:intorno_uncinato} implies that for every $Q \in \mathcal W (T,\bS, \tau)$ we can decompose
\begin{eqnarray} \label{e:splitting_branches}
    && T_{y_Q,\bar{M}d_Q} \mres (\bB_2 \setminus \overline{B}_{1/8}(V))  =  \sum_{i=1}^{N_0} \tilde{T}_i^Q\,, \\ \label{support of branches}
  &&  \tilde{T}_i^Q := T_{y_Q,\bar{M}d_Q} \mres \left( (\bB_2 \setminus \overline{B}_{1/8}(V)) \cap \{q=(x,y)\, \colon \, {\rm dist}(q,\bH_{0,i}) < \delta\,|x|\} \right)\,,
\end{eqnarray}
where each $\tilde{T}_{i}^Q$ has no boundary $\modp$ in $\bB_2 \setminus \overline{B}_{1/8}(V)$ by \cite[Lemma 6.1]{DLHMS} and
\begin{eqnarray} \label{e:disjoint_pieces}
    \spt(\tilde T_i^Q) \cap \spt(\tilde T_{i'}^Q) & = & \emptyset \qquad \mbox{whenever $i \ne i'$}\,, \\
    \|T_{y_Q,\bar{M}d_Q}\|(\bB_2 \setminus \overline{B}_{1/8}(V)) &=& \sum_{i=1}^{N_0} \mass(\tilde T_i^Q)\,.
\end{eqnarray}
In particular, each $\tilde T_i^Q$ is area minimizing $\modp$ in $\Sigma \cap (\bB_2 \setminus \overline{B}_{1/8}(V))$. Rescaling back, we have an analogous decomposition 
\begin{equation} \label{e:splitting_rescaled}
    T \mres (\bB_{2\bar{M} d_Q}(y_Q) \setminus \overline{B}_{\bar{M}d_Q/8}(V)) = \sum_{i=1}^{N_0} T_i^Q\,,
\end{equation}
where each ${T}_{i}^Q$ has no boundary $\modp$ in $\bB_{2\bar{M}d_Q}(y_Q) \setminus \overline{B}_{\bar{M}d_Q/8}(V)$ and
\begin{eqnarray} \label{e:disjoint_pieces_rescaled}
    \spt(T_i^Q) \cap \spt(T_{i'}^Q) & = & \emptyset \qquad \mbox{whenever $i \ne i'$}\,, \\
    \|T\|(\bB_{2\bar{M}d_Q}(y_Q) \setminus \overline{B}_{\bar{M}d_Q/8}(V)) &=& \sum_{i=1}^{N_0} \mass( T_i^Q)\,.
\end{eqnarray}
In particular, each $T_i^Q$ is area minimizing $\modp$ in $\Sigma \cap (\bB_{2\bar{M}d_Q}(y_Q) \setminus \overline{B}_{\bar{M}d_Q/8}(V))$.

\medskip

\textit{Step three.} From \eqref{e:splitting_rescaled}, we deduce that, for $T'=({\rm P}_0)_\sharp T$, we have 
\begin{equation} \label{splitting_projection}
    T' \mres (\bB_{\bar M d_Q}(y_Q) \setminus \overline{B}_{\bar M d_Q/8}(V)) = \sum_{i=1}^{N_0} (T')_i^Q\,,
\end{equation}
where each $(T')_i^Q := [({\rm P}_0)_\sharp T_i^Q]\mres (\bB_{\bar M d_Q}(y_Q) \setminus \overline{B}_{\bar M d_Q/8}(V))$ satisfies
\begin{equation} \label{support of currents}
\spt((T')_i^Q) \subset \left( \bB_{\bar M d_Q}(y_Q) \setminus \overline{B}_{\bar M d_Q/8}(V) \right) \cap \{q=(x,y) \in \pi_0\, \colon \, {\rm dist}(q,\bH_{0,i}) < \delta\,|x|\}\,,
\end{equation}  
by \eqref{support of branches}, and it is area minimizing $\modp$ in $\pi_0 \cap \left( \bB_{\bar M d_Q}(y_Q) \setminus \overline{B}_{\bar M d_Q/8}(V) \right)$ with respect to the area functional relative to the metric $g_\Psi$ on $\pi_0$. Furthermore, since
\[
\spt^p(\partial T_i^Q) \subset \left(\partial\bB_{2\bar{M} d_Q}(y_Q) \cup  \partial B_{\bar{M}d_Q/8}(V))\right)\cap \{q=(x,y)\, \colon \, {\rm dist}(q,\bH_{0,i}) \leq \delta\,|x|\}\,,
\]
each $(T')_i^Q$ has no boundary $\modp$ in $\bB_{\bar M d_Q}(y_Q) \setminus \overline{B}_{\bar M d_Q/8}(V)$. 

Now, we observe that, due to \eqref{support of currents}, 
\begin{equation} \label{height bound}
    \spt((T')_i^Q) \cap \{q \in \pi_0 \, \colon \, {\bf p}_i(q) \in  32 Q_i \} \subset {\rm C}(32Q_i,\delta)\,,
\end{equation}
and that
\begin{equation} \label{cylinder is good}
{\rm C}(32Q_i,\delta) \subset \pi_0 \cap \left( \bB_{\bar M d_Q}(y_Q) \setminus \overline{B}_{\bar M d_Q/8}(V) \right)
\end{equation}
as soon as $\bar M$ is large enough (depending on $m$).

Finally, notice that the constancy lemma $\modp$ implies that
\begin{equation} \label{constancy in action}
    ({\bf p}_i)_\sharp [(T')_i^Q \mres \{q \in \pi_0 \, \colon \, {\bf p}_i(q) \in  32 Q_i \}] = \kappa_{Q_i}\,\a{32 Q_i} \; \modp
\end{equation}
for some constant $\kappa_{Q_i} \in \mathbb Z \cap \left( - \frac{p}{2}, \frac{p}{2} \right]$.

\medskip

\textit{Step four.} In this step, we prove that 
\begin{equation}\label{molteplicitagiuste}
 \kappa_{Q_i} = \kappa_{0,i},  
\end{equation}
for every $i\in\{1,\ldots,N_0\}$: this will imply, in particular, that $|\kappa_{Q_i}| < p/2$.

Observe that to this aim it is sufficient to prove \eqref{molteplicitagiuste}
with $Q$ replaced by any cube $Q'$ with $Q\preceq Q'$. Indeed, since for any two consecutive cubes $\hat Q$ and $\tilde Q$ the cubes $32\hat Q$ and $32\tilde Q$ overlap on a region of positive area, then by \eqref{constancy in action} the equality \eqref{molteplicitagiuste} would propagate from $Q'$ to $Q$ along a chain which connects them. 

Let us then choose $Q' \in \mathcal W$ such that $Q\preceq Q'$ and there is no cube of $\mathcal Q$ above $Q'$. By Remark \ref{rem_whitney_houston} we infer that the conclusions of steps one, two, and three hold with $Q'$ in place of $Q$.  

We claim now that there exists $\lambda \in \left( 16, 32 \right)$ such that
\begin{equation}\label{flatpiccola}
    \hat\flat^p_{\bB_{R_0}}\left(({\bf p}_i)_\sharp [(T')_i^{Q'} \mres \{q \in \pi_0 \, \colon \, {\bf p}_i(q) \in  \lambda {Q'}_i \}] - \kappa_{0,i}\,\a{\lambda {Q'}_i}\right)<\Ha^m(\lambda Q')\,,
\end{equation}
which, due to \eqref{constancy in action}, implies in particular that \eqref{molteplicitagiuste} holds for the cube $Q'$, since the (modified) $p$-flat distance between two $m$-currents supported on an $m$-dimensional cube is the mass $\modp$ of their difference.

In order to prove \eqref{flatpiccola}, we begin observing that 
$$\hat\flat^p_{\bB_{R_0}}(T'-\bC_0) < \eta_{\bS_0}\,.$$This follows from \eqref{e:hyp_flat_T_C0} and the fact that $T'-\bC_0=({\rm{P_0}})_\sharp(T-\bC_0)$, because ${\rm{P_0}}$ is a 1-Lipschitz map. 

Let $R, S$ and $Z$ be such that
\begin{equation} \label{e:small flat at large scale}
T'-\bC_0=R+\partial S + pZ \qquad \mbox{with} \qquad  \|R\|(\bB_{R_0}) + \|S\|(\bB_{R_0})\leq 2\eta_{\bS_0}\,. 
\end{equation}
For each $i$, let us define the function $f_i \colon \pi_0 \to \left[ 0, \infty \right)$ by
\begin{equation} \label{e:cyl_inducing function}
    f_i(q) := \max \left\lbrace 2^{M+1}\, |{\bf p}_i(q) - c_{Q'_i}|_{\infty} , 64\, \theta^{-1} |{\bf p}_i^\perp(q)| \right\rbrace\,,
\end{equation}
having denoted $|v|_\infty := \max\{|z_h| \, \colon \, h=1,\ldots,m  \}$ if $v=\left(z_1,\ldots,z_{m}\right)$ is a decomposition of $v$ in the orthonormal system of coordinates on $\bH_{0,i}$ having $V$ as a coordinate hyperplane. Using that there is no cube of $\mathcal Q$ above $Q'$, so that the side length of $Q'$ is $2^{-M}$, together with the definition of $\bar M$, it is not difficult to see that the above definition of $f_i$ implies that, for any $16\leq\lambda \leq 32$, the sublevel set $\{f_i\leq\lambda\}$ coincides with the cylinder
\begin{equation} \label{e:cyls are sublevels}
    \{f_i \leq \lambda\} = {\rm C}\left(\lambda Q'_i,\frac{\theta\, \lambda}{64\,\bar M d_{\lambda Q_i'}}\right)\,.
\end{equation}

By the slicing formula \cite[Lemma 28.5]{Simon83}, for almost every $16\leq\lambda\leq 32$ we have from \eqref{e:small flat at large scale} that
\begin{equation} \label{e:slicing formula in action}
(T' - \bC_0) \mres \{f_i \leq \lambda\} = R \mres \{f_i \leq \lambda\} + \partial [S \mres \{f_i \leq \lambda\}] - \langle S, f_i, \lambda \rangle + p\, Z \mres \{f_i \le \lambda\}\,.
\end{equation}

Now, observe that, by the definition of $\theta$, each cylinder 
\[
{\rm C}\left(\lambda Q'_i,\frac{\theta\, \lambda}{64\,\bar M d_{\lambda Q_i'}}\right) \subset {\rm C}\left(\lambda Q'_i,\frac{\theta}{2\,\bar M d_{\lambda Q_i'}}\right)
\]
does not intersect $\bH_{0,j}$ for any $j\neq i$ for $\bar M$ large enough. Hence,
\begin{equation} \label{e:cone is ok}
    \bC_0 \mres \{f_i \leq \lambda \} = \kappa_{0,i}\, \a{\lambda Q'_i}\,.
\end{equation}
Next, we claim that
\begin{equation} \label{e:breaking good}
    T' \mres \{f_i \leq \lambda \} = (T')_i^{Q'} \mres \{f_i \leq \lambda \} = (T')_i^{Q'} \mres \{ q \in \pi_0 \, \colon \, {\bf p}_i(q) \in \lambda Q_i'\}\,.
\end{equation}

Indeed we have $$\{f_i\leq \lambda\}\cap\left\{q=(x,y):{\rm dist}(q,\bH_{0,j})<\frac{\theta}{2}\,|x|\right\}=\emptyset \qquad \mbox{for every $j \neq i$}\,,$$ 
and therefore, since $\delta < \frac{\theta}{2}$, \eqref{splitting_projection} and \eqref{support of currents} imply the first identity in \eqref{e:breaking good}. The second identity follows from \eqref{height bound} and \eqref{e:cyls are sublevels}, since, for $\delta < \frac{\theta}{2^9}$, ${\rm C}(\lambda Q_i',\delta) \subset {\rm C}\left(\lambda Q'_i,\frac{\theta\, \lambda}{64\,\bar M d_{\lambda Q_i'}}\right)$. Now by \eqref{e:cone is ok} and \eqref{e:breaking good}, we can rewrite \eqref{e:slicing formula in action} as
\begin{equation} \label{e:slicing2}
\begin{split}
    &(T')_i^{Q'} \mres \{q \in \pi_0 \, \colon \, {\bf p}_i(q) \in  \lambda Q_i' \} - \kappa_{0,i}\, \a{\lambda Q'_i} \\ 
    & \qquad \qquad = R \mres \{f_i \leq \lambda\} + \partial [S \mres \{f_i \leq \lambda\}] - \langle S, f_i, \lambda \rangle + p\, Z \mres \{f_i \le \lambda\} \,.
\end{split}
\end{equation}
By \cite[Lemma 28.5 (1)]{Simon83}, we can find $\lambda\in[16,32]$ such that 
\begin{equation}
    \mass(\langle S , f_i , \lambda \rangle) \le \frac{1}{16}{\rm Lip}(f_i)\, \|S\|(\{f_i \leq 32\}) \leq \frac{1}{16} {\rm Lip}(f_i)\, \|S\|(\bB_{R_0})\,.
\end{equation}
In turn, using that ${\rm Lip}(f_i) \leq 64\,\theta^{-1}+2^{M+1}$, \eqref{e:small flat at large scale} yields
\begin{align}
    \hat\flat^p_{\bB_{R_0}}\left((T')_i^{Q'} \mres \{q \in \pi_0 \, \colon \, {\bf p}_i(q) \in  \lambda {Q}_i' \} - \kappa_{0,i}\,\a{\lambda {Q'}_i}\right) &\leq 2\eta_{\bS_0}\,(1 + 4\,\theta^{-1}+2^{M-3})\nonumber\\
    & < \Ha^m(\lambda Q')\,,\label{quasiflatpiccola}
\end{align}
as soon as \[\eta_{\bS_0}<\frac{1}{2^{m(M-2)}(2+8\,\theta^{-1}+2^{M-2})}\,,\] 
whose validity is guaranteed by the choice of $\eta_{\bS_0}$ in \eqref{e:the flat smallness parameter}. Lastly, \eqref{flatpiccola} follows from \eqref{quasiflatpiccola} since ${\bf p}_i$ is 1-Lipschitz.

\medskip

\textit{Step five.} By \eqref{height bound}, \eqref{constancy in action}, and \eqref{molteplicitagiuste}, and recalling that $1 \leq \kappa_{0,i} < p/2$, we see now that, for each $i \in \{1,\ldots,N_0\}$, the currents $(T')_i^Q$ satisfy the hypotheses of the regularity theorem in \cite[Theorem 4.5]{White86} in $\{q \in \pi_0 \, \colon \, {\bf p}_i(q) \in  32 Q_i \}$ as soon as $\delta_0$ is chosen such that
\[
32\,\delta_0 \bar M \leq \delta_{BW}\,,
\]
where $\delta_{BW}=\delta_{BW}(m,p)$ denotes the regularity threshold of \cite[Theorem 4.5]{White86}. We can then conclude from \cite[Theorem 1]{SchoenSimon} that there exist precisely $\kappa_{0,i}$ functions $u_{i,j} \colon 4Q_i \to \bH_{0,i}^{\perp_0} \simeq \R$ of class $C^{1,1/2}$ such that 
\begin{itemize}
    \item[(i)] $u_{i,1} \leq u_{i,2} \leq \ldots \leq u_{i,\kappa_{0,i}}$ in $4Q_i$;
    \item[(ii)] given $j,j' \in \{1,\ldots,\kappa_{0,i}\}$ with $j \leq j'$ it is either $u_{i,j} \equiv u_{i,j'}$ in $4Q_i$ or $u_{i,j}(q) < u_{i,j'}(q)$ for every $q \in 4Q_i$;
    \item[(iii)] $\|u_{i,j}\|_{C^{1,\frac12}(4Q_i)} \leq C_1\,\delta$ for every $i$ and $j$, for some constant $C_1=C_1(m,p)>0$;
    \item[(iv)] the current $(T')_i^Q \mres \{q \in \pi_0 \, \colon \, {\bf p}_i(q) \in  4 Q_i \}$ coincides with the multigraph defined by $\{u_{i,j}\}_{j=1}^{\kappa_{0,i}}$.
\end{itemize}

Finally, we let $u^Q$ denote the $p$-multifunction on $Q$ over $\bS_0$ defined by the functions $\{u_{i,j}\}$ as also $i$ is let vary in $\{1,\ldots, N_0\}$. Conclusion (iii) above readily implies \eqref{e:Holder estimate on functions}, whereas \eqref{T' is graphical} follows from (iv) together with \eqref{splitting_projection} and \eqref{height bound}. 
\end{proof}

\subsection{Proof of Theorem \ref{thm:graph_v1}}
Let $Q \in \mathcal Q$, and, with the usual meaning of $y_Q$ and $d_Q$, observe that, whenever $d_Q \ge \sigma$ it holds

\begin{equation} \label{e:excess estimate for far away cubes}
    \bE(T,\bS,y_Q,\bar M d_Q) = \frac{1}{(\bar M d_Q)^{m+2}} \int_{\bB_{\bar M d_Q}(y_Q)} \dist^2(q,\bS) \, d\|T\| \leq \left( \frac{R_0}{\bar M} \right)^{m+2}\, \frac{\bE}{\sigma^{m+2}}\,.
\end{equation}

In particular, choosing $\sigma = C_2\, \frac{\bE^{1/(m+2)}}{\beta}$ guarantees the validity of (i) as soon as 
\begin{equation} \label{e: lower bound on tau}
    \tau^2 \geq \left( \frac{R_0}{C_2\, \bar M}\right)^{m+2} \, \beta^{m+2}\,.
\end{equation}

Next, fix $\delta := \min \left\lbrace \frac{\beta}{C_{1}}, \delta_0 \right\rbrace$. If $\eps_1$ and $\tau$ are sufficiently small, explicitly if $\eps_1 < \bar{\eps}_1$ and $\tau < \bar\tau$ with $\bar\eps_1$ and $\bar\tau$ defined by \eqref{e:first upper bound on tau} in correspondence with this choice of $\delta$, we can apply Lemma \ref{lem:White} to every cube $Q \in \mathcal{W}$. We can therefore guarantee that the conclusion in (i) is satisfied by choosing the constant $C_2$ so that we can find an appropriate $\tau$ satisfying
\begin{equation} \label{e: choice of tau}
    \left( \frac{R_0}{C_2\, \bar M}\right)^{m+2} \, \beta^{m+2} \leq \tau^2 < \bar\tau^2 \leq \left( \frac{1}{16 C_0 C_1} \right)^{m+2} \, \beta^{m+2}\,.
\end{equation}

From Lemma \ref{lem:White} it then follows that for every $Q \in \mathcal W$ there exists a $p$-multifunction $u^Q\in C^{1,\frac12}(4Q)$ over $\bS_0$ such that \eqref{e:Holder estimate on functions} and \eqref{T' is graphical} hold true with $\beta$ replacing $C_1\,\delta$ in the right-hand side of \eqref{e:Holder estimate on functions}. Since, for any two adjacent cubes $\tilde Q$ and $\hat Q$ in $\mathcal W$, the cubes $4\tilde Q$ and $4\hat Q$ intersect on a set of positive measure, each function $u^Q$ is the restriction, on $4Q$, of a \emph{unique} $p$-multifunction $u$ of class $C^{1,\frac12}$ on $U_{4 \mathcal W}=\bigcup_{Q \in \mathcal{W}} 4Q$ over $\bS_0$ satisfying (ii). In particular, it follows from \eqref{T' is graphical} that, setting $\mathcal{C}_{4\mathcal W, \delta}:= \bigcup_{Q\in \mathcal{W}} \mathcal{C}(4Q,\delta)$
\begin{equation} \label{e:T' is globally graphical}
    T' \mres \mathcal{C}_{4\mathcal W, \delta} =  \mathbf{G}_{\bS_0}(u) \,.
\end{equation}

Recalling that $T'=({\rm P}_0)_\sharp T$, and that, on the manifold $\Sigma$, ${\rm P}_0$ is invertible with inverse ${\rm Id} + \Psi$, the $p$-multifunction $v$ on $U_{4 \mathcal W}$ over $\bS_0$ defined by \eqref{e:function to manifold} satisfies
\begin{equation} \label{e:T is globally graphical}
    T \mres {\rm P}_0^{-1}(\mathcal C_{4\mathcal W,\delta}) = {\bf G}_{\bS_0}(v)\,.
\end{equation}
Conclusion (iii) follows then at once from Lemma \ref{lem:White}(a) which immediately implies that $\spt(T) \cap ( R_{\mathcal W} \setminus V ) \subset {\rm P}_0^{-1} (\mathcal{C}_{4\mathcal{W},\delta})$.

We finally come to (iv). Observe first that 
\[
\bB_2\setminus R_{\mathcal{W}} \subset \bigcup \left\lbrace \bB_{\varrho_{\mathcal{W}} (y)} (y) \, \colon \, y \in \left[-2,2\right]^{m-1} \quad \mbox{with} \quad \varrho_{\mathcal W}(y) > 0 \right\rbrace\, .
\]
Next, we observe that, by the definition of $\varrho_{\mathcal W}(y)$ and the properties of cubes in $\mathcal Q$, for each $y\in [-2,2]^{m-1}$ with $\varrho_{\mathcal W}(y) > 0$ there is a $Q\in \mathcal{Q}$ such that $|y-y_Q|\leq C \varrho_{\mathcal{W}} (y) \leq C d_Q$ and $\bE (T, \bS, y_Q, \bar M d_Q) \geq \tau^2$, where the constant $C$ depends only on $m$ and $M$. In particular, for some other positive constant $\bar C (m, M)$,
\[
\bE (T, \bS, y, \bar C\varrho_{\mathcal{W}} (y)) \geq \bar C^{-1} \tau^2\, .
\]
Apply Vitali's covering theorem to find pairwise disjoint balls $\bB_{r_i} (y_i):= \bB_{\bar C \varrho_{\mathcal{W}} (y_i)} (y_i)$ such that $\{\bB_{5r_i} (y_i)\}$ covers $\bB_2 \setminus R_{\mathcal{W}}$. Using the monotonicity formula, we then have
\begin{align*}
& \int_{\bB_2 \setminus R_{\mathcal W}} |x|^2 \, d\|T\| + \int_{\bB_2 \setminus R_{\mathcal W}} |x|^2 \, d\|\bC\|\\
\leq & \sum_i C r_i^2 ( \|T\|(\bB_{5r_i}(y_i)) + \|\bC\|(\bB_{5r_i}(y_i))  ) \leq \sum_i C r_i^{m+2}\\
\leq & \bar C \, \tau^{-2} \sum_i \int_{\bB_{r_i}(y_i)} \dist^2(\cdot,\bS) \, d\|T\| \leq \bar C \,\tau^{-2}\, \bE\, .
\end{align*}

The estimate in \eqref{e:L^2 estimate} then follows from the choice of $\tau$ in \eqref{e: choice of tau}. \qed

\section{Linear selection I: local algorithm}\label{sec:local}

We next observe that the the cone $\bC$ is also a $p$-multigraph over $\bS_0$. For this reason we introduce linear multifunctions over $\bS_0$.

\begin{definition}\label{d:linear}
A $p$-multifunction $l=\{l_{i,j}\}$ over $\bS_0$ will be called linear if, for each $i$ and $j$, $l_{i,j}: \bH_{0,i} \to \bH_{0,i}^\perp$ is linear and vanishes on the spine $V$.
\end{definition}

The following is then an obvious consequence of Theorem \ref{thm:graph_v1}

\begin{corollary}\label{c:ovvio}
Let $T$, $\Sigma$, $\bC$, $\bC_0$, $\bS := \spt (\bC)$, and $\bS_0 := \spt (\bC_0)$ be as in Theorem \ref{thm:graph_v1}, and consider the corresponding map $v$ and $U_{4\mathcal{W}}$ its domain. Then:
\begin{itemize}
    \item[(i)] there is a linear $p$-multifunction $l$ over $\bS_0$ such that $\bC = \bG_{\bS_0} (l)$;
    \item[(ii)] ${\rm dist}\, (q, \bS) = {\rm \dist}\, \left(q, \cup_h\, \spt \left(\bG_{\bH_{0,i}} (l_{i,h}) \right)\right)$ for each $q\in \spt(\bG_{\bS_0} (v_{i,j}) )$;
    \item[(iii)] there is a geometric constant $C$ such that
    \begin{equation}\label{e:L2-graphical-initial}
    C^{-1} \int_{R_\mathcal{W}} \dist (q, \bS)^2 \, d\|T\| (q)
    \leq \sum_{Q \in \mathcal W} \sum_{i=1}^{N_0} \int_{4 Q_i} \sum_{j=1}^{\kappa_{0,i}} \min_{1 \leq h \leq \kappa_{0,i}} |v_{i,j} (z) - l_{i,h} (z)|^2\, dz 
     \leq C \, \bE \,.
    \end{equation}
\end{itemize}
\end{corollary}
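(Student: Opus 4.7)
The plan is to prove (i) by a direct structural observation, (ii) by a quantitative angle-separation argument, and (iii) by combining the graphical identification provided by Theorem \ref{thm:graph_v1}(iii) with pointwise comparisons between graph heights and ambient distance.

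For (i), I would begin from Remark \ref{rmk:structure of C and S}, which already writes $\bC = \sum_{i,j} \a{\bH_{i,j}}$ with each $\bH_{i,j} = \ell_{i,j} \times V$ a halfplane sharing spine $V$ with $\bH_{0,i}$. The flat smallness in \eqref{e:hyp_flat_T_C0} combined with the choice of $\eta_{\bS_0}$ in \eqref{e:the flat smallness parameter} forces each ray $\ell_{i,j} \subset V^{\perp_0}$ to make an angle smaller than $\sphericalangle(\bS_0)/4$ with $\ell_{0,i}$ (the argument in Step one of the proof of Lemma \ref{lem:White}, applied now to $\bC$ rather than $T$, yields a pointwise control of the distance from $\bS$ to $\bS_0$). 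Under such an angle bound, $\bH_{i,j}$ is the graph over $\bH_{0,i}$ of a unique linear map $l_{i,j}\colon \bH_{0,i}\to \bH_{0,i}^{\perp_0}$ vanishing on $V$, obtained by resolving the unit generator of $\ell_{i,j}$ into components parallel and orthogonal to $v_{0,i}$ inside $V^{\perp_0}$. The collection $l=\{l_{i,j}\}$ is the desired linear $p$-multifunction, and by construction $\bC = \bG_{\bS_0}(l)$.

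For (ii), I would use an angle-separation argument. If $q = z + v_{i,j}(z)$ lies on the graph of $v_{i,j}$, Theorem \ref{thm:graph_v1}(ii) gives $|v_{i,j}(z)| \leq \beta\,|x|$, while by (i) together with the smallness chosen in \eqref{e:the flat smallness parameter}, every page $\bH_{i',h}$ of $\bS$ with $i'\neq i$ is contained in an $\sphericalangle(\bS_0)/4$-neighborhood of $\bH_{0,i'}$; hence every point of $\bH_{i',h}$ at distance comparable to $|x|$ from $V$ is separated from $\bH_{0,i}$ by at least a dimensional multiple of $\sphericalangle(\bS_0)\,|x|$. Choosing $\beta \leq \beta_0(\bS_0)$ small enough (a legitimate choice, since $\beta$ is a free parameter in Theorem \ref{thm:graph_v1}), the infimum defining $\dist(q,\bS)$ must be attained on a page with the same index $i$, giving (ii).

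Finally, for (iii), the identity $T \mres R_{\mathcal W} = \bG_{\bS_0}(v) \mres R_{\mathcal W}$ from Theorem \ref{thm:graph_v1}(iii) together with the area formula converts the left-hand side into a sum of integrals over sets $D_{i,j}^Q \subset 4Q_i$ of $\dist^2(z+v_{i,j}(z),\bS)\,J_{v_{i,j}}(z)\,dz$, with Jacobian $J_{v_{i,j}} \in [\sfrac12,2]$ by the $C^1$-smallness of $v$. The pointwise bound $\dist(z+v_{i,j}(z),\bS) \leq |v_{i,j}(z) - l_{i,h}(z)|$ (valid for every $h$, since $z + l_{i,h}(z)$ lies on $\bS$), combined with (ii) and the enlargement $D_{i,j}^Q \subset 4Q_i$, yields the first inequality. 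For the reverse, if $\bar q = \bar z + l_{i,h}(\bar z)$ realizes $\dist(q,\cup_h \spt(\bG_{\bH_{0,i}}(l_{i,h})))$, the orthogonality of $\bar z - z \in \bH_{0,i}$ and $v_{i,j}(z) - l_{i,h}(\bar z) \in \bH_{0,i}^{\perp_0}$ gives $|v_{i,j}(z) - l_{i,h}(\bar z)| \leq |q-\bar q|$, and the Lipschitz bound on $l_{i,h}$ (of order $O(\beta)$) yields $|v_{i,j}(z) - l_{i,h}(z)| \leq (1+C\beta)\,\dist(q,\bS)$ by (ii). The main obstacle in closing the argument is that this pointwise estimate lives on all of $4Q_i$, strictly larger than the projection of the portion of the graph sitting inside $R_{\mathcal W}$; to transfer the resulting integral back to one against $\|T\|$, one has to invoke the stronger graphical identification already available from the proof of Lemma \ref{lem:White}, namely that $T$ coincides with $\bG_{\bS_0}(v)$ on the entire cylinder ${\rm P}_0^{-1}(\mathcal C(4Q,\delta))$, not merely on $R_{\mathcal W}$. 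Together with the bounded overlap of the family $\{4Q\}_{Q \in \mathcal W}$ (of order depending only on $m$ and $M$), this yields the upper bound by $C\,\bE$.
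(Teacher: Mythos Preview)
Your argument is correct and is exactly the fleshing-out the paper has in mind: the authors simply declare the corollary ``an obvious consequence of Theorem \ref{thm:graph_v1}'' and give no proof. One minor imprecision worth fixing is that the Lipschitz constant of $l_{i,h}$ is controlled by the closeness of $\bC$ to $\bC_0$ (i.e.\ by $\eta_{\bS_0}$ via \eqref{e:hyp_flat_T_C0} or by $\eps_1$ via \eqref{e:hyp_graph_cone}), not by $\beta$; since these parameters are all small the conclusion is unaffected.
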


The main purpose of this and the next section is, roughly speaking, to take out of the integral the $\min$ in \eqref{e:L2-graphical-initial}.

\begin{theorem}[Improved estimate]\label{thm:graph v2}
Let $T$, $\Sigma$, $\bC$, $\bC_0$, $\bS := \spt (\bC)$, and $\bS_0 := \spt (\bC_0)$ be as in Theorem \ref{thm:graph_v1}, let $u$ and $U_{4 \mathcal{W}}$ be the corresponding map and its domain, and let $l$ be as in Corollary \ref{c:ovvio}(i). There are a geometric constant $C$ and a selection function $h:(i,j)\mapsto h (i,j) \in \{1, \ldots \kappa_{0,i}\}$ such that if $\tilde l$ denotes the linear $p$-multifunction $\{\tilde l_{i,j} = l_{i,h(i,j)}\}$ and $w$ denotes the $p$-multifunction on $U_{4\mathcal W}$ over $\bS_0$ defined by
\begin{equation} \label{e:difference function}
    w_{i,j} := u_{i,j} - \tilde l_{i,j}\,,
\end{equation}
then
\begin{align}\label{eqn:Linfty estimate excess}
	 	\sup_{\zeta = (t,y) \in U_{3 \mathcal W}}t^{\frac{m}{2}+1}\left( t^{-1} \abs{w(\zeta)} + \abs{Dw(\zeta)}+ t^{\sfrac12} [Dw]_{\sfrac12}(\zeta) \right)&\le C\, (\bE+\bA^2)^{\sfrac12}\,,\\ \label{L2 estimate excess}
 \sum_{Q \in \mathcal W} \sum_{i=1}^{N_0} \sum_{j=1}^{\kappa_{0,i}}	\int_{3 Q_i}   (|w_{i,j} (z)|^2 + \abs{x}^2 \abs{Dw_{i,j}(z)}^2) \, dz &\le C\, (\bE+\bA^2) \,,
\end{align}
where, for $z \in 3 Q_i$, $|x|$ denotes, as usual, the distance of $z$ from $V$.
\end{theorem}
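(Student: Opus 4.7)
The plan is to combine three ingredients: (a) a local selection producing, on each Whitney cube $Q$, an index $h_Q(i,j)$ such that $u_{i,j}$ is $L^2$-close to $l_{i,h_Q(i,j)}$ on $Q$; (b) a Harnack-type rigidity argument that promotes the selection to a single global map $h(i,j)$ independent of $Q$; and (c) Schauder-type interior estimates converting $L^2$ control into the weighted pointwise bound \eqref{eqn:Linfty estimate excess}.

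For step (a), Corollary \ref{c:ovvio}(iii) already gives
\[ \sum_{Q \in \mathcal{W}} \sum_{i=1}^{N_0}\sum_{j=1}^{\kappa_{0,i}} \int_{4 Q_i} \min_{1\le h\le\kappa_{0,i}} |v_{i,j}(z)-l_{i,h}(z)|^2\,dz \;\lesssim\; \bE. \]
A pigeonhole in $h\in\{1,\ldots,\kappa_{0,i}\}$ applied cube-by-cube produces $h_Q(i,j)$ attaining, up to a factor depending only on $p$, the same bound without the minimum. Replacing $v_{i,j}$ by $u_{i,j}$ costs an $L^2$ error of order $\int_{4Q_i}|\Psi(z+u_{i,j}(z))|^2\,dz \lesssim \bA^2|Q|$, which accounts precisely for the $\bA^2$ summand in \eqref{L2 estimate excess}.

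For step (b), I would use that each $u_{i,j}$ (after rescaling $Q$ to unit size) solves the graphical minimal surface equation on $\Sigma$ with $C^{1,1/2}$-small data, by Theorem \ref{thm:graph_v1}(ii). Schauder plus interpolation imply that if $u_{i,j}$ is $L^2$-close to two candidates $l_{i,h_1},l_{i,h_2}$ on overlapping domains, then $l_{i,h_1}-l_{i,h_2}$ must be $L^\infty$-small on a subregion; but any two distinct linear functions on $\bH_{0,i}$ vanishing on $V$ are separated, in $L^\infty$ on any subdomain of the scale of the cube, by a quantitative multiple of the slope gap times the distance to $V$. Thus, once $\bE$ is smaller than the squared slope gap (a fixed geometric constant attached to the finitely many sheets of $\bC_0$ plus a universal perturbation), $h_Q(i,j)$ is forced to be locally constant; propagating through adjacent cubes via the partial order $\preceq$ on $\mathcal W$, starting from a top-sublayer cube, yields a single global selection $h(i,j)$. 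This rigidity step is the \emph{main obstacle}: it relies on the precise structure of the linear multifunctions (vanishing on $V$, nowhere-vanishing slope differences) and is the content of a Harnack-type lemma (cf.\ Lemma \ref{lem.harmonic rigidity} in the introductory discussion of Section \ref{sec:local}), which must be robust enough to accommodate the fact that several distinct sheets $l_{i,h}$ may collapse onto the same halfplane $\bH_{0,i}$ and yet be non-equal.

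For step (c), with the global $h=h(i,j)$ fixed, set $w_{i,j}:=u_{i,j}-l_{i,h(i,j)}$. On each Whitney cube $Q$, $w_{i,j}$ satisfies the minimal surface equation linearised about $l_{i,h(i,j)}$ on $\Sigma$, with source of order $\bA$ from the ambient curvature, and with small $C^{1,1/2}$ coefficients by Theorem \ref{thm:graph_v1}(ii). Rescaling $Q_i\subset \bH_{0,i}$ to unit size and applying standard interior Schauder gives, in scale-invariant form,
\[ d_Q^{-1}\|w_{i,j}\|_{L^\infty(3Q_i)} + \|Dw_{i,j}\|_{L^\infty(3Q_i)} + d_Q^{1/2}[Dw_{i,j}]_{1/2,3Q_i} \;\lesssim\; d_Q^{-(m+2)/2}\|w_{i,j}\|_{L^2(4Q_i)} + d_Q\,\bA. \]
On $Q$ one has $t\sim d_Q$ (Whitney property \eqref{e:dist v diam}); multiplying the previous inequality by $d_Q^{m/2+1}$, using the summed $L^2$ bound $\|w_{i,j}\|_{L^2(4Q_i)}^2 \lesssim \bE+\bA^2$ established in step (a) (which is exactly \eqref{L2 estimate excess}), and taking the supremum over $Q \in \mathcal W$ and over $(i,j)$, yields \eqref{eqn:Linfty estimate excess}. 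The $t^{m/2+1}$ weight is precisely what is needed to make the inequality scale-invariant on each dyadic Whitney cube.
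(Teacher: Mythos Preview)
Your outline has the right three-step shape, but steps (a) and (b) both contain genuine gaps, and the role you assign to Lemma \ref{lem.harmonic rigidity} is misplaced.

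\textbf{Step (a).} Pigeonhole does \emph{not} allow you to pull the pointwise minimum $\min_h|u_{i,j}(z)-l_{i,h}(z)|^2$ out of the integral on a single cube. The optimal index $h$ can vary with the point $z\in 4Q_i$, and without using regularity of $u_{i,j}$ there is no reason a single $h_Q$ should satisfy $\int_{4Q_i}|u_{i,j}-l_{i,h_Q}|^2\lesssim \int_{4Q_i}\min_h|u_{i,j}-l_{i,h}|^2$. This is exactly what the Harnack-type Lemma \ref{lem.harmonic rigidity} is for: it uses that $u_{i,j}$ solves (approximately) the minimal surface equation to show that a \emph{single} $h$ realizes the minimum on all of $3Q_i$, up to a controlled constant. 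So the Harnack lemma belongs in step (a), not step (b), and is the content of Lemma \ref{l:local_selection}.

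\textbf{Step (b).} Your rigidity argument is based on a quantitative slope gap between distinct $l_{i,h}$, but no such gap exists: the $l_{i,h}$ parametrize the pages of $\bC$ (not $\bC_0$), and distinct values of $h$ may give the \emph{same} linear function, or linear functions whose slope difference is arbitrarily small compared to $\bE^{1/2}$. Hence the local selection $\bar h(Q)$ is genuinely allowed to jump from cube to cube, and the proof cannot proceed by showing it is locally constant. The paper's Lemma \ref{l:global_selection} takes a completely different route: it fixes $\hat h:=\bar h(\hat Q)$ at a single top-sublayer cube $\hat Q$ and then, for each $Q_0\in\mathcal W$, controls $\|u-l_{\hat h}\|_{L^\infty(3Q_0)}$ by chaining along the path $\mathscr W(Q_0)$ from $Q_0$ up to $\hat Q$. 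The triangle inequality produces telescoping terms $\|l_{\bar h(\phi(Q_0,s+1))}-l_{\bar h(\phi(Q_0,s))}\|$; each such term is bounded, via \eqref{linear estimate} and the local estimate \eqref{e:local-selection}, by the excess on the cube where the jump occurs. The key finiteness input is that $\bar h$ takes only $\kappa_0$ values, so there are at most $\kappa_0$ jumps, and the resulting sum of errors is controlled by $\sum_{Q\in\mathcal W}\mu(4Q)$ with the correct weights. This combinatorial chaining argument is what replaces your (unavailable) slope-gap rigidity.

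Your step (c) is essentially correct and matches the paper.
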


The selection function $(i,j) \mapsto h(i,j)$ identifies a new cone $\tilde{\bC}$ and a new open book $\tilde{\bS} \subset \bS$ as follows.

\begin{definition}\label{d:new-cone}
Let $\tilde l$ be the linear $p$-multifunction $\{\tilde l_{i,j}=l_{i,h(i,j)}\}$ from Theorem \ref{thm:graph v2}. We denote by $\tilde{\bC}$ the cone given by $\bG_{\bS_0}(\tilde{l})$. The corresponding open book $\spt (\tilde{\bC})$ is denoted by $\tilde{\bS}$.
\end{definition}

\begin{remark} Observe that 
\begin{equation}\label{e:new_book}
\tilde{\bS} = \bigcup_{i,j} \bH_{i,h(i,j)} =: \bigcup_{i,j} \tilde{\bH}_{i,j}\, . 
\end{equation}
Clearly the halfspaces appearing in \eqref{e:new_book} are not necessarily distinct, namely it might be that $\tilde{\bH}_{i,j} = \tilde{\bH}_{i', j'}$ for distinct pairs $(i,j)$ and $(i',j')$. Moreover
\[
\tilde{\bC} = \sum_{i,j} \a{\bH_{i,h(i,j)}} = \sum_{i,j} \a{\tilde{\bH}_{i,j}}\, ,
\]
and thus each page $\tilde{\bH}_{i,j}$ is counted in $\tilde{\bC}$ with a multiplicity that equals the number of pairs $(i',j')$ such that $h (i',j') = h (i,j)$.

However, since the $\|l_{i,j}\|_\infty$ is suitably small compared to the minimal angle between distinct pages of $\bS_0$ (cf. \eqref{e:hyp_flat_T_C0} and \eqref{e:the flat smallness parameter}), we at least know that $\tilde{\bH}_{i,j} \neq \tilde{\bH}_{i',j'}$ whenever $i\neq i'$. In particular we can conclude that:
\begin{itemize}
\item $\tilde{\bS}$ has at least as many pages as $\bS_0$; 
\item $\tilde{\bS} \subset \bS$, so that in particular $\tilde{\bS}$ has no more pages than $\bS$
\item $\tilde{\bH}_{i,j}$ has a multiplicity in $\tilde{\bC}$ which is at most $\kappa_{0,i}$. 
\end{itemize}
It is however possible that $\tilde{\bS}$ is a {\em strict} subset of $\bS$, i.e. that it has less pages than $\bS$. Likewise, the multiplicities, in the respective cones $\bC$ and $\tilde{\bC}$, of a page $\bH_{i,j}$ which is common to both $\bS$ and $\tilde{\bS}$ are just two, typically unrelated, integer numbers in $\{1, \ldots , \kappa_{0,i}\}$. 
\end{remark}

An important corollary of Theorem \ref{thm:graph v2} is that, in the graphicality region $R_{\mathcal W}$, the current $T$ coincides also with a $p$-multigraph over $\tilde{\bS}$.

\begin{corollary} \label{cor:reparametrization}
    Let $T, \Sigma, \bC, \bC_0, \bS := \spt(\bC)$, and $\bS_0:= \spt(\bC_0)$ be as in Theorem \ref{thm:graph v2}, and let $\tilde{\bS}$ be the open book in Definition \ref{d:new-cone}. There exists a $p$-multifunction $\tilde u = \{\tilde u_{i,j}\}$ over $\tilde{\bS}$ of class $C^{1,\frac12}$ on $U_{2 \mathcal W}$ and with $\tilde u_{i,j} \colon (\tilde U_{2 \mathcal W})_{i,j} \subset \tilde{\bH}_{i,j} \to \tilde{\bH}_{i,j}^{\perp_0}$ for all $i$ and $j$ such that
    \begin{equation} \label{e:rep_Holder}
        \| \tilde u \|_{C^{1,\frac12}(U_{2\mathcal W})} \leq \beta\,,
    \end{equation}
    and, denoting $\tilde v$ the $p$-multifunction on $U_{2 \mathcal W}$ over $\tilde \bS$ defined by
    \begin{equation} \label{e:rep_up to manifold}
        \tilde v_{i,j}(z) := \tilde u_{i,j}(z) + \Psi(z + \tilde u_{i,j}(z))\,,
    \end{equation}
    we have
    \begin{equation}\label{e:rep_parametrization}
        T \mres R_{\mathcal W} = {\bf G}_{\tilde{\bS}}(\tilde v) \mres R_{\mathcal W}
    \end{equation}
   and 
    \begin{equation}\label{e:comparison-tilde-u-w}
\int_{(\tilde U_{2\mathcal W})_{i,j} \cap \bB_2} (|\tilde{u}_{i,j}|^2 + |x|^2 |\nabla \tilde{u}_{i,j}|^2) 
\leq C \int_{(U_{3 \mathcal W})_i\cap \bB_3} (|w_{i,j}|^2 + |x|^2 |\nabla w_{i,j}|^2) \leq C \, (\bE + \bA^2)\,,
\end{equation}
where $w$ is the $p$-multifunction over $\bS_0$ defined in \eqref{e:difference function}. Moreover, combining this with \eqref{e:L^2 estimate} we trivially have
\begin{equation}\label{eq:tildeexcesvsexcess}
    r^{m+2}\bE(T,\tilde\bS, 0, r)\leq C\,(\bE+\bA^2)\,,\qquad 0<r<2\,.
\end{equation}

Finally, for every fixed $\tilde{\eta}>0$, if $\varepsilon_1$ in \eqref{e:hyp_graph_cone}-\eqref{e:hyp_graph_current} and $\eta_{\bS_0}$ in \eqref{e:hyp_flat_T_C0} are chosen sufficiently small, then 
\begin{equation}\label{e:flat_C_tildeC}
\hat\flat^p_{\bB_{R_0}} (\tilde{\bC} - \bC) < \tilde{\eta}\, .
\end{equation}
\end{corollary}

\smallskip

In this section we will show that a suitable selection $h (i,j)$ as in Theorem \ref{thm:graph v2} exists at the level of each cube in the Whitney domain $\mathcal{W}$. Such local selection algorithm depends on an appropriate Harnack-type estimate and will result in Lemma \ref{l:local_selection} below. How to choose the same $h (i,j)$ on all cubes of the Whitney domain so that the conclusions of Theorem \ref{thm:graph v2} and Corollary \ref{cor:reparametrization} hold will instead be explained in the next section.

\begin{lemma}[Local linear selection]\label{l:local_selection}
There is a constant $C$ depending only on $m$ and $p$ such that the following holds.
Let $T$, $\Sigma$, $\bC$, $\bC_0$, $\bS := \spt (\bC)$, and $\bS_0 := \spt (\bC_0)$ be as in Theorem \ref{thm:graph_v1}, let $u$, $v$ and $U_{4 \mathcal{W}} = \cup_{Q \in \mathcal W} 4Q$ be the corresponding maps and their domain, and let $l$ be as in Corollary \ref{c:ovvio}(i). Consider any $i$ and $j$ and any cube $Q\in \mathcal{W}$. Then there is a $\bar h$, which depends on $i$, $j$, and $Q$, such that
\begin{align}
&d_Q^m \| u_{i,j} - l_{i,\bar h}\|^2_{L^\infty (3 Q_i)} + d_Q^{2+m} \, \|D (u_{i,j} -l_{i,\bar h})\|^2_{L^\infty (3 Q_i)} + d_Q^{3+m} \, [D u_{i,j}]^2_{\frac{1}{2}, 3 Q_i}  \nonumber\\
\leq & C \int_{4Q_i} \min_h |u_{i,j} (z) - l_{i,h} (z)|^2\, dz
+ C \bA^2 d_Q^{2+m}\, \label{e:local-selection}
\end{align}
(where we recall that $d_Q$ denotes the diameter of $Q$). 
\end{lemma}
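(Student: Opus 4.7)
The plan is to fix $i$, $j$, and $Q \in \mathcal{W}$, rescale $4Q_i$ to a cube of unit diameter via $z \mapsto (z - c_Q)/d_Q$, and observe that the claimed inequality is scale-invariant once one makes the replacement $\bA \mapsto \bA\, d_Q$. In the rescaled picture, the function $u_{i,j}$ is, through the parametrization $v_{i,j} = u_{i,j} + \Psi \circ (\mathrm{Id} + u_{i,j})$ of Theorem \ref{thm:graph_v1}, a section of $\Sigma$ whose graph satisfies the minimal surface equation and whose $C^{1,1/2}$ norm is at most the small constant $\beta$. Hence $u_{i,j}$ solves a quasilinear elliptic PDE close to the Laplace equation, with inhomogeneity of order $\bA$ coming from the second fundamental form of $\Sigma$. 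Since each $l_{i,h}$ is linear and vanishes on $V$, it is harmonic, so each difference $g_h := u_{i,j} - l_{i,h}$ solves the same near-linear elliptic equation.

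\textbf{Pigeonhole selection of $\bar h$.} The first step will be to choose $\bar h$ by a pigeonhole argument: at each $z \in 4 Q_i$, pick any minimizer $h^\ast(z) \in \{1,\ldots,\kappa_{0,i}\}$ of $h \mapsto |u_{i,j}(z) - l_{i,h}(z)|$, and partition $4Q_i$ (up to null sets) into the level sets $E_h := \{h^\ast = h\}$. Since there are at most $\kappa_{0,i} \leq p$ indices, some index $\bar h$ will satisfy $|E_{\bar h}| \geq |4Q_i|/\kappa_{0,i}$, and by construction
\begin{equation*}
\int_{E_{\bar h}} |u_{i,j} - l_{i,\bar h}|^2\, dz \;\leq\; \int_{4Q_i} \min_{h} |u_{i,j} - l_{i,h}|^2\, dz\,.
\end{equation*}

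\textbf{From $L^2$ on $E_{\bar h}$ to $C^{1,1/2}$ on $3Q_i$.} The crucial step will be to apply the Harnack-type rigidity estimate for near-solutions of the minimal surface equation (the aforementioned Lemma \ref{lem.harmonic rigidity}) to the function $g_{\bar h}$: $L^2$-smallness on a subset $E_{\bar h}$ of positive density upgrades to $L^\infty$-smallness on a slightly smaller subdomain, with an additive error controlled by the curvature source $\bA$. Rescaling back, this will produce
\begin{equation*}
d_Q^m \,\|u_{i,j} - l_{i,\bar h}\|_{L^\infty(3Q_i)}^2 \;\leq\; C \int_{4Q_i} \min_h |u_{i,j} - l_{i,h}|^2\, dz + C\, \bA^2 d_Q^{m+2}\,.
\end{equation*}
I will then close the argument with standard interior Schauder estimates for the near-linear elliptic PDE satisfied by $g_{\bar h}$: these promote the $L^\infty$ bound on $3Q_i$ to the $C^{1,1/2}$ bound in the statement, with the correct powers of $d_Q$. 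The Hölder seminorm term is handled for free since $l_{i,\bar h}$ is linear, so that $[D(u_{i,j} - l_{i,\bar h})]_{1/2,\,3Q_i} = [Du_{i,j}]_{1/2,\,3Q_i}$.

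\textbf{Main obstacle.} The hard part is the Harnack-type rigidity, since $L^2$ smallness on a proper subset of positive density does not, in general, control the $L^\infty$ norm on a larger domain for an arbitrary elliptic PDE. The quantitative unique-continuation statement we need (isolated as Lemma \ref{lem.harmonic rigidity}) is naturally proved by a compactness/contradiction argument: a sequence of putative counterexamples, normalized so that $\|g_k\|_{L^\infty(3Q_i)} = 1$, would enjoy uniform $C^{1,1/2}$ bounds inherited from Theorem \ref{thm:graph_v1}(ii), hence would admit a $C^1$-convergent subsequence whose limit is a harmonic function on $3Q_i$ vanishing on a set of positive measure, contradicting classical unique continuation for harmonic functions.
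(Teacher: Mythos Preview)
Your pigeonhole--plus--unique-continuation route does not go through, for two reasons.

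\textbf{What Lemma \ref{lem.harmonic rigidity} actually says, and how the paper uses it.} The lemma is \emph{not} a propagation-of-smallness-from-measurable-sets statement applied to a single difference $g_{\bar h}=u_{i,j}-l_{i,\bar h}$. It takes as input the whole ordered family $l_{i,1}<\dots<l_{i,\kappa_{0,i}}$ and the pointwise minimum, and outputs
\[
\min_{h}\|u_{i,j}-l_{i,h}\|_{L^\infty(3Q_i)}^{2}\;\le\;C\int_{4Q_i}\min_{h}|u_{i,j}-l_{i,h}|^{2}\,+\,C\bA^{2}\,.
\]
Its proof is by induction on the number $N$ of sheets and uses the Harnack inequality for the (signed) differences $g_N-u$ and $g_N-g_{N-1}$, exploiting the ordering $g_1<\dots<g_N$; there is no compactness or unique continuation. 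The paper then simply \emph{defines} $\bar h:=\arg\min_h\|u_{i,j}-l_{i,h}\|_{L^\infty(3Q_i)}$, and closes the gradient and H\"older seminorm bounds by interior Schauder for the linearized equation satisfied by $u_{i,j}-l_{i,\bar h}$. No pigeonhole is ever needed.

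\textbf{Why your argument fails.} After your pigeonhole choice, you need the implication ``$\int_{E_{\bar h}}|g_{\bar h}|^{2}$ small with $|E_{\bar h}|\ge c|4Q|$ $\Rightarrow$ $\|g_{\bar h}\|_{L^\infty(3Q)}$ small'', with an \emph{additive} error $C\bA^{2}$ and a constant $C$ independent of $g_{\bar h}$. This is false already for harmonic functions: take $g=r^{N}\cos(N\theta)$ on $B_4\subset\R^{2}$ and $E=B_1$; then $|E|/|B_4|$ is fixed, $\int_{E}g^{2}\sim N^{-1}$, but $\|g\|_{L^\infty(B_3)}=3^{N}$. The known propagation-of-smallness estimates from measurable sets are \emph{multiplicative}, of the form $\|g\|_{\infty,\text{inner}}\le C\|g\|_{\infty,E}^{\alpha}\|g\|_{\infty,\text{outer}}^{1-\alpha}$ with $\alpha<1$, and do not yield the linear bound you want. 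Your proposed compactness proof does not rescue this: after normalizing so that $\|g_k\|_{L^\infty(3Q_i)}=1$, the bound $\|u\|_{C^{1,1/2}}\le\beta$ from Theorem \ref{thm:graph_v1}(ii) translates into $\|g_k/\|g_k\|_{\infty}\|_{C^{1,1/2}}\le C\beta/\|g_k\|_{L^\infty(3Q_i)}$, which blows up when $\|g_k\|_\infty\to0$; and interior Schauder for the linearized equation would require control of $\tilde g_k$ on a domain strictly larger than $3Q_i$, which you do not have. In short, the selection of $\bar h$ must be made via the $L^\infty$-minimum (as the paper does), not by pigeonhole on the pointwise argmin sets.
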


\subsection{A Harnack type inequality}\label{sec.harmonic rigidity} We start with the Harnack-type estimate which, as explained in the paragraph above, is the main tool to obtain the local selection. It turns out that the only property needed on the linear functions $l_i$ is that they solve the minimal surface equation and since the argument would not be any simpler, we state the lemma under this more general assumption. 

\begin{lemma}\label{lem.harmonic rigidity}
For every $N\in \N$, $L>0$, and $s>m$ there is a constant $C=C(m,N,L,s)$ with the following property. Set $\lambda Q := \left[ -\lambda, \lambda \right]^m \subset \R^m$, and assume that
\begin{enumerate}
	\item[(a1)]  $g_1<g_2< \dotsm < g_N$ is an ordered family of solutions to the minimal surfaces equation in $4 Q \subset \R^m$ with $\norm{\nabla g_j}_{L^\infty(4 Q)} <L$ for every $j=1,\ldots,N$;
	\item[(a2)] $u \in C^2(4 Q)$ with $\norm{\nabla u}_{L^\infty(4 Q)} <L$ is a solution to
	\[ \Div\left( \frac{\nabla u}{\sqrt{1+\abs{\nabla u}^2}}\right) = H\,, \]
	for some $H$ of the form $H=\Div(H_1)+H_2$ with $H_1 \in L^s(4Q)$ and $H_2 \in L^{\sfrac{s}{2}}(4 Q)$.
\end{enumerate}
Then
\begin{align}
& \min_{i=1,\ldots, N} \|u-g_i\|_{L^\infty \left(3 Q\right)}
\le\; &  C \left( \int_{4 Q} \min_{i=1, \ldots, N} \abs{u-g_i}^2\, dz+\left(\|H_1\|_{L^s(4 Q)} + \|H_2\|_{L^{\sfrac{s}{2}}(4 Q)}\right)^2 \right)^{\sfrac{1}{2}}\,.\label{eq:estimate} 
\end{align}
\end{lemma}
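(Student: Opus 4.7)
The plan is to prove the estimate by a linearization-plus-compactness argument, with the compactness serving as a substitute for a direct quantitative selection of the best index $i$. The first observation is that, for each $i$, if one subtracts the PDEs satisfied by $u$ and $g_i$ and applies the mean value theorem to the minimal-surface nonlinearity $F(p) := p/\sqrt{1+\abs{p}^2}$, then $v_i := u - g_i$ satisfies a linear divergence-form PDE $\Div(A_i(z)\nabla v_i) = \Div(H_1) + H_2$ in $4Q$, where $A_i(z) := \int_0^1 DF(\nabla g_i + t\nabla v_i)\,dt$ is uniformly elliptic with ellipticity constants depending only on $L$. Together with the pointwise gradient bounds (a1)--(a2), this gives, via standard interior $C^{1,\alpha}$ regularity (De~Giorgi--Nash--Moser applied to the linearization with H\"older coefficients, or Ladyzhenskaya--Uraltseva for the original quasilinear equation), uniform $C^{1,\alpha}_{\mathrm{loc}}$ bounds on $u$ and each $g_i$ depending only on $(m, L, s)$ and on $\|H_1\|_{L^s(4Q)}$, $\|H_2\|_{L^{s/2}(4Q)}$; this is the only analytic ingredient needed for the compactness step that follows.

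I would then argue by contradiction. Suppose the estimate fails: then there exists a sequence of configurations $(u^{(k)}, g_1^{(k)}, \ldots, g_N^{(k)}, H_1^{(k)}, H_2^{(k)})$ satisfying (a1)--(a2) such that, after rescaling,
\[
\min_i \|u^{(k)} - g_i^{(k)}\|_{L^\infty(3Q)} = 1, \qquad \int_{4Q} \min_i \abs{u^{(k)} - g_i^{(k)}}^2 + \|H_1^{(k)}\|_{L^s(4Q)}^2 + \|H_2^{(k)}\|_{L^{s/2}(4Q)}^2 \to 0.
\]
The uniform $C^{1,\alpha}_{\mathrm{loc}}$ bounds then allow extraction of a subsequence along which $u^{(k)} \to u$ and $g_i^{(k)} \to g_i$ in $C^1_{\mathrm{loc}}(4Q)$. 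Passing to the limit in the equations, $u$ and each $g_i$ solve the classical homogeneous minimal surface equation in $4Q$, with $g_1 \leq \cdots \leq g_N$ and $\abs{\nabla u}, \abs{\nabla g_i} \leq L$.

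To close the contradiction I would exploit the vanishing $L^2$ quantity: continuity of the limits forces $\min_i \abs{u(z) - g_i(z)} = 0$ for every $z \in 4Q$, so the closed sets $E_i := \{u = g_i\}$ cover the connected open set $4Q$, and the Baire category theorem produces an index $i_0$ for which $E_{i_0}$ has nonempty interior. The difference $u - g_{i_0}$ vanishes on this open set and satisfies a uniformly elliptic linear divergence-form PDE with H\"older-continuous coefficients; strong unique continuation (equivalently, real analyticity of solutions of the minimal surface equation, since $F$ is real-analytic on $\{\abs{p}\leq L\}$) forces $u \equiv g_{i_0}$ throughout $4Q$, giving $\|u - g_{i_0}\|_{L^\infty(3Q)} = 0$. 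But by uniform convergence this contradicts $\|u^{(k)} - g_{i_0}^{(k)}\|_{L^\infty(3Q)} \geq \min_i \|u^{(k)} - g_i^{(k)}\|_{L^\infty(3Q)} = 1$. The main technical obstacle is precisely this selection of index: any direct quantitative proof, for instance via pigeonhole applied to the sets $A_i := \{\sigma = i\}$ with $\sigma(z) := \arg\min_i \abs{u(z) - g_i(z)}$, is obstructed by the possibility that $u$ is close to different $g_i$'s on different subregions, so that no single $v_i$ is small in $L^2(4Q)$. Harnack's inequality applied to the positive differences $g_j - g_i$ ($i < j$, which solve the linearized MSE) controls only their oscillation, not their pointwise size, and the compactness approach cleanly bypasses this combinatorial issue at the cost of producing a nonconstructive constant $C = C(m,N,L,s)$.
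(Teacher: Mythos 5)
Your proposal takes a genuinely different route from the paper: you argue by compactness and contradiction, whereas the paper gives a direct, quantitative proof by induction on $N$, running a two-sided Harnack-inequality argument (applied to the ordered differences $g_N-u$, $g_N-g_{N-1}$, etc.) together with the $N=1$ base case from \cite[Theorem 8.17]{GilbargTrudinger}. That direct route produces an explicit constant and, more importantly, avoids the normalization difficulty that your compactness argument runs into.

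The serious gap is the ``rescaling'' step. You normalize so that $\min_i\|u^{(k)}-g_i^{(k)}\|_{L^\infty(3Q)}=1$ while keeping (a1)--(a2). But the minimal surface equation is not homogeneous: if $g$ solves $\Div\bigl(\nabla g/\sqrt{1+|\nabla g|^2}\bigr)=0$ then $\lambda g$ does not for $\lambda\neq 1$, so multiplying a failing configuration by $\lambda_k=\epsilon_k^{-1}$ destroys hypothesis (a1). What one can say is that $\epsilon_k:=\min_i\|u^{(k)}-g_i^{(k)}\|_{L^\infty(3Q)}$ is automatically bounded above (a Lipschitz consequence of the gradient bounds) and that $\delta_k^2:=\int\min_i|u^{(k)}-g_i^{(k)}|^2+K(H^{(k)})^2\to 0$, but there is nothing forbidding $\epsilon_k\to 0$ along the contradicting sequence. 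In that regime your compactness limit gives $u_\infty\equiv g_{i_0,\infty}$, which is perfectly consistent with $\epsilon_k\to 0$ and produces \emph{no contradiction}. Handling it would require a genuine blow-up: rescale the \emph{differences} $(u^{(k)}-g_{i_k}^{(k)})/\epsilon_k$, $(g_i^{(k)}-g_{i_k}^{(k)})/\epsilon_k$, establish Caccioppoli-type $W^{1,2}$ bounds for the rescaled functions from the divergence-form structure, pass to a \emph{linear} limit equation, and argue there. That is a substantial additional argument that the proposal does not supply.

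There are also two secondary points, both fixable. First, with only $H_1\in L^s$, $H_2\in L^{s/2}$ ($s>m$) you get interior $C^{0,\alpha}$ for $u$, not $C^{1,\alpha}$; the claimed $C^{1,\alpha}_{\mathrm{loc}}$ bounds for $u^{(k)}$ are not justified (and the coefficients $A_i=\int_0^1 DF(\nabla g_i+t\nabla v_i)\,dt$ are merely bounded measurable, not H\"older, for the same reason). Fortunately your limit argument only needs $C^0$ compactness of $u^{(k)}$ (from the Lipschitz bound) together with $C^\infty_{\mathrm{loc}}$ bounds for the $g_i^{(k)}$, which are classical. Second, once you know $\min_i|u_\infty-g_{i,\infty}|\equiv 0$ on $4Q$, you do not need unique continuation (whose applicability is dubious here given the coefficient regularity): since the $g_{i,\infty}$ are ordered solutions of the MSE, the strong maximum principle lets you show that each set $\{u_\infty=g_{i,\infty}\}$ is open (as well as closed), and connectedness of $4Q$ forces $u_\infty\equiv g_{i_0,\infty}$ for a single $i_0$. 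You should also address the possibility that $g_i^{(k)}-g_j^{(k)}$ diverges for some pairs $(i,j)$; those indices drop out of the minimum for large $k$ and can be discarded, but this needs to be said.
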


\begin{proof}

Setting $F(w):=\displaystyle \frac{w}{\sqrt{1+\abs{w}^2}}$ for $w \in \R^m$, assumptions (a1) and (a2) read
\begin{equation}\label{eq.minimal surface equations with rhs} \Div(F(\nabla g_j)) = 0 \;\; \mbox{for every $j$}\,, \quad \text{and} \quad \Div(F(\nabla u)) = H \quad \text{ in } 4Q. \end{equation}
We note that 
\begin{equation}\label{eq:matrix} DF(w)= \frac{1}{\sqrt{1+\abs{w}^2}} \left(\mathrm{Id} - \frac{w \otimes w}{1+\abs{w}^2}\right) \end{equation}
is a symmetric matrix with minimal and maximal eigenvalues $\lambda(w)=(1+\abs{w}^2)^{-\frac32}$ and $\Lambda(w)=(1+\abs{w}^2)^{-\frac12}$, respectively; in particular, $DF(w)$ is positive-definite, and $\Lambda/\lambda$ is bounded uniformly on $\{|w| \leq L\}$. \\


Arguing by induction on $N \geq 1$, we will prove \eqref{eq:estimate} with $3 Q$ replaced by $2^{-N} Q$ in the left-hand side; the estimate in \eqref{eq:estimate} will then follow by a classical covering argument.\\

\emph{Induction base: $N=1$.} In this situation the statement reduces to classical elliptic regularity: using \eqref{eq.minimal surface equations with rhs}, the function $w:=u-g_N$ solves  
\begin{equation}\label{eq:equation for w} \Div( A\nabla w)= H \text{ on } 4Q \end{equation}
where $A=A(z):= \int_0^1 DF(\nabla g_N(z) + t \nabla w(z))\,dt$ is uniformly elliptic by \eqref{eq:matrix}. Hence, \eqref{eq:estimate} follows from \cite[Theorem 8.17]{GilbargTrudinger}.\\

\emph{Induction step: $N-1 \to N$.} We split the proof into two cases, depending on the validity of
\begin{equation} \label{dichotomy}
\inf_{2Q} (g_N-u) \geq 0\,.\\
\end{equation}

\emph{First case: \eqref{dichotomy} holds.}
Set $K(H) := \|H_1\|_{L^s(4Q)} + \|H_2\|_{L^{\sfrac{s}{2}}(4Q)}$, and suppose further that, for a dimensional $\eps > 0$ to be chosen,
\begin{equation}\label{eq:close} 0\le \inf_{2Q} (g_N-u)< \max\{\epsilon\, \inf_{2Q} (g_N-g_{N-1}), \,K(H)\}\,. \end{equation}
Harnack's inequality, see \cite[Theorems 8.17, 8.18]{GilbargTrudinger}, then implies that, for some constant $\hat C=\hat C(m,L,s)$
\[\sup_{2Q} (g_N-u) \le \hat C \left(\inf_{2Q} (g_N- u)  + K(H) \right) \le 2 \hat C \max\{  \epsilon\, \inf_{2Q}( g_N-g_{N-1} ), K(H)\}\,.\]
If $K(H)\ge \epsilon\, \inf_{2Q}(g_N-g_{N-1})$ we conclude $\sup_{2Q} (g_N-u) \le 2 \hat C \,K(H)$, and thus \eqref{eq:estimate} follows. 
Otherwise, we deduce 
\[\sup_{2Q} (g_N-u) \le 2 \hat C \epsilon \,\inf_{2Q}( g_N-g_{N-1} ) < \frac12 \,\inf_{2Q}( g_N-g_{N-1} )\]
for $\epsilon<\frac{1}{4\hat C}$.  Hence, we have $g_N(z)-u(z) = \min_{i=1,\dots, N} \abs{u(z)-g_i(z)}$ for all $z\in 2Q$. The estimate follows now as in the case $N=1$. In conclusion, \eqref{eq:estimate} holds true if \eqref{eq:close} holds. 

Assume now that \eqref{dichotomy} holds but \eqref{eq:close} fails. Consider next a point $\bar z\in 2Q$. Should
\begin{equation}\label{e:N-minimizes-in-B1}
\arg\min_{i=1,\ldots, N} \abs{u(\bar z)-g_i(\bar z)} = N\, ,
\end{equation}
we must necessarily have $u(\bar z) \geq g_{N-1} (\bar z)$, otherwise \[|u(\bar z) - g_{N-1} (\bar z)| = g_{N-1} (\bar z) - u(\bar z) < g_N (\bar z) - u(\bar z) = |g_N (\bar z) - u(\bar z)|\,,\] 
a contradiction to \eqref{e:N-minimizes-in-B1}. Owing again to \eqref{dichotomy}, we then have
\begin{align*}
 \min_{i=1, \dotsc, N-1} \abs{u(\bar z)-g_i(\bar z)} &\leq
|u(\bar z) - g_{N-1} (\bar z)| = u(\bar z)-g_{N-1}(\bar z)  \le g_N(\bar z)-g_{N-1}(\bar z)\\
 &\le \hat C\, \inf_{2Q}(g_N-g_{N-1})
\le  \frac{\hat C}{\epsilon}\, (g_N(\bar z) - u(\bar z))\\ 
&=\frac{\hat C}{\eps}\,\min_{i=1,\dotsc,N} \abs{u(\bar z)-g_i(\bar z)}\, ,
\end{align*}
where we have again used Harnack's inequality to deduce the first inequality in the second line. On the other hand, if \eqref{e:N-minimizes-in-B1} fails at $\bar z$, then
\[
 \min_{i=1, \dotsc, N-1} \abs{u(\bar z)-g_i(\bar z)} = \min_{i=1, \dotsc, N} \abs{u(\bar z)-g_i (\bar z)}\, .
\] 
This implies that for \emph{every} $z\in 2Q$ 
\[ \min_{i=1, \dotsc, N-1} \abs{u(z)-g_i(z)}  \le \frac{\hat C}{\epsilon}\min_{i=1, \dotsc, N} \abs{u(z)-g_i(z)}\,.\]
Hence by the induction step we conclude
\begin{align}
\min_{i=1, \ldots, N} \sup_{2^{-N}Q} \abs{u-g_i}^2 &\leq \min_{i=1,\dotsc, N-1} \sup_{ 2^{-(N-1)} \frac{Q}{2}} \abs{u-g_i}^2\nonumber\\
&\le  C \int_{2Q} \min_{i=1, \dotsc, N-1} \abs{u-g_i}^2 + C\, K(H)^2\nonumber\\
& \le \frac{C}{\epsilon^2} \int_{2Q} \min_{i=1, \dotsc, N} \abs{u-g_i}^2 + C\, K(H)^2\,\label{eq.g_N>u} \end{align}
(observe that the inductive statement has been applied by replacing the outer cube $4Q$ with $2Q$ and the inner cube $2^{-(N-1)}Q$ with $2^{-N}Q = 2^{-(N-1)} \frac{Q}{2}$; this can however be easily achieved by scaling the original statement).\\

\emph{Second case: \eqref{dichotomy} fails.}  We will reduce the proof to the first case. As observed in the induction base, the function $w:=u-g_N$ solves \eqref{eq:equation for w}, hence $w^+:= \max\{w,0\}$ is a sub-solution to the same equation in $4Q$, and therefore by \cite[Theorem 8.17]{GilbargTrudinger}
\[ \sup_{3 Q} (w^+)^2 \le C \left(\int_{4Q} (w^+)^2 + K(H)^2\right) \le C \left(\int_{4Q} \min_{i=1, \dotsc, N} \abs{u-g_i}^2 + K(H)^2\right)\, , \]
where we have used that, by the ordering of the functions $g_i$, $w^+ = \min_i |u-g_i|$ on the set $\{w^+>0\}$. Define $\tilde{g}_N:=g_N+C\mathcal{D}$ where $\mathcal{D}^2 = \int_{4Q} \min_{i=1, \dotsc, N} \abs{u-g_i}^2 + K(H)^2$, in such a way that $\tilde{g}_N -u \geq 0$ in $3 Q$. Since  
\[
\abs{u(z)-\tilde{g}_N(z)}\le \abs{u(z)-g_N(z)} + C\mathcal{D}\,,
\]
the definition of $\mathcal{D}^2$ implies that, setting $\tilde{g}_i = g_i$ for $i<N$, 
\[ \int_{2Q} \min_{i=1, \dotsc, N} \abs{u-\tilde{g}_i}^2 + K(H)^2\le C\mathcal{D}^2\,.\]
Thus, the family $\{\tilde{g}_i\}_{i=1}^N$ and $u$ satisfy the assumptions of the lemma as well as \eqref{dichotomy} with $\tilde{g}_N$ in place of $g_N$, and therefore by \eqref{eq.g_N>u} 
\begin{align*} \min_{i=1,\dotsc, N} \sup_{2^{-N}Q} \abs{u-g_i}^2  &\le 2\min_{i=1,\dotsc, N} \sup_{2^{-N}Q} \abs{u-\tilde{g}_i}^2 + C\mathcal{D}^2\\&\le C \left(\int_{2Q} \min_{i=1, \dotsc, N} \abs{u-\tilde{g}_i}^2 + K(H)^2 \right) + C\mathcal{D}^2\le C\mathcal{D}^2\,.\end{align*}
This closes the induction step, and completes the proof.\end{proof}

\subsection{Proof of Lemma \ref{l:local_selection}} Let us fix $i \in \{1,\dots,N_0\}$ and $j \in \{1,\ldots,\kappa_{0,i}\}$, and consider the corresponding function $u_{i,j}$ as well as all the linear functions $\{l_{i,h}\}$ defined on the page $\bH_{0,i}$ of the book $\bS_0$. For brevity, we will drop the reference to the fixed pair $(i,j)$, and simply write $\kappa_0$, $\bH_0$ and $u$ for $\kappa_{0,i}$, $\bH_{0,i}$ and $u_{i,j}$, respectively. Fix any $m$-dimensional cube $Q$ in the Whitney domain $\mathcal{W}$ and, following the same convention just explained, identify it with $Q_i$. Observe also that, since the estimate \eqref{e:local-selection} is scaling invariant, we might assume, without loss of generality, that the sidelength of the cube $Q$ is $1$. 

By Theorem \ref{thm:graph_v1}, the graph of the function $z\mapsto v (z) =u (z) + \Psi(z + u(z))$ over $4Q$ is stationary in $\Sigma$. Let $g_{\Psi}:= ({\rm Id} + \Psi)^\sharp \delta_{\mathbb R^{m+n}}$ be the metric on $\pi_0$ defined in the proof of Lemma \ref{lem:White}. We fix an orthonormal basis $\{e_1,\ldots,e_m,e_{m+1}\}$ of $\pi_0$ with $e_{m+1}\in  \bH_0^\perp$, and define the function $\bH_0 \times \mathbb R \times \mathbb R^m \ni (z,\bar{u}, \bar{p})\mapsto \Phi (z, \bar{u}, \bar{p})\in \mathbb R^+$ as
\begin{equation} \label{e:integrando}
    \Phi(z,\bar u , \bar p ):=\sqrt{\det\left[ (g_\Psi)_{z+ \bar u e_{m+1}}( e_\alpha + \bar p_\alpha e_{m+1}, e_\beta + \bar p_\beta e_{m+1})\right]}\,.
\end{equation}
Since $\|u\|_{C^1} \leq \beta$ (cf. Theorem \ref{thm:graph_v1}(ii)), $u$ is then a critical point of the energy
\begin{equation}\label{e:funzionale}
\int_{4Q} \Phi (z, u (z), \nabla u (z))\, dz \, .
\end{equation}
Therefore, $u$ is a solution to the Euler-Lagrange equation for \eqref{e:funzionale}, which reads
\begin{equation}\label{e:elliptic_PDE}
{\rm div}\, (D_{\bar p} \Phi (z, u, \nabla u)) = \underbrace{D_{\bar u} \Phi (z,u, \nabla u)}_{=: H_2}\, .
\end{equation}
Since
\begin{align*} 
&D_{\bar p_\alpha}\Phi(z,\bar u, \bar p) - \frac{\bar p_\alpha}{\sqrt{1+\abs{\bar p}^2}}=R_\alpha(z,\bar u,\bar p) \text{ with } \abs{R_\alpha(z,\bar u,\bar p)}\le C \norm{D\Psi}_\infty(1 + \abs{\bar p})\,, \\
&\abs{D_{\bar u}\Phi(z,\bar u,\bar p)}\le C \norm{D^2 \Psi}_\infty
( 1 + \abs{\bar p})\,, 
\end{align*}
we can then conclude that $u$ solves 
\[
{\rm div}\, \left(\frac{\nabla u}{1+|\nabla u|^2}\right) = {\rm div}\,  \underbrace{(- R (z, u, \nabla u))}_{=: H_1} + H_2\, ,
\]
with $\|H_i\|_\infty \leq C \|D^2\Psi\|_\infty$ (where we have used that $\|D\Psi\|_\infty \leq C \|D^2 \Psi\|_\infty$ given that $D\Psi (0) =0$). On the other hand, the functions $l_h$, being linear, solve the minimal surface equation. We can thus apply (the rescaled version of) Lemma \ref{lem.harmonic rigidity} with $s=\infty$ to conclude the estimate
\[
\min_h \| u - l_h\|^2_{L^\infty (3 Q)} \leq C \int_{4Q} \min_h |u (z) - l_h (z)|^2\, dz + C d_Q^2\,\|D^2 \Psi\|_{L^\infty(4Q)}^2\, .
\]
We let then $\bar h$ be the index such that $\|u-l_{\bar h}\|_{L^\infty (3 Q)} = \min_h \| u - l_h \|_{L^\infty (3 Q)}$, and we estimate $\|D (u-l_{\bar h})\|_{L^\infty (3 Q)}$ and $[D (u- l_{\bar h})]_{\frac{1}{2}, 3 Q}$ using standard Schauder theory
for \eqref{e:elliptic_PDE}: since $Dl_h$ is a constant, and therefore $[D (u- l_h)]_{\frac{1}{2}, 3 Q} = [Du]_{\frac{1}{2}, 3 Q}$, \eqref{e:local-selection} is achieved by observing that $\|D^2\Psi\|_\infty \leq C \|A_\Sigma\|_\infty = C\, \bA$.


\section{Linear selection II: global algorithm}\label{sec:global}

In this section we complete the proof of Theorem \ref{thm:graph v2} and Corollary \ref{cor:reparametrization}. The key of the proof of Theorem \ref{thm:graph v2} is to show an analogue of Lemma \ref{l:local_selection} where the choice of $\bar h$ is independent of the cube $Q$. The relevant statement is thus the following.

\begin{lemma}[Global linear selection]\label{l:global_selection}
There is a constant $C$ depending only on $m$ and $p$ such that the following holds.
Let $T$, $\Sigma$, $\bC$, $\bC_0$, $\bS := \spt (\bC)$, and $\bS_0 := \spt (\bC_0)$ be as in Theorem \ref{thm:graph_v1}, let $u$, $v$ and $U_{4 \mathcal{W}} = \cup_{Q \in \mathcal W} 4Q$ be the corresponding maps and their domain, and let $l$ be as in Corollary \ref{c:ovvio}(i). Consider any $i$ and $j$, let $\hat Q\in \mathcal{W}$ be any cube which does not have any element above (cf. the partial order relation of Definition \ref{def:whitney}) and let $\hat h$ be the index $\bar h$ of Lemma \ref{l:local_selection} corresponding to $i,j$, and $\hat Q$. Then the following two estimates hold
\begin{align}
& \sup_{Q\in \mathcal{W}} \left(d_Q^m \| u_{i,j} - l_{i,\hat h}\|^2_{L^\infty (3 Q_i)} + d_Q^{2+m} \, \|D (u_{i,j} -l_{i,\hat h})\|^2_{L^\infty (3 Q_i)} + d_Q^{3+m} \, [D u_{i,j}]^2_{\frac{1}{2}, 3 Q_i} \right)\nonumber\\
\leq & C \sum_{Q\in \mathcal{W}} \int_{4Q_i} \min_h |u_{i,j} (z) - l_{i,h} (z)|^2\, dz
+ C \bA^2\label{e:global_selection_1}
\end{align}
\begin{align}
& \sum_{Q\in \mathcal{W}} d_Q^m \left( \| u_{i,j} - l_{i,\hat h}\|^2_{L^\infty (3 Q_i)} + d_Q^2 \, \|D (u_{i,j} -l_{i,\hat h})\|^2_{L^\infty (3 Q_i)} \right)\nonumber\\
\leq & C \sum_{Q\in \mathcal{W}} \int_{4Q_i} \min_h |u_{i,j} (z) - l_{i,h} (z)|^2\, dz
+ C \bA^2\, .\label{e:global_selection_2}
\end{align}
\end{lemma}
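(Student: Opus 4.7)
Fix indices $i \in \{1, \ldots, N_0\}$, $j \in \{1, \ldots, \kappa_{0,i}\}$ and abbreviate $u = u_{i,j}$, $l_h = l_{i,h}$; for each $Q \in \mathcal{W}$ let $\bar h_Q$ be an index supplied by Lemma \ref{l:local_selection}, set $E_Q := \int_{4 Q_i} \min_h |u - l_h|^2\, dz$, and $\mathcal{E} := \sum_{Q \in \mathcal{W}} E_Q + \bA^2$, so both right-hand sides of \eqref{e:global_selection_1} and \eqref{e:global_selection_2} equal $C\mathcal{E}$. The strategy is to propagate the selection $\hat h = \bar h_{\hat Q}$ from the top cube $\hat Q$ to every $Q \in \mathcal{W}$ along a chain of cubes in $\mathcal{W}$.

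Given $Q \in \mathcal{W}$, I would build a chain $\hat Q = Q_0, Q_1, \ldots, Q_N = Q$ with $N \lesssim 1 + \log_2(1/d_Q)$ and with the property that any two consecutive cubes $Q_a, Q_{a+1}$ have their $i$-th projections satisfying $|3(Q_a)_i \cap 3(Q_{a+1})_i| \geq c \min(d_{Q_a},d_{Q_{a+1}})^m$ for a dimensional $c > 0$. Concretely the chain alternates between horizontal walks within a dyadic layer (using that $V$-adjacent same-layer cubes have overlapping $3$-enlargements) and single vertical descents from a bottommost-radial-slot cube at layer $s$ to its aligned topmost-radial-slot child at layer $s+1$; an elementary check of the Whitney geometry gives the required overlap with positive spread in $V^\perp$, and each layer contributes $O_M(1)$ chain steps. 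For each consecutive pair the triangle inequality on the overlap together with Lemma \ref{l:local_selection} gives
\[
\|\nabla(l_{\bar h_{Q_a}} - l_{\bar h_{Q_{a+1}}})\| \leq C d_{Q_a}^{-(m+2)/2}\bigl(E_{Q_a}^{1/2} + E_{Q_{a+1}}^{1/2}\bigr) + C\bA,
\]
using that this difference is linear, vanishes on $V$, and that the overlap spans the $V^\perp$-direction at scale $\sim d_{Q_a}$.

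Telescoping along the chain, multiplying by $d_Q^{(m+2)/2}$, and applying the weighted Cauchy--Schwarz $(\sum_a x_a)^2 \leq (\sum_a w_a^{-1})(\sum_a w_a x_a^2)$ with $w_a = 2^{(k(Q) - s_a)\gamma}$ for a fixed $\gamma \in (0, 3)$ yields the per-cube bound
\[
d_Q^m \|l_{\hat h} - l_{\bar h_Q}\|_{L^\infty(3Q_i)}^2 \leq C\, d_Q^{m+2} \|\nabla(l_{\hat h} - l_{\bar h_Q})\|^2 \leq C \sum_{a \in \mathrm{chain}(Q)} 2^{-(k(Q) - s_a)(m+2-\gamma)} E_{Q_a} + C\bA^2 \leq C\mathcal{E},
\]
where the geometric series $\sum_a w_a^{-1} \leq C_M$ converges thanks to $\gamma > 0$, and the $\bA$-contribution $d_Q^{(m+2)/2} N \bA \leq C\bA$ is absorbed using $d_Q^{(m+2)/2} \log(1/d_Q) \leq C$. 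The sup bound \eqref{e:global_selection_1} then follows from the triangle inequality $\|u - l_{\hat h}\|_{L^\infty(3Q_i)} \leq \|u - l_{\bar h_Q}\|_{L^\infty(3Q_i)} + \|l_{\bar h_Q} - l_{\hat h}\|_{L^\infty(3Q_i)}$, Lemma \ref{l:local_selection}, and the per-cube bound above; the gradient and Hölder estimates are analogous since the linear difference has constant gradient and vanishing Hölder seminorm.

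For the summed version \eqref{e:global_selection_2} I sum the per-cube bound over $Q \in \mathcal W$ and swap the order of summation:
\[
\sum_{Q \in \mathcal{W}} d_Q^m \|l_{\hat h} - l_{\bar h_Q}\|_{L^\infty(3Q_i)}^2 \leq C \sum_{Q'} E_{Q'} \sum_{Q: Q' \in \mathrm{chain}(Q)} 2^{-(k(Q) - s(Q'))(m+2-\gamma)}.
\]
The inner sum is bounded uniformly in $Q'$ by $\sum_{k \geq s(Q')} C_M\, 2^{(m-1)(k - s(Q'))}\, 2^{-(k - s(Q'))(m+2-\gamma)} = C_M \sum_{t \geq 0} 2^{-t(3 - \gamma)} \leq C_M$, since for $\gamma \in (0,3)$ the net exponent $-(3-\gamma)$ is strictly negative; combined with the summed local estimate from Lemma \ref{l:local_selection} and a parallel bound for the $\bA$-contribution, this proves \eqref{e:global_selection_2}. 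The main obstacle is precisely this weighted Cauchy--Schwarz: the per-layer decay factor $2^{-(m+2)}$ along the propagation must beat the per-layer growth factor $2^{m-1}$ of descendants of a fixed cube, and the gap $(m+2) - (m-1) = 3 > 0$ is exactly what permits choosing $\gamma \in (0,3)$ so that both the per-cube and the summed estimate converge to a $C\mathcal{E}$-bound uniformly in $Q$.
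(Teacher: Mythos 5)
Your proof is correct, but it takes a genuinely different route from the paper's at the key step of propagating the selection across scales. You telescope the gradient difference $\nabla(l_{\hat h} - l_{\bar h_Q})$ along the \emph{entire} chain of $O(\log(1/d_Q))$ cubes from $\hat Q$ down to $Q$, and then control the resulting logarithmically-long sum with a weighted Cauchy--Schwarz whose weight exponent $\gamma$ is tuned to lie in the window $(0,3)$ so that both the partition-of-unity sum $\sum_a w_a^{-1}$ and the descendant-counting sum converge. The paper instead exploits a pigeonhole on the $\kappa_0$ available labels: it defines a recursive map $\phi(Q_0,\cdot)\colon\{0,\dots,\kappa_0\}\to\mathscr{W}(Q_0)$ (see the three properties (p1)--(p3)) that jumps through the chain by locating, for each label, the lowest cube along the chain carrying that label. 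This collapses the telescoping to at most $\kappa_0+1$ terms, so one squares a sum of boundedly many terms directly, with no Cauchy--Schwarz. Both approaches ultimately hinge on exactly the same numerical gap you identify --- the per-level decay $2^{-(m+2)}$ of $(d_{Q_0}/d_Q)^{m+2}$ beats the per-level descendant growth $2^{m-1}$, with margin $3>0$ --- so the counting step \eqref{stima per II} in the paper plays the same role as your interchange-of-summation argument. Your chain-construction and overlap claims (consecutive $3$-enlargements overlap, each layer contributes $O_M(1)$ steps, ancestors stay in $\mathcal W$ by Remark \ref{rem_whitney_houston}) are sketched rather than verified, but they do hold in the Whitney geometry of Section \ref{sec:graph}, and the gradient extraction from an $L^\infty$ estimate is even simpler than you suggest since $l_{i,h}-l_{i,h'}$ is a scalar multiple of the radial coordinate on $\bH_{0,i}$. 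One practical advantage of the paper's route worth noting: the recursive algorithm $\phi$ is reused verbatim twice later in the paper (e.g., in Step four of the proof of Theorem \ref{thm:est spine}, where the selection must also track the no-hole points $\xi_Q$), and carrying the pigeonhole structure rather than a weighted Cauchy--Schwarz makes that reuse cleaner; your approach would work there too, but would need to be re-run each time.
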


\begin{proof}[Proof of Theorem \ref{thm:graph v2}]

Setting $h(i,j):=\hat h$ from Lemma \ref{l:global_selection}, and recalling the definition for $w$ given in \eqref{e:difference function}, the estimates \eqref{eqn:Linfty estimate excess} and \eqref{L2 estimate excess} follow immediately from \eqref{e:global_selection_1} and \eqref{e:global_selection_2}, together with the simple observation that
\begin{equation*}
\sum_{Q\in \mathcal{W}} \int_{4Q_i} \min_h |u_{i,j} (z) - l_{i,h} (z)|^2\, dz 
\leq \sum_{Q \in \mathcal W} \int_{4 Q_i} \min_h |v_{i,j} (z) - l_{i,h} (z)|^2\, dz \leq C \bE\, ,
\end{equation*}
where the last inequality is \eqref{e:L2-graphical-initial}.
\end{proof}

\begin{proof}[Proof of Corollary \ref{cor:reparametrization}]

 Consider each $u_{i,j}$, defined on its respective domain $(U_{4 \mathcal W})_i \subset \bH_{0,i}$ as specified in Theorem \ref{thm:graph_v1}. For any $z \in (U_{4 \mathcal W})_i$, we let $\tilde z$ denote the orthogonal projection of $(z + u_{i,j}(z))$ on the page $\tilde{\bH}_{i,j}$ of $\tilde{\bS}$. It is easy to see that, if $\beta$ (which controls the Lipschitz constant of $u$, cf. Theorem \ref{thm:graph_v1}(ii)) is sufficiently small, the map $z\mapsto \tilde{z}$ is biLipschitz on its image, so that, in particular, we can define the maps
\[
\tilde u_{i,j}(\tilde z) := (z + u_{i,j}(z)) - \tilde z \in \tilde{\bH}_{i,j}^{\perp_0}\,, \qquad \tilde v_{i,j}(\tilde z) := \tilde u_{i,j}(\tilde z) + \Psi (\tilde z + \tilde u_{i,j}(\tilde z))\,.
\]
 
 Moreover, for every $\sigma > 0$ a suitable choice of $\beta$ (depending on $\sigma$) entails, for $z= (x,y)$ and $\tilde{z}=(\tilde{x},\tilde{y})$, that
\begin{equation} \label{la bilipschitz tilde}
\tilde y = y \,, \qquad (1-\sigma) |x|\leq |\tilde{x}|\leq (1+\sigma) |x|\, .
\end{equation}

 We denote by $\tilde{U}_{i,j}$ the corresponding domain of $\tilde{u}_{i,j}$ and $\tilde{v}_{i,j}$. Observe that, differently from $u,v$, and $w$, a domain $\tilde{U}_{i',j'}$ with $(i',j') \neq (i,j)$ cannot be recovered from $\tilde{U}_{i,j}$ through a rotation around the spine $V$. Nonetheless, by restricting each $\tilde u_{i,j}$ to a suitable subset of $\tilde U_{i,j}$, we can regard the collection $\{\tilde u_{i,j}\}$ (and thus, analogously, $\{\tilde v_{i,j}\}$) as a $p$-multifunction over $\tilde{\bS}$ on a domain which, in view of \eqref{la bilipschitz tilde} (and for an appropriate choice of $\beta$), contains $U_{2\mathcal W}$. In turn, the latter implies
\begin{equation}\label{e:cosa-manca-al-grafico}
\spt (T - \bG_{\tilde{\bS}} (\tilde{v})) \cap R_{\mathcal W} = \emptyset\,.
\end{equation}

This proves \eqref{e:rep_parametrization}. Concerning \eqref{e:comparison-tilde-u-w}, the first inequality follows readily from the definitions of the maps $w_{i,j}$ and $\tilde u_{i,j}$, whereas the second inequality is an immediate consequence of \eqref{L2 estimate excess}. Finally, \eqref{e:flat_C_tildeC} is a simple compactness argument: fix $\tilde\eta$ and assume $T_k, \Sigma_k, \bC_k$ satisfy the corresponding assumptions with vanishing $\varepsilon_1 (k)$ and $\eta_{\bS_0} (k)$. In particular, it follows readily that $\bC_k$ and $\tilde{\bC}_k$ converge to $\bC_0$ as well, and for $k$ sufficiently large we must satisfy \eqref{e:flat_C_tildeC}. 
\end{proof}

We are then only left with the proof of Lemma \ref{l:global_selection}.

\begin{proof}[Proof of Lemma \ref{l:global_selection}]
Fix $i,j$, and a cube $\hat Q$ as in the statement. As in the proof of Lemma \ref{l:local_selection}, we drop the subscripts $i,j$ and we identify $\bH_0=\bH_{0,i}$
with $[0, \infty)\times V$. 
Let also $\bar h: \mathcal{W}\ni Q \to \{1, \dotsc, \kappa_0\}$
be a map which selects, for each cube $Q \in \mathcal{W}$, the index $\bar h (Q)$ of an $L^\infty(3Q)$-optimal linear function in $\{l_h\}_{h=1}^{\kappa_0}$ as in Lemma \ref{l:local_selection}.

In order to simplify the estimates, let us introduce the monotone function
\begin{equation}\label{eq.set valued function}
	\mu(E):=\int_E \min_h |u - l_h|^2 + \abs{E}^{\frac{m+2}{m}} \bA^2 \quad \mbox{for $E \subset U_{4 \mathcal W}$ Borel}\,,
\end{equation}
so that \eqref{e:local-selection} can be re-written as 
\begin{equation}\label{eq.good selection on a single cube2}
	d_Q^m \left(\|u-l_{\bar h(Q)}\|_{L^\infty (3 Q)}^2 + 
	d_Q^2 \|D(u-l_{\bar h (Q)})\|_{L^\infty (3 Q)}^2 + d_Q^3 [Du]^2_{\frac{1}{2}, 3 Q} \right)\leq C\, \mu(4Q)\,.
\end{equation}
Recall the partial order relation $\preceq$ of Definition \ref{def:whitney} and Remark \ref{rem_whitney_houston}. For every $Q_0\in \mathcal{W}$, let $\mathscr{W}(Q_0)$ be the family of all cubes that are above $Q_0$ together with a shortest path of adjacent cubes in the top sub-layer that connects this family to $\hat{Q}$ (cf. Figure \ref{figura-6}). We index the elements $\mathscr{W}(Q_0) = \{Q_0, \ldots, Q_{\hat N}\}$ with $Q_{\hat N} = \hat Q$, and $Q_i$ either immediately above $Q_{i-1}$ (when $Q_{i-1}$ does not belong to the top sub-layer) or adjacent (if $Q_{i-1}$ belongs to the top sub-layer), and we say that $Q_{i}$ comes right after $Q_{i-1}$. 

Next we select $\kappa_0+1$ elements $\phi (Q_0, s)$ (with ${s\in \{0, \ldots, \kappa_0\}}$) from $\mathscr{W} (Q_0)$, where the function $\phi (Q_0, \cdot)\colon \{0, \ldots, \kappa_0\} \to \mathscr{W} (Q_0)$ is defined through
the following recursive algorithm: 
\begin{itemize}
\item[(a)] $\phi(Q_0, \kappa_0):=Q_{\hat N} = \hat Q$;
\item[(b)] for $0 \leq s \leq \kappa_0-1$ we define $\phi (Q_0, s)$ by:
\begin{itemize}
    \item[$\bullet$] $Q_0$ if $\bar h (\phi(Q_0,s+1)) = \bar h (Q_0)$,
    \item[$\bullet$] otherwise $\phi (Q_0, s)$ is the cube $Q_i$ such that $i+1$ is the smallest index such that $\bar h (Q_{i+1}) = \bar h (\phi (Q_0, s+1))$. 
\end{itemize}
\end{itemize}

\begin{figure}\label{figura-6}
\begin{tikzpicture}
\fill[gray!60] ({25*0.125}, {0.5+2*0.125}) -- ({26*0.125}, {0.5+2*0.125}) -- ({26*0.125}, {0.5+3*0.125}) -- ({25*0.125}, {0.5+3*0.125}) -- ({25*0.125}, {0.5+2*0.125});
\fill[gray!60] ({25*0.125}, {0.5+3*0.125}) -- ({26*0.125}, {0.5+3*0.125}) -- ({26*0.125}, {0.5+4*0.125}) -- ({25*0.125}, {0.5+4*0.125}) -- ({25*0.125}, {0.5+3*0.125});
\foreach \l in {1,...,4}
\fill[gray!60] ({12*0.25}, {1+(\l-1)*0.25}) -- ({12*0.25}, {1+\l*0.25}) -- ({13*0.25}, {1+\l*0.25}) -- ({13*0.25}, {1+(\l-1)*0.25}) -- ({12*0.25}, {1+(\l-1)*0.25});
\foreach \l in {1,...,4}
\fill[gray!60] ({6*0.5}, {2+(\l-1)*0.5}) -- ({6*0.5}, {2+\l*0.5}) -- ({7*0.5}, {2+\l*0.5}) -- ({7*0.5}, {2+(\l-1)*0.5}) -- ({6*0.5}, {2+(\l-1)*0.5});
\fill[gray!60] (1,4) -- (3,4) -- (3,3.5) -- (1,3.5) -- (1,4);
\foreach \x in {1,...,4}
\draw (0, {2+0.5*\x}) -- (6,{2+0.5*\x}) (0,{1+0.25*\x}) -- (6,{1+0.25*\x}) (0,{0.5+0.125*\x}) -- (6,{0.5+0.125*\x}) (0,{0.25+0.0625*\x}) -- (6,{0.25+0.0625*\x}) (0,{0.125+0.03125*\x}) 
-- (6,{0.125+0.03125*\x});
\draw (0,0.125) -- (6,0.125);
\foreach \y in {0,...,12}
\draw ({0.5*\y},2) -- ({0.5*\y},4); 
\foreach \y in {0,...,24}
\draw ({0.25*\y},1) -- ({0.25*\y},2);
\foreach \y in {0,...,48}
\draw ({0.125*\y},0.5) -- ({0.125*\y},1);
\foreach \y in {0,...,96}
\draw ({0.0625*\y},0.25) -- ({0.0625*\y},0.5);
\foreach \y in {0,...,192}
\draw ({0.03125*\y},0.125) -- ({0.03125*\y},0.25);
\fill[black] (0,0) -- (6,0) -- (6,0.125) -- (0,0.125) -- (0,0);
\draw[very thick] (0,0) -- (6,0);
\node[above] at (1.25,4) {$\hat{Q}$};
\end{tikzpicture}
\caption{An example of $\mathscr{W}(Q_0)$.}
\end{figure}
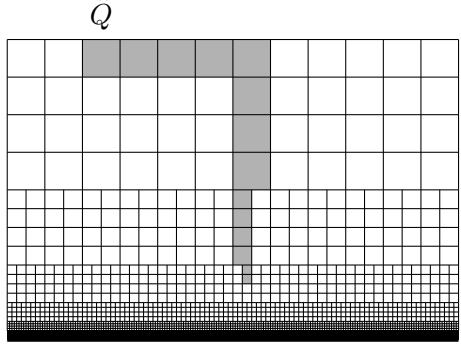

In particular, the map $\phi$ enjoys the following properties:
\begin{enumerate}
\item[(p1)] $\phi(Q_0,\kappa_0)=\hat{Q}$ and $\phi(Q_0,0)=Q_0$ for all $Q_0\in \mathcal{W}$,
\item[(p2)] $\phi(Q_0,s)\in \mathscr{W}(Q_0)$ for all $s\in \{0,\dotsc, \kappa_0\}$, 
\item[(p3)] If $s\leq \kappa_0-1$ and $Q^\top$ denotes the cube that comes right after $Q$, then 
\[
\bar h((\phi (Q_0, s))^\top)=\bar h(\phi(Q_0,s+1))\, .
\]
\end{enumerate}
Since $l_h$ are linear functions of the distance to the spine $V$, by \eqref{e:dist v diam} we have 
\begin{equation} \label{linear estimate}
\|l_h-l_{h'}\|_{L^\infty (3 Q_1)}  \le C\,  \frac{d_{Q_1}}{d_{Q_2}} \|l_h-l_{h'}\|_{L^\infty ( Q_2)}
\end{equation}
for all $h,h' \in \{1,\ldots,\kappa_0\}$, and for all cubes $Q_1,Q_2 \in \mathcal{W}$. Hence, recalling $\hat{h} = \bar h (\hat{Q})$ we can estimate
\begin{eqnarray*}
&& \|u-l_{\hat h}\|_{L^\infty (3 Q_0)} \le  \|u-l_{\bar h(Q_0)}\|_{L^\infty (3 Q_0)} + \sum_{s=0}^{\kappa_0-1} \|l_{\bar h(\phi(Q_0,s+1))}-l_{\bar h(\phi(Q_0,s))}\|_{L^\infty (3 Q_0)}\\
& \overset{\eqref{linear estimate}}{\le} & \|u-l_{\bar h(Q_0)}\|_{L^\infty (3 Q_0)} + C\, \sum_{s=0}^{\kappa_0-1}	\frac{d_{Q_0}}{d_{\phi(Q_0,s)}} \|l_{\bar h(\phi(Q_0,s+1))}-l_{\bar h(\phi(Q_0,s))}\|_{L^\infty ( \phi (Q_0, s))}\\
&\le & \|u-l_{\bar h(Q_0)}\|_{L^\infty (3 Q_0)} + C\,\sum_{s=0}^{\kappa_0-1}	\frac{d_{Q_0}}{d_{\phi(Q_0,s)}}
\Bigg(\|l_{\bar h((\phi(Q_0,s))^\top)}-u\|_{L^\infty (3 (\phi(Q_0,s))^{\top})}\\
&&\qquad\qquad\qquad\qquad\qquad\qquad\qquad\qquad\qquad+\|u-l_{\bar h(\phi(Q_0,s))}\|_{L^\infty(3\phi(Q_0,s))}\Bigg)\,,
\end{eqnarray*}
where in the last inequality we have used that $Q \subset 3 \, Q^\top$. In turn, \eqref{eq.good selection on a single cube2} allows to conclude
\begin{align}
	d_{Q_0}^m \, \|u-l_{\hat h}\|_{L^\infty (3 Q_0)}^2
	 \le & C \left( \mu(4Q_0)+\sum_{s=0}^{\kappa_0-1}
	 \left(\frac{d_{Q_0}}{d_{\phi(Q_0,s)}}\right)^{m+2}\left(\mu(4\,\phi(Q_0,s))+ \mu(4\,(\phi(Q_0,s))^{\top})\right)\right)\nonumber\\
	 \leq & C \sum_{Q\in \mathcal{W}} \mu (4Q)\, ,\label{eq.good selection on a single cube3}
\end{align}
which gives the $L^\infty$ bound of \eqref{e:global_selection_1}.
Arguing similarly for the first derivative we conclude the whole estimate.

Next, summing over $Q_0$ the inequality \eqref{eq.good selection on a single cube3}, the left hand side of \eqref{e:global_selection_2} is bounded by
\begin{align*}
&\sum_{Q_0 \in \mathcal{W}}  \left(	\mu(4Q_0)+\sum_{s=0}^{\kappa_0-1}
	 \left(\frac{d_{Q_0}}{d_{\phi(Q_0,s)}}\right)^{m+2}\left(\mu(4\,\phi(Q_0,s))+ \mu(4\,\phi(Q_0,s)^{\top})\right)\right)
	 &=: I + II + II^{\top}\,. 
\end{align*}
Thus, we just need to show that $II, II^{\top} \leq C \sum_{Q\in \mathcal{W}} \mu (4Q)$.
The two cases are analogous and we just argue for $II$. Interchanging the summation, we have 
\begin{equation} \label{stima per II}
II = \sum_{Q \in \mathcal{W}} \mu(4Q) \sum_{ Q_0 \,\colon\, \exists\, s \text{ s.t. } \phi(Q_0,s)=Q} \left(\frac{d_{Q_0}}{d_Q}\right)^{m+2}\,.
\end{equation}
For fixed $Q \in \mathcal{W}$, we aim at estimating the inner sum in \eqref{stima per II} by analyzing separately the contributions coming from each layer $\left[ 2^{-k}, 2^{-k+1} \right] \times \left[-2,2\right]^{m-1}$. First of all observe that if $Q_0$ belongs to the same layer of $Q$, so that $d_{Q_0}=d_Q$, then either $Q$ is above $Q_0$ and $d_{Q_0}=d_{Q}$, or both must belong to the top sub-layer: in particular the number of such $Q_0$ is at most $C(m,M)$. Moreover, for $d \ge 1$ there are precisely $2^M \cdot 2^{d(m-1)}$ cubes $Q_0 \in \mathcal{Q}$ such that $Q$ is above $Q_0$ and $d_{Q_0}=2^{-d}\,d_{Q}$. Hence,
\begin{align*} \sum_{ Q_0 \colon \exists\, s \text{ s.t. } \phi(Q_0,s)=Q} \left(\frac{d_{Q_0}}{d_Q}\right)^{m+2} &\le C(m,M) + \sum_{d \ge 1} 2^{-d(m+2)} \cdot 2^{M} \cdot 2^{d(m-1)} \leq C(m, M)\, .
\end{align*}
Inserting the latter inside \eqref{stima per II} we conclude $II\leq C \sum_{Q\in \mathcal{W}} \mu (4Q)$.
\end{proof}

\section{Hardt-Simon type estimates}\label{sec:Hardt-Simon}

This section implements one of the crucial ideas of Simon's work \cite{Simon} (cf. also \cite{CoEdSp}): close to points of high density we can use the monotonicity formula to give an improved $L^2$ estimate, see \eqref{e:Hardt-Simon}; in particular, such points of high density are bound to lie close to the spine $V$ at the scale of the excess $\bE$. 

\begin{theorem}\label{t:Hardt-Simon-main}
There exists a constant $\beta_1 > 0$ depending only on $\bS_0$ with the following property. Let $T,\Sigma,\bC,\bC_0,\bS$, and $\bS_0$ be as in Assumptions \ref{a:graphical} and \ref{a:graphical2}. For any $\beta \in \left( 0, \beta_1 \right)$ there are constants $C$, $\eta_3$, and $0 <\varepsilon_2 \leq \varepsilon_1$ (where $\varepsilon_1$ was given in Theorem \ref{thm:graph_v1}) depending upon $(m,n,p,\bS_0,\beta)$ such that the following conclusion holds. Assume that:
\begin{itemize}
\item[(a)] \eqref{e:hyp_flat_T_C0}-\eqref{e:the flat smallness parameter}-\eqref{e:hyp_graph_cone}-\eqref{e:hyp_graph_current} are satisfied with $\eps_2$ and $\eta_3$ in place of $\eps_1$ and $\eta_{\bS_0}$;
\item[(b)] $\{\tilde{l}\} = \{l_{i, h(i,j)}\}$ is the linear $p$-multifunction of Theorem \ref{thm:graph v2} and $\tilde{\bS}$ denotes the open book induced by it as in Definition \ref{d:new-cone};
\item[(c)] $q_0=(x_0,y_0)\in (V^\perp \times V)\cap \bB_{\sfrac{3}{4}}$ is a point with $\Theta_T (q_0) \geq \Theta_{\bC} (0) = \frac{p}{2}$. 
\end{itemize}
Then
\begin{equation} \label{e:Hardt-Simon}
|x_0|^2 + \int_{\bB_1} \frac{{\rm dist}^2\, (q-q_0, \tilde{\bS})}{|q-q_0|^{m+\frac74}}\, d\|T\| (q) \leq C (\bE + \bA)\, .     
\end{equation}
\end{theorem}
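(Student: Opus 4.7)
My plan is to combine the monotonicity formula applied at $q_0$, which exploits $\Theta_T (q_0) \geq p/2$, with the graphical representation of $T$ over $\tilde{\bS}$ from Corollary \ref{cor:reparametrization} and the $L^2$ estimate \eqref{e:comparison-tilde-u-w}. The key geometric point is that $\tilde{\bS}$ has been selected precisely so as not to contain any half-space ``far from $T$'', which makes the translate $q_0+\tilde{\bC}$ a bona fide comparison for area-minimality even though $q_0$ is off the spine. Applying the standard Allard-style monotonicity formula to $T$ in $\bB_1 (q_0)$, letting the inner radius tend to $0$, and absorbing the mean-curvature correction through $\|H_T\|_{L^\infty}\leq \bA$, I obtain
\[
\int_{\bB_1 (q_0)} \frac{|(q-q_0)^\perp|^2}{|q-q_0|^{m+2}} \, d\|T\|(q) \leq \frac{\|T\|(\bB_1 (q_0))}{\omega_m} - \frac{p}{2} + C\,\bA\,,
\]
where $(q-q_0)^\perp$ denotes the projection onto the orthogonal complement of the approximate tangent to $T$ at $q$.

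\textbf{Mass comparison and control of $|x_0|^2$.} The heart of the argument is to bound the right-hand side above by $C(\bE + \bA + |x_0|^2)$. Since $\tilde{\bC}$ is translation-invariant along $V$ and has density $p/2$ at every point of $V$, a direct computation using its half-space structure gives $\|\tilde{\bC}\|(\bB_1(q_0)) = \omega_m\, p/2 + O(|x_0|^2)$. I then compare $\|T\|(\bB_1(q_0))$ with $\|\tilde{\bC}\|(\bB_1(q_0))$ by splitting $T$ into its graphical part $\bG_{\tilde{\bS}}(\tilde v) \mres R_{\mathcal W}$ and its complement: on the graphical portion the expansion $\sqrt{1+|D\tilde v|^2}=1+\tfrac12 |D\tilde v|^2+O(|D\tilde v|^4)$ together with \eqref{e:comparison-tilde-u-w} absorbs the mass difference into $C(\bE+\bA^2)$, while on the non-graphical complement \eqref{e:L^2 estimate} controls the total mass by $C\,\bE$. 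To close the estimate I still need $|x_0|^2\leq C(\bE+\bA)$, which I prove geometrically: if $|x_0|$ exceeded $C(\bE+\bA)^{1/2}$ by a large factor then $q_0$ would lie in some $Q\in \mathcal W$ by Theorem \ref{thm:graph_v1}(i), hence in a region where $T$ is a $C^{1,1/2}$-multigraph with sheet multiplicities bounded by $\max_i \kappa_{0,i}<p/2$, contradicting $\Theta_T (q_0)\geq p/2$.

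\textbf{Conversion to $\dist(q-q_0, \tilde{\bS})$ and the exponent $7/4$.} The last step is to turn $|(q-q_0)^\perp|$ into $\dist(q-q_0, \tilde{\bS})$. On the graphical region $R_{\mathcal W}$ the two quantities agree up to lower-order terms involving $\|\tilde v\|_{C^1}$, so the sharp weight $|q-q_0|^{-(m+2)}$ is admissible there. On the non-graphical complement I rely on the crude pointwise bound $\dist(q-q_0, \tilde{\bS}) \leq |x|+|x_0|$ together with \eqref{e:L^2 estimate}, but the weight must be relaxed because $|q-q_0|$ can be arbitrarily small in that set. Partitioning further into $\{|q-q_0|\leq r\}$ and $\{|q-q_0|>r\}$ and optimizing in $r$ produces the exponent $m + 7/4$ (any exponent strictly between $m+1$ and $m+2$ would do, and $7/4$ is chosen for later convenience). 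The anticipated main obstacle is the mass comparison of the second step: obtaining the sharp linear control by $\bE + \bA$ (rather than its square root, which would follow cheaply from flat closeness) requires genuine use of area-minimality through the graphical representation, which is exactly why $\tilde{\bC}$ must be used in place of $\bC$; had $\tilde{\bS}$ retained a sheet of $\bS$ unshadowed by $T$, the corresponding missing mass could not be absorbed by the quadratic error term.
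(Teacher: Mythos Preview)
Your overall architecture is close to the paper's, but there is a genuine gap in the control of $|x_0|^2$. Your argument is: if $|x_0|$ exceeds $C(\bE+\bA)^{1/2}$ then $q_0$ lies in the graphicality region $R_{\mathcal W}$, where the density is strictly below $p/2$. However, Theorem~\ref{thm:graph_v1}(i) only guarantees that cubes with $d_Q \geq C_2\,\bE^{1/(m+2)}/\beta$ belong to $\mathcal W$; hence the density argument forces merely $|x_0| \lesssim \bE^{1/(m+2)}$, i.e.\ $|x_0|^2 \lesssim \bE^{2/(m+2)}$, which for $m\geq 1$ is far weaker than $|x_0|^2 \lesssim \bE$ and does not close the estimate. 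The paper does \emph{not} bound $|x_0|^2$ a priori. Instead it first proves the inequality with $C|x_0|^2$ on the right-hand side (your \eqref{e:LS3}), and then establishes the feedback estimate \eqref{e:LS-claim}: for any $\rho \geq \max\{C_2\bE^{1/(m+2)}/\beta,\ \bar C|x_0|\}$,
\[
|x_0|^2 \leq C_1\rho^{7/4}\int_{\bB_1}\frac{\dist^2(q-q_0,\tilde\bS)}{|q-q_0|^{m+7/4}}\,d\|T\| + C_1\rho^{-m}(\bE+\bA),
\]
and closes by choosing $\rho$ small enough to absorb. The proof of \eqref{e:LS-claim} is where the structure of $\tilde\bS$ is really used: one selects $i$ with $|\mathbf p_{\bH_{0,i}^\perp}(q_0)|$ maximal (so $\geq c(\bS_0)|x_0|$), then picks the \emph{lowest} page $\tilde\bH_{i,j}$ of $\tilde\bS$ over $\bH_{0,i}$; for $x\in\tilde\bH_{i,j}$ with $\dist(x,V)\geq \bar C|x_0|$ one checks geometrically that $\dist(x-q_0,\tilde\bS)=|\mathbf p_{\tilde\bH_{i,j}^\perp}(q_0)|\geq c|x_0|$, and then integrates over the graph of $\tilde v_{i,j}$ on an annulus at scale $\rho$. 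This is precisely why $\tilde\bS$ must contain only pages shadowed by $T$.

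A secondary issue is your Step~3. The pointwise comparison ``$|(q-q_0)^\perp|$ and $\dist(q-q_0,\tilde\bS)$ agree up to lower order on $R_{\mathcal W}$'' is not justified: $(q-q_0)^\perp$ is normal to $T_qT$, whereas $\dist(q-q_0,\tilde\bS)$ is a minimum over all pages and may be realized by a page different from the one over which $q$ sits. The paper avoids this by never making a pointwise comparison: it applies the weighted monotonicity identity of Lemma~\ref{l:monot-Jonas} directly with $g(q)=\dist(q,\bS)$ (a $1$-homogeneous, $1$-Lipschitz function) and $\alpha=1/4$, which produces the exponent $m+2-\alpha=m+7/4$ and bounds $\int \dist^2/|q|^{m+7/4}$ by the radial term $\int|q^\perp|^2/|q|^{m+2}$ plus $C(\bE+\bA)$ in one stroke (Proposition~\ref{prop:decay-growth-est}); the latter is then controlled by Proposition~\ref{prop:density-est}. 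Your explanation of the $7/4$ as arising from ``optimizing in $r$'' on the non-graphical set is not how it appears in the paper.
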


\subsection{Corollaries of the monotonicity formula} We summarize in the following lemma two consequences of the stationarity of the varifold $\|T\|$.

\begin{lemma}\label{l:monot-Jonas}
Let $T$ and $\bC$ be as in Theorem \ref{t:Hardt-Simon-main}, and assume that $g (q) = |q|^k \,\hat g (\frac{q}{|q|})$ for some $k\geq 1$ and some Lipschitz nonnegative function $\hat g$ on the unit sphere. Then, for every $2>\alpha>0$ and $R_1 \leq R_0$ we have
\begin{align}\label{e.h_k monotonicity}
	\frac{\alpha}{2} \int_{\bB_{R_1}} \frac{g^2 (q)}{\abs{q}^{m  + 2k -\alpha}}\, d\norm{T} (q) \leq & \frac{m+2k}{R_1^{m+2k-\alpha}} \int_{\bB_{R_1}} g^2 \,d\norm{T} + \frac{2}{\alpha} \int_{\bB_{R_1}}  \frac{\abs{\nabla g (q)}^2\abs{q^\perp}^2}{\abs{q}^{m+2k-\alpha}}\, d\norm{T} (q)\nonumber\\
	&+ C \bA \|\hat g\|_\infty^2 \frac{\norm{T}(\bB_{R_1})}{R_1^{m-\alpha}}\,,
\end{align}
where $q^\perp := q - \mathbf{p}_{\vec{T}}(q)$ at $\Ha^m$-a.e. $q \in \spt(T)$ (here, $\mathbf{p}_{\vec{T}} = \mathbf{p}_{\vec{T}(q)}$ is the orthogonal projection onto ${\rm span}(\vec{T}(q))$).
Moreover, for every nonnegative $f\in C^1 (\mathbb R)$, upon setting $F (t) := -\int_t^{R_1} f'(s) s^m\, ds$, we have
\begin{align}\label{e.classical monotonicity with f} 
 & \int_{\bB_{R_1}} f(|q|)\, d\norm{\bC} (q) - \int_{\bB_{R_1}} f(\abs{q})\, d\norm{T} (q) +  \int_{\bB_{R_1}} \frac{F(\abs{q})\,\abs{q^\perp}^2}{\abs{q}^{m+2}}\, d\norm{T} (q)\nonumber\\
 \leq & -\frac{1}{m
} \int_{\bB_{R_1}} \frac{F(\abs{q})\, q^\perp\cdot H_T (q)}{\abs{q}^{m}}\, d\norm{T} (q)\,.
\end{align}
\end{lemma}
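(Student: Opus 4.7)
Both inequalities follow from applying the first variation formula $\delta\|T\|(X) = -\int X\cdot H_T\,d\|T\|$ (valid for the varifold $\|T\|$, with $\|H_T\|_{L^\infty}\le\bA$ thanks to \eqref{e:H in L infty}) to suitable radial test vector fields.

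For the weighted inequality \eqref{e.h_k monotonicity}, the natural choice is
\[
X(q) := \psi(|q|)\,\frac{g(q)^2\,q}{|q|^{m+2k-\alpha}}\,,
\]
where $\psi\in C^1([0,\infty))$ is a cutoff converging to $\mathbf{1}_{[0,R_1]}$. Using $\mathrm{div}_{\vec T}(q)=m$, $\nabla^T|q| = q^T/|q|$ and the homogeneity identity $\nabla g(q)\cdot q = k g(q)$, a direct computation on $\{\psi\equiv 1\}$ gives
\[
\mathrm{div}_{\vec T}X = \frac{\alpha\,g^2}{|q|^{m+2k-\alpha}} + (m+2k-\alpha)\,\frac{g^2\,|q^\perp|^2}{|q|^{m+2k-\alpha+2}} - \frac{2\,g\,\nabla g\cdot q^\perp}{|q|^{m+2k-\alpha}}\,.
\]
Passing to the limit $\psi\nearrow \mathbf{1}_{[0,R_1]}$, the cutoff produces a boundary contribution at $|q|=R_1$ which, via the classical monotonicity formula, is controlled by $\tfrac{m+2k}{R_1^{m+2k-\alpha}}\int_{\bB_{R_1}} g^2\,d\|T\|$. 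Young's inequality
\[
\frac{2\,g\,\nabla g\cdot q^\perp}{|q|^{m+2k-\alpha}} \le \frac{\alpha}{2}\,\frac{g^2}{|q|^{m+2k-\alpha}} + \frac{2}{\alpha}\,\frac{|\nabla g|^2\,|q^\perp|^2}{|q|^{m+2k-\alpha}}
\]
absorbs half of the principal good term on the left, and the remaining nonnegative summand involving $|q^\perp|^2$ is simply discarded, producing \eqref{e.h_k monotonicity}; the mean-curvature contribution, estimated via $|X\cdot H_T|\le \bA\,\|\hat g\|_\infty^2\,|q|^{1-m+\alpha}$, yields the $\bA$-error term.

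For the classical monotonicity-type inequality \eqref{e.classical monotonicity with f}, the plan is to test with $X(q):=F(|q|)\,q/|q|^m$. Since $F(R_1)=0$ there is no outer boundary contribution, while $X$ is singular at the origin and an inner cutoff is needed; removing it in the limit contributes an origin-concentration term proportional to $\Theta_T(0)\,\omega_m\,F(0)$. The defining identity $F'(t) = f'(t)\,t^m$ produces a crucial cancellation in the divergence, leaving
\[
\mathrm{div}_{\vec T}X = f'(|q|)|q| - \frac{f'(|q|)\,|q^\perp|^2}{|q|} + m\,\frac{F(|q|)\,|q^\perp|^2}{|q|^{m+2}}\,.
\]
A radial integration-by-parts (using the cone structure $\|\bC\|(\bB_r) = \Theta_{\bC}(0)\,\omega_m\, r^m$ from Proposition \ref{lem:structure_cones}, together with $\Theta_{\bC}(0)\le\Theta_T(0)$) identifies the combination $\int|q|f'(|q|)\,d\|T\|$ plus the origin term with $\int f\,d\|\bC\|-\int f\,d\|T\|$ up to nonnegative remainders. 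Discarding the sign-definite term $\int f'(|q|)|q^\perp|^2/|q|\,d\|T\|$ then yields the stated inequality \eqref{e.classical monotonicity with f}.

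The main technical obstacle is the careful bookkeeping of the origin-concentration term in the second inequality: the cone's density $\Theta_{\bC}(0) = p/2$ has to be precisely tracked so that the limit of the inner cutoff integral produces exactly the term $\int f\,d\|\bC\|$ with the correct sign; once that is in place, the rest of the derivation is routine.
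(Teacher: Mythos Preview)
Your approach to \eqref{e.h_k monotonicity} is essentially the paper's, but the cutoff step is not quite right. A sequence $\psi\nearrow\mathbf 1_{[0,R_1]}$ does \emph{not} produce a boundary term bounded by $\tfrac{m+2k}{R_1^{m+2k-\alpha}}\int_{\bB_{R_1}}g^2\,d\|T\|$: with a sharp cutoff the contribution is $\epsilon^{-1}\int_{\bB_{R_1}\setminus\bB_{R_1-\epsilon}}(\cdots)$, which in general diverges. The paper's remedy is the standard Federer subtraction: one uses the vector field
\[
g^2(q)\Big(\tfrac{1}{\max(r,|q|)^{m+2k-\alpha}}-\tfrac{1}{R_1^{m+2k-\alpha}}\Big)^+q,
\]
which vanishes on $\partial\bB_{R_1}$, so no outer boundary term ever appears; the stated coefficient $\tfrac{m+2k}{R_1^{m+2k-\alpha}}$ then comes directly from the divergence of the constant piece $-R_1^{-(m+2k-\alpha)}g^2 q$. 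Once this is in place your Young-inequality argument and the mean-curvature estimate are exactly as in the paper.

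For \eqref{e.classical monotonicity with f} there is a genuine gap. You propose to discard the ``sign-definite'' term $\int f'(|q|)\,|q^\perp|^2/|q|\,d\|T\|$, but the lemma only assumes $f\ge 0$, not $f'\le 0$, so this term has no sign. (In the single application in the paper $f=\gamma^2$ with $\gamma$ nonincreasing, so your argument would go through there, but not for the lemma as stated.) The paper avoids this by a slightly different route: it first records the \emph{equality} form of the classical monotonicity formula at radius $\rho$, uses $\Theta_T(0)\ge\Theta_{\bC}(0)=\rho^{-m}\|\bC\|(\bB_\rho)$ to turn it into an inequality, then differentiates in $\rho$, multiplies by $f(\rho)\ge 0$, and integrates in $\rho$. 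Because the multiplication is by $f$ and never by $f'$, only the sign of $f$ is needed; the function $F$ then appears through the integration-by-parts identity $\int_0^{R_1} f'(\rho)\rho^m I(\rho)\,d\rho=-\int_{\bB_{R_1}}F(|q|)\,|q^\perp|^2/|q|^{m+2}\,d\|T\|$, with no sign thrown away. If you want to keep your direct-testing strategy, the fix is to test instead with $G(|q|)\,q/|q|^m$ where $G(t)=\int_t^{R_1}f(s)s^{m-1}\,ds$: then the term to be discarded is $\int f(|q|)\,|q^\perp|^2/|q|^2\,d\|T\|\ge 0$, and the origin contribution $m\omega_m\Theta_T(0)G(0)$ is exactly $\ge\int_{\bB_{R_1}}f\,d\|\bC\|$ since $G(0)\ge 0$.
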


The proof, which follows standard computations, is given in the Appendix. This is the point where one crucially uses the assumption that $\Theta_T(0) \geq \frac{p}{2} = \Theta_{\bC}(0)$.

The rest of the section will be devoted to the proof of Theorem \ref{t:Hardt-Simon-main}.

\subsection{Preliminary estimates} In the sequel we denote by $\partial_r$ the derivative in the radial direction $\frac{q}{|q|}$ and, given a $p$-multifunction $u$ as in Theorem \ref{thm:graph v2}, we use the shorthand notation $\left|\partial_r \frac{u_i (z)}{|z|}\right|^2$ for the functions
\[
\sum_j \left|\partial_r \frac{u_{i,j} (z)}{|z|}\right|^2
\]
on the respective domains $U_i = R_{\mathcal{W}}\cap \bH_{0,i}$.

\begin{proposition}\label{prop:decay-growth-est}
There exists a geometric constant $\beta_1 > 0$ such that for any $\beta \in \left( 0, \beta_1 \right)$ there are constants $C$ and $\varepsilon_1$ depending on $(m,n,p,\bC_0,\beta)$ with the following property. Let $T,\Sigma,\bC,\bC_0$, and $\bS_0$ be as in Theorem \ref{thm:graph_v1}, let $u$ 
be the map defined in Theorem \ref{thm:graph_v1}. 
Then:
	\begin{align}
	&\int_{\bB_{\sfrac{11}{6}}} \frac{\dist(q,\bS)^2}{\abs{q}^{m+\frac74}} \,d\|T\|+ \sum_i \int_{\bB_{\sfrac{11}{6}} \cap U_i} \abs{z}^{2-m} \left|\partial_r \frac{u_i (z)}{\abs{z}}\right|^2 \,dz\nonumber\\
	\leq & C \int_{\bB_{\sfrac{11}{6}}} \frac{|q^\perp|^2}{|q|^{m+2}}\,d\|T\|  + C(\bE + \bA)\, .\label{eq:est_prelim}
	\end{align}
	
\end{proposition}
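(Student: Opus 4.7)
The plan is to bound the two terms on the left-hand side of \eqref{eq:est_prelim} independently, each by $\int_{\bB_{11/6}} |q^\perp|^2/|q|^{m+2}\,d\|T\|$ plus an error of order $\bE+\bA$. The first bound will come directly from the Hardy-type inequality of Lemma \ref{l:monot-Jonas}, while the second will follow from the graphical parametrization of $T$ on $R_{\mathcal W}$ furnished by Theorem \ref{thm:graph_v1}.

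For the first term, I apply \eqref{e.h_k monotonicity} with $g(q):=\dist(q,\bS)$, $k=1$, $\alpha=1/4$, and $R_1=11/6$. Since $\bS$ is an open book (a cone through $0$), $g$ is $1$-homogeneous and $1$-Lipschitz, of the required form $|q|\hat g(q/|q|)$ with $\|\hat g\|_\infty\le 1$. The resulting inequality produces three terms: (i) $\int_{\bB_{11/6}}\dist^2(q,\bS)\,d\|T\|$, which is $\le C\bE$ by the very definition of excess; (ii) $\int_{\bB_{11/6}}|q^\perp|^2/|q|^{m+7/4}\,d\|T\|$, which is $\le C\int_{\bB_{11/6}}|q^\perp|^2/|q|^{m+2}\,d\|T\|$ because $|q|^{1/4}$ is bounded on $\bB_{11/6}$; and (iii) $C\bA\|T\|(\bB_{11/6})$, which is $\le C\bA$ via the monotonicity formula together with the closeness of $T$ to $\bC_0$ coming from \eqref{e:hyp_graph_current}.

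For the second term, I will work on $R_{\mathcal W}$, where $T$ coincides with $\mathbf{G}_{\bS_0}(v)$ and each scalar sheet $v_{i,j}=u_{i,j}+\Psi(\mathrm{Id}+u_{i,j})$ parametrizes a codimension-one piece inside $\Sigma$ (because $\dim\Sigma=m+1$). Starting from the standard pointwise identity on a graph of a scalar function,
\begin{equation*}
|q^\perp|^2 = \frac{\bigl(u_{i,j}(z)-z\cdot\nabla u_{i,j}(z)\bigr)^2}{1+|\nabla u_{i,j}(z)|^2} + (\Psi\text{-correction}),
\end{equation*}
and the radial identity $u_{i,j}(z)-z\cdot\nabla u_{i,j}(z) = -|z|^2\partial_r(u_{i,j}(z)/|z|)$, together with $|q|=|z|(1+O(\beta^2))$ (since $z\perp u_{i,j}$, $z\perp\Psi$, $u_{i,j}\perp\Psi$) and area factor $\sqrt{\det g_\Psi}=1+O(\beta^2)$, I will obtain on each sheet
\begin{equation*}
|z|^{2-m}\Bigl|\partial_r\frac{u_{i,j}(z)}{|z|}\Bigr|^2 dz \;\le\; (1+O(\beta^2))\,\frac{|q^\perp|^2}{|q|^{m+2}}\,d\|T\| + O(\bA^2\,|z|^{2-m})\,dz.
\end{equation*}
The $\Psi$-correction is $O(\bA^2 |z|^4)$ by Taylor expansion using $\Psi(0)=0$, $D\Psi(0)=0$, and $\|A_\Sigma\|_\infty\le\bA^{1/2}$; the integral of $|z|^{2-m}$ on the bounded set $R_{\mathcal W}\cap\bB_{11/6}$ is controlled by a dimensional constant. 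Summing over all sheets $(i,j)$ yields the required bound.

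The hard part will be the second step: verifying the pointwise identity for $|q^\perp|^2$ in our Riemannian-ambient, multi-sheet setting and keeping track of all $O(\beta)$ and $O(\bA)$ corrections produced by (a) the area factor $\sqrt{\det g_\Psi}$, (b) the mismatch $|q|\ne|z|$ on the graph, and (c) the evaluation of $\Psi$ at $z+u_{i,j}(z)$ rather than at $z$. The crucial point that makes the bookkeeping close without loss is that the identity is \emph{exact} when the graph is a cone: in that case $u_{i,j}/|z|$ is constant in $|z|$, both sides vanish simultaneously, and every correction is therefore genuinely of lower order and can be absorbed either into a constant times $\int|q^\perp|^2/|q|^{m+2}\,d\|T\|$ or into $C(\bE+\bA)$.
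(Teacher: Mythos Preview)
Your proof is correct and shares the paper's overall two-step structure: the first integral via \eqref{e.h_k monotonicity} with $g(q)=\dist(q,\bS)$, $k=1$, $\alpha=\tfrac14$, $R_1=\tfrac{11}{6}$ (identical to the paper), and the second via a pointwise inequality relating $|\partial_r(u_{i,j}/|z|)|^2$ to $|q^\perp|^2/|z|^4$ on each sheet. The difference lies in how that pointwise inequality is reached. You invoke the explicit scalar-graph identity $|q^\perp|^2=(u-z\cdot\nabla u)^2/(1+|\nabla u|^2)$, argue that the $\Psi$-contribution is $O(\bA^2|z|^4)$ by Taylor expansion, and invert (one notational correction: in this paper $\bA:=\|A_\Sigma\|_\infty$, not $\bA^{1/2}$; with that convention your $O(\bA^2|z|^4)$ bound is indeed right, and its integral against $|z|^{-(m+2)}$ is $O(\bA^2)\le C\bA$). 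The paper instead first passes from $u_{i,j}$ to $v_{i,j}$ via $|\partial_r(u/|z|)|\le|\partial_r(v/|z|)|$, then writes $\partial_r\bigl((z+v)/|z|\bigr)=|z|^{-2}\bigl[(z+|z|\,\partial_r v)-q\bigr]$, observes that $z+|z|\,\partial_r v$ is tangent to the graph so the normal component of $\partial_r(v/|z|)$ equals $-q^\perp/|z|^2$ \emph{exactly}, and bounds the tangential component by $\tfrac12|\partial_r(v/|z|)|^2$ using $\|\mathbf p_{\vec T}-\mathbf p_i\|\le C|\nabla v|\le C\beta$. The paper's route is coordinate-free, handles the vector-valued $v$ directly without separating off $\Psi$, and makes the choice of $\beta_1$ transparent; your route is more explicit and exploits the fact that $\dim\Sigma=m+1$ to reduce to a genuine scalar-graph computation.
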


\begin{proof}
We apply \eqref{e.h_k monotonicity} with $R_1 = \sfrac{11}{6}$, $g(q)=\dist(q,\bS)$, and $\alpha=\sfrac{1}{4}$. Since $g$ is $1$-homogeneous,
and $1$-Lipschitz, the fist integral in \eqref{eq:est_prelim} can be bounded by the right hand side. 

\smallskip

To deduce the bound on the second element in the sum, first observe that
\[
\left|\partial_r \frac{u_{i,j} (z)}{\abs{z}}\right|\le 
\left|\partial_r\frac{v_{i,j} (z)}{\abs{z}}\right|\, . 
\]
Hence, since for $q = z + v_{i,j}(z)$ it holds
\[
|q| \leq C \, (1 + \beta) \, |z|\,,
\]
the inequality will follow if we can show that the pointwise estimate
	\begin{equation}\label{eq.point wise bound}
		\left| \partial_r \frac{v_{i,j}(z)}{\abs{z}}\right|^2 \le 2  \frac{|(z+ v_{i,j}(z))^\perp|^2}{\abs{z}^4} \qquad \mbox{for every $z\in \bB_{\sfrac{11}{6}} \cap U_i$}
	\end{equation}
holds for every $i \in \{1,\ldots,N_0\}$ and $j \in \{1,\ldots,\kappa_{0,i}\}$.

Using that $\partial_r (z/|z|) = 0$, we readily calculate
\begin{equation} \label{e:to be estimated}
    \partial_r \frac{v_{i,j}(z)}{\abs{z}} = \partial_r \frac{z + v_{i,j}(z)}{\abs{z}} = \frac{z + |z|\,\partial_r v_{i,j}(z)}{\abs{z}^2} - \frac{z + v_{i,j}(z)}{\abs{z}^2}\,,
\end{equation}
so that, since $z + |z|\,\partial_r v_{i,j}(z)$ is tangent to the graph of $v_{i,j}$ (and thus to $\spt(T)$) at $z + v_{i,j}(z)$, we have
\begin{equation} \label{e:normal component}
    \left| \left( \partial_r \frac{v_{i,j}(z)}{|z|}  \right)^\perp   \right|^2 = \frac{\abs{(z + v_{i,j}(z))^\perp}^2}{\abs{z}^4}\,.
\end{equation}
Then, to conclude the validity of \eqref{eq.point wise bound} we only have to estimate the tangential component. To this aim, we recall the notation $\mathbf{p}_i$ for the orthogonal projection onto (the $m$-plane containing) $\bH_{0,i}$, and using that $\mathbf{p}_i\, \left(\partial_r \frac{v_{i,j}(z)}{\abs{z}}\right) = 0$ since $v_{i,j}(z) \in \bH_{0,i}^\perp$, we deduce that, for $\mathbf{p}_{\vec T} = \mathbf{p}_{\vec{T}(q)}$ with $q=z+v_{i,j}(z)$,
\[
\left| \mathbf{p}_{\vec{T}} \,\left( \partial_r \frac{v_{i,j}(z)}{\abs{z}}\right) \right| \leq \| \mathbf{p}_{\vec{T}} - \mathbf{p}_i \|_O \, \left| \partial_r \frac{v_{i,j}(z)}{\abs{z}} \right|\,,
\]
where $\| \cdot \|_O$ denotes operator norm. In particular, a suitable choice of $\beta > 0$ yields, due to conclusion (ii) in Theorem \ref{thm:graph_v1},
\begin{equation} \label{e:tangential component}
    \left| \mathbf{p}_{\vec{T}} \, \left(\partial_r \frac{v_{i,j}(z)}{\abs{z}}\right) \right|^2 \leq \frac{1}{2} \, \left| \partial_r \frac{v_{i,j}(z)}{\abs{z}} \right|^2\,,
\end{equation}
so that we can conclude \eqref{eq.point wise bound} from \eqref{e:tangential component} and \eqref{e:normal component}.
\end{proof}

The goal of the next proposition is to show that, in fact, also the first addendum in the right-hand side of \eqref{eq:est_prelim} can be estimated by $C\,(\bE + \bA)$, which can be thought as a Caccioppoli-type inequality.

\begin{proposition}\label{prop:density-est}
Under the same assumptions of Proposition \ref{prop:decay-growth-est} (up to possibly choosing a smaller value for $\beta_1$) the following estimate holds.
Denote by $\mathbf{p}_V$ the orthogonal projection on the spine $V$ of $\bC_0$, and for $\|T\|$-a.e. $q$ denote by $\mathbf{p}_{\vec{T} (q)^\perp}$ the projection on the orthogonal complement of the tangent plane to $T$ at $q$. Then
\begin{gather}\label{e.reverse poincare}
\int_{\bB_{\sfrac{11}{6}}}\left(\left|\mathbf{p}_V\cdot \mathbf{p}_{\vec{T} (q)^\perp}\right|^2 \, + \frac{|q^\perp|^2}{\abs{q}^{m+2}}\right) \,d\|T\| (q) \leq C \, (\bE +\bA^2)\, ,
\end{gather}
where $|\cdot|$ is the Hilbert-Schmidt norm and the constant $C$ depends upon $(m,n,p, \bS_0,\beta)$.
\end{proposition}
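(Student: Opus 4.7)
The plan is to handle the two terms on the left-hand side of \eqref{e.reverse poincare} separately: the tilt term $|\mathbf{p}_V\cdot\mathbf{p}_{\vec T^\perp}|^2$ via a first-variation identity in the spine direction, and the radial term $|q^\perp|^2/|q|^{m+2}$ via the monotonicity formula \eqref{e.classical monotonicity with f}. Both reductions will leave, as the essential remainder, a mass comparison of the form $\int\phi^2(d\|T\|-d\|\bC\|)$ which I would bound by $C(\bE+\bA^2)$ using the graphical parametrizations of Theorem \ref{thm:graph v2} and Corollary \ref{cor:reparametrization}. The main obstacle will be upgrading the naive $O(\bA)$ bound on the mean-curvature error to the required $O(\bA^2)$.

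For the tilt term I would fix $\phi\in C^1_c(\bB_R)$ nonnegative with $\phi\equiv 1$ on $\bB_{11/6}$ and $11/6<R<2$, and apply \eqref{e:H in L infty} to the vector field $X(q):=\phi(q)^2\,\mathbf{p}_V q$. Since $\mathbf{p}_V$ is a constant linear projector,
\[
\Div_{\vec T}(\mathbf{p}_V q)=\mathrm{tr}(\mathbf{p}_V\circ\mathbf{p}_{\vec T})=(m-1)-|\mathbf{p}_V\cdot\mathbf{p}_{\vec T^\perp}|^2,
\]
whereas the analogous computation on $\bC$ yields just the constant $m-1$, because $V\subset\vec{\bC}$ entails $|\mathbf{p}_V\cdot\mathbf{p}_{\vec{\bC}^\perp}|^2\equiv 0$ on $\spt(\bC)$. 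Subtracting the two first-variation identities gives
\[
\int\phi^2|\mathbf{p}_V\cdot\mathbf{p}_{\vec T^\perp}|^2\,d\|T\|=(m-1)\!\int\phi^2(d\|T\|-d\|\bC\|)+2\!\int\phi\,\mathbf{p}_V q\cdot(\nabla^{\vec T}\phi\,d\|T\|-\nabla^{\vec\bC}\phi\,d\|\bC\|)+\int\phi^2\,\mathbf{p}_V q\cdot H_T\,d\|T\|.
\]
For the radial term I would apply \eqref{e.classical monotonicity with f} with $f\in C^1([0,R))$ nonnegative, decreasing, and equal to $1$ on $[0,11/6]$, so that the corresponding $F$ is bounded below by a positive constant on $[0,11/6]$; Young's inequality on the mean-curvature remainder then absorbs half of the target integral into the left-hand side and leaves a $C\bA^2$ error. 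The cross-gradient terms in the tilt identity will be handled analogously, by writing $\nabla^{\vec T}\phi-\nabla^{\vec\bC}\phi=-(\mathbf{p}_{\vec T^\perp}-\mathbf{p}_{\vec\bC^\perp})\nabla\phi$ and applying Cauchy--Schwarz, thereby absorbing a small multiple of the target tilt excess back into the left-hand side.

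The mass comparison $|\int\phi^2(d\|T\|-d\|\bC\|)|\le C(\bE+\bA^2)$ will follow from the common graphical parametrization of $T$ and $\tilde{\bC}$ over $\tilde{\bS}$ given by Corollary \ref{cor:reparametrization}, expanding the area integrands $\sqrt{\det g}$ to second order: the cross term $\int\nabla\tilde l_{i,j}\cdot\nabla(\tilde v_{i,j}-\tilde l_{i,j})$ is a null Lagrangian for the linear (hence harmonic) $\tilde l_{i,j}$ and reduces via integration by parts to boundary contributions that either cancel across adjacent Whitney cubes or are absorbed into the cutoff terms; the quadratic remainder is controlled by \eqref{L2 estimate excess}--\eqref{e:comparison-tilde-u-w}; the portion of $\bB_{11/6}\setminus R_{\mathcal W}$ is controlled by \eqref{e:L^2 estimate}; and the discrepancy between $\tilde\bC$ and $\bC$ is accommodated by \eqref{e:flat_C_tildeC}. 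The decisive point for the mean-curvature remainder $\int\phi^2\,\mathbf{p}_V q\cdot H_T\,d\|T\|$ is that $H_T(q)\perp T_q\Sigma$ by \eqref{e:H in L infty}, while $V\subset\pi_0=T_0\Sigma$ and $T_q\Sigma$ deviates from $\pi_0$ by an $O(\bA|q|)$ rotation because $\Sigma=\mathrm{graph}(\Psi)$ with $\|A_\Sigma\|_\infty\le\bA$ and $D\Psi(0)=0$; hence $|\mathbf{p}_V H_T(q)|\le C\bA^2|q|$, producing the required $O(\bA^2)$ bound, and the identical refinement handles the mean-curvature error appearing in the monotonicity reduction.
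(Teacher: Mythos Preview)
Your first-variation identity with the test field $\phi^2\,\mathbf{p}_V q$ is correct, and your observation that $\mathbf{p}_V q\cdot H_T = O(\bA^2)$ (because $H_T\perp T_q\Sigma$, $V\subset\pi_0$, and $|T_q\Sigma-\pi_0|=O(\bA|q|)$) is the right refinement for the mean-curvature remainders.  The genuine gap is in the mass-comparison step: at this point in the paper you do \emph{not} have the bound
\[
\Big|\int\phi^2\,(d\|T\|-d\|\bC\|)\Big|\le C(\bE+\bA^2).
\]
Expanding the area element over the graphical region gives, to leading order, $\tfrac12\int\phi^2|\nabla w_{i,j}|^2$ (or, over $\tilde\bS$, $\tfrac12\int\phi^2|\nabla\tilde u_{i,j}|^2$), and the only available control from Theorem~\ref{thm:graph v2} and Corollary~\ref{cor:reparametrization} is the \emph{weighted} Dirichlet energy $\int|x|^2|\nabla w_{i,j}|^2\le C(\bE+\bA^2)$, not the unweighted one; a Caccioppoli inequality cannot be run across the spine because the graphical domain $(U_{4\mathcal W})_i$ stops there.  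Your ``null Lagrangian'' manoeuvre on the cross term $\int\phi^2\nabla\tilde l\cdot\nabla w$ likewise produces a boundary contribution at the inner edge of the Whitney region which is not absorbed by the cutoff.  Since your reduction of the radial term via \eqref{e.classical monotonicity with f} also terminates in exactly the same upper bound on $\int f\,(d\|T\|-d\|\bC\|)$, both halves of your argument are blocked by the same missing estimate.

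The paper resolves this by testing instead with $g^2\,\mathbf{p}_{V^\perp}q=g^2x$.  Then $\mathrm{div}_{\vec T}(g^2 x)$ produces $1+|\mathbf{p}_V\cdot\mathbf{p}_{\vec T^\perp}|^2$ rather than $(m-1)-|\cdot|^2$, so after comparing with the scaling identity for $\tilde\bC$ one obtains the tilt term \emph{and} $2\big(\int g^2 d\|T\|-\int g^2 d\|\tilde\bC\|\big)$ on the same side; the monotonicity formula then converts this mass difference directly into the radial term without ever needing an independent upper bound on it.  The point is that the remaining error terms, $\int|x^\perp|^2|\nabla_V g|^2\,d\|T\|$ and the $g\,x\cdot\nabla_{V^\perp}g$ cross terms, all carry a factor of $|x|$ or $|x|^2$, which is precisely the weight under which \eqref{L2 estimate excess}--\eqref{e:comparison-tilde-u-w} yield the $C(\bE+\bA^2)$ bound.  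In short, the choice of $\mathbf{p}_{V^\perp}q$ rather than $\mathbf{p}_V q$ is not cosmetic: it is what makes the available weighted estimates sufficient.
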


\begin{proof}
Let $g \in C^\infty_c(\bB_{R_0})$, and, denoting $\mathbf{p}_{V^\perp}$ the orthogonal projection onto the complement $V^\perp$ to the spine $V$ of $\bC_0$, test the first variation formula \eqref{e:H in L infty} with the vector field $\chi(q) = \chi(x,y) := g^2(q)\, \mathbf{p}_{V^\perp}(q) = g^2(q)\,x$ to obtain
\begin{equation} \label{e:first variation in action}
    - \int g^2 \, x \cdot H_T \, d\|T\| = \int {\rm div}_{\vec{T}}\, (g^2 x) \, d\|T\|\,.
\end{equation}
In order to calculate ${\rm div}_{\vec{T}}\, (g^2 x)$, where $\vec{T}=\vec{T}(q)$ and $x = \p_{V^\perp}(q)$, let us denote $(\tau_1,\ldots,\tau_m)$ and $(\nu_{m+1}, \ldots, \nu_{m+n})$ orthonormal bases of $\vec{T}$ and $\vec{T}^\perp$ respectively, so that
\[
{\rm div}_{\vec{T}}\, (g^2 x) = \sum_{i=1}^m \tau_i \cdot \nabla_{\tau_i}(g^2 x) = 2\, g \, \p_{\vec{T}}(x) \cdot \nabla g + g^2\, \sum_{i=1}^m \tau_i \cdot \p_{V^\perp}(\tau_i)\,.
\]
Concerning the first addendum, we see that
\[
\p_{\vec{T}}(x) \cdot \nabla g = \p_{\vec{T}}(x) \cdot (\nabla_{V^\perp}g + \nabla_V g) = \p_{\vec{T}}(x) \cdot \nabla_{V^\perp}g - \p_{\vec{T}^\perp}(x) \cdot \nabla_V g\,,
\]
since $x \cdot \nabla_V g = \p_{V^\perp}(q) \cdot \nabla_V g = 0$. Concerning the second addendum, instead, we write
\[
\sum_{i=1}^m \tau_i \cdot \p_{V^\perp}(\tau_i) = m - \sum_{i=1}^m \tau_i \cdot \p_V(\tau_i)\,,
\]
and since
\[
\sum_{i=1}^m \tau_i \cdot \p_V(\tau_i) + \sum_{j=1}^n \nu_{m+j} \cdot \p_V(\nu_{m+j}) = {\rm tr}\,(\p_V) = m-1\,,
\]
we deduce
\[
\sum_{i=1}^m \tau_i \cdot \p_{V^\perp}(\tau_i) = 1 + \sum_{j=1}^n \nu_{m+j} \cdot \p_V(\nu_{m+j}) = 1 + {\rm tr}\,(\p_V \cdot \p_{\vec{T}^\perp}) = 1 + \left| \p_V \cdot \p_{\vec{T}^\perp} \right|^2\,.
\]
Hence, Young's inequality allows to estimate from \eqref{e:first variation in action}
\begin{align}
  & - \int g^2 \, x \cdot H_{T} \, d\|T\|\nonumber\\
  = & \int (1 + \left| \p_V \cdot \p_{\vec{T}^\perp} \right|^2) \, g^2 \, d\|T\| + \int 2g\, \left( \p_{\vec{T}}(x) \cdot \nabla_{V^\perp}g - \p_{\vec{T}^\perp}(x) \cdot \nabla_V g \right) \, d\|T\| \nonumber \\
  \geq & \int (1 + {\textstyle{\frac12}} \left| \p_V \cdot \p_{\vec{T}^\perp} \right|^2 )\, g^2 \, d\|T\| -2 \int \left(  \abs{x^\perp}^2\, \abs{\nabla_V g}^2 - g\, (\p_{\vec{T}}(x) \cdot \nabla_{V^\perp}g)  \right) \, d\|T\|\,.
\end{align}
In particular we infer
\begin{align}
&\int (1 + {\textstyle{\frac12}} \left| \p_V \cdot \p_{\vec{T}^\perp} \right|^2 )\, g^2 \, d\|T\|\nonumber\\
\leq & - \int g^2 \, x \cdot H_{T} \, d\|T\|
+ 2 \int \left(  \abs{x^\perp}^2\, \abs{\nabla_V g}^2 - g\, (\p_{\vec{T}}(x) \cdot \nabla_{V^\perp}g)  \right) \, d\|T\|\,.\label{e:FV-1}
\end{align}
We next consider the linear $p$-multifunction $\{l_{i, h (i,j)}\}$ of Theorem \ref{thm:graph v2} and the corresponding cone $\tilde{\bC}$. Since $\tilde{\bC}$ has spine $V$, it is invariant with respect to scaling in the $V^\perp$ direction, so that, if we define $\iota_r (x,y):= (\frac{x}{r}, y)$, then $(\iota_r)_\sharp \tilde{\bC} = \tilde{\bC}$ for all $r>0$. Hence, if we differentiate in $r$ the identity
\[
\int g^2\, d\|\tilde{\bC}\| = \int g^2\, d\|(\iota_r)_\sharp \tilde{\bC}\|
\]
and evaluate for $r=1$ we conclude
\[
0 = - \int \left( 2 g \left(x\cdot \nabla_{V^\perp} g\right) + g^2 \right)\, d\|\tilde{\bC}\|\,,
\]
that is
\begin{equation}\label{e:omogenea}
\int g^2\, d\|\tilde{\bC}\| = - \int 2 g \left(x\cdot \nabla_{V^\perp} g\right) d \|\tilde{\bC}\|\,.
\end{equation}
Subtracting \eqref{e:omogenea} from \eqref{e:FV-1} we infer
\begin{align}
&\int \left| \p_V \cdot \p_{\vec{T}^\perp} \right|^2 g^2 d\|T\|
+ 2 \left(\int g^2 d\|T\| - \int g^2 d\|\tilde{\bC}\|\right)
\leq \underbrace{- 2 \int g^2 \, x \cdot H_{T} \, d\|T\|}_{=: {\rm(A)}}\nonumber\\
& \qquad\qquad+ \underbrace{4 \int \abs{x^\perp}^2\, \abs{\nabla_V g}^2\, d\|T\|}_{=: {\rm (B)}}
 + \underbrace{4 \int g \left(x\cdot \nabla_{V^\perp} g\right) d \|\tilde{\bC}\|
 - 4 \int g\, (\p_{\vec{T}}(x) \cdot \nabla_{V^\perp}g)\, d\|T\|}_{=: {\rm (C)}} \,. \label{e:FV-2}
\end{align}
Choose next $g (q) := \gamma (|q|)$, where $\gamma$ is a smooth, nonnegative, and nonincreasing function which equals $1$ on $[0,\sfrac{11}{6}]$ and is supported in $[0, 2)$. With this choice, and using \eqref{e.classical monotonicity with f}, the left-hand side of \eqref{e:FV-2} dominates the left-hand side of \eqref{e.reverse poincare} up to a summand $C\bA$. Moreover, it is easy to bound (A) with $C\bA$ and thus it remains to bound (B) and (C) with $C (\bE+\bA)$. We first use $\|\nabla g\|_\infty \leq C$ and \eqref{e:cosa-manca-al-grafico} to achieve the bound
\begin{align*}
{\rm (B)} &\leq C \int_{\bB_2\setminus R_{\mathcal W}} |x|^2 \, d\|T\| +
C \sum_{i,j} \underbrace{\int_{\bB_2} |x^\perp|^2 \,d \|\bG_{\tilde{v}_{i,j}}\|}_{=:{\rm (B}_{i,j}{\rm)}}\, ,
\end{align*}
where $\tilde v = \{\tilde v_{i,j}\}$ is the $p$-multifunction over $\tilde{\bS}$ introduced in Corollary \ref{cor:reparametrization}. The first summand is bounded by $C (\bE + \bA^2)$ because of \eqref{e:L^2 estimate}. As for the second summand, we use the graphical structure to write it as an integral over $\tilde{U}_{i,j}$. To that end, we write every $z\in \tilde{U}_{i,j}\subset \bH_{1,i,j}$ as $z=(\xi,\zeta)\in V^\perp\times V$ and denote by $\mathbf{p}^\perp_z$ the orthogonal projection onto the normal space $(T_{z+\tilde{v}_{i,j} (z)} \bG_{\tilde{v}_{i,j}})^\perp$. Using the fact that the Lipschitz constant of $\tilde{v}_{i,j}$ is bounded by $C\beta$, we then infer
\begin{align*}
    ({\rm B}_{i,j}) &\leq C \int_{\tilde{U}_{i,j}\cap \bB_2} \left|\mathbf{p}^\perp_z (\xi+\tilde{u}_{i,j} (z)+\Psi (z+\tilde{u}_{i,j} (z)))\right|^2\, dz\,.
\end{align*}
Next, consider that $\|\mathbf{p}^\perp_z - \mathbf{p}_{\bH_{1,i,j}^\perp}\|_O \leq C |\nabla \tilde{v}_{i,j} (z)|\leq C\|D\Psi\|_0 + C |\nabla \tilde{u}_{i,j} (z)|$ and since 
$\mathbf{p}_{\bH_{1,i,j}^\perp} (\xi) =0$, we conclude
\begin{equation}
\left|\mathbf{p}^\perp_z (\xi)\right|\leq C \bA + C |\xi||\nabla \tilde{u}_{i,j} (z)|
\, .
\end{equation}
On the other hand,
\begin{equation}
\left|\mathbf{p}^\perp_z (\tilde{u}_{i,j} (z) + \Psi (z+\tilde{u}_{i,j} (z)))\right|
\leq C |\tilde{u}_{i,j} (z)| + C \bA\, .
\end{equation}
In particular we conclude
\begin{align*}
({\rm B}_{i,j}) &\leq C \int_{\tilde{U}_{i,j}} \left(|\xi|^2 |\nabla \tilde{u}_{i,j} (z)|^2 +
|\tilde{u}_{i,j} (z)|^2\right)\, dz + C \bA^2\\
&\stackrel{\eqref{e:comparison-tilde-u-w}}{\leq} C \int_{(U_{3\mathcal W})_i\cap \bB_3} \left(|\xi|^2 |\nabla w_{i,j} (z)|^2 +
|w_{i,j} (z)|^2\right)\, dz + C\bA^2
\end{align*}
and thus $({\rm B}_{i,j})\leq C (\bE + \bA^2)$ because of \eqref{L2 estimate excess}.

\medskip

We now come to estimating (C). To this aim, we first compute the two integrands, namely
\begin{align*}
g (q) \, x\cdot \nabla_{V^\perp} g (q) &= \frac{\gamma' (|q|) \gamma (|q|)}{|q|} \, |x|^2 
=: \lambda (|q|)\, |x|^2 \\
g (q) \, \mathbf{p}_{\vec{T} (q)} (x) \cdot \nabla_{V^\perp} g (q)
&= \frac{\gamma' (|q|) \gamma (|q|)}{|q|}\, \mathbf{p}_{\vec{T} (q)} (x)\cdot x
=: \lambda (|q|)\, \mathbf{p}_{\vec{T} (q)} (x)\cdot x\, .
\end{align*}
In both cases the integrands are bounded by $C|x|^2$ due to the fact that $|q|^{-1} \gamma' (|q|)$ is bounded. In particular, arguing as for (B) we can estimate
\begin{align}
{\rm (C)} & \leq C (\bE + \bA^2) + 4\, \sum_{i,j} \left|\int_{\tilde{U}_{i,j}} \lambda (|z|)\,
|\xi|^2\, dz - \int \lambda (|q|)\, \mathbf{p}_{\vec{T} (q)} (x)\cdot x\, 
d\|\bG_{\tilde{v}_{i,j}}\| (q)\right|\, .\label{e:bound-for-(C)}
\end{align}
If we introduce the projection $\mathbf{p}_z$ onto the tangent $T_{z+\tilde{v}_{i,j} (z)}\bG_{\tilde{v}_{i,j}}$ and the Jacobian $J \tilde{v}_{i,j} (z)$, we can then use the graphicality to express the second integral as 
\begin{equation}\label{e:ugly-formula}
\int_{\tilde{U}_{i,j}} \underbrace{\lambda (|z+\tilde{v}_{i,j} (z)|)\,  
\mathbf{p}_{z} (\xi+\tilde{v}_{i,j} (z)) \cdot (\xi +\tilde{v}_{i,j} (z))}_{=: f(z)}\,  J\tilde{v}_{i,j} (z)\, dz\, .
\end{equation}
Recall first the classical Taylor expansion
\[
|J\tilde{v}_{i,j} (z) -1| \leq C |\nabla \tilde{v}_{i,j} (z)|^2
\leq C |\nabla\tilde{u}_{i,j} (z)|^2 + C \bA^2\, .
\]
Since $|f(z)| \leq C|\xi|^2 + |\tilde{v}_{i,j} (z)|^2 \leq C |\xi|^2 + C\bA^2$, up to an error term $C |\xi|^2 |\nabla\tilde{u}_{i,j} (z)|^2 + C \bA^2$ the integrand in \eqref{e:ugly-formula} can be treated as 
\begin{equation}\label{e:less-ugly}
\lambda (|z+\tilde{v}_{i,j} (z)|)\,  
\mathbf{p}_{z} (\xi+\tilde{v}_{i,j} (z)) \cdot (\xi +\tilde{v}_{i,j} (z))\,.
\end{equation}
Next note that $\lambda$ vanishes on $[0,1]$, so that we can regard it as a smooth function of $|q|^2$. Since $|z+\tilde{v}_{i,j} (z)|^2-|z|^2 = |\tilde{u}_{i,j} (z)|^2 + |\Psi (z+\tilde{u}_{i,j} (z))|^2 \leq |\tilde{u}_{i,j} (z)|^2 + C \bA^2$, up to an error term $C|\tilde{u}_{i,j} (z)|^2 + C \bA^2$, the expression in \eqref{e:less-ugly} can be treated as 
\begin{equation}\label{e:less-ugly-2}
\lambda (|z|)\,  
\mathbf{p}_{z} (\xi+\tilde{v}_{i,j} (z)) \cdot (\xi +\tilde{v}_{i,j} (z))\, .
\end{equation}
Next observe that
\[
\mathbf{p}_{z} (\xi+\tilde{v}_{i,j} (z)) \cdot (\xi +\tilde{v}_{i,j} (z))
= |\mathbf{p}_{z} (\xi+\tilde{v}_{i,j} (z))|^2 = |\p_z(\xi)|^2 + |\p_z(\tilde v_{i,j}(z))|^2 + 2 \, \p_z(\xi) \cdot \p_z(\tilde v_{i,j}(z))\,.
\]
Now, we clearly have
\[
|\p_z(\tilde v_{i,j}(z))|^2 \leq C |\tilde u_{i,j}(z)|^2 + C \bA^2\,;
\]
furthermore, from the definition of $\p_z$ one gets that
\[
\begin{split}
2 \, |\p_z(\xi) \cdot \p_z(\tilde v_{i,j}(z))| &= 2 \, |\xi \cdot \p_z (\tilde v_{i,j}(z))| \leq C \, |\xi| \, |\nabla \tilde v_{i,j}(z)| \, |\tilde v_{i,j}(z)| \\
& \leq C \, |\xi|^2 \, |\nabla \tilde u_{i,j}(z)|^2 + C \, |\tilde u_{i,j}(z)|^2 + C \bA^2\,.
\end{split}
\]
Thus, up to an error term of type $C \, |\xi|^2 \, |\nabla \tilde u_{i,j}(z)|^2 + C \, |\tilde u_{i,j}(z)|^2 + C \bA^2$, \eqref{e:less-ugly-2} can be treated as 
\[
\lambda (|z|) |\mathbf{p}_z (\xi)|^2\, .
\]
In turn we can write
\[
|\xi|^2 - |\mathbf{p}_z (\xi)|^2 \leq C |\xi|^2 |\nabla \tilde{v}_{i,j} (z)|^2
\leq C |\xi|^2 |\nabla \tilde{u}_{i,j} (z)|^2 + C \bA^2\, .
\]
Since $\lambda (|z|) |\xi|^2$ is the integrand in the first integral of \eqref{e:bound-for-(C)}, summarizing our considerations we achieve
\begin{align*}
&\left|\int_{\tilde{U}_{i,j}} \lambda (|z|)\,
|\xi|^2\, dz - \int \lambda (|q|)\, \mathbf{p}_{\vec{T} (q)} (x)\cdot x\, 
d\|\bG_{\tilde{v}_{i,j}}\| (q)\right|\\
\leq & C\bA^2 + C\int_{\tilde{U}_{i,j}\cap \bB_2} (|\xi|^2 |\nabla \tilde{u}_{i,j} (z)|^2 + |\tilde{u}_{i,j} (z)|^2)\, dz\, .
\end{align*}
Hence, using again \eqref{e:comparison-tilde-u-w} and \eqref{L2 estimate excess} we conclude the desired estimate ${\rm (C)} \leq C (\bE+\bA^2)$.
\end{proof}

\subsection{Proof of Theorem \ref{t:Hardt-Simon-main}} 

Before coming to the proof we isolate the following simple remark:


\begin{lemma}\label{l:shifting-points}
Under the assumptions of Theorem \ref{t:Hardt-Simon-main} the following holds provided $\varepsilon_2$ and $\eta_3$ are chosen sufficiently small.
Let $\lambda \in \left(\sfrac{1}{2R_0},\frac{1}{R_0}\right)$, $q_0$ be as in Theorem \ref{t:Hardt-Simon-main} and $\tilde{\bC}$ be the cone in Definition \ref{d:new-cone}. Let $O$ be an orthogonal linear transformation of $\mathbb R^{m+n}$ mapping $T_0 \Sigma$ onto $T_{q_0} \Sigma$ so that $|O-{\rm Id}|$ is minimal. Then the assumptions of Proposition \ref{prop:density-est}
hold when we replace $T$ with $T_{q_0,\lambda} = (\eta_{q_0,\lambda})_\sharp T$, $\Sigma$ with $\Sigma_{q_0,\lambda}:= (\Sigma-q_0)/\lambda$, and the cones $\bC_0$ and $\bC$ with cones $O( \bC_0)$ and $O(\tilde{\bC})$.
\end{lemma} 
\begin{proof} Observe first that $|O-{\rm Id}|\leq C_0 \bA$ for some geometric constant $C_0$.
Next recall that $\spt(\tilde{\bC})=\tilde{\bS}\subset \bS=\spt(\bC)$ and that the multiplicities of $\tilde{\bC}$ are a reordering of the multiplicities of $\bC$. We thus deduce that 
\[\hat\flat^p_{{\bB}_{R_0}}(O(\tilde{\bC})-\bC_0)\le C|O-{\rm Id}|+ \hat\flat^p_{{\bB}_{R_0}}(\tilde{\bC}-\bC_0)< \eta_{\bS_0}\,.\]
Next note that, upon choosing $\varepsilon_2$ very small, we can assume as a consequence of (i) in  Theorem \ref{thm:graph_v1} that $q_0=(x_0,y_0)$ is sufficiently close to $V$ i.e. $|x_0|\le \delta(\varepsilon_2)$. Hence by the using the invariance of $\bC_0$ along $V$ and scaling we have 
\[\hat\flat^p_{{\bB}_{R_0}}(T_{q_0,\lambda}-\bC_0)\le C \hat\flat^p_{{\bB}_{R_0}}(T-\bC_0) + C|x_0| \le \eta_{\bS_0}\,. \]

It remains to check that the excess with respect to $O(\tilde{\bS})$ is small. 
\begin{align*} \dist\left(\frac{q-q_0}{\lambda}, O(\tilde{\bS})\right)&\le \dist \left( \frac{q-q_0}{\lambda}, \tilde{\bS}\right) +|O-{\rm Id}|\left|\frac{q-q_0}{\lambda}\right| \\
&= \frac1\lambda\dist \left(q-q_0, \tilde{\bS}-y_0\right) + |O-{\rm Id}|\left|\frac{q-q_0}{\lambda}\right|\\
&\le \frac1\lambda \dist(q,\tilde{\bS}) + \frac{|x_0|}{\lambda} + |O-{\rm Id}|\left|\frac{q-q_0}{\lambda}\right|\,.
\end{align*}
Hence we conclude
\[\bE(T_{q_0,\lambda},O(\tilde{\bS}), 0, R_0) \le C \bE(T, \tilde{\bS},q_0, \lambda R_0) + C\left(|x_0|^2 + \bA^2 \right)\,.
\]
Now we can appeal to \eqref{eq:tildeexcesvsexcess} and conclude the lemma.
\end{proof}

\begin{proof}[Proof of Theorem \ref{t:Hardt-Simon-main}] 
By Lemma \ref{l:shifting-points}, we can apply Proposition
\ref{prop:decay-growth-est} and Proposition \ref{prop:density-est} with $T_{q_0,\lambda}$ and $O(\tilde{\bC})$ replacing $T$ and $\bC$. Choosing $\lambda =\frac{1}{2R_0}$, 
we conclude
\begin{equation}\label{e:LS1}
\int_{\bB_{\lambda}(q_0)} \frac{{\rm dist}\, (q-q_0, O(\tilde{\bS}))^2}{|q-q_0|^{m+\frac74}} \, d\|T\|(q) \leq C (\bE (T, O(\tilde{\bS}) +q_0, q_0, \sfrac12) + \bA)\, .
\end{equation}
Next, observe that $\dist(q-q_0, \tilde{\bS}) \le \dist(q-q_0, O(\tilde{\bS})) + |O-{\rm Id}| |q-q_0|$. Since $|q-q_0|^{2-m-\frac74}$ is integrable with respect to $d||T||$ and $|O-{\rm Id}| \leq C_0 \bA$ we can replace $O ({\tilde{\bS}})$ in the left hand side with $\tilde{\bS}$ at the price of a larger constant $C$ in the right hand side. 
Next, observe that by the invariance of $\tilde{\bS}$ along the spine $V$ we can estimate 
\begin{equation}\label{e:LS2}
\dist (q, \tilde{\bS} + q_0) \leq \dist (q, \tilde{\bS}) + |x_0|\, .
\end{equation}
Hence 
\begin{align*}
\bE (T, \tilde{\bS} +q_0, q_0, \sfrac12) &\leq C |x_0|^2 + C \bE (T, \tilde{\bS}, 0, 2)
\leq C |x_0|^2 + C(\bA + \bE)\, ,
\end{align*}
where in the last inequality we have used \eqref{eq:tildeexcesvsexcess}. 
Combining with \eqref{e:LS1} we achieve
\begin{equation}\label{e:LS3}
\int_{\bB_{\lambda}(q_0)} \frac{{\rm dist}\, (q-q_0, \tilde{\bS})^2}{|q-q_0|^{m+\frac74}}\, d\|T\| (q) \leq C (\bE + \bA) + C |x_0|^2\,.
\end{equation}
In $\bB_1 \setminus \bB_\lambda(q_0)$ we have $|q-q_0|>\lambda$ hence
\begin{align*}&\int_{\bB_1\setminus \bB_{\lambda}(q_0)} \frac{{\rm dist}\, (q-q_0, \tilde{\bS})^2}{|q-q_0|^{m+\frac74}}\, d\|T\| (q) \le C \bE(T, \tilde{\bS}+q_0,0, 1)\\
&\qquad\le C|x_0|^2 + \bE(T,\tilde{\bS},0,1) \le C|x_0|^2 +C(\bA + \bE)\,, \end{align*}
where we have used once again \eqref{eq:tildeexcesvsexcess}.

We claim the existence of $\beta_1$ and $C_1$ depending only upon $\bS_0$ such that, if 
\begin{equation}\label{e:condition-on-rho}
\rho\geq \max \{C_2 \bE^{\sfrac{1}{(m+2)}} \beta^{-1}, \bar{C} |x_0|\}\, ,
\end{equation}
where $C_2$ is the constant of Theorem \ref{thm:graph_v1}(i) and $\bar C$ depends only on $\bS_0$, then
\begin{equation}\label{e:LS-claim}
|x_0|^2 \leq C_1 \rho^{\frac74} \int_{\bB_{1}} \frac{{\rm dist}\, (q-q_0, \tilde{\bS})^2}{|q-q_0|^{m+\frac74}}\, d\|T\| (q) + C_1 \rho^{-m} (\bE + \bA)\, . 
\end{equation}
Using \eqref{e:LS-claim} with a fixed appropriately small $\rho$ we then get from \eqref{e:LS3}
\begin{equation}\label{e:LS4}
\int_{\bB_{1}} \frac{{\rm dist}\, (q-q_0, \tilde{\bS})^2}{|q-q_0|^{m+\frac74}}\, d\|T\| (q) \leq C (\bE + \bA) \, ,
\end{equation}
which in turn we can combine again with \eqref{e:LS-claim} to achieve the desired estimate \eqref{e:Hardt-Simon}. Note that in order to ensure that $\rho$ can be chosen sufficiently small, we need $\bE$ sufficiently small, which in turn dictates a sufficiently small choice of $\varepsilon_2$, depending on $\beta$, and $|x_0|$ smaller than a constant depending on $\beta$ and $\bS_0$, which in turn requires $\eta_3$ to be chosen sufficiently small.

We now come to the proof of \eqref{e:LS-claim}. We first choose a half plane $\bH_{0,i}$ which is furthest away from $q_0$, i.e. 
\begin{equation}\label{e:i-min}
|\mathbf{p}_{\bH_{0,i}^\perp} (q_0)| = |\mathbf{p}_{\bH_{0,i}^\perp} (x_0)|
= \max_j \big\{|\mathbf{p}_{\bH_{0,j}^\perp} (q_0)|\big\} 
= \max_j \big\{|\mathbf{p}_{\bH_{0,j}^\perp} (x_0)|\big\}\, .
\end{equation}
Note that, since the open book $\bS_0$ is nonflat, there is a positive constant $c$ depending only on $\bC_0$ such that
\begin{equation}\label{e:angolo}
4c |x_0| \leq |\mathbf{p}_{\bH_{0,i}^\perp} (q_0)|\,.
\end{equation}

Consider now the orthogonal complement of $\bH_{0,i}$ in $\pi_0$, namely $\bH_{0,i}^{\perp_0} = \bH_{0,i}^\perp \cap \pi_0$. The latter is a line and we can identify it with $\{(t, 0, \ldots, 0): t\in \mathbb R\}$. We now look at the projection of $q_0$ on this line, which is given by $(t_0,0,\ldots 0)$. Observe that, $|\mathbf{p}_{\bH_{0,i}^\perp} (q_0)|\leq |t_0| + C_0 \bA |x_0|$, because $q_0\in \spt (T)\subset \Sigma$, where $C_0$ is a geometric constant. In particular, choosing $\varepsilon_2$ sufficiently small, we can assume that $|t_0|\geq 2c |x_0|$. If $t_0=0$, it follows that $x_0=0$ and there is nothing to prove. 
We can thus assume, without loss of generality, that $t_0>0$. We now consider $\{\tilde{\bH}_{i,j}\}_j$ as graphs over $\bH_{0,i}$ of functions taking values in  $\bH_{0,i}^{\perp_0}$. We then choose the $j$ whose graph is lowest in the natural ordering induced by the variable $t$.
We then have
\[
|\mathbf{p}_{\tilde{\bH}_{i,j}^\perp} (q_0)|\geq
|\mathbf{p}_{\bH_{0,i}^\perp} (q_0)| - \big| \mathbf{p}_{\tilde{\bH}_{i,j}^\perp}-
\mathbf{p}_{\bH_{0,i}^\perp}| |x_0|\, .
\]
Choosing $\eta_3$ sufficiently small we can thus ensure
\begin{equation}\label{e:angolo-2}
c |x_0| \leq |\mathbf{p}_{\tilde{\bH}_{i,j}^\perp} (q_0)|\, .
\end{equation}
On the other hand for any point $x$ in $\tilde{\bH}_{i,j}$ with $\dist (x,V) \geq \bar C |x_0|$, where the constant $\bar C$ depends only upon $\bS_0$, it follows that 
\begin{equation}\label{e:distanza}
\dist (x-q_0, \tilde{\bS}) = |\mathbf{p}_{\tilde{\bH}_{i,j}^\perp} (q_0)| \geq c |x_0|\, .
\end{equation}
In order to prove the latter claim we first observe that it suffices to show it for the point $\mathbf{p}_{\pi_0} (q_0)$. Secondly, using the invariance of the cone along the spine $V$, we can assume as well that $q_0, x\in V^{\perp_0}$, thus reducing the claim to a simple $2$-dimensional geometric consideration. An illustration of why the latter holds is given in Figure \ref{f:jonas}.

\begin{figure}
\begin{tikzpicture}
\draw[dashed] (-7,0) -- (-3,0);
\node[right] at (-3,0){$\bH_{0,i}$}; 
\draw[dashed] (-7,0) -- ({-7+4 * cos 100}, {4* sin 100});
\draw[dashed] (-7,0) -- ({-7+4 * cos 230}, {4* sin 230});
\draw (-7,0) -- ({-7+3* cos 10}, {3* sin 10});
\draw (-7,0) -- ({-7+3* cos 4}, {3* sin 4});
\draw (-7,0) -- ({-7+3* cos (-3)}, {3* sin (-3)});
\node[below] at ({-7+3* cos (-3)}, {3* sin (-3)}){$\bH_{i,j}$};
\draw (-7,0) -- ({-7+3* cos 107}, {3* sin 107});
\draw (-7,0) -- ({-7+3* cos 112}, {3* sin 112});
\draw (-7,0) -- ({-7+3* cos 235}, {3*sin 235});
\draw (-7,0) -- ({-7+3* cos 225}, {3*sin 225});
\draw (-7,0) -- ({-7+3* cos 240}, {3*sin 240});
\draw (-7,0) -- ({-7+3* cos 220}, {3*sin 220});

\draw (0,0) -- ({4* cos (-3)}, {4* sin (-3)});
\node[below] at ({4* cos (-3)}, {4* sin (-3)}){$\bH_{i,j}$};
\draw (1,1) -- ({1+3* cos 10}, {1+3* sin 10});
\draw (1,1) -- ({1+3* cos 4}, {1+3* sin 4});
\draw (1,1) -- ({1+3* cos (-3)}, {1+3* sin (-3)});
\draw (1,1) -- ({1+3* cos 107}, {1+3* sin 107});
\draw (1,1) -- ({1+3* cos 112}, {1+3* sin 112});
\draw (1,1) -- ({1+3* cos 235}, {1+3*sin 235});
\draw (1,1) -- ({1+3* cos 225}, {1+3*sin 225});
\draw (1,1) -- ({1+3* cos 240}, {1+3*sin 240});
\draw (1,1) -- ({1+3* cos 220}, {1+3*sin 220});
\draw[fill] ({3* cos (-3)}, {3* sin (-3)}) circle [radius = 0.05];
\node[below] at ({3* cos (-3)}, {3* sin (-3)}){$x$};
\draw[dashed] ({3* cos (-3)}, {3* sin (-3)}) -- ({3* cos (-3) + (sin 3 + cos 3) * sin 3}, {3* sin (-3) + (sin 3 + cos 3)* cos 3});
\draw[fill] ({3* cos (-3) + (sin 3 + cos 3) * sin 3}, {3* sin (-3) + (sin 3 + cos 3)* cos 3}) circle [radius=0.05];
\node[below right] at ({3* cos (-3) + (sin 3 + cos 3) * sin 3}, {3* sin (-3) + (sin 3 + cos 3)* cos 3}){$\bar x$};
\end{tikzpicture}
\caption{The picture on the left shows the books $\bS_0$ (dashed lines) and the book $\tilde{\bS}$ (solid lines).The page $\bH_{0,i}$ is pictured horizontal and the page $\bH_{i,j}$ is the ``lowest page'' of $\tilde{\bS}$ among those close to $\bH_{0,i}$. The angle formed between $q_0$ and $\bH_{0,i}$ is larger than a geometric constant (depending only on $\bS_0$) and much larger than the angle between $\bH_{0,i}$ and $\bH_{i,j}$.
The picture on the right shows $\bH_{i,j}$, the translated book $q_0 + \tilde{\bS}$ and a point $x\in \bH_{i,j}$ with the property that $|x-q_0|\geq C |x_0|$ for a suitable constant. Observe that ${\rm dist}\, (x-q_0, \tilde{\bS}) = {\rm dist}\, (x, q_0 + \tilde{\bS}) = |\bar x - x|$, where $\bar x$ is the point on $q_0 + \tilde{\bS}$ closest to $x$. Note that $\bar x$ must belong to $q_0 + \bH_{i,j}$ and $\bar x-x$ must be orthogonal it, in particular $|x-\bar x| = |\mathbf{p}_{\bH_{i,j}^\perp} (q_0)|$.} \label{f:jonas}
\end{figure}
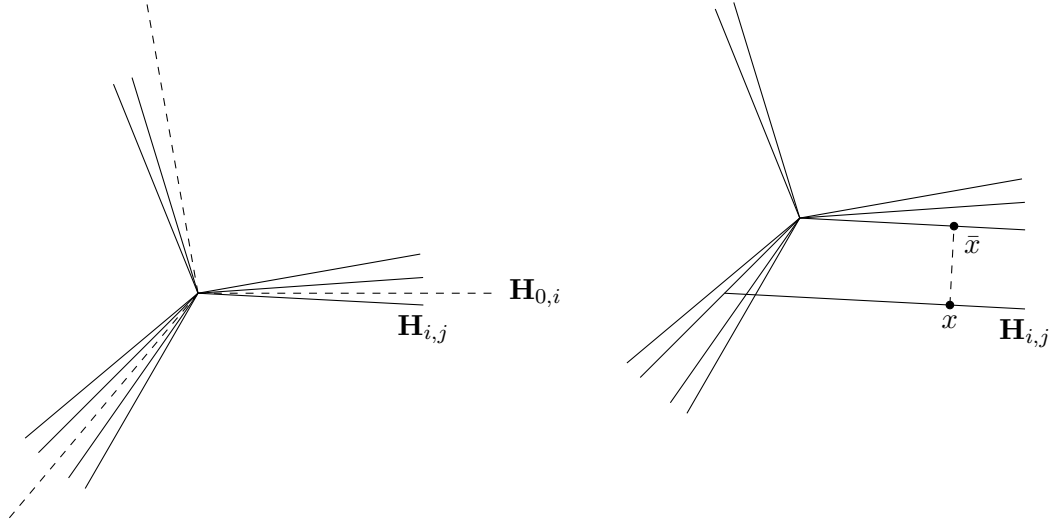

Fix a system of coordinates so that $\tilde{\bH}_{i,j} = \{(x_1, 0, \ldots, 0, v): v\in V, x_1\geq 0\}$.
Consider now $\beta< \beta_1$ fixed and let $\varepsilon_2$ be chosen so small that the domain of the function $\tilde{v}_{i,j}$ contains $\Omega := [\rho, 2\rho]\times \{0\} \times B_{\rho} (y_0)$. This is possible by choosing $\varepsilon_2$ sufficiently small because of \eqref{e:condition-on-rho} and Theorem \ref{thm:graph_v1}. We also require that any point in $\Omega$ satisfies $\dist (x,V) \geq \rho\geq \bar C |x_0|$, so that \eqref{e:distanza} holds.    

The graph of $\tilde{v}_{i,j}$ over $\Omega$ belongs to $T$, and if $q$ is a point on it and $x$ is its projection onto $\tilde{\bH}_{i,j}$ we can combine \eqref{e:distanza} and the triangle inequality to get
\[
c |x_0| \leq \dist (x - q_0, \tilde{\bS})
\leq \dist (q-q_0, \tilde{\bS}) + |\tilde{v}_{i,j} (x)|
\leq \dist (q-q_0, \tilde{\bS}) + |\tilde{u}_{i,j} (x)| + C \bA\, .
\]
Squaring the latter inequality, integrating it over the domain $\Omega$, and using that $\Omega \subset \bB_1$ if $\rho$ is small enough we reach
\begin{align*}
|x_0|^2 &\leq C \rho^{\frac74} \int_{\bB_1} \frac{\dist (q-q_0, \tilde{\bS})^2}{|q-q_0|^{m+\frac74}} d\|T\| (q)
+ C \rho^{-m} \int_{\bB_{1}\cap \tilde{\bH}_{i,j}} |\tilde{u}_{i,j}|^2 + C  \bA^2\nonumber\\
& \leq C \rho^{\frac74}\int_{\bB_{1}}\frac{\dist (q-q_0, \tilde{\bS})^2}{|q-q_0|^{m+\frac74}} d\|T\| (q)
+ C \rho^{-m} (\bE + \bA^2)\, ,
\end{align*}
where we have used \eqref{e:comparison-tilde-u-w}.
\end{proof}

\section{No-hole condition, binding functions, and estimates on the spine}\label{sec:binding}

We start by summarizing the assumptions on the various currents and parameters. 

\begin{ipotesi}\label{a:leone-simone} We let $T, \Sigma, \bC_0, \bC, \bS_0$, and $\bS$ be as in Assumption \ref{a:graphical}. $\beta_1$ is the constant of Theorem \ref{t:Hardt-Simon-main}, which depends only upon $\bS_0$. For any fixed $\beta<\beta_1$ we choose $\eta_3$ and $\varepsilon_2$, depending upon $(m,n,p,\bS_0, \beta)$, as in Theorem \ref{t:Hardt-Simon-main} and we assume that
\begin{align} 
&\hat\flat^p_{{\bB}_{R_0}}(T-\bC_0) + \hat\flat^p_{{\bB}_{R_0}}(\bC-\bC_0) < \eta_3\,,\label{e:hyp_flat_T_C0-aggiornata}\\
&\bA+ \bE (\bC, \bS_0, 0, R_0) + \bE (T, \bS, 0, R_0)\leq \eps_2^2\, . \label{e:hyp_graph_aggiornata}
\end{align}
\end{ipotesi}

In this section, we are going to adopt the following notation. Recall that $\mathcal{Q}$ defines a collection of cubes in $\left[ 0, 2 \right] \times \left[-2,2\right]^{m-1} \subset \left[0,\infty\right) \times V$, so that
\[
\bigcup_{Q \in \mathcal Q} Q = \left( 0, 2 \right] \times \left[-2,2\right]^{m-1} = \left[ 0, 2 \right] \times \left[-2,2\right]^{m-1} \setminus V\,.
\]
Recalling that 
\begin{align*}
\pi_0 &= \{0_{n-1}\} \times \R^2 \times \R^{m-1}\,,\\
V &= \{0_{n-1}\} \times \{0_2\} \times \R^{m-1}\,,
\end{align*}
we will set 
\begin{equation}
    R_{\mathcal Q} := \left\lbrace q=(0,x,y) \in \pi_0 \, \colon \, 0 < |x| \leq 2 \quad \mbox{and} \quad y \in \left[-2,2\right]^{m-1} \right\rbrace\,.
\end{equation}
Notice that $R_{\mathcal Q}$ is invariant with respect to rotations around $V$ in $\pi_0$. 

\begin{definition} \label{def:binding functions}

A \emph{binding function} is any Borel measurable function $\xi \colon R_{\mathcal Q} \to V^\perp$ with the property that
     $\xi(q) = \xi(q')$ for all $q=(0,x,y)$ and $q'=(0,x',y')$ such that $(|x|,y)$ and $(|x'|,y')$ belong to the interior of the same $Q \in \mathcal Q$.
\end{definition}

The following is the main theorem of this section. In the statement, we will use the notation 
\begin{equation}\label{e:def_rho_inf}
\varrho_\infty := \| \varrho_{\mathcal W} \|_{L^\infty(\left[-2,2\right]^{m-1})}\, ,
\end{equation}
where $\varrho_{\mathcal W}$ is the function defined in \eqref{oh dear vrho} corresponding to a Whitney domain $\mathcal W$.

\begin{theorem} \label{thm:est spine}
    Let $\bC_0$ be as in Assumption \ref{ass:cone}, with $\bS_0 = \spt(\bC_0) \subset \pi_0$ and let $V$ be the spine of $\bC_0$. Let $\beta_1$ be as in Assumption \ref{a:leone-simone}. For any $\beta < \beta_1$ there exist positive constants $\eta_4 \leq \eta_3$, $\eps_3\leq \eps_2$, and $C$, depending upon $(m,n,p,\bS_0,\beta)$ with the following property. Let $T,\Sigma,\bC$, and $\bS$ be as in Assumption \ref{a:leone-simone} such that \eqref{e:hyp_flat_T_C0-aggiornata}-\eqref{e:hyp_graph_aggiornata} hold with $\eta_4$ and $\eps_3$ in place of $\eta_3$ and $\eps_2$. Then, setting as usual $|x|(q)=\dist(q,V)$, it holds
    
    \begin{equation}\label{e:est spine}
        \int_{\bB_{\sfrac12}} \frac{\dist(q,\tilde\bS)^2}{\max\{\varrho_\infty, |x|\}^{1/2}} \, d\|T\|(q) \leq C (\bE + \bA)\,,
    \end{equation}
    where $\tilde\bS$ is the open book $\spt(\tilde\bC) = {\bG}_{\bC_0}(\tilde l)$ from Definition \ref{d:new-cone}. Furthermore, let $u$, $U_{4\mathcal W}$, $\tilde l$, and $w$ be as in Theorem \ref{thm:graph v2}. There exist a binding function $\xi$ and a $p$-multifunction $\varpi$ on $U_{\mathcal W}$ over $\bS_0$ such that
    
    \begin{align} \label{e:bf infty}
       & \| \xi \|_{\infty}^2  \leq C (\bE + \bA)\,, \\ \label{bf:multierror}
       & \sup_{\zeta \in U_{\mathcal W}} \abs{\varpi(\zeta)}^2 \leq C\, (\bE + \bA) \, \hat\flat^p_{\bB_{R_0}}(\bC-\bC_0)^2\,, \\  \label{e:bf L2}
       & \sum_{i=1}^N \sum_{j=1}^{\kappa_{0,i}} \int_{(U_\mathcal W)_i \cap \bB_{\sfrac12}} \frac{\abs{w_{i,j} - \varpi_{i,j} - \mathbf{p}_{\bH_{0,i}^{\perp_0}}(\xi)}^2}{|x|^{5/2}} + \frac{\abs{\nabla w_{i,j}}^2}{\abs{x}^{1/2}} \, dz \leq C(\bE + \bA)\,.
        \end{align}
    
\end{theorem}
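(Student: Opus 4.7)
The proof combines three ingredients already available: the no-hole condition of Proposition \ref{prop:no-holes}, which produces high-density points of $T$ inside every Whitney cube; the Hardt-Simon weighted estimate of Theorem \ref{t:Hardt-Simon-main} applied at each such point; and the graphical parametrization of Theorems \ref{thm:graph_v1}--\ref{thm:graph v2} together with Corollary \ref{cor:reparametrization}, which converts singular weighted integrals of $\dist(\cdot,\tilde\bS)^2$ into singular weighted integrals of $|w_{i,j}|^2$. The binding function $\xi$ will record the ``common translation'' of the $\kappa_{0,i}$ graphical sheets above each page $\bH_{0,i}$ that is forced by the no-hole points, while the $p$-multifunction $\varpi$ will absorb the residual ``intra-page'' shift among the $\tilde l_{i,j}=l_{i,h(i,j)}$'s.

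For every $Q\in\mathcal W$ with $Q\ssubset [0,1)\times [-1,1]^{m-1}$, Proposition \ref{prop:no-holes} applied at the point $(0,y_Q)\in V$ at scale $\sim d_Q$ produces $q_Q=(x_Q,y_Q')\in\spt(T)$ with $\Theta_T(q_Q)\geq p/2$ and $|q_Q-(0,y_Q)|\leq C d_Q$. Applying Theorem \ref{t:Hardt-Simon-main} at $q_Q$ (after rescaling by $d_Q$ and using that $\tilde\bS$ is $V$-invariant, so its translates by $y$-directions coincide with itself) yields
\begin{equation}\label{e:HS-local}
|x_Q|^2+\int_{\bB_{cd_Q}(q_Q)}\frac{\dist^2(q-q_Q,\tilde\bS)}{|q-q_Q|^{m+7/4}}\,d\|T\|(q)\leq C\,d_Q^{-1/4}\,\mathbf E_Q,
\end{equation}
where $\mathbf E_Q$ is the rescaled local contribution of $\bE+\bA$. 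Define $\xi$ on $R_{\mathcal Q}$ by $\xi:=x_Q$ on every $Q\in\mathcal W$ (extended arbitrarily otherwise); the $L^\infty$ bound \eqref{e:bf infty} is then immediate from \eqref{e:HS-local}.

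For $\varpi$, fix $\zeta=(t,y)\in Q$ and set, for each $(i,j)$,
\[
\varpi_{i,j}(\zeta):=\tilde l_{i,j}\bigl(\p_{\bH_{0,i}}(q_Q+\zeta)\bigr)-\tilde l_{i,j}(\zeta)-\p_{\bH_{0,i}^{\perp_0}}(x_Q).
\]
This is constant on each $Q$, hence a valid $p$-multifunction. Since $\tilde l_{i,j}$ is linear vanishing on $V$ with $\|D\tilde l_{i,j}\|\leq C\,\hat\flat^p_{\bB_{R_0}}(\bC-\bC_0)$, combining with $|x_Q|\leq C(\bE+\bA)^{1/2}$ produces \eqref{bf:multierror}. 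Near $q_Q$ and in the graphical region, $\dist(z+v_{i,j}(z),\tilde\bS)=|w_{i,j}(z)-\varpi_{i,j}-\p_{\bH_{0,i}^{\perp_0}}(\xi)|$ up to errors of the same order absorbed by $\varpi$; hence, summing \eqref{e:HS-local} over $Q\in\mathcal W$ using a Vitali-type overlap argument and pushing the integral onto $\bH_{0,i}$ through Theorem \ref{thm:graph v2} (Jacobian essentially $1$), the $V$-directions contribute a factor $|x|^{m-1}$ so that $|q-q_Q|^{-(m+7/4)}$ becomes $|x|^{-5/2}$ (valid because $|x_Q|\ll|x|$ on $U_{\mathcal W}$), yielding the $w$-piece of \eqref{e:bf L2}. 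The gradient piece is a direct consequence of the radial-derivative bound in Proposition \ref{prop:decay-growth-est}, applied to the selected $\tilde l_{i,j}$ via the selection of \S\ref{sec:global}. Finally \eqref{e:est spine} follows by the identical summation split into $\bB_{1/2}\cap R_{\mathcal W}$ (where $|x|\geq\varrho_\infty$ and \eqref{e:HS-local} is applied) and its complement (where \eqref{e:L^2 estimate} gives the trivial bound $\varrho_\infty^{-1/2}\int|x|^2 d\|T\|\leq C\varrho_\infty^{-1/2}\bE$, absorbed by choosing $\varrho_\infty$ comparable to a power of $\bE$ via Theorem \ref{thm:graph_v1}(i)).

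The principal obstacle is the construction of $\varpi$: it must yield an $L^\infty$ bound of genuine order $(\bE+\bA)^{1/2}\cdot\hat\flat^p_{\bB_{R_0}}(\bC-\bC_0)$, rather than merely $\hat\flat^p_{\bB_{R_0}}(\bC-\bC_0)$. Producing this improved bound exploits both that $\tilde l_{i,j}$ is a \emph{linear} function vanishing on $V$ (so evaluating it on $\xi$ gives a product structure) and that $|\xi|\leq C(\bE+\bA)^{1/2}$ from the no-hole/Hardt-Simon input. This correction is intrinsically tied to the multiplicity $\kappa_{0,i}>1$ phenomenon: in the multiplicity-one setting of \cite{Simon} the term $\varpi$ disappears, and the present refinement is the key technical device allowing our later excess-decay iteration to tolerate $\bC$ being far from $\bC_0$ (i.e.\ $\hat\flat^p_{\bB_{R_0}}(\bC-\bC_0)$ not tied to $\bE$).
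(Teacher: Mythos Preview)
Your overall strategy is right---no-holes plus Hardt--Simon at every Whitney cube, with $\xi$ recording the $V^\perp$-component of the high-density point---and this is exactly what the paper does. But there is a genuine gap in the passage from the Hardt--Simon integral to the weighted $L^2$ bound on $w_{i,j}$.

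The Hardt--Simon estimate \eqref{e:Hardt-Simon} controls $\dist^2(q-\xi_Q,\tilde\bS)$. On the graph of $v_{i,j}$ over a cube $Q_i$, this distance is (up to negligible errors) the \emph{minimum} over all sheets,
\[
\min_{1\le h\le\kappa_{0,i}}\bigl|u_{i,j}(z)-\p_{\bH_{0,i}^{\perp_0}}(\xi_Q)-l_{i,h}(z)+l_{i,h}(\p_{\bH_{0,i}}(\xi_Q))\bigr|\,,
\]
not $|w_{i,j}-\varpi_{i,j}-\p_{\bH_{0,i}^{\perp_0}}(\xi)|$ for the \emph{specific} selection $\tilde l_{i,j}=l_{i,h(i,j)}$ that was made in Theorem~\ref{thm:graph v2}. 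There is no a priori reason why the shift by $\xi_Q$ preserves the optimality of $h(i,j)$: the minimizing $h$ can change from cube to cube. The paper handles this by \emph{re-running the entire selection machinery} (Lemmas~\ref{lem.harmonic rigidity} and~\ref{l:global_selection}) with the shifted family $\{u-\p_{\bH_0^{\perp_0}}(\xi_Q)\}$ versus $\{l_h-l_h(\p_{\bH_0}(\xi_Q))\}$, obtaining a new global selection $h^*=h^*(i,j)$ and cube-dependent constants $\varpi_Q=l_{\bar h(Q)}(\p_{\bH_0}(\xi_Q))$; only \emph{afterwards} does one compare $l^*:=l_{i,h^*(i,j)}$ with $\tilde l_{i,j}$ via the identity $|D(l^*-\tilde l)|^2\le C(\bE+\bA)$, which finally allows substituting $\tilde l$ back in. Your definition of $\varpi$ plugs in $\tilde l_{i,j}$ directly and asserts the resulting expression equals the distance ``up to errors absorbed by $\varpi$''; this step is precisely where the re-selection argument is needed and cannot be skipped when $\kappa_{0,i}>1$.

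Two smaller issues: (a) the gradient piece of \eqref{e:bf L2} does not follow from Proposition~\ref{prop:decay-growth-est}, which only controls the \emph{radial} derivative $\partial_r(u/|z|)$; the paper obtains the full $|x|^{-1/2}$-weighted gradient bound from the Schauder estimate in the re-selected problem (cf.\ \eqref{improved c_onealfa old sel}); (b) your treatment of the region $\{|x|<\varrho_\infty\}$ via $\dist(q,\tilde\bS)\le|x|$ yields only $\varrho_\infty^{5/2}$ on the right-hand side, which is not $\le C(\bE+\bA)$; the paper applies no-holes and Hardt--Simon directly at scale $\varrho_\infty$ there.
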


\subsection{No-holes property}

The following ``no-holes property'' is the crucial tool towards the proof of Theorem \ref{thm:est spine}.

\begin{proposition}[No-holes property]\label{prop:no-holes}
Let $\bC_0$ be as in Assumption \ref{ass:cone}, with 
\[
\bS_0 = \spt(\bC_0) \subset \pi_0
\]
and let $V$ be the spine of $\bC_0$. For every $\delta \in \left(0, \frac18 \right)$, there exists $\eps_{{\rm NH}} = \eps_{\rm NH}(m,p,\bS_0,\delta) > 0$ with the following property. Let $\Sigma$ be as in Assumption \ref{assumption:manifolds and currents}, and let $T$ be area minimizing $\modp$ in $\Sigma \cap \bB_2$ with $(\pa T) \mres \bB_1 = 0\, \modp$. If $\hat\flat^p_{{\bB}_1}(T-\bC_0) < \eta_1(\bS_0)$ (where $\eta_1(\bS_0)$ is the parameter defined in Lemma \ref{lem:qualitative flat_excess}), and if
\begin{equation} \label{e:no_holes}
   \bA + \bE(T,\bS_0,0,1) \leq \eps_{{\rm NH}}^2\,,
\end{equation}
then $T$ satisfies the following \emph{$\delta$-no-holes condition w.r.t. $\bC_0$ in $B^{m-1}_{\sfrac{1}{2}} \subset V$}: 
\begin{itemize}
\item[(NH)] for any $y \in B_{\sfrac{1}{2}}^{m-1}$, there exists $q \in \bB_\delta((0,y))$ such that $\Theta_T(q) \geq \Theta_{\bC_0}(0) = \sfrac{p}{2}$. 
\end{itemize}
\end{proposition}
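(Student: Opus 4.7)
My plan is to argue by contradiction and compactness, ultimately exploiting the topological obstruction coming from the mod~$p$ structure: the $1$-dimensional cone $\bC_0'$ obtained by slicing $\bC_0$ transversally to $V$ consists of $N \geq 3$ rays with multiplicities $\kappa_i \in [1, p/2)$ summing to $p$, so it cannot be reproduced by a density-$<p/2$ rectifiable mod-$p$ chain without producing a definite mass excess. Suppose the conclusion fails: then there exist sequences $\{T_k\}$, $\{\Sigma_k\}$ verifying the hypotheses with $\bA_k + \bE(T_k, \bS_0, 0, 1) \to 0$ and $\hat\flat^p_{\bB_1}(T_k - \bC_0) < \eta_1(\bS_0)$, together with points $y_k \in B_{1/2}^{m-1}$ for which $\Theta_{T_k}(q) < p/2$ for every $q \in \bB_\delta((0, y_k))$. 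Extracting subsequences I may assume $y_k \to y_\infty$; set $q_\infty := (0, y_\infty) \in V$. Lemma~\ref{lem:qualitative flat_excess}, together with standard compactness for area-minimizing currents mod $p$ and the mean curvature bound, yields $T_k \to \bC_0$ both in flat norm mod $p$ and as varifolds.

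Next I would extract $1$-dimensional information by slicing in the direction orthogonal to the spine. For a.e.~$y'$ close to $y_\infty$ in $V$, the Federer slice $S_k^{y'} := \langle T_k, \mathbf{p}_V, y'\rangle$ is a rectifiable cycle mod $p$ of dimension $1$ inside the fiber $\mathbf{p}_V^{-1}(y')$. The varifold convergence $T_k \to \bC_0$, coupled with the coarea formula and Fatou, implies that for a.e.~$y'$ the slice $S_k^{y'}$ converges in flat norm mod $p$ and in varifold sense to $\bC_0' \times \{y'\}$, with $\bC_0'$ as in Proposition~\ref{lem:structure_cones}. Federer's semicontinuity of density under slicing further gives $\Theta_{S_k^{y'}}(q) \leq \Theta_{T_k}(q) < p/2$ at every $q$ in the intersection of $\bB_\delta((0, y_k))$ with the fiber, provided $y'$ is sufficiently close to $y_\infty$.

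I would then rely on a quantitative rigidity statement for the slices: if a $1$-rectifiable mod-$p$ cycle in a $2$-dimensional disc $D_\rho$ of the fiber is flat-close to $\bC_0' \mres D_\rho$ and has no point of density $\geq p/2$, then its total mass in $D_\rho$ exceeds $\|\bC_0'\|(D_\rho) = p\rho$ by a definite amount $c(\bS_0)\, \rho > 0$. The underlying geometric principle is a Steiner-tree-type computation: the boundary configuration $\sum_i \kappa_i \delta_{\rho v_i}$ with $\kappa_i < p/2$, $\sum_i \kappa_i = p$, and $N \geq 3$ admits $\bC_0' \mres D_\rho$ as unique area-minimizing filling, while any non-branching competitor (a disjoint union of smooth arcs joining the boundary points in pairs with integer multiplicities $< p/2$) is strictly longer. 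Integrating the mass excess over $y' \in B_\eta^{m-1}(y_\infty)$ for a suitable $\eta$ and combining with the varifold mass convergence $\|T_k\|(D_\rho \times B_\eta^{m-1}(y_\infty)) \to p\rho\, \omega_{m-1}\, \eta^{m-1}$ delivers the required contradiction.

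The hardest ingredient will be the quantitative rigidity statement above. The qualitative obstruction coming from the constraints $\kappa_i < p/2$ and $N \geq 3$ is geometrically transparent, but producing a \emph{definite} mass excess depending only on $\bS_0$ out of the sole density bound demands either a combinatorial description of the structure of $1$-chains mod $p$ with prescribed boundary and no high-density points, or a barrier/replacement argument directly at the level of $T_k$, in which $T_k \mres \bB_\delta(q_\infty)$ is replaced by a competitor built from $\bC_0$ and a smooth interpolation on a spherical annulus; the latter route has the advantage of sidestepping the slicing step entirely while still exploiting the same combinatorial obstruction from Proposition~\ref{lem:structure_cones}.
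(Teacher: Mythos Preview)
Your contradiction--compactness setup is right, and the geometric intuition that the multiplicity constraints $\kappa_i<p/2$, $\sum_i\kappa_i=p$, $N\geq 3$ force a topological obstruction is exactly the heart of the matter. But the route you take to exploit it has genuine gaps, and the paper's argument is both different and considerably more direct.

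\medskip

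\textbf{The paper's approach.} The paper does not slice along $V$ and does not prove any mass--excess lemma for $1$--dimensional competitors. Instead it first establishes a short auxiliary result (Lemma~\ref{lem:technical_no_hole}): if $T$ is area minimizing $\modp$ in codimension one with $(\partial T)\mres\bB_1=0\;\modp$ and $\Theta_T(q)<p/2$ for every $q\in\bB_1$, then in fact $(\partial T)\mres\bB_1=0$ as a \emph{classical} integral current. The proof uses the stratification: points in $\mathcal S^{m-1}\setminus\mathcal S^{m-2}$ have density $p/2$ by Proposition~\ref{lem:structure_cones} (excluded by hypothesis), points in $\mathcal S^m\setminus\mathcal S^{m-1}$ are regular by White's $\varepsilon$--regularity, so $\spt(\partial T)\subset\mathcal S^{m-2}$ has Hausdorff dimension $\leq m-2$ and the flat $(m-1)$--chain $\partial T$ must vanish.

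Once the $T_j$ are classical integral cycles in $\bB_\delta((0,y_j))$, the paper runs \emph{classical} Federer--Fleming compactness (after a radial slice to control the boundary mass on some sphere $\partial\bB_\sigma((0,y))$), passes to a limit $\hat T$ with $(\partial\hat T)\mres\bB_{\delta/2}((0,y))=0$, and identifies $\hat T=\bC_0$ there. Since the multiplicities on $\bC_0$ are all $<p/2$, this identification holds as classical currents, contradicting $\partial\bC_0=p\,\a{V}\neq 0$.

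\medskip

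\textbf{Gaps in your route.} Two steps in your plan are not justified.
First, the inequality $\Theta_{S_k^{y'}}(q)\leq\Theta_{T_k}(q)$ that you attribute to ``Federer's semicontinuity of density under slicing'' is not a standard result; slicing theory controls slice masses in an integrated (coarea) sense, not pointwise densities at arbitrary points of the slice, and your rigidity step needs the bound at \emph{every} point.
Second, the quantitative rigidity statement for $1$--cycles (flat--close to $\bC_0'$, no density $\geq p/2$, forces a definite mass excess) is precisely the hard content you would need to supply, and you do not; you acknowledge this. Even if the qualitative picture is correct for $p=3$, turning it into a uniform lower bound valid for arbitrary rectifiable mod--$p$ cycles (not minimizers, not regular) is nontrivial.

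\medskip

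The lesson is that the density hypothesis $\Theta_T<p/2$ is most efficiently used not as a pointwise constraint on slices, but as an input to regularity theory that upgrades $T$ from a mod--$p$ cycle to a classical integral cycle, after which the contradiction with $\partial\bC_0=p\,\a{V}$ is immediate.
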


We first prove the following lemma.

\begin{lemma} \label{lem:technical_no_hole}
Let $\Sigma$ be as in Assumption \ref{assumption:manifolds and currents}, and let $T$ be area minimizing $\modp$ in $\Sigma \cap \bB_2$ with $(\pa T) \mres \bB_1 = 0 \, \modp$. If $\Theta_T(q) < \sfrac{p}{2}$ for every $q \in \bB_1$, then $(\pa T) \mres \bB_1 = 0$. In particular, $T$ is an area minimizing integral current without boundary in $\bB_1$.
\end{lemma}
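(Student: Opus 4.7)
The vanishing $(\partial T)\mres\bB_1 = 0\,\modp$ means there exists an integer rectifiable current $S \in \Rc_{m-1}(\bB_1)$ such that $(\partial T)\mres \bB_1 = pS$, and $\partial S = 0$ in $\bB_1$ follows from $\partial\circ\partial = 0$. The main task is to show $S = 0$; once this is established, the classical area-minimality of $T$ in $\bB_1$ follows by a standard reduction, namely that any integer rectifiable $m$-cycle $W$ compactly supported in $\bB_1$ is a classical boundary $W = \partial Z$ (by Federer's theorem for cycles in a contractible open set), hence a boundary $\modp$, so mod-$p$ minimality of $T$ gives $\mass(T) \leq \mass(T+W)$.

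\textbf{Contradiction via tangent cone.} Argue by contradiction, assuming $S \neq 0$. By the rectifiability of $S$, I may select a point $q_0 \in \spt(S)\cap \bB_1$ at which $S$ admits an approximate tangent $(m-1)$-plane $\pi_0 \subset T_{q_0}\Sigma$ with integer multiplicity $k\geq 1$, so the blow-ups $(\eta_{q_0,r})_\sharp S$ converge as currents to $k\,\a{\pi_0}$ as $r\downarrow 0$. Along a suitable subsequence $r_\ell\downarrow 0$, the rescalings $(\eta_{q_0,r_\ell})_\sharp T$ converge to a tangent cone $\bC$ to $T$ at $q_0$, which is an area-minimizing cone $\modp$ in $\R^{m+n}$ supported in the $(m+1)$-plane $T_{q_0}\Sigma$, with $\Theta_\bC(0) = \Theta_T(q_0)$. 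Since boundary commutes with push-forward and with weak limits of flat chains, one obtains $\partial\bC = pk\,\a{\pi_0}$ as classical currents.

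\textbf{Density lower bound.} The heart of the argument is to show $\Theta_\bC(0)\geq pk/2$, which yields $\Theta_T(q_0)\geq p/2$ and contradicts the hypothesis. For any $y \in \pi_0\setminus\{0\}$, a tangent cone $\bC^y$ to $\bC$ at $y$ is again area-minimizing $\modp$ in $T_{q_0}\Sigma$ with $\partial\bC^y = pk\,\a{\pi_0}$ (as $\pi_0$ is a linear subspace, its tangent cone at $y$ is itself). By an iterated blow-up/dimension-reduction argument (the mod-$p$ counterpart of Allard's boundary regularity at a regular point of the classical boundary), $\bC^y$ must be translation-invariant along $\pi_0$ and hence splits as $\bC^y = \bC' \times \a{\pi_0}$, where $\bC'$ is a $1$-dimensional area-minimizing cone $\modp$ in the $2$-plane $\pi_0^\perp\cap T_{q_0}\Sigma$ with $\partial\bC' = pk\,\delta_0$. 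Following the argument in the proof of Proposition \ref{lem:structure_cones}(ii), $\bC' = \sum_i \theta_i\,\a{\ell_i}$ is a finite sum of oriented rays from the origin with integer multiplicities $\theta_i$; the signed boundary identity $\sum_i \epsilon_i\theta_i = pk$ (with $\epsilon_i\in\{\pm 1\}$ encoding orientation) and the triangle inequality give $\sum_i|\theta_i|\geq pk$, hence $\Theta_{\bC'}(0) = \tfrac12\sum_i|\theta_i|\geq pk/2$. The product-density formula yields $\Theta_\bC(y) = \Theta_{\bC'}(0)\geq pk/2$, and the monotonicity formula for the minimizing cone $\bC$ gives $\Theta_\bC(0)\geq \Theta_\bC(y)\geq pk/2$, the desired contradiction. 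The main obstacle is the product splitting of $\bC^y$ along $\pi_0$, i.e.~its translation-invariance along the boundary plane; this is a mod-$p$ analogue of the classical boundary regularity for area-minimizing integer currents and can be derived either by invoking such a regularity result, or by a direct dimension-reduction argument exploiting that $\bC$ and its translates $(\tau_v)_\sharp\bC$ for $v\in\pi_0$ share the same classical boundary and are mod-$p$ minimizing.
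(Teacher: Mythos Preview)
Your opening step contains a genuine gap: from $(\partial T)\mres\bB_1 = 0\;\modp$ you deduce $(\partial T)\mres\bB_1 = pS$ with $S$ \emph{integer rectifiable}. But the equivalence class of $0$ mod~$p$ only yields $(\partial T)\mres\bB_1 = pP$ for some integral \emph{flat chain} $P$. Rectifiability of $P$ would follow from local finiteness of $\mass(\partial T)$ via the boundary rectifiability theorem, but this is not known a priori for area minimizers $\modp$; in fact, for odd $p$ it is one of the main conclusions of \cite{DLHMS}, and the present lemma is used in a regime (all $p$) where no such structure is yet available. Without rectifiability of $S$ you cannot select a point $q_0$ with an approximate tangent plane, and the entire blow-up argument collapses.

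The paper's proof sidesteps this completely. It never attempts to analyze $\partial T$ as a rectifiable object; instead it bounds the Hausdorff dimension of $\spt(\partial T)\cap\bB_1$ by observing that $\spt(\partial T)\subset\sing(T)$ and then ruling out strata: points in $\mathcal{S}^{m-1}\setminus\mathcal{S}^{m-2}$ are excluded because Proposition~\ref{lem:structure_cones}(iii) forces density $p/2$ there, and flat singular points in $\mathcal{S}^m\setminus\mathcal{S}^{m-1}$ are excluded by White's $\varepsilon$-regularity in codimension one. This leaves $\spt(\partial T)\subset\mathcal{S}^{m-2}$, which has Hausdorff dimension $\leq m-2$, and then Federer's support theorem for flat chains kills $\partial T$. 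Even granting rectifiability of $S$, your subsequent argument (that the iterated tangent cone $\bC^y$ splits as $\bC'\times\a{\pi_0}$) is only sketched; you would need to carefully track the classical boundary through the mod-$p$ tangent cone construction, which is delicate since the mod-$p$ flat limit and the Federer--Fleming limit of the rescalings need not agree as integral currents.
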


\begin{proof}
First observe that, since $\{q \in \bB_1 \,\colon\,\Theta_T(q)=\sfrac{p}{2}\}=\emptyset$, $\pa T$ is a flat chain whose support $\spt(\pa T) \cap \bB_1$ is contained in the singular set $\sing(T) \cap \bB_1$. By the standard stratification of $\sing(T)$, given a point $q \in \spt(\pa T) \cap \bB_1$, one and only one of the following (mutually exclusive) cases may occur:
\begin{itemize}
\item[(a)] $q \in \cS^{m-2}$,
\item[(b)] $q \in \cS^{m-1} \setminus \cS^{m-2}$,
\item[(c)] $q \in [\cS^m \setminus \cS^{m-1}] \cap \sing(T)$.
\end{itemize}
By Proposition \ref{lem:structure_cones}, the assumption that $\Theta_T(q) < \sfrac{p}{2}$ prevents case (b) to occur, and, since $T$ has codimension one in $\Sigma$, White's regularity \cite[Theorem 4.5]{White86} implies that the set of points in (c) is empty. Thus, the only possible alternative is (a), whence $\dim_{\Ha}(\spt(\pa T) \cap \bB_1) \leq m-2$. Since $\pa T$ is a flat chain of dimension $m-1$, this implies that necessarily $\pa T \mres \bB_1 = 0$ (see e.g. \cite[Theorem 3.1]{White_deformation}).
\end{proof}

\begin{remark}
We observe explicitly that if $\Sigma$ is of class $C^{3,a_0}$ for some $a_0\in \left(0,1\right)$ then Lemma \ref{lem:technical_no_hole} holds true also when the codimension of $T$ in $\Sigma$ is larger than one. The proof is the same, modulo the fact that White's regularity theory cannot be invoked, and that the set of points in (c) may in fact be not empty. Nonetheless, we can still bound its Hausdorff dimension by $m-2$ using \cite[Theorem 1.7]{DLHMS}.
\end{remark}

\begin{proof}[Proof of Proposition \ref{prop:no-holes}]

Given the structure of area minimizing $m$-cones $\modp$ with $(m-1)$-dimensional spine as detailed in Proposition \ref{lem:structure_cones}, we can assume that $\partial \bC_0 = p \, \a{V}$ in $\R^{m+n}$, for a suitable choice of a constant orientation on $V$. In particular, it holds
\begin{equation} \label{e:bdry_comparison1}
(\pa \bC_0) \mres \bB_{\delta}((0,y)) \neq 0 \qquad \mbox{for every $\delta > 0$ and for every $y \in V$}\,. 
\end{equation}

Now, suppose towards a contradiction that the proposition is false. Then, there are $0 < \delta < \sfrac{1}{8}$, a sequence $\eps_j \downarrow 0^+$, and currents $T_j$ area minimizing $\modp$ in $\Sigma_j \cap \bB_2$ with 
\begin{equation} \label{contradiction to no holes}
    \spt^p(\pa T_j) \cap \bB_1 = \emptyset\,, \quad \hat\flat^p_{{\bB}_1}(T_j-\bC_0) < \eta_1(\bS_0)\,, \quad \bA_j + \bE (T_j, \bS_0,0,1) \leq \eps_j^2\,,
\end{equation}
which do not satisfy (NH). That is, there are points $y_j \in B_{1/2}^{m-1} \subset V$ such that $\Theta_{T_j}(q) < \sfrac{p}{2}$ for all $q \in \bB_\delta ((0,y_j))$. Lemma \ref{lem:technical_no_hole} then yields 
\begin{equation} \label{zero classical bdry}
(\pa T_j) \mres \bB_{\delta}((0,y_j)) = 0\,.
\end{equation}

First observe that, by a classical argument, it is easy to see that the second condition in \eqref{contradiction to no holes} together with the minimality $\modp$ of $T_j$ imply that the masses of $T_j$ in, say, $\bB_{4/5}$ are uniformly bounded by a constant $C(m,p)$. Moreover, since $\eps_j \downarrow 0^+$, Lemma \ref{lem:qualitative flat_excess} implies that
\begin{equation} \label{convergence to cone}
\hat\flat^p_{{\bB}_{\sfrac{3}{4}}}(T_j-\bC_0) \to 0 \qquad \mbox{as $j \to \infty$}\,.
\end{equation}

 Next, let $y \in \overline{B}^{m-1}_{1/2} \subset V$ be a subsequential limit of the points $y_j$. By slicing theory, if we denote ${\rm d}_y(q) = |q-(0,y)|$, we have that
\[
\int_{\frac{\delta}{2}}^{\delta} \mass(\langle T_j , {\rm d}_y , \sigma \rangle) \, d\sigma \leq \|T_j\|(\bB_\delta((0,y))) \leq \|T_j\|(\bB_{3/4})\,,
\]
so that there exist a (not relabeled) subsequence of $T_j$ and $\sigma \in \left( \frac{\delta}{2}, \delta \right)$ with the property that 
\[
\lim_{j \to \infty} \mass(\langle T_j , {\rm d}_y , \sigma \rangle) \leq C(m,p) \, \delta^{-1}\,.
\]
In particular, since
\[
\partial[T_j \mres \bB_\sigma((0,y))] = \langle T_j, {\rm d}_y, \sigma \rangle
\]
by \eqref{zero classical bdry} for all sufficiently large $j$, the sequence $\{T_j \mres \bB_\sigma((0,y))\}_{j}$ satisfies the hypotheses of the Federer-Fleming compactness theorem for integral currents, so that a further subsequence converges, in the sense of currents and with respect to the classical flat distance $\flat_{\overline{\bB}_\sigma((0,y))}$, to an integral current $\hat T$, and \eqref{zero classical bdry} guarantees that
\begin{equation} \label{e:bdry_comparison2}
(\pa \hat T) \mres \bB_{\sfrac{\delta}{2}}((0,y)) = 0\,.
\end{equation}

By \cite[Proposition 5.2]{DLHMS}, $\hat T$ is area minimizing $\modp$, and by Proposition \ref{flat:same topology} it holds $\lim_{j \to \infty}\hat\flat^p_{\bB_\sigma((0,y))}(T_j \mres \bB_\sigma((0,y)) - \hat T)=0$. In turn, using \eqref{convergence to cone}, the monotonicity of $\hat\flat^p$ with respect to the localizing set, and Proposition \ref{p:who cares about what is outside}, we conclude that $\hat\flat^p_{\bB_{\sigma}((0,y))}((\hat T - \bC_0) \mres \bB_{\sigma}((0,y))) = 0$, so that, in particular,
\begin{equation}\label{limit current is the cone}
    \hat T \mres \bB_{\delta/2}((0,y)) = \bC_0 \mres \bB_{\delta/2}((0,y)) \quad \modp
\end{equation}
by Corollary \ref{cor:flat hat and congruence}. In fact, since the multiplicities on $\bC_0$ are all strictly less than $\sfrac{p}{2}$, the identity in \eqref{limit current is the cone} holds in the sense of classical currents. The conditions \eqref{e:bdry_comparison1} and \eqref{e:bdry_comparison2} are then incompatible, and we have reached a contradiction. 
\end{proof}

\subsection{Proof of Theorem \ref{thm:est spine}}

\textit{Step one.} Recall the notation $\mathcal{Q}$ for the collection of cubes $Q \subset \left[0,2\right] \times \left[-2,2\right]^{m-1}$ defined in Section \ref{sec:graph}. Select, thanks to \eqref{e:dist v diam}, a number $\delta \in \left( 0, \frac{1}{4} \right)$ such that 
\begin{equation}\label{the choice of delta}
 \dist(4Q,V) \geq 2\delta d_Q \qquad \mbox{for every $Q \in \mathcal{Q}$}\,, 
\end{equation}
and then let $\eps_{{\rm NH}}$ be given by Proposition \ref{prop:no-holes} in correspondence with this choice of $\delta$. Let $y \in B_{\sfrac{1}{2}}^{m-1} = \bB_{\sfrac{1}{2}} \cap V$ be arbitrary, and let $2 > R > \varrho(y)$. By definition of $\varrho(y)$ and the structure of $U_{\mathcal W}$, then, there exists a cube $Q \in \mathcal W(T,\bS,\tau)$ such that $\zeta=(R,y) \in Q$. As usual, let $c_Q = (x_Q,y_Q)$ be the center of $Q$, $y_Q=(0,y_Q)$ the projection of $c_Q$ onto $V$, and $d_Q$ the diameter of $Q$. Notice, in passing, that $\abs{y-y_Q} < d_Q/2$. Also observe that, by \eqref{e:dist v diam} and our choice of $\bar{M}$, it holds
\begin{equation} \label{again distance v diameter}
    \frac{1}{4}\bar{M}d_Q \leq R \leq \frac{1}{2} \bar{M} d_Q\,.
\end{equation}
We claim now that, modulo possibly choosing a smaller value for $\tau$, the current $T_{y,R} := (\eta_{y,R})_\sharp T$ satisfies the hypotheses of Proposition \ref{prop:no-holes}. It is clear, by the choice of $R_0$, that $T_{y,R}$ is area minimizing $\modp$ in $\Sigma_{y,R} \cap \bB_2$, where $\Sigma_{y,R}: = \frac{\Sigma-y}{R}$, and that $(\partial T_{y,R}) \mres \bB_1 = 0 \; \modp$. Next, $\bA_{y,R} = \|A_{\Sigma_{y,R}}\|_0 \leq R \, \|A_\Sigma\|_0 \leq 2\,\bA$, and thus $\bA_{y,R} \leq \eps_{{\rm NH}}^2$ as soon as $\eps_3 \leq \eps_{{\rm NH}}^2/2$. Hence, we only have to check that
\begin{equation} \label{no holes hypotheses}
\hat\flat^p_{{\bB}_1}(T_{y,R} - \bC_0) < \eta_1(\bS_0)\,, \qquad \bE(T_{y,R},\bS_0,0,1) \leq \eps_{{\rm NH}}^2\,.
\end{equation}
For the excess estimate, using that $\bE(T_{y,R},\bS_0,0,1) = \bE(T,\bS_0,y,R)$ together with \eqref{again distance v diameter} we deduce that
\[
\bE(T_{y,R},\bS_0,0,1) \leq C\, \bE(T,\bS,y_Q, \bar{M} d_Q) + C\, \dist_{\Ha}(\bS \cap \bB_1, \bS_0 \cap \bB_1)^2 \leq C\, \tau^2 + C\, \eta_4^2\,.
\]
Therefore, the second inequality in \eqref{no holes hypotheses} is satisfied for suitable choices of $\tau$ and $\eta_4$. In this regard, notice that the quantity $\tau$ defining the Whitney domain was previously chosen so that \eqref{e: choice of tau} is satisfied: the smallness condition of $\tau$ with respect to $\eps_{{\rm NH}}$ forces, therefore, $\beta$ to be sufficiently small with respect to $\eps_{{\rm NH}}$, which translates into a smallness requirement on the constant $\beta_1$ of Theorem \ref{t:Hardt-Simon-main}, depending on $m,p$, and $\bS_0$.

Next, we prove the estimate on the modified flat distance. Of course, the estimate is trivial (provided $\eta_4$ is chosen small enough depending on the constant $M$) if $R$ is comparable to $1$, so we can assume without loss of generality that $Q$ is not contained in the top stratum $\left[1,2\right] \times \left[-2,2\right]^{m-1}$, so that $\log_{\sfrac{1}{2}}(\bar M d_Q) + 1 \geq 0$. The estimate is a simple consequence of the following claim: for any integer $0 \leq \ell \leq  \log_{\sfrac{1}{2}}(\bar M d_Q) +1$ it holds
\begin{equation} \label{flat iteration}
    \hat\flat^p_{\bB_{1}}(T_{y,2^{-(\ell+1)}} - \bC_0) \leq 2^{m+1}C(\bS_0)\, \bE(T,\bS_0,y,2^{-\ell})^{\frac12}\,,
\end{equation}
where $C(\bS_0)$ is the constant of Corollary \ref{flat-excess improved}. To prove \eqref{flat iteration}, first, notice that, by assumption, $\hat\flat^p_{\bB_1}(T_{y,1} - \bC_0) < \eta_4$. Thus, as long as $\eta_4 \leq \eta_2(\bS_0)$, Corollary \ref{flat-excess improved} implies that
\[
\hat\flat^p_{\bB_{\sfrac{1}{2}}}(T_{y,1} - \bC_0) \leq C(\bS_0)\, \bE(T,\bS_0,y,1)^{\frac12}\,,
\]
and thus, by rescaling,
\begin{equation}\label{first iteration}
    \hat\flat^p_{\bB_1}(T_{y,2^{-1}} - \bC_0) \leq 2^{m+1}C(\bS_0) \, \bE(T,\bS_0,y,1)^{\frac12}\,,
\end{equation}
which is \eqref{flat iteration} when $\ell=0$. Next, suppose that \eqref{flat iteration} is true for $\ell-1 \geq 0$, namely that
\begin{equation}\label{ind hyp}
    \hat\flat^p_{\bB_{1}}(T_{y,2^{-\ell}} - \bC_0) \leq 2^{m+1}C(\bS_0)\, \bE(T,\bS_0,y,2^{-(\ell-1)})^{\frac12}\,,
\end{equation}
and let us prove it for $\ell$. Since $\ell-1 \leq \log_{\sfrac{1}{2}}(\bar M d_Q)$, there exists a cube $Q'$ with diameter $d_{Q'} = 2^{-(\ell-2)}$ and such that $Q \preceq Q'$ (see Definition \ref{def:whitney}). By the definition of $\mathcal W$, we then have that
\[
\bE(T,\bS,y_{Q'}, \bar M d_{Q'}) \leq \tau^2\,,
\]
which in turn yields
\[
\begin{split}
\bE(T,\bS_0,y, 2^{-(\ell-1)}) &\leq C\, \bE(T,\bS_0,y_{Q'},\bar M d_{Q'})\\ 
&\leq \bE(T,\bS,y_{Q'}, \bar{M} d_{Q'}) + C\, \dist_{\Ha}(\bS \cap \bB_1, \bS_0 \cap \bB_1)^2 \\ 
&\leq C\, \tau^2 + C\, \eta_4^2\,.
\end{split}
\]
If $\tau$ and $\eta_4$ are sufficiently small, depending only on $m,p$, and $\bS_0$, \eqref{ind hyp} then implies that $\hat\flat^p_{\bB_1}(T_{y,2^{-\ell}} - \bC_0) < \eta_2(\bS_0)$, so that Corollary \eqref{flat-excess improved} applies and gives
\[
\hat\flat^p_{\bB_{\sfrac{1}{2}}}(T_{y,2^{-\ell}} - \bC_0) \leq C(\bS_0) \, \bE(T,\bS_0,y,2^{-\ell})^{\frac12}\,,
\]
that is, by rescaling, \eqref{flat iteration}. When \eqref{flat iteration} is applied with $\ell =  \log_{1/2}(\bar M d_Q) +1$, and keeping \eqref{again distance v diameter} into account, we deduce the first inequality in \eqref{no holes hypotheses} as soon as $\tau$ and $\eta_4$ are sufficiently small.

\medskip

\textit{Step two.} As a first, immediate consequence of \textit{Step one}, we see that for every $y \in B_{\sfrac{1}{2}}^{m-1}$ the hypotheses of Proposition \ref{prop:no-holes} are satisfied when $T$ is replaced by $T_{y,R}$ with $R = \varrho_\infty$. In particular, for every $y \in B_{\sfrac{1}{2}}^{m-1}$ there exists a point $\xi \in \bB_{\delta\varrho_\infty}((0,y))$ with $\Theta_T(\xi) \geq \Theta_{\bC_0}(0)=\sfrac{p}{2}$. Let us write the left-hand side of \eqref{e:est spine} as  
\begin{equation}\label{est spine split}
\begin{split}
    \int_{\bB_{\sfrac{1}{2}}} \frac{\dist^2(q,\tilde\bS)}{\max\{\varrho_\infty, |x|\}^{1/2}} \, d\|T\|(q) &=  \int_{\bB_{\sfrac{1}{2}} \cap B_{\varrho_\infty}(V)} \frac{\dist^2(q,\tilde\bS)}{\varrho_\infty^{1/2}} \, d\|T\|(q) \\
    &\qquad \qquad+ \int_{\bB_{\sfrac{1}{2}} \setminus B_{\varrho_\infty}(V)} \frac{\dist^2(q,\tilde\bS)}{|x|^{1/2}} \, d\|T\|(q)\,,
\end{split}
\end{equation}
and let us discuss here the first summand. For any $y \in B_{\sfrac{1}{2}}^{m-1}$, letting $\xi$ be a ``no-hole'' point as above in $\bB_{\delta \varrho_\infty}((0,y))$, we can apply Theorem \ref{t:Hardt-Simon-main} to estimate
\[
\begin{split}
\int_{\bB_{\varrho_\infty}((0,y))} \frac{\dist^2(q,\tilde\bS)}{\varrho_\infty^{1/2}} \, d\|T\|(q) &\leq C \, \varrho_{\infty}^{m+\sfrac{7}{4}-\sfrac{1}{2}} \int_{\bB_{\varrho_\infty}((0,y))} \frac{\dist^2(q-\xi, \tilde \bS)}{\abs{q-\xi}^{m+\frac74}} + C \, \varrho_\infty^{m-\sfrac{1}{2}}\,\abs{\p_{V^\perp}(\xi)}^2\\
&\leq C \varrho_\infty^{m-\sfrac{1}{2}} (\bE + \bA)\,.
\end{split}
\]

We can then cover $\bB_{\sfrac{1}{2}} \cap B_{\varrho_\infty}(V)$ with $N \leq C \varrho_\infty^{-(m-1)}$ balls $\{\bB_{\varrho_\infty}((0,y_j))\}_{j=1}^N$, and using the Besicovitch covering theorem to arrange such balls in $C_B$ subfamilies each consisting of pairwise disjoint balls we finally conclude that 
\begin{equation}\label{first piece comes home}
    \int_{\bB_{\sfrac{1}{2}} \cap B_{\varrho_\infty}(V)} \frac{\dist^2(q,\tilde\bS)}{\varrho_\infty^{1/2}} \, d\|T\|(q) \leq C (\bE + \bA)\,.
\end{equation}

\medskip

\textit{Step three.} Concerning the second term in the sum \eqref{est spine split}, we first notice that $\bB_{\sfrac{1}{2}} \setminus B_{\varrho_\infty}(V) \subset \bB_{\sfrac{1}{2}} \cap R_{\mathcal W}$. Then, we write $R_\mathcal W = \bigcup_{Q \in \mathcal W} A_Q$, where $A_Q$ is the set of all points $q=(x,y) \in \R^{m+n}$ such that $(0,|x|,y) \in Q$. \textit{Step one} shows that for each cube $Q \in \mathcal W$ whose center $c_Q$ has a projection $y_Q$ onto $V$ in $B_{\sfrac{1}{2}}^{m-1}$ the hypotheses of Proposition \ref{prop:no-holes} hold for the current $T_{y_Q,R_Q}$ when $R_Q:=\min_{\zeta \in Q}\dist(\zeta,V)$. As a consequence, for each $Q \in \mathcal W$ there exists a point $\xi_Q \in \bB_{\delta R_Q}((0,y_Q))$ with $\Theta_T(\xi_Q)\geq \sfrac p2$, and Theorem \ref{t:Hardt-Simon-main} gives 
\begin{equation}\label{hs at Qnh}
    \abs{\p_{V^\perp}(\xi_Q)}^2 + \int_{\bB_1} \frac{\dist^2(q-\xi_Q,\tilde{\bS})}{\abs{q-\xi_Q}^{m+\frac74}} \, d\|T\|(q) \leq C (\bE + \bA)\,.
\end{equation}
We can then argue as in \textit{Step two}, using in addition that $|x|\geq c(m) d_Q$ when $q \in A_Q$ to estimate 
\begin{equation}\label{second piece one at a time}
\begin{split}
\int_{\bB_{\sfrac{1}{2}}\cap A_Q} \frac{\dist^2(q,\tilde\bS)}{\max\{\varrho_\infty, |x|\}^{1/2}} \, d\|T\|(q) &\leq C \,d_Q^{m+\sfrac{7}{4}-\sfrac{1}{2}} \int_{\bB_{\sfrac{1}{2}}\cap A_Q} \frac{\dist^2(q-\xi_Q,\tilde\bS)}{\abs{q-\xi_Q}^{m+\frac74}} \, d\|T\|(q)\\
&\qquad + C\, d_Q^{m-\sfrac{1}{2}}\, \abs{\p_{V^\perp}(\xi_Q)}^2 \\
&\leq C\, d_Q^{m-\sfrac{1}{2}} (\bE+\bA)\,,
\end{split}
\end{equation}
where we have used \eqref{hs at Qnh} in the last inequality. Consider now that $d_Q$ is comparable to the sidelength of the cube, and that such sidelength is given by $2^{-k}$ for some positive integer $k$. For each fixed $k$, consider the collection $\mathcal{C}_k$ of cubes in $\mathcal W$ which intersect $\bB_{\sfrac{1}{2}}\setminus B_{\varrho_\infty} (V)$ and have sidelength $2^{-k}$. There are at most $C (2^{-k})^{1-m}$ such cubes. Therefore, we can estimate
\begin{align}
& \int_{\bB_{\sfrac{1}{2}} \setminus B_{\varrho_\infty}(V)}  \frac{\dist^2(q,\tilde\bS)}{\max\{\varrho_\infty, |x|\}^{1/2}} \, d\|T\|(q)\leq  \sum_k \sum_{Q\in \mathcal{C}_k} \int_{\bB_{\sfrac{1}{2}}\cap A_Q} \frac{\dist^2(q,\tilde\bS)}{\max\{\varrho_\infty, |x|\}^{1/2}} \, d\|T\|(q)\nonumber\\
\leq & \sum_k C 2^{-k/2} (\bE + \bA) \leq C (\bE + \bA)\,,\label{e:risomma}
\end{align}
thus completing the proof of \eqref{e:est spine}.

\medskip

{\textit {Step four.}} Towards the proofs of \eqref{e:bf infty}-\eqref{e:bf L2}, we will need to repeat the arguments of Lemmas \ref{l:local_selection} and \ref{l:global_selection} leading to the definition of the selection function $h=h(i,j)$ of Theorem \ref{thm:graph v2}. Let us fix $i \in \{1, \ldots, N_0\}$ and $j \in \{1, \ldots, \kappa_{0,i}\}$, drop the corresponding subscripts, and identify $\bH_0 = \bH_{0,i}$ with $\left[0,\infty\right) \times V$. We then write $u=u_{i,j}$, $v = v_{i,j}$, and $w = u - \tilde{l}$ for the functions of Theorem \ref{thm:graph v2} and Definition \ref{d:new-cone} on $U_{4\mathcal{W}}$. Let also $\{l_h\}_{h=1}^{\kappa_0}$, where $\kappa_0 = \kappa_{0,i}$, be the collection of all linear functions $l_{i,h}$ defined on the page $\bH_0=\bH_{0,i}$ and whose graph parametrizes $\bS$. Let us also fix a cube $Q \in \mathcal W$, and let $\xi_Q \in \bB_{\delta R_Q}((0,y_Q))$ be the corresponding point from \textit{Step three}. Setting
\[
q(z):=z+v(z)=z+u(z)+\Psi(z+u(z)) \qquad \mbox{for all $z \in 4Q$}\,,
\]
we see that for every $z \in 4Q$
\begin{align*}
    \dist^2(q(z)-\xi_Q,\bS) &\geq \min_{1 \leq h \leq \kappa_0} \inf_{\tilde z\in \bH_0} \left\lbrace \abs{z - \p_{\bH_0}(\xi_Q) - \tilde z}^2 + \abs{u(z) - \p_{\bH_0^{\perp_0}}(\xi_Q) - l_h(\tilde z)}^2 \right\rbrace \\
    &= \min_{1 \leq h \leq \kappa_0} \inf_{\tilde z\in \bH_0} \left\lbrace |z - \tilde z|^2 + \abs{u(z) - \p_{\bH_0^{\perp_0}}(\xi_Q) - l_h(\tilde z)  + l_h(\p_{\bH_0}(\xi_Q)) }^2 \right\rbrace\,,
\end{align*}
where the first inequality was obtained by projecting on $\pi_0 = \bH_0 \oplus \bH_0^{\perp_0}$ (where, with a slight abuse of notation, we are identifying $\bH_0$ with the linear space containing it) and using that the distance on the left-hand side is realized by pages in $\bS$ parametrized as graphs $l_h=l_{i,h}$ on $\bH_0=\bH_{0,i}$ due to the choice of $\delta$ in \eqref{the choice of delta}; and where the second identity was obtained by simply replacing $\tilde z$ with $\tilde z + \p_{\bH_0}(\xi_Q)$. Notice that, due to the invariance of $\bS$ (and, therefore, of the corresponding functions $l_h$) with respect to translations along $V$, we may assume without loss of generality that $\p_V(z)=\p_V(\xi_Q)=0$, and thus that also $\p_V(\tilde z)=0$ in the above infimum. It is then a simple exercise in planar geometry to show that
\begin{equation} \label{comparing distances}
    \dist^2(q(z)-\xi_Q,\bS) \geq \frac{1}{2}\,\min_{1\leq h \leq \kappa_0} \abs{u(z) - \p_{\bH_0^{\perp_0}}(\xi_Q) - l_h(z)  + l_h(\p_{\bH_0}(\xi_Q))}^2\,.
\end{equation}

Next, we proceed as in Lemma \ref{l:global_selection}, letting $\bar h$ denote a map $Q \in \mathcal W \mapsto \bar h(Q) \in \{1, \ldots, \kappa_0\}$ which selects, for each $Q \in \mathcal W$, the index $\bar h(Q)$ of an $L^\infty(3Q)$-optimal function in the sense of Lemma \ref{lem.harmonic rigidity} when $u(z)$ is replaced by $u(z) - \p_{\bH_0^{\perp_0}}(\xi_Q)$ and $g_h(z)$ is replaced by $l_h(z) - l_h(\p_{\bH_0}(\xi_Q))$. Setting, for the sake of simplicity, 
\begin{equation} \label{un bell'errore}
\varpi_Q := l_{\bar h(Q)}(\p_{\bH_0}(\xi_Q))\,,    
\end{equation} 
we then obtain the following estimate, similar to \eqref{e:local-selection}:
\begin{align}
&d_Q^m \| u - \p_{\bH_0^{\perp_0}}(\xi_Q) - l_{\bar h(Q)} + \varpi_Q\|^2_{L^\infty (3 Q)} + d_Q^{2+m} \, \|D (u -l_{\bar h(Q)})\|^2_{L^\infty (3 Q)} + d_Q^{3+m} \, [D u]^2_{\frac{1}{2}, 3 Q}  \nonumber\\
\leq & C \int_{4Q} \min_h |u (z) - \p_{\bH_0^{\perp_0}}(\xi_Q) - l_{h} (z) + l_h(\p_{\bH_0}(\xi_Q))|^2\, dz
+ C \bA^2 d_Q^{2+m}\,. \label{e:improved-local-selection}
\end{align}

Combining \eqref{comparing distances} with \eqref{hs at Qnh} then yields
\begin{align}
    & \int_{4Q} \min_h |u (z) - \p_{\bH_0^{\perp_0}}(\xi_Q) - l_{h} (z) + l_h(\p_{\bH_0}(\xi_Q))|^2\, dz\nonumber\\ 
    \leq &C\, d_Q^{m+\frac74} \int_{4Q} \frac{\dist^2(q(z)-\xi_Q,\bS)}{\abs{q(z)-\xi_Q}^{m+\frac74}} \, dz \nonumber \\ \label{verso hs}
    &\leq C\,d_Q^{m+\frac74} \, (\bE + \bA)\,,
\end{align}

so that we achieve, through \eqref{e:improved-local-selection}, the estimate

\begin{equation}\label{local selection + hs}
    \| u - \p_{\bH_0^{\perp_0}}(\xi_Q) - l_{\bar h(Q)} + \varpi_Q\|^2_{L^\infty (3 Q)} + d_Q^{2} \, \|D (u -l_{\bar h(Q)})\|^2_{L^\infty (3 Q)} + d_Q^{3} \, [D u]^2_{\frac{1}{2}, 3 Q}    \leq C d_Q^{\sfrac{7}{4}} (\bE + \bA)\,.
\end{equation}

Next, we proceed \emph{verbatim} as in the proof of Lemma \ref{l:global_selection}. Letting $\hat Q \in \mathcal W$ be any cube which does not have any element above, and setting $\hat h := \bar h(\hat Q)$, for any $Q_0 \in \mathcal W$ the recursive algorithm and estimates from the proof of Lemma \ref{l:global_selection} (see the argument leading to formula \eqref{eq.good selection on a single cube3}) yield the estimate
\begin{align}
    &d_{Q_0}^{-1}\,\| u - \p_{\bH_0^{\perp_0}}(\xi_{Q_0}) - l_{\hat h} + \varpi_{Q_0}\|_{L^\infty (3Q_0)} \nonumber \\
    &\leq d_{Q_0}^{-1}\,\| u - \p_{\bH_0^{\perp_0}}(\xi_{Q_0}) - l_{\bar h(Q_0)} + \varpi_{Q_0} \|_{L^\infty (3Q_0)}\nonumber\\
    &\qquad\qquad+C\, \sum_{s=0}^{\kappa_0-1} d_{\phi(Q_0,s)}^{-1} \|l_{\bar h(\phi(Q_0,s+1))} - l_{\bar h(\phi(Q_0,s))} \|_{L^\infty(\phi(Q_0,s))} \nonumber \\
    & \leq d_{Q_0}^{-1}\,\| u - \p_{\bH_0^{\perp_0}}(\xi_{Q_0}) - l_{\bar h(Q_0)}+ \varpi_{Q_0} \|_{L^\infty (3Q_0)}\nonumber\\
    & \qquad\qquad +C\, \sum_{s=0}^{\kappa_0-1} \|D l_{\bar h(\phi(Q_0,s+1))} - D l_{\bar h(\phi(Q_0,s))}\|_{L^\infty(\phi(Q_0,s))} \nonumber \\
    & \leq d_{Q_0}^{-1}\,\| u - \p_{\bH_0^{\perp_0}}(\xi_{Q_0}) - l_{\bar h(Q_0)}+ \varpi_{Q_0} \|_{L^\infty (3Q_0)}\nonumber\\
    &\qquad\qquad +C\, \sum_{s=0}^{\kappa_0-1} \Big( \|D l_{\bar h((\phi(Q_0,s))^{\top})} - D u\|_{L^\infty(3(\phi(Q_0,s))^{\top})}+ \|D u- D l_{\bar h(\phi(Q_0,s))} \|_{L^\infty(3\phi(Q_0,s))} \Big)\nonumber \,,
\end{align}
so that finally \eqref{local selection + hs} gives
\begin{equation} \label{improved Linfty estimate}
    d_{Q_0}^{-2}\,\| u - \p_{\bH_0^{\perp_0}}(\xi_{Q_0}) - l_{\hat h} + \varpi_{Q_0}\|^2_{L^\infty (3Q_0)} \leq C \, d_{Q_0}^{-\sfrac{1}{4}}\, (\bE+\bA)
\end{equation}

Standard elliptic estimates then imply also that
\begin{equation}
    \|D(u - l_{\hat h}) \|^2_{L^\infty(2Q_0)} + d_{Q_0} \left[ Du \right]^2_{\frac12,2Q_0} \leq C \, d_{Q_0}^{-\sfrac{1}{4}} \, (\bE + \bA) \,. \label{improved c_onealfa estimate}
\end{equation}

We set $h^*=h^*(i,j) := \hat h$, and $l^*_{i,j}:=l_{i,h^*(i,j)}$, and we can proceed to compare the linear $p$-multifunction $\{l^*_{i,j}\}$ with the linear $p$-multifunction $\{\tilde l_{i,j}\}$ introduced in Theorem \ref{thm:graph v2} and corresponding to the selection $h=h(i,j)$. By the triangle inequality, and still dropping the subscripts $_{i,j}$, we have for any $Q \in \mathcal W$ that
\begin{align*}
    d_Q^{m+2}\|D(l^*-\tilde l)\|_{L^\infty(2Q)}^2 & \leq C\, d_Q^{m+2}  \|D (l^* - u) \|_{L^\infty(2Q)}^2 + C \, d_Q^{m+2} \| D(\tilde l - u)\|_{L^\infty(2Q)}^2 \\
    & \overset{\eqref{improved c_onealfa estimate}}{\leq} C\, d_Q^{m+\frac{7}{4}} (\bE + \bA) + C \, d_Q^{m+2} \| D(\tilde l - u)\|_{L^\infty(2Q)}^2\,.
\end{align*}
Using that $\|D (l^* - \tilde l)\|_{L^\infty(2Q)}$ is constant with respect to $Q$, we can sum the above inequality over $Q \in \mathcal W$: using \eqref{e:global_selection_2} together with the fact that cubes $Q \in \mathcal W$ have side length $l_Q = 2^{-k}$ for some positive integer $k$, and that for every $k$ the set $\mathcal C_k$ of cubes $Q$ with side length $l_Q = 2^{-k}$ has cardinality $\sharp(\mathcal C_k) = C\, (2^{-k })^{1-m}$, we obtain
\begin{equation} \label{comparing selections}
    | D (l^* - \tilde l) |^2 \leq C \, (\bE + \bA)\,,
\end{equation}
and thus
\begin{equation} \label{comparing selections due}
    \| l^* - \tilde l\|_{L^\infty(3Q)}^2 \leq C\, d_Q^2 \, (\bE + \bA)\,.
\end{equation}

In particular, the estimates in \eqref{improved Linfty estimate} and \eqref{improved c_onealfa estimate} can be rewritten using the multifunction $\{\tilde l\}$ in place of $\{l^*\}$, that is it holds
\begin{align} \label{improved Linfty old sel}
d_Q^{-2} \, \| u - \p_{\bH_0^{\perp_0}}(\xi_Q) - \tilde l + \varpi_Q\|_{L^\infty(3Q)}^2 &\leq C\, d_Q^{-\sfrac{1}{4}}\, (\bE + \bA)\,, \\ \label{improved c_onealfa old sel}
    \|D(u_{i,j} - \tilde l_{i,j})\|_{L^\infty(2Q)}^2 &\leq C \, d_Q^{-\sfrac{1}{4}} (\bE + \bA)\,.
\end{align}

\medskip

\textit{Step five.} Recall the notation $A_Q$ introduced in \textit{Step three}. We define the binding function $\xi$ on $R_{\mathcal Q}$ by:
\begin{equation}\label{def:the binding function}
    \xi(q) = 
    \begin{cases}
    \p_{V^\perp}(\xi_Q) &  \mbox{if $q \in {\rm int}(A_Q)$ for some $Q \in \mathcal Q$ with $y_Q \in B^{m-1}_{\sfrac{1}{2}}$}\,,\\
    0 & \mbox{elsewhere}\,.
    \end{cases}
\end{equation}

In particular, the $L^\infty$ estimate for the binding function $\xi$ appearing in \eqref{e:bf infty} is immediate from \eqref{hs at Qnh}. Next, we define the $p$-multifunction $\varpi$ on $U_{\mathcal W}$ over $\bS_0$ as follows: for every $i \in \{1,\ldots, N \}$ and $j \in \{1,\ldots,\kappa_{0,i}\}$, we let $\varpi_{i,j}$ be given on $(U_\mathcal W)_i$ by
\begin{equation}\label{def:the multierror}
    \varpi_{i,j}(z) := 
    \begin{cases}
    \varpi_{i,j,Q} &  \mbox{if $z=(x,y)$ and $(|x|,y) \in {\rm int}(Q)$ for some $Q \in \mathcal W$ with $y_Q \in B^{m-1}_{\sfrac{1}{2}}$}\,,\\
    0 & \mbox{elsewhere}\,.
    \end{cases}
\end{equation}
where $\varpi_{i,j,Q}$ is the constant $\varpi_Q$ defined in \eqref{un bell'errore}. In particular, if $\zeta=(|x|,y) \in {\rm int}(Q)$ for some $Q \in \mathcal W$, then for $z = (x,y)$ it holds, thanks to \eqref{hs at Qnh},
\begin{equation} \label{estimate on the multierror}
    |\varpi_{i,j}(z)|^2 \leq C \, \|\nabla l_{\bar h(i,j,Q)}\|_\infty^2 \, (\bE + \bA)\,,
\end{equation}
which implies \eqref{bf:multierror}. Finally, we prove \eqref{e:bf L2}. Recalling that $w=u-\tilde l$, using the definitions of $\xi$ and $\varpi$ as in \eqref{def:the binding function} and \eqref{def:the multierror}, and taking into account that, for every $Q \in \mathcal W$, the diameter $d_Q$ is comparable to the distance from the spine, we can use \eqref{improved Linfty old sel} to estimate
\begin{align*}
    &\sum_{i=1}^N \sum_{j=1}^{\kappa_{0,i}} \int_{(U_\mathcal W)_i \cap \bB_{\sfrac12}} \frac{\abs{w_{i,j} - \varpi_{i,j} - \mathbf{p}_{\bH_{0,i}^{\perp_0}}(\xi)}^2}{|x|^{5/2}}\, dz \leq C\, \sum_{Q \in \mathcal W} d_Q^{m-\frac34}\, (\bE + \bA) \\ 
    & \qquad \qquad \qquad \leq C\,  \sum_{k \geq 0} \sum_{Q \in \mathcal C_k} d_Q^{m-\frac34}  \, (\bE + \bA) \leq C \, \sum_{k \geq 0} \left( \frac{1}{2^{\sfrac{1}{4}}} \right)^k \, (\bE + \bA)\,.
\end{align*}
This proves the first part of \eqref{e:bf L2}; the proof of the second part is analogous, using \eqref{improved c_onealfa old sel} in place of \eqref{improved Linfty old sel}. \hfill\qedsymbol

\section{Blow up}\label{sec:bu}

In this section we consider ``blow-up'' sequences.

\begin{definition}\label{d:scoppia} A blow-up sequence is given by
\begin{itemize}
\item[(a)] submanifolds $\Sigma_k$ as in Assumption \ref{assumption:manifolds and currents} with $T_0 \Sigma_k = \pi_0 = \{0_{n-1}\}\times \mathbb R^{m+1}$; 
\item[(b)] a sequence of currents $T_k$ in $\mathscr{R}_m (\Sigma_k)$ which are area-minimizing $\modp$ in $\Sigma_k\cap \bB_{2R_0}$;
\item[(c)] a sequence of cones $\bC_k$ supported in $\pi_0$; 
\end{itemize}
such that
\begin{itemize}
    \item[(i)] $\Theta_{T_k} (0)\geq\frac{p}{2}$ for every $k$;
    \item[(ii)] the cones $\bC_k$ have the same $(m-1)$-dimensional spine $V = \{0_{n-1}\}\times \{0_2\}\times \mathbb R^{m-1}$;
    \item[(iii)] $\bC_k$ converge in the flat topology to an area-minimizing cone $\bC_0$ with spine $V$;
    \item[(iv)] the currents $T_k$ converge, with respect to $\hat\flat^p_{\bB_{R_0}}$, to $\bC_0$;
    \item[(v)] upon denoting by $\bS_k$ and $\bS_0$ the books corresponding to $\bC_k$ and $\bC_0$, by $\bE_k$ the excesses $\bE (T_k, \bS_k, 0, R_0)$, and by $\bA_k$ the quantities $\|A_{\Sigma_k}\|_{L^\infty}$, we have
    \begin{align}\label{e:blow-up-conditions}
    \bE_k \to 0 \qquad \mbox{and} \qquad \frac{\bA_k}{\bE_k} \to 0\, ,    
    \end{align}
    (where we implicitly assume $\bE_k>0$).
\end{itemize}
\end{definition}

Having fixed the constant $\beta_1$ of Theorem \ref{thm:est spine}, we let $\beta = \frac{\beta_1}{2}$ and assume, without loss of generality, that each pair $(T, \bC) = (T_k, \bC_k)$ satisfies the assumptions of Theorem \ref{thm:est spine}. In particular we denote:
\begin{itemize}
    \item[($\alpha$)] by $w^k$ the corresponding $p$-multifunctions $w$ over $\bS_0$ and by $U^k:= U_{4\mathcal{W}^k}$ their domains (here, $\mathcal{W}^k = \mathcal{W}(T_k,\bS_k, \tau)$ with $\tau$ depending only on $(m,n,p,\bS_0)$);
    \item[($\beta$)] by $h^k$ the selection functions $h$ from Theorem \ref{thm:graph v2};
    \item[($\gamma$)] by $l^k$ the corresponding linear $p$-multifuctions $\tilde l$;
    \item[($\delta$)] by $\xi^k$ the corresponding binding functions $\xi$;
    \item[($\varepsilon$)] by $\varpi^k$ the corresponding $p$-multifunctions $\varpi$.
\end{itemize}
Observe that under \eqref{e:blow-up-conditions}, by Theorem \ref{thm:graph_v1}(i), the domains $U^k$ ``close around'' the spine, in the sense that, for any fixed cube $Q\in \mathcal{Q}$, $Q\subset U^k$ provided $k$ is large enough. For further reference, we let $U^\infty$ be the union of all $2Q$ for $Q$'s in $\mathcal{Q}$. The following is then an easy corollary of Theorem \ref{thm:est spine}, whose proof is left to the reader.  

\begin{corollary}\label{c:scoppia}
Consider a blow-up sequence $(T_k, \bC_k)$ as in Definition \ref{d:scoppia}, set $\beta = \frac{\beta_1}{2}$ for $\beta_1$ as in Theorem \ref{thm:est spine}, consider $w^k, U^k, h^k, l^k$, $\xi^k$, and $\varpi^k$ as in ($\alpha$)-($\varepsilon)$, and set $\bar w^k:= \bE_k^{-\sfrac{1}{2}} w^k$, $\bar\xi^k:= \bE_k^{-\sfrac{1}{2}} \xi^k$, and $\bar\varpi^k := \bE_k^{-\sfrac{1}{2}} \varpi^k$. Up to subsequences, the following holds:
\begin{itemize}
    \item[(i)] $h^k (i,j)$ is constant for every $i$ and $j$;
    \item[(ii)] $\bar w^k$ converges locally in $C^1$ to a $p$-multifunction $\bar w$ on $U^\infty \cap \{\abs{\zeta}< \sfrac{1}{2}\}$ over $\bS_0$ taking values in $\pi_0$; 
    \item[(iii)] $\bar \xi^k$ converges locally uniformly to a binding function $\bar \xi$ defined on $U^\infty \cap \{\abs{\zeta}< \sfrac{1}{2}\}$, whereas $\bar\varpi^k$ converges locally uniformly to zero;
    \item[(iv)] The following estimates hold (for a geometric constant $C$ which depends only on $p$, $m$ and $n$): 
    \begin{align} \label{bu:c_onealpha}
        &\sup_{\zeta = (t,y) \in U^\infty} t^{\frac{m}{2}+1}\left( t^{-1} \abs{\bar w (\zeta)} + \abs{D\bar w (\zeta)}+ t^{\sfrac12} [D\bar w]_{\sfrac12}(\zeta) \right)\le C\\ \label{bu:radial}
        &\sum_{i,j} \int_{\bB_{1/2}\cap U^\infty_i} |z|^{2-m}\left|\partial_r \frac{\bar w_{i,j} (z)}{|z|}\right|^2\, dz \leq C\\ \label{bu:nonconcentration w12}
        &\sum_{i,j}\int_{\bB_{1/2}\cap U^\infty_i} \frac{|\bar w_{i,j} - \mathbf{p}_{\mathbf{H}_{0,i}^{\perp_0}} (\bar \xi)|^2}{|x|^{\frac{5}{2}}} +\frac{|\nabla \bar w_{i,j}|^2}{|x|^{\frac{1}{2}}}\, dz \leq C\, .
    \end{align}
\end{itemize}
\end{corollary}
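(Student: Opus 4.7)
The plan is to extract subsequences validating each claim in turn, relying on the uniform bounds from the previously established theorems after normalization by $\bE_k$.

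First, since each $h^k(i,j)$ takes values in the finite set $\{1,\ldots,\kappa_{0,i}\}$, a diagonal extraction yields a subsequence along which $h^k(i,j) \equiv h^\infty(i,j)$ is constant, proving (i). In particular, this selection defines a fixed linear $p$-multifunction that serves as the reference for all $\bar w^k$.

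Next, dividing \eqref{eqn:Linfty estimate excess} applied to $(T_k,\bC_k)$ by $\bE_k^{1/2}$ and using $\bA_k^2/\bE_k \to 0$ will yield a uniform bound for $\bar w^k$ in the weighted $C^{1,1/2}$ norm of \eqref{holder norm}. Since by Theorem \ref{thm:graph_v1}(i) the Whitney domains $U^k$ eventually contain any fixed $Q \in \mathcal Q$, on each such cube Arzelà--Ascoli delivers, via a countable diagonal extraction, a further subsequence along which $\bar w^k \to \bar w$ locally in $C^1$ on $U^\infty \cap \bB_{1/2}$, establishing (ii), while \eqref{bu:c_onealpha} is the limit of the same bound. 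For (iii), estimate \eqref{e:bf infty} gives $\|\bar\xi^k\|_\infty \leq C$ uniformly; since $\xi^k$ is constant on the interior of each cube of $\mathcal Q$, a diagonal argument on the countable fixed collection $\mathcal Q$ produces a subsequence along which $\bar\xi^k$ converges cubewise to a binding function $\bar\xi$. On the other hand, \eqref{bf:multierror} reads $\|\bar\varpi^k\|_\infty^2 \leq C(1 + \bA_k/\bE_k)\,\hat\flat^p_{\bB_{R_0}}(\bC_k-\bC_0)^2$, and since $\bC_k \to \bC_0$ in the flat topology, $\bar\varpi^k \to 0$ locally uniformly.

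Finally, (iv) will be obtained by dividing the relevant integral estimates by $\bE_k$ and applying Fatou's lemma. Estimate \eqref{bu:radial} comes from combining Propositions \ref{prop:decay-growth-est} and \ref{prop:density-est}, noting that $\partial_r(w_{i,j}/|z|) = \partial_r(u_{i,j}/|z|)$ because $\tilde l_{i,j}$ is linear and hence $\tilde l_{i,j}/|z|$ is $0$-homogeneous in the radial direction. Estimate \eqref{bu:nonconcentration w12} will come from \eqref{e:bf L2} divided by $\bE_k$, using the uniform convergence $\bar\varpi^k \to 0$ to discard the multi-error term from the integrand in the limit. I expect the main technical subtlety to be this last passage to the limit: the weight $|x|^{-5/2}$ is singular at the spine, so I will apply Fatou's lemma cubewise on each $Q \in \mathcal Q$, where the weight is bounded and the integrand converges pointwise thanks to the $C^1_{\rm loc}$ convergence of $\bar w^k$ and the cubewise uniform convergence of $\bar\xi^k$.
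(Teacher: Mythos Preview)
Your proposal is correct and follows exactly the natural route; the paper itself leaves the proof to the reader, so there is no alternative argument to compare against. Your cubewise application of Fatou's lemma to pass the integral bounds \eqref{bu:radial} and \eqref{bu:nonconcentration w12} to the limit is the right way to handle the singular weights, and your observation that $\partial_r(\tilde l_{i,j}/|z|)=0$ by $0$-homogeneity is precisely what reduces the radial estimate for $u$ to one for $w$.
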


\begin{remark}\label{r:local}
Observe that the open sets $U^\infty_i\subset \bH_{0,i}$ do not include any portion of the spine $V$, rather $\bB_{1/2}\cap V$ is contained in the boundary of each $U^\infty_i$. When we refer to ``local properties'', we understand them as taking place in compact subsets of the domain, i.e. ``away from $V$''. In particular, precise formulations of points (ii) and (iii) in Corollary \ref{c:scoppia} are the following:
\begin{itemize}
    \item[(ii)] for every $Q\in \mathcal{Q}$ and for every $i$, upon ordering the sheets $\bar w^k_{i,j}$, and $\bar w_{i,j}$ monotonically, each $\bar w^k_{i,j}$ (which is defined on $2Q$ for $k$ large enough), converges in $C^1 (2Q)$ to $\bar w_{i,j}$; 
    \item[(iii)] for every $Q\in \mathcal{Q}$ the constant values taken by the $\bar \xi^k$ on ${\rm int}(A_Q)$ converge to the constant value taken by $\bar \xi$ (and $\bar\varpi^k$ converge to zero on $Q$).
    \end{itemize}
\end{remark}

The main point of this section is to show that the convergence of $\bar w^k$ is strong in $L^2$ and to collect some relevant properties of the pair of functions $\bar w$ and $\bar \xi$ in Corollary \ref{c:scoppia}. One crucial property is \eqref{e:Simon-test} below, which is valid for {\em cylindrical} vector fields.

\begin{definition}\label{d:campi-cilindrici}
Let $V$ be a linear subspace of $\mathbb R^{m+n}$. A vector field $W: \mathbb R^{m+n}\to \mathbb R^{m+n}$ is called {\em cylindrical with respect to $V$} if $W (q)= W (\bar q)$ for any pair of points $q, \bar{q}$ such that $\mathbf{p}_V (q) = \mathbf{p}_V (\bar q)$ and $\dist (q, V) = \dist (\bar q, V)$.
\end{definition}

\begin{proposition}\label{p:scoppia}
Let $T_k,\bar w^k, \bar \xi^k, U^k$, $\bar w, \bar \xi$, and $U^\infty$ be as in Corollary \ref{c:scoppia}. Then:
\begin{itemize}
    \item[(i)] The converge of $\bar w_k$ to $\bar w$ is strong in the sense that, for $|x| (q) = \dist (q,V)$,
    \begin{equation}\label{e:strong-L2}
    \int_{U^\infty \cap \bB_{1/2}} (|\bar w|^2 + |x|^2 |D\bar w|^2)\, d\|\bC_0\| =
    \lim_{k\to\infty} \int_{U^k \cap \bB_{1/2}} (|\bar w^k|^2 + |x|^2 |D\bar w^k|^2)\, d\|\bC_0\| < \infty\, .
    \end{equation}
    \item[(ii)] The following estimate holds (for, we recall, $\bS_k$ the open book $\spt(\bC_k)$)
    \begin{align}
        &\limsup_{k\to\infty} \frac{1}{\bE_k} \int_{\bB_{1/2}} \dist (q, \bS_k)^2\, d\|T_k\|\leq \int_{\bB_{1/2}\cap U^\infty} |\bar w|^2\, d\|\bC_0\|\, .
    \end{align}
    \item[(iii)] $\bar w_{i,j}$ is (locally) smooth in $U^\infty_i \cap \bB_{1/2}$ and $\Delta \bar w_{i,j} =0$, for every $i$ and $j$;
    \item[(iv)] for any $W\in C^\infty_c (\bB_{1/2}, \mathbb R^{m+n})$ cylindrical with respect to $V$ we have
    \begin{equation}\label{e:Simon-test}
    \sum_i \int_{\bH_{0,i}} \sum_j \nabla \bar w_{i,j} : \nabla \frac{\partial W}{\partial y_l}\, dz = 0\, \qquad\forall\, l=1, \ldots , m-1\, .  
    \end{equation}
\end{itemize}
\end{proposition}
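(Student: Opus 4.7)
The plan is to prove (iii) and (i) first using the local $C^{1,1/2}$ convergence of Corollary \ref{c:scoppia}(ii) together with the non-concentration at the spine; (ii) via a graphical/near-spine decomposition combining strong $L^2$ convergence with the Hardt-Simon estimate and no-holes; and (iv), which will be the technical core, via a first-variation argument applied to the cylindrical test field $\partial_{y_l} W$.

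For (iii), each $u^k_{i,j}$ solves the Euler-Lagrange equation of the area functional on $\Sigma_k$ derived in the proof of Lemma \ref{l:local_selection}, while $\tilde l^k_{i,j}$ is affine and trivially solves the minimal surface equation. Subtracting, $w^k_{i,j}$ satisfies a linearized elliptic equation $\mathrm{div}(A^k_{i,j}\nabla w^k_{i,j}) = H^k_{i,j}$, with $A^k_{i,j}\to\mathrm{Id}$ uniformly on compacts of $U^\infty$ (since $\bC_k\to\bC_0$ forces $\nabla u^k, \nabla \tilde l^k \to 0$ locally) and $|H^k_{i,j}|\le C\bA_k = o(\bE_k^{1/2})$. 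Dividing by $\bE_k^{1/2}$ and passing to the limit, $\bar w_{i,j}$ is harmonic. For (i), local $C^1$ convergence handles compact subsets of $U^\infty\cap\bB_{1/2}$; to control a neighborhood $\{|x|<\rho\}$ of the spine uniformly in $k$, rescale \eqref{bu:nonconcentration w12} and \eqref{e:bf infty} to obtain $\int_{\{|x|<\rho\}}|\bar w^k_{i,j}-\mathbf{p}_{\bH_{0,i}^{\perp_0}}(\bar\xi^k)|^2 \le C\rho^{5/2}$, $\int_{\{|x|<\rho\}\cap\bB_{1/2}}|\mathbf{p}_{\bH_{0,i}^{\perp_0}}(\bar\xi^k)|^2 \le C\rho$, and (writing $|x|^2 = |x|^{5/2}\cdot|x|^{-1/2}$) $\int_{\{|x|<\rho\}}|x|^2|D\bar w^k_{i,j}|^2 \le C\rho^{5/2}$; all vanish as $\rho\to 0$ uniformly in $k$.

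For (ii), split $\bB_{1/2} = R_{\mathcal W^k}\cup(\bB_{1/2}\setminus R_{\mathcal W^k})$. On $R_{\mathcal W^k}$, the graphical representation gives $\dist(q,\bS_k) \le |w^k_{i,j}(z)|+C\bA_k$ (using $\tilde\bS^k\subset\bS_k$), and strong $L^2$ convergence from (i) together with $Jv^k\to 1$ locally yields the upper bound by $\int|\bar w|^2\,d\|\bC_0\|$ in the limit. By Theorem \ref{thm:graph_v1}(i), the complement is eventually contained in $\{|x|<\rho\}$ for any $\rho>0$; there we cover $V\cap\bB_{1/2}$ by $\sim\rho^{1-m}$ balls $B_\rho(y_j)$, each containing (by Proposition \ref{prop:no-holes}) a point $q_j$ of density $\ge p/2$. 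Combining Theorem \ref{t:Hardt-Simon-main} (which yields both $|\mathbf{p}_{V^\perp}(q_j)|^2\le C\bE_k$ and $\int_{B_\rho(y_j)}\dist^2(\cdot-q_j,\tilde\bS^k)\,d\|T_k\|\le C\rho^{m+\sfrac{7}{4}}\bE_k$) with the $V$-invariance inequality $\dist(q,\bS_k)\le\dist(q-q_j,\tilde\bS^k)+|\mathbf{p}_{V^\perp}(q_j)|$, summing produces $\int_{\{|x|<\rho\}\cap\bB_{1/2}}\dist^2\,d\|T_k\|\le C\rho\bE_k = o(\bE_k)$ as $\rho\to 0$.

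For (iv), test the first variation of $T_k$ with $X=\partial_{y_l}W$ for cylindrical $W\in C^\infty_c(\bB_{1/2})$: $\int\mathrm{div}_{T_k}(\partial_{y_l}W)\,d\|T_k\| = -\int\partial_{y_l}W\cdot H_{T_k}\,d\|T_k\| = o(\bE_k^{1/2})$. The analogous identity for $\bC_0$ vanishes. Subtracting, dividing by $\bE_k^{1/2}$, and Taylor-expanding $\mathrm{div}_{T_k}(\partial_{y_l}W)$ around $\mathrm{div}_{\bC_0}(\partial_{y_l}W)$ on the graphical region produces to leading order $\sum_{i,j}\int\nabla w^k_{i,j}:\nabla\partial_{y_l}W\,dz$; passing to the limit via (i) and the near-spine control of (ii) yields \eqref{e:Simon-test}. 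The hard part will be the near-spine residual: the $O(\bE_k^{1/2})$ piecewise-constant binding $\bar\xi^k$ would naively produce $O(1)$ terms after rescaling, but the cylindricity of $W$ ensures these constant-in-sheet contributions vanish when paired against $\nabla\partial_{y_l}W$, via $\int\mathrm{div}((\mathrm{const})\,\partial_{y_l}W)\,dz=0$ for compactly supported $W$.
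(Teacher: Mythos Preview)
Your arguments for (i)--(iii) match the paper's; for (ii) the paper simply invokes the already-proved estimate \eqref{e:est spine} for the near-spine contribution rather than re-deriving it from no-holes and Hardt--Simon, but your route works.

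For (iv) there are two genuine gaps. First, Taylor-expanding the first variation around $\bC_0$ produces at leading order $\nabla u^k_{i,j}=\nabla \tilde l^k_{i,j}+\nabla w^k_{i,j}$, not $\nabla w^k_{i,j}$, and the quadratic remainder contains a term $O(|\nabla \tilde l^k|^2)$. Since $|\nabla \tilde l^k|$ measures the angle between $\bC_k$ and $\bC_0$, which in Definition \ref{d:scoppia} is only assumed to be $o(1)$ and is unrelated to $\bE_k$, the quantity $\bE_k^{-1/2}|\nabla \tilde l^k|^2$ need not vanish. The paper instead subtracts $\delta\bC_k(\bar W)=0$ and expands around $\bC_k$ (i.e.\ around the graphs $\tilde l^k$), so the leading term is $\nabla w^k$ directly and all errors are $o(\bE_k^{1/2})$; see \eqref{e:espansione-matriciazze}.

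Second, and more seriously, your near-spine mechanism is wrong: the binding function controls the \emph{position} of $\spt(T_k)$, but what must be bounded is $\bE_k^{-1/2}\int_{B_r(V)}\mathrm{div}_{T_k}\bar W\,d\|T_k\|$, which depends on the \emph{tangent planes} of $T_k$. Neither (i) nor (ii) bounds this. The paper's mechanism is: first reduce (via a cutoff and \eqref{bu:nonconcentration w12}) to $W$ depending only on $y$ near $V$; for such $\bar W=\partial_{y_l}W$ one has $\mathrm{div}_\pi\bar W=0$ whenever $V\subset\pi$, so the cone contribution in $B_r(V)$ vanishes identically, while the current contribution is bounded via \eqref{e:piano-tilta} and Cauchy--Schwarz by $C\big(\|T_k\|(B_r(V))\big)^{1/2}\big(\int|\mathbf{p}_V\cdot\mathbf{p}_{\vec T_k^\perp}|^2\big)^{1/2}$; the tilt integral is then $\le C\bE_k$ by the Caccioppoli-type estimate \eqref{e.reverse poincare}, giving \eqref{e:pezzo-dentro}. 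Constant shifts of $\bar w^k$ do not enter the first variation at all, so your argument about cancellation against $\nabla\partial_{y_l}W$ is addressing the wrong object.
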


\begin{remark}
In \eqref{e:Simon-test} each map $\bar w_{i,j}$, which takes values in $\bH_{0,i}^{\perp_0}$, is regarded as a map taking values in $\mathbb R^{m+n}$, while $W$ is restricted on $\bH_{0,i}$ and regarded thus as a map from $\bH_{0,i}$ to $\mathbb R^{m+n}$.  The corresponding product in \eqref{e:Simon-test} is thus understood as the usual Hilbert-Schmidt product of the Jacobian matrices $\nabla \bar w_{i,j}$ and $\nabla \frac{\partial W}{\partial y_l}$, where in both cases $\nabla$ denotes the differential with respect to the variables in $\bH_{0,i}$. In a few computations we will use the notation $D W$ for the {\em full Jacobian matrix} of $W$, i.e. when the derivatives are taken with respect to all variables. Observe however that, because of the special symmetry assumption on $W$, $\partial_v W (q) =0$ for every $q\in \bH_{0,i}$ and any $v\in \bH_{0,i}^\perp$.  
\end{remark}

\begin{proof}{\bf Proof of (i).} By the uniform convergence on compact subsets of $\bB_{1/2} \cap U^\infty_i$ for all $i$, it suffices to show that there is no ``concentration'' at the spine. To that end, consider a positive radius $r$ and estimate, using the results of Theorem \ref{thm:est spine},
\begin{align*}
& \int_{\bB_{1/2}\cap B_r (V)\cap U^k} (|\bar w^k|^2 + |x|^2 |D\bar w^k|^2)\, d\|\bC_0\|\\
\leq &\bE_k^{-1} \int_{\bB_{1/2}\cap B_r (V)\cap U^k} (|w^k - \varpi^k - \mathbf{p}_{\bH_{0,i}^{\perp_0}} (\xi^k)|^2 + |x|^2 |D w^k|^2)\, d\|\bC_0\|\\
&\qquad\qquad+ C r \bE_k^{-1} \left(\|\xi^k\|_\infty^2 + \|\varpi^k\|_\infty^2 \right)\\
\leq & \bE_k^{-1} r^{5/2}  \int_{\bB_{1/2}\cap B_r (V)\cap U^k} \frac{|w^k - \varpi^k - \mathbf{p}_{\bH_{0,i}^{\perp_0}} (\xi^k)|^2}{|x|^{5/2}} + \frac{|D w^k|^2}{|x|^{1/2}}\, d\|\bC_0\|\\
&\qquad\qquad + C r \bE_k^{-1} \left(\|\xi^k\|_\infty^2 + \|\varpi^k\|_\infty^2 \right) \\ 
\leq &C \left(r^{5/2} + r + r\,\hat\flat^p_{\bB_{R_0}}(\bC_k-\bC_0)^2\right) \left(1+ \bE_k^{-1} \bA_k\right)\, .
\end{align*}
Recalling that both $\bE_k^{-1} \bA_k$ and $\hat\flat^p_{\bB_{R_0}}(\bC_k-\bC_0)$ are infinitesimal, we conclude
\[
\limsup_{k\to\infty} \int_{\bB_{1/2}\cap B_r (V)\cap U^k} (|\bar w^k|^2 + |x|^2 |D\bar w^k|^2)\, d\|\bC_0\| \leq C r\, .
\]

\medskip

{\bf Proof of (ii).} First of all observe that $q= z+ l^k_{i,j} (z)+w^k_{i,j} (z) + \Psi^k(z+ l^k_{i,j} (z)+w^k_{i,j} (z))\in \spt (T_k)$ for every choice of $k,i,j$ and every $z\in U^k_i$, while $z+ l^k_{i,j} (z)\in \spt (\tilde{\bC}_k) = \tilde{\bS}_k \subset \bS_k$. We thus have $\dist (q, \bS_k) \leq |w^k (z)| + C\,\bA_k$. Moreover, the support of the current $T_k$ coincides with the graph of the multifunction $u^k + \Psi^k(\cdot + u^k)$ on $\bB_1\cap \{\dist (q, V)\geq \sigma_k\}$ for some infinitesimal sequence $\sigma_k$. Therefore we can write
\begin{align*}
&\limsup_{k\to\infty} \bE_k^{-1} \int_{\bB_{1/2}\setminus B_r (V)} \dist (q, \bS_k)^2\, d\|T_k\| (q)\\
\leq &\limsup_{k\to\infty} \left\lbrace (1+ C \Lip (u^k) + C\bA_k) \int_{\bB_{1/2}\setminus B_r (V)} \bE_k^{-1} |w^k (z)|^2\, d\|\bC_0\| (z) + Cr\bE_k^{-1}\bA_k \right\rbrace\, ,
\end{align*}
where we have used the area formula to estimate the area element on the graphical parametrization of the current induced by the graph of $u^k + \Psi^k(\cdot + u^k)$ with $1+ C \Lip (u^k) + C\bA_k$. 
Observe now that the Lipschitz constant of $u^k$ converges to $0$ on any compact set in $\bB_1\setminus V$ and we can thus conclude
\begin{align*}
\limsup_{k\to\infty} \bE_k^{-1} \int_{\bB_{1/2}\setminus B_r (V)} \dist (q, \bS_k)^2\, d\|T_k\| (q)
\leq &\limsup_{k\to\infty} \int_{\bB_{1/2}\setminus B_r (V)} |\bar w^k (z)|^2\, d\|\bC_0\| (z)\\
\leq & \int_{\bB_{1/2}} |\bar w|^2\, d\|\bC_0\|\, .
\end{align*}
In order to prove (ii) we then need to show the nonconcentration estimate
\[
\lim_{r\downarrow 0} \limsup_{k\to\infty} \bE_k^{-1} \int_{\bB_{1/2}\cap B_r (V)} \dist (q, \bS_k)^2\, d\|T_k\| (q) = 0\, .
\]
Fix $r$ and assume that $k$ is large enough so that $r>\varrho^k_\infty$ (recall the definition of the latter is given in \eqref{e:def_rho_inf}). We can then use \eqref{e:est spine} in Theorem \ref{thm:est spine} to bound
\begin{align*}
&\bE_k^{-1} \int_{\bB_{1/2}\cap B_r (V)} \dist (q, \bS_k)^2\, d\|T_k\| (q)\\
\leq &\bE_k^{-1} r^{1/2} \int_{\bB_{1/2}} \frac{\dist (q, \bS_k)^2}{\max\{\varrho_\infty^k,|x|\}^{1/2}}\, d\|T_k\| (q)
\leq C (1+\bE_k^{-1} \bA_k) r^{1/2}\, .
\end{align*}

\medskip

{\bf Proof of (iii).} We consider a cube $Q\in U^\infty$ and recall that, on $4Q$, $\bar w^k$ converges to $\bar w$ in $C^1$. Consider a single sheet $u^k_{i,j} = w^k_{i,j}+ l^k_{i,j}$, and recall that it is a critical point of the area functional with respect to the metric $({\rm Id}+\Psi^k)^\sharp\delta_{\R^{m+n}}$ on $\pi_0$; see formula \eqref{e:funzionale}. In particular, $u^k_{i,j}$ is a solution to the corresponding Euler-Lagrange equation, which we can rewrite as 
\[
{\rm div} \left(\frac{\nabla u^k_{i,j} (z)}{\sqrt{1+|\nabla u^k_{i,j} (z)|^2}}\right)
= {\rm div}\, \underbrace{(-R_k (z, u^k_{i,j} (z), \nabla u^k_{i,j} (z))}_{=:f_k (z)} + \underbrace{S_k (z, u^k_{i,j} (z), \nabla u^k_{i,j} (z))}_{=:g_k (z)}\, ,
\]
where the functions $S_k$ and $R_k$ satisfy the bounds $|R_k (z,\bar u,\bar p)| + |S_k (z,\bar u,\bar p)|\leq C \bA_k (1+|\bar u|+|\bar p|)$. On the other hand we have
\[
{\rm div} \left(\frac{\nabla  l^k_{i,j} (z)}{\sqrt{1+|\nabla l^k_{i,j} (z)|^2}}\right)=0\, .
\]
Subtract the two equations, divide by $\bE_k^{-1/2}$ and consider that $c_k:= \sqrt{1+|\nabla l^k_{i,j} (z)|^2}$ is a constant. We can then write 
\begin{align}\label{e:ugly-expansion}
\Delta \bar w^k_{i,j} (z)
= c_k {\rm div} (\bE_k^{-1/2} f_k) + c_k \bE_k^{-1/2} g_k +
{\rm div}\, \Bigg(\bE_k^{-1/2}\underbrace{ 
\Bigg(\frac{c_k}{\sqrt{1+|\nabla u^k_{i,j} (z)|^2}}-1\Bigg)}_{=: h_k}\nabla u^k_{i,j} \Bigg)
\end{align}
Let next $k\to\infty$: clearly the left hand side converges to $\Delta \bar w$ in the sense of distributions. On the other hand we have the estimates $\|f_k\|_{C^0} + \|g_k\|_{C^0}\leq C\bA_k$ and (since $c_k\to 1$) the first two summands in the right hand side converge (distributionally) to $0$. 
We next estimate 
\[
|h_k (z)|\leq C |\nabla u^k_{i,j} (z)-\nabla l^k_{i,j} (z)|= C |\nabla w^k_{i,j} (z)|\leq C
|x|^{-\frac{m}{2}-1} (\bE_k^{1/2}+\bA_k)\, . 
\]
Considering however that we are taking $z\in 4Q_i$, we can estimate
\[
\bE_k^{-1/2} \|h_k\|_{L^\infty (4 Q_i)} \leq C d_Q^{-\frac{m}{2}-1}\, .
\]
Since $\nabla u^k_{i,j}$ converges uniformly to $0$ on $4Q_i$, we conclude that the third summand in \eqref{e:ugly-expansion} converges to $0$ as well. 

\medskip

{\bf Proof of (iv).} Fix $W$ as in the claim. We first observe that each map $w^k_{i,j}$ takes values in the linear subspace $V^{\perp_0}$. We can therefore assume, without loss of generality, that $W$ takes values in $V^{\perp_0}$ as well. 

Fix next $r>0$ and consider a cut-off function $\phi_r$ which is identically equal to $0$ in a neighborhood of $0$, equals $1$ on $[r, \infty)$ and satisfies the bound $\|\phi_r'\|_0\leq C r^{-1}$. Consider then the vector field
\[
W_r (\bar{x}, y) = W(\bar{x}, y) \phi_r (|\bar x|) + W (0, y) (1-\phi_r (|\bar x|))\, .
\]
Obviously $W_r$ depends only on $y$ in a neighborhood of $V$, while we have the estimate 
\[
\left|\nabla \frac{\partial (W-W_r)}{\partial y_l} (\bar x,y)\right|\leq C\|D^2 W\|_0 \qquad \forall\,l=1,\ldots,m-1\,.
\]
Using that $W-W_r = 0$ outside of $B_r(V)$, it thus follows easily that
\begin{align*}
\left|\int_{\bH_{0,i}} \sum_j \nabla \bar w_{i,j} \colon \nabla \left(\frac{\partial (W-W_r)}{\partial y_l}\right)\right|
&\leq C \|D^2 W\|_0 \sum_j \int_{\bH_{0,i}\cap \bB_{1/2}\cap B_r (V)} |\nabla \bar w_{i,j}| \\
& \leq C \|D^2 W\|_0 \, r^{\sfrac{5}{4}} \, \sum_j \left(\int_{\bH_{0,i} \cap \bB_{1/2}} \frac{\abs{\nabla \bar w_{i,j}}^2}{|x|^{\frac12}} \, dz \right)^{\sfrac{1}{2}}\,.
\end{align*}
Since the left hand side converges to $0$ as $r\downarrow 0$ by \eqref{bu:nonconcentration w12}, and since the vector fields $W_r$ are still cylindrical, it suffices to prove (iv) for a cylindrical vector field $W$ which in addition depends only upon the variable $y$ in some neighborhood of $V$. 

We then fix the index $l \in \{1,\ldots,m-1\}$, set $\bar W:= \frac{\partial W}{\partial y_l}$ and, summarizing the above discussion, without loss of generality we assume:
\begin{itemize}
    \item[(S)] $\bar W$ is cylindrical, it depends only on the variable $y\in V$ in $B_{r_0} (V)$, and it takes values on $V^{\perp_0}$ (everywhere).
\end{itemize}
Next consider that:
\begin{itemize}
    \item Since $\bC_k$ is invariant in the direction $y_l$ and $\bar W$ is a derivative along that direction (of a compactly supported smooth vector field) then $\delta \bC_k (\bar W) =0$;
    \item Since $T_k$ is area minimizing $\modp$ in $\Sigma_k$, we have $|\delta T_k (\bar W)|\leq \bA_k \|\bar W\|_0$.
\end{itemize}
In particular 
\begin{equation}\label{e:variation_vanishes}
\lim_{k\to\infty} \bE_k^{-\frac{1}{2}} (\delta T_k (\bar W) - \delta \bC_k (\bar W)) = 0\, .
\end{equation}
Next consider an $r< r_0$ and a $k$ large enough so that $T_k$ is the graph of the multifunction $v^k = u^k + \Psi^k(\cdot + u^k)$ outside $B_r (V)$. We split both currents $T_k$ and $\bC_k$ into two pieces: 
\begin{itemize}
    \item $T_k^{\rm g}$ is the graph of the multifuction $v^k$ over $\bS_0$ on $U_r:= \{\zeta=(t,y) \, \colon \, t> r\}$, while $T_k^{\rm r}$ is the remainder in $\bB_{1/2}$ (i.e. $(T_k - T_k^{\rm g})\res \bB_{1/2}$);
    \item likewise $\bC_k^{\rm g}$ is the graph of the multifunction $l^k$ over $\bS_0$ on $U_r$ and $\bC_k^{\rm r}:= (\bC_k - \bC_k^{\rm g})\res \bB_{1/2}$.
\end{itemize}
We denote by $U_{r,i}$ the sets $(U_r)_i\subset  \bH_{0,i}$ and make the following claims:
\begin{align}
&\limsup_{k\to \infty} \bE_k^{-1/2} \left|\int {\rm div}_{T_k} \bar W\, d\|T_k^{\rm r}\| -
\int {\rm div}_{\bC_k} \bar W\, d\|\bC_k^{\rm r}\|\right| \leq C r^{1/2}\label{e:pezzo-dentro}\\
&\lim_{k\to\infty} \bE_k^{-1/2} \left(\int {\rm div}_{T_k} \bar W\, d\|T_k^{\rm g}\| -
\int {\rm div}_{\bC_k} \bar W\, d\|\bC_k^{\rm g}\|\right) = \sum_i \int_{U_{r,i}} \sum_j \nabla \bar{w}_{i,j} \colon \nabla \bar W\label{e:pezzo-fuori}\, .
\end{align}
Since obviously
\begin{align*}
\delta T_k (\bar W) &= \int {\rm div}_{T_k} \bar W\, d\|T_k^{\rm r}\| +
\int {\rm div}_{T_k} \bar W\, d\|T_k^{\rm g}\|\\
\delta \bC_k (\bar W) &= \int {\rm div}_{\bC_k} \bar W\, d\|\bC_k^{\rm r}\| +\int {\rm div}_{\bC_k} \bar W\, d\|\bC_k^{\rm g}\|\, , 
\end{align*}
the combination of \eqref{e:pezzo-dentro}, \eqref{e:pezzo-fuori}, and \eqref{e:variation_vanishes} implies
\[
\left|\sum_i \int_{U_{r,i}} \sum_j \nabla \bar{w}_{i,j} \colon \nabla \bar W\right| \leq C r^{1/2}\, .
\]
Thus the desired conclusion follows from letting $r\downarrow 0$. 

We now come to the proof of \eqref{e:pezzo-dentro} and \eqref{e:pezzo-fuori}. Concerning \eqref{e:pezzo-dentro}, observe that (S) above implies that
\[
{\rm div}_\pi \bar W =0 \qquad \mbox{in $B_{r_0}(V)$}
\]
for every vector space $\pi$ which contains $V$ and, in particular,
\begin{equation}\label{e:piano-tilta}
|{\rm div}_\pi \bar W|\leq C \|D \bar W\|\left|\mathbf{p}_V\cdot\mathbf{p}_{\pi^\perp}\right|\, ,
\end{equation}
for any arbitrary vector space $\pi$.

Since every tangent plane to $\bC_k$ contains $V$, and since $\bC_k^{\rm r}$ is supported in $B_{r_0}(V)$, we conclude
\[
\int {\rm div}_{\bC_k} \bar W\, d\|\bC_k^{\rm r}\| = 0\, ,
\]
while using \eqref{e:piano-tilta} and the Cauchy-Schwartz inequality we get
\[
\left|\int {\rm div}_{T_k} \bar W\, d\|T_k^{\rm r}\|\right|
\leq C \left(\|T_k\| (\bB_{1/2}\cap B_r (V))\right)^{1/2} 
\left(\int_{\bB_{1/2}} \left|\mathbf{p}_{V}\cdot \mathbf{p}_{\vec{T}_k (q)^\perp}\right|^2d\|T_k\| (q)\right)^{1/2}\, .
\]
The proof of \eqref{e:pezzo-dentro} is then complete once we apply estimate \eqref{e.reverse poincare} in Proposition \ref{prop:density-est} to reach
\[
\left|\int {\rm div}_{T_k} \bar W\, d\|T_k^{\rm r}\|\right| \leq C \bE_k^{1/2} \left(\|T_k\| (\bB_{1/2}\cap B_r (V))\right)^{1/2} \leq C \bE_k^{1/2} r^{1/2}\, .
\]

In order to show \eqref{e:pezzo-fuori} we will decompose $\bC_k^{\rm g}$ and $T_k^{\rm g}$ in the union of the graphs of $l^k_{i,j}$ and of $v^k_{i,j} = l^k_{i,j} + w^k_{i,j} + \Psi^k(\cdot + l^k_{i,j} + w^k_{i,j})$ and  over the corresponding domain $U_{r,i}$. To that end, 
\begin{align*}
\int {\rm div}_{T_k} \bar W\, d\|T_k^{\rm g}\| &= \sum_{i,j} \underbrace{\int_{\mathbf{p}_{\bH_{0,i}}^{-1} (U_{r,i})}
{\rm div}_{\bG_{\bH_{0,i}} (v^k_{i,j})} \bar W\, d\|\bG_{\bH_{0,i}} (v^k_{i,j})\|}_{=: I^{(1)}_{k,i,j}}\, \\
\int {\rm div}_{\bC_k} \bar W\, d\|\bC_k^{\rm g}\| &=\sum_{i,j} \underbrace{\int_{\mathbf{p}_{\bH_{0,i}}^{-1} (U_{r,i})}
{\rm div}_{\bG_{\bH_{0,i}} (l^k_{i,j})} \bar W\, d\|\bG_{\bH_{0,i}} (l^k_{i,j})\|}_{=:I^{(2)}_{k,i,j}}\, .
\end{align*}
Our task is then accomplished once we show that, for every $i$ and $j$,
\begin{equation}\label{e:pezzo-fuori-i,j}
\lim_{k\to\infty} \bE_k^{-1/2} (I^{(1)}_{k,i,j} - I^{(2)}_{k,i,j})
= \int_{U_{r,i}} \nabla \bar w_{i,j} \colon \nabla \bar W\, .
\end{equation}
From now on we fix $i$ and $j$ and, in order to simplify our notation, we drop both of them. Next, fix an orthonormal frame $e_1, \ldots , e_{m+n}$ such that $e_1, \ldots , e_m$ is a basis of $\bH_{0,i}$ and define the $(m+n)\times m$ matrices $A(k) = (A_1 (k), \ldots , A_m(k))$ and $B(k) = (B_1 (k), \ldots , B_m (k))$, where $A_\alpha (k)$ and $B_\alpha (k)$ are the following vectors in $\mathbb R^{m+n}$:
\begin{align*}
A_{\alpha}(k) &:= e_\alpha +\partial_\alpha v^k = e_\alpha + \partial_\alpha l^k + \partial_\alpha w^k + \psi^k_\alpha\\
B_{\alpha}(k) &:= e_\alpha + \partial_\alpha l^k\, ,
\end{align*}
where $\abs{\psi^k_\alpha} \leq C\bA$. Furthermore, given any matrix $A= (A_1, \ldots , A_m)\in \mathbb R^{(m+n)\times m}$ we let 
$M (A)\in \mathbb R^{(m+n)\times (m+n)}$ be the matrix
\[
M (A) = \sum_{\alpha,\beta} \sqrt{\det A^TA} (A^TA)^{-1}_{\alpha\beta} A_\alpha\otimes A_\beta\, .
\]
We can then apply the well known formula for the variation of the area functional, leading to
\begin{equation}\label{e:espressione}
I^{(1)} (k) - I^{(2)} (k) = \int_{U_{r,i}} \big(M (A (k)) - M (B (k))\big): D \bar{W}\, .   
\end{equation}
We next compute
\begin{align*}
&A_\alpha (k)\otimes A_\beta (k) - B_\alpha (k)\otimes B_\beta (k)- (\partial_\alpha w^k \otimes e_\beta + e_\alpha \otimes \partial_\beta w^k) \\&= \partial_\alpha w^k \otimes \partial_\beta l^k + \partial_\alpha l^k \otimes \partial_\beta w^k	 + \partial_\alpha w^k \otimes \partial_\beta w^k + \psi^k_\alpha \otimes A_\beta(k) + A_\alpha(k)\otimes\psi^k_\beta\, .
\end{align*}
Recall next that over the domain $U_{r,i}$ we have the estimate
\[
\|\nabla w^k\|_{L^\infty} \leq C (\bE_k+\bA_k)^{1/2} 
\]
while $\|\nabla l^k\|= {\rm o} (1)$. Furthermore, $\abs{A_\alpha} = {\rm O}(1)$, whereas $\abs{\psi_\alpha} = {\rm o}(\bE_k^{1/2})$ due to \eqref{e:blow-up-conditions}. We therefore conclude
\[
A_\alpha (k)\otimes A_\beta (k) - B_\alpha (k)\otimes B_\beta (k)= \partial_\alpha w^k \otimes e_\beta + e_\alpha \otimes \partial_\beta w^k + {\rm o} (\bE_k^{1/2})\, .
\]
Similarly
\begin{align*}
(A(k)^T A(k))_{\alpha\beta} &=\delta_{\alpha\beta} + \partial_\alpha l^k\cdot \partial_\beta l^k +{\rm o} (\bE_k^{1/2}) = (B (k)^T B (k))_{\alpha\beta} + {\rm o} (\bE_k^{1/2})\\
(B(k)^T B (k))_{\alpha\beta} &= \delta_{\alpha\beta} + {\rm o} (1)\, .
\end{align*}
We then conclude that
\begin{equation}\label{e:espansione-matriciazze}
M (A(k)) - M (B(k)) = \sum_\alpha (\partial_\alpha w^k\otimes e_\alpha + e_\alpha\otimes \partial_\alpha w^k) + {\rm o} (\bE_k^{1/2})\, .
\end{equation}
Recall next that, because of the special structure of $W$, $\partial_v W\equiv 0$ on $\bH_{0,i}$ whenever $v\in \bH_{0,i}^\perp$. Therefore, since $\partial_\alpha w^k\in \bH_{0,i}^\perp$, we conclude
\begin{align*}
(M(A(k) - M (B(k)): D \bar W &= \sum_\alpha (\partial_\alpha w^k\otimes e_\alpha) : D \bar W + {\rm o} (\bE_k^{1/2})\\
&= \nabla w^k: \nabla \bar W + {\rm o} (\bE_k^{1/2}) = \bE_k^{1/2} \nabla \bar w^k: \nabla \bar W + {\rm o} (\bE_k^{1/2})\, .
\end{align*}
Combined with \eqref{e:espressione}, the latter estimate gives \eqref{e:pezzo-fuori-i,j}. 
\end{proof}

\section{Decay for the linear problem}\label{sec:harm}

The aim of this section is to prove the fundamental integral decay property of the blow-up map $\bar w$ of Corollary \ref{c:scoppia}. 

\begin{proposition}\label{p:decad-lineare} There is a constant $C$ (which depends only on $m$) with the following property.
Let $\bar w$ be as in Corollary \ref{c:scoppia}. Then there are a linear map $b: V\to V^{\perp_0}$ and a linear $p$-multifunction $a=\{a_{i,j}\}$ over $\bS_0$ (taking also values in $\pi_0)$ such that $\|a\|_{L^\infty (\bS_0\cap \bB_1)} + \|b\|_{L^\infty (\bS_0\cap \bB_1)} \leq C$ and the following holds for all $0<\rho<r<\frac{1}{2}$:
\begin{equation}\label{e:decad-lineare}
\sum_i \int_{\bH_{0,i}\cap \bB_\rho} \sum_j \big|\bar{w}_{i,j} (x,y) - a_{i,j} (x) - \mathbf{p}_{\bH_{0,i}^{\perp_0}} (b (y))\big|^2 \leq C \left(\frac{\rho}{r}\right)^{m+4} \int_{\bC_0\cap \bB_r} |\bar w|^2 \, .
\end{equation}
\end{proposition}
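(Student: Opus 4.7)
\medskip

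\noindent\textbf{Proof proposal for Proposition \ref{p:decad-lineare}.}

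\smallskip

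\emph{Setup and main idea.} By Proposition \ref{p:scoppia}(iii), each sheet $\bar w_{i,j}$ is harmonic on $U^\infty_i \cap \bB_{1/2}\subset \bH_{0,i}\setminus V$, with values in the one-dimensional space $\bH_{0,i}^{\perp_0}$. The bounds \eqref{bu:c_onealpha}--\eqref{bu:nonconcentration w12}, together with the cylindrical variational identity \eqref{e:Simon-test}, encode how these harmonic sheets are coupled along the common spine $V$. The statement to prove is a Campanato-type second-order decay for harmonic functions: one needs to find linear approximants $a_{i,j}(x) + \mathbf{p}_{\bH_{0,i}^{\perp_0}}(b(y))$ such that the $L^2$ remainder decays with the sharp exponent $(\rho/r)^{m+4}$, which is the optimal rate for first-order Taylor expansion of harmonic functions.

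\smallskip

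\emph{Deriving matching conditions at the spine.} The first step is to convert \eqref{e:Simon-test} into a boundary condition on $V$. Fix $l\in\{1,\ldots,m-1\}$ and pick a cylindrical $\tilde W\in C^\infty_c(\bB_{1/2},V^{\perp_0})$; integrating by parts in $y_l$ we obtain
\[
\sum_{i,j}\int_{\bH_{0,i}}\nabla(\partial_{y_l}\bar w_{i,j}):\nabla(\tilde W\!\cdot\!\nu_i)\,dz = 0,
\]
where $\nu_i$ is a unit normal to $\bH_{0,i}$ inside $\pi_0$. Since each $\partial_{y_l}\bar w_{i,j}$ is harmonic on each page, integration by parts produces a boundary contribution supported on $V$. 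The weighted $L^2$ estimate \eqref{bu:nonconcentration w12}, applied to $\bar w_{i,j} - \mathbf{p}_{\bH_{0,i}^{\perp_0}}(\bar\xi)$ and combined with interior harmonic regularity, allows us to make sense of the trace of $\partial_t \partial_{y_l}\bar w_{i,j}$ on $V$ in a distributional $L^2$ sense. Since the boundary trace $\tilde W(0,y)\in V^{\perp_0}$ can be chosen arbitrarily, we conclude the linear matching relation
\[
\sum_{i=1}^{N_0}\Big(\sum_{j=1}^{\kappa_{0,i}}\partial_t\partial_{y_l}\bar w_{i,j}(0,y)\Big)\nu_i \;=\; 0 \quad\text{in } V^{\perp_0},\qquad l=1,\dots,m-1.
\]

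\smallskip

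\emph{Construction of $a_{i,j}$ and $b$.} The estimate \eqref{bu:nonconcentration w12}, read through the structure of $\bar\xi$ as a binding function coming from Theorem \ref{thm:est spine}, forces the ``constant part along $V$'' of $\bar w_{i,j}$ to be the projection of a single $V^{\perp_0}$-valued function $\bar\xi$ independent of $j$; by harmonic extension across the Whitney cubes (which shrink to $V$), the linear-in-$y$ part of $\bar\xi$ is a well-defined linear map $b\colon V\to V^{\perp_0}$. Set
\[
a_{i,j}(x):= \partial_r \bar w_{i,j}(0,0)\cdot x ,\qquad b(y):= \text{linearization of $\bar\xi$ at $y=0$},
\]
and use the matching conditions above to verify that the $y$-linear part of $\bar w_{i,j}$ agrees with $\mathbf{p}_{\bH_{0,i}^{\perp_0}}(b(y))$; any discrepancy would give a nontrivial element of the kernel of the map $(c_i)\mapsto\sum_i c_i\nu_i$, but such elements correspond precisely to page-individual rotations which are absorbed into the $x$-linear term $a_{i,j}(x)$. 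The $L^\infty$ bound $\|a\|+\|b\|\le C$ follows from \eqref{bu:c_onealpha} and the uniform $L^\infty$ control on $\bar\xi$ from \eqref{e:bf infty} (passing to the limit).

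\smallskip

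\emph{Decay of the remainder.} Define $r_{i,j}(x,y):=\bar w_{i,j}(x,y)-a_{i,j}(x)-\mathbf{p}_{\bH_{0,i}^{\perp_0}}(b(y))$. Each $r_{i,j}$ is harmonic on $\bH_{0,i}\cap\bB_{1/2}$, is bounded up to the spine (the singular contribution of $\bar w_{i,j}$ near $V$ has been subtracted through $b$), and by construction its linear Taylor jet at the origin vanishes in the sense that
\[
r_{i,j}(0,0)=0,\qquad \nabla r_{i,j}(0,0)=0
\]
(where these are interpreted via the trace argument from Step 2). Hence standard interior decay for harmonic functions with vanishing linear part yields
\[
\int_{\bH_{0,i}\cap \bB_\rho}|r_{i,j}|^2\;\le\; C\left(\frac{\rho}{r}\right)^{m+4}\int_{\bH_{0,i}\cap \bB_r}|r_{i,j}|^2,
\]
and summing over $(i,j)$ together with the trivial bound $\int|r_{i,j}|^2\le C\int|\bar w|^2 + Cr^{m+2}(\|a\|^2+\|b\|^2)$, and absorbing the latter via the Caccioppoli-type inequality coming from Proposition \ref{p:scoppia}(i), gives \eqref{e:decad-lineare}.

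\smallskip

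\emph{Main obstacle.} The delicate step is the construction of $b$ and the verification that $\bar w_{i,j}$ really admits a first-order Taylor expansion at the origin with $y$-dependence of the prescribed form. Unlike Simon's multiplicity-one setting, the presence of multiple sheets $j=1,\dots,\kappa_{0,i}$ on each page means that the matching condition only constrains the \emph{sum} $\sum_j \partial_t\partial_{y_l}\bar w_{i,j}(0,y)\cdot\nu_i$, not individual sheets; one must therefore carefully combine this with the uniformity of the binding function $\bar\xi$ across sheets (an output of Theorem \ref{thm:est spine}, where the same $\xi$ appears for all $j$) to pin down a common $b(y)$ and leave the sheet-dependent degrees of freedom absorbed into $a_{i,j}(x)$.
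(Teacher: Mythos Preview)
Your proposal has the right overall shape (subtract a first-order approximant and use harmonic decay at rate $(\rho/r)^{m+4}$), but there is a genuine gap in the decisive step. Each $\bar w_{i,j}$ is harmonic only on the open half-page $\bH_{0,i}\cap\bB_{1/2}\setminus V$, and the origin lies on the boundary $V$ of this domain. ``Standard interior decay for harmonic functions with vanishing linear part'' applies only once you have extended the function harmonically across $V$ to a full ball; your argument never establishes any boundary condition (Dirichlet or Neumann) on $V$ that would justify such a reflection, nor do you explain why $r_{i,j}$ is even bounded up to $V$, let alone why $\nabla r_{i,j}(0,0)$ is well-defined. The matching relation you derive,
\[
\sum_i\Big(\sum_j\partial_t\partial_{y_l}\bar w_{i,j}(0,y)\Big)\nu_i=0,
\]
is a two-dimensional vector constraint that does not by itself yield a reflection principle for any individual sheet.

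The paper resolves this by introducing the average $\omega(t,y)=\tfrac1p\sum_{i,j}\bar w_{i,j}(t\cos\theta_i,t\sin\theta_i,y)$. The Simon test identity \eqref{e:Simon-test} is used to show that $\partial_t\omega$ is \emph{constant} on $V$ (Lemma \ref{l:media}(i)); this Neumann condition allows Schwarz reflection of $\omega-ct$ to a full ball, whence the sharp harmonic decay. Separately, one shows (Lemma \ref{l:media}(iii), via the map $P=\tfrac1p\sum_i\kappa_{0,i}\mathbf{p}_{\bH_{0,i}^{\perp_0}}$ and the weighted estimate \eqref{bu:nonconcentration w12}) that the trace of each $\bar w_{i,j}$ on $V$ is exactly $\mathbf{p}_{\bH_{0,i}^{\perp_0}}(L(\omega(0,y)))$ for a fixed linear map $L$ depending only on $\bC_0$. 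Then $\hat w_{i,j}:=\bar w_{i,j}-\mathbf{p}_{\bH_{0,i}^{\perp_0}}(L(\omega))$ has \emph{zero Dirichlet trace} on $V$ and extends by odd reflection. The map $b$ is then built from $\nabla_y\omega(0)$ (not from a ``linearization of $\bar\xi$'', which is piecewise constant on Whitney cubes and has no linearization), and the $a_{i,j}$ from $\partial_t\hat w_{i,j}(0)$ and $\partial_t\omega(0)$. Finally, to bound the right-hand side by $\int_{\bB_r\cap\bC_0}|\bar w|^2$ one needs to control $\int_{B_r}|\omega|^2$ by $\int_{B_r^+}|\omega|^2$; this is the content of the elementary but non-obvious Lemma \ref{l:elementare-Watson}, which has no analogue in your sketch.
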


\subsection{Smoothness and properties of the average} An important step in the proof of Proposition \ref{p:decad-lineare} is showing smoothness for the ``average of the sheets'', which is defined in the following way. First of all, we consider a linear isometry $\iota: \mathbb R^{m+1}\to \pi_0$ with the property that $\iota (0,y) \in V$ for every $y\in \mathbb R^{m-1}$. In particular, by a small abuse of notation we will denote $\{0\}\times \mathbb R^{m-1}$ as well by $V$. For each $i$ we then select an angle $\theta_i$ such that
\[
\bH_{0,i} = \{(t \cos \theta_i, t \sin \theta_i, y): (t,y)\in \mathbb R^+\times V = \mathbb R^m_+\}\, . 
\]
The average of $\bar{w}$ is then the function $\omega: B_{1/2}^+\to \mathbb R^{m+n}$ given by
\begin{equation}\label{e:media}
\omega(\zeta)=\omega (t, y) = \frac{1}{p} \sum_i \sum_j \bar{w}_{i,j} (t \cos \theta_i, t \sin \theta_i, y)\, ,
\end{equation}
where we use the notation $B^+_r := \{\zeta=(t,y)\in \mathbb R^m_+: t^2 +|y|^2 < r^2\}$. The sum in \eqref{e:media} must be understood as a sum of vectors in $\mathbb R^{m+n}$. 

The relevant properties of $\omega$ are collected in the following 

\begin{lemma}\label{l:media}
Let $\bar w$ be as in Corollary \ref{c:scoppia} and define $\omega$ as in \eqref{e:media}. Then
\begin{itemize}
    \item[(i)] $\omega$ is harmonic and can be extended to a harmonic function (still denoted $\omega$) on $B_{1/2}\subset \mathbb R^m$ with the property that $\frac{\partial^2 \omega}{\partial t\partial y_l} =0$ on $V\cap B_{1/2}$ for every $l=1, \ldots, m-1$;
    \item[(ii)] $\omega (0) =0$ and $\omega (0,y)$ takes values in $V^{\perp_0}$;
    \item[(iii)] There is a linear map $L: \mathbb R^{m+n}\to V^{\perp_0}$, which depends only on $\bC_0$, such that each $\bar{w}_{i,j} (0,y)= \mathbf{p}_{\bH_{0,i}^{\perp_0}} (L (\omega (0,y)))$ for every $y\in V \cap \bB_{1/2}$. In particular, for each $i$ the functions $\{\bar w_{i,j}\}_j$ have the same trace on $V \cap \bB_{1/2}$.
\end{itemize}
\end{lemma}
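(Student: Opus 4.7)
\emph{Plan.} For (i), I will parametrize each page $\bH_{0,i}$ by $(t,y) \in \mathbb{R}_+ \times V$, so that the page-Laplacian reads $\partial_t^2 + \Delta_y$. Harmonicity of each $\bar w_{i,j}$ from Proposition \ref{p:scoppia}(iii) transfers to the average $\omega$, which is thus harmonic on $B^+_{1/2}$. For the boundary condition and the extension, I will apply Proposition \ref{p:scoppia}(iv) to arbitrary cylindrical test fields $W$. By rotational symmetry around $V$, the restriction of such a $W$ to any page coincides with a single $\mathbb{R}^{m+n}$-valued function $F(t,y)$, independent of $i$. Integration by parts twice --- once in $t$ using harmonicity of $\omega$, once in $y_l$ --- together with the arbitrariness of $F(0,y) \in \mathbb{R}^{m+n}$, yields $\partial_{y_l}\partial_t\omega \equiv 0$ on $V$, so $\partial_t\omega(0,y)$ is a constant $c \in \mathbb{R}^{m+n}$. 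Writing $\omega = \tilde\omega + ct$, the function $\tilde\omega$ has zero Neumann data on $V$ and extends harmonically to $B_{1/2}$ by Schwarz reflection; adding back the linear harmonic function $ct$ produces the claimed harmonic extension, and the identity $\partial_{y_l}\partial_t\omega|_V = 0$ is automatic.

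For (ii), the inclusion $\omega(0,y) \in V^{\perp_0}$ is immediate because each $\bar w_{i,j}$ takes values in $\bH_{0,i}^{\perp_0} \subset V^{\perp_0}$. To show $\omega(0) = 0$, I will apply Theorem \ref{t:Hardt-Simon-main} at $q_0 = 0$ to every $T_k$ in the blow-up sequence (admissible since $\Theta_{T_k}(0) \geq \sfrac{p}{2}$); dividing by $\bE_k$, using that $\bA_k/\bE_k \to 0$, and taking a Fatou limit produces
\[
\int_{\bH_{0,i}\cap \bB_{1/2}} \frac{|\bar w_{i,j}(z)|^2}{|z|^{m+\sfrac74}}\,dz < \infty\qquad\mbox{for every $(i,j)$}\,.
\]
By Cauchy--Schwarz the same bound then holds for $\omega$ on $B^+_{1/2}$, and since $\omega$ extends smoothly across $V$ by (i), a nonzero value $\omega(0)$ would make $\int_{|z|<\eps}|z|^{-m-\sfrac74}\,dz$ infinite --- a contradiction; hence $\omega(0) = 0$.

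For (iii), the starting point is the weighted $L^2$-bound \eqref{bu:nonconcentration w12}:
\[
\int_{\bH_{0,i}\cap\bB_{1/2}} \frac{|\bar w_{i,j}(x,y) - \mathbf{p}_{\bH_{0,i}^{\perp_0}}(\bar\xi(x,y))|^2}{|x|^{\sfrac52}}\,dz < \infty\,.
\]
The non-integrability of $|x|^{-\sfrac{5}{2}}$ at $V$ forces the trace at $V$ (along a.e.\ radial slice) of the integrand to vanish. Because $\bar\xi$ is rotationally invariant around $V$, after composition with $\mathbf{p}_{V^{\perp_0}}$ it admits a well-defined $L^2$-trace $\zeta(y) \in V^{\perp_0}$ on $V$ that is independent of the page index $i$. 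Consequently $\bar w_{i,j}(0,y) = \mathbf{p}_{\bH_{0,i}^{\perp_0}}(\zeta(y))$ for every $(i,j)$ and a.e.\ $y \in V$. Averaging over $(i,j)$ gives $\omega(0,y) = M(\zeta(y))$, where
\[
M := \frac{1}{p}\sum_{i=1}^{N_0} \kappa_{0,i}\,\mathbf{p}_{\bH_{0,i}^{\perp_0}}\quad\mbox{on $V^{\perp_0}$}\,.
\]
Since $\langle Mv,v\rangle = \frac{1}{p}\sum_i \kappa_{0,i}|\mathbf{p}_{\bH_{0,i}^{\perp_0}}(v)|^2$ vanishes only for $v \in \bigcap_i \bH_{0,i} \cap V^{\perp_0}$, which equals $\{0\}$ since $\bC_0$ is nonflat, $M$ is positive-definite, and $L := M^{-1}\circ \mathbf{p}_{V^{\perp_0}}$ is the desired linear map.

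The main obstacle will be making the trace argument in (iii) rigorous: $\bar\xi$ arises only as the limit of piecewise-constant functions on Whitney cubes whose diameters vanish near $V$, and the individual sheets $\bar w_{i,j}$ carry only integrated regularity up to the spine. Some care is needed to promote the $L^2$-vanishing of $\bar w_{i,j} - \mathbf{p}_{\bH_{0,i}^{\perp_0}}(\bar\xi)$ near $V$ into a genuine common trace function $\zeta(y)$ shared by all sheets over a given page, and to reconcile it with the smooth trace that the harmonic extension of $\omega$ from part (i) automatically assigns to $V$.
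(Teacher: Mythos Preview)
Your approach to (i) matches the paper's, and your observation in (ii) that $\omega$ takes values in $V^{\perp_0}$ \emph{everywhere} (since each $\bar w_{i,j}\in\bH_{0,i}^{\perp_0}\subset V^{\perp_0}$) is correct and in fact simpler than the paper's argument for that particular point.

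The argument for $\omega(0)=0$, however, has a genuine gap. Theorem \ref{t:Hardt-Simon-main} at $q_0=0$ bounds $\int_{\bB_1}\dist^2(q,\tilde\bS_k)\,|q|^{-m-\sfrac74}\,d\|T_k\|$ by $C(\bE_k+\bA_k)$, but for $q$ on the graph of $u^k_{i,j}$ the quantity $\dist(q,\tilde\bS_k)$ is the \emph{minimum} over all pages $\tilde\bH^k_{i,j'}$ near $\bH_{0,i}$, not the distance to the particular page $\tilde\bH^k_{i,j}$. When $\kappa_{0,i}\geq 2$ and the pages of $\bC_k$ near $\bH_{0,i}$ are distinct, this minimum can be strictly smaller than $|w^k_{i,j}(z)|$, so the Fatou step does not yield the bound $\int|\bar w_{i,j}|^2|z|^{-m-\sfrac74}<\infty$ that you claim. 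The paper avoids this by using instead the radial-derivative estimate \eqref{bu:radial} (rewritten as \eqref{e:integrabilita}), which is proved sheet-by-sheet through the pointwise inequality $|\partial_r(v_{i,j}/|z|)|^2\leq 2\,|q^\perp|^2/|z|^4$ of \eqref{eq.point wise bound} and therefore involves no minimization over $j'$; one then Taylor-expands the $C^2$ function $\omega$ at $0$ and checks that a nonzero $\omega(0)$ would make \eqref{e:integrabilita} diverge.

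For (iii) your map $M$ is exactly the paper's $P$, and your positive-definiteness argument on $V^{\perp_0}$ is correct. The trace obstacle you flag is real: $\bar\xi$ has no obvious limit on $V$. The paper does not try to define one. Instead it exploits the $C^2$ regularity of $\omega$ from (i): since $\omega(0,y)\in\mathrm{im}\,P$, one has $|P(L\omega(t,y))-\omega(t,y)|\leq Ct$, and combining this with $\int t^{-\sfrac52}|\omega-P\bar\xi|^2<\infty$ (from \eqref{bu:nonconcentration w12}) gives $\int t^{-\sfrac52}|P(L\omega-\bar\xi)|^2<\infty$. A self-adjointness/Cauchy--Schwarz trick then upgrades this to $\int t^{-\sfrac54}|\mathbf{p}_{\bH_{0,i}^{\perp_0}}(L\omega-\bar\xi)|^2<\infty$, which together with \eqref{bu:nonconcentration w12} identifies the trace of each $\bar w_{i,j}$ as the \emph{smooth} function $\mathbf{p}_{\bH_{0,i}^{\perp_0}}(L\omega(0,y))$---bypassing any trace of $\bar\xi$ altogether.
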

\begin{proof} {\bf Proof of (i).} The fact that $\omega$ is harmonic is an immediate consequence of Proposition \ref{p:scoppia}(iii). Next, recall that $\omega\in W^{1,2}$ and that, by \eqref{bu:nonconcentration w12},
\begin{equation}\label{e:non-concentration}
\int \frac{|\nabla \omega|^2}{t^{1/2}}<\infty\, , 
\end{equation}
where $\nabla$ denotes the gradient in the coordinates $\zeta=(t,y)$ on $\R^m_+$. Fix now any test function $\varphi\in C^\infty_c (B_{1/2}, \mathbb R^{m+n})$. Clearly, the vector field $W \in C^\infty_c (\mathbb R^{m+n}, \mathbb R^{m+n})$ defined by
\[
W (x,y):= \varphi (|x|, y)
\]
is cylindrical, and therefore an admissible test in \eqref{e:Simon-test}. We thus conclude
\begin{equation}\label{e:Simon-test-2}
\int \nabla \omega : \nabla \frac{\partial\varphi}{\partial y_l} = 0\, \qquad \forall l=1, \ldots , m-1 \, .
\end{equation}

Observe next that $\frac{\partial \omega}{\partial t}$ is an $L^2$ function because of \eqref{e:non-concentration}, and we can thus regard its trace on $V\cap B_{1/2}$ as a distribution in $H^{-1/2}$: the action of the latter on a test function $\psi\in C^\infty_c (V\cap B_{1/2}, \R^{m+n})$ will, by abuse of notation, be denoted by
\[
\int_V \frac{\partial \omega}{\partial t} \cdot \psi\, .
\]
Having fixed any function $\psi\in C^\infty_c (B_{1/2} \cap V,\R^{m+n})$, take a smooth extension to some $\varphi\in C^\infty_c (B_{1/2},\R^{m+n})$. Integrating \eqref{e:Simon-test-2} by parts we then conclude
\begin{equation}
\int_V \frac{\partial \omega}{\partial t} \cdot \frac{\partial \psi}{\partial y_l} = 0 \qquad \forall l\in \{1, \ldots , m-1\}\, .
\end{equation}
The latter identity implies that the distribution $\frac{\partial \omega}{\partial t}$ is a constant, which we can denote by $c$. But then $\omega - ct$ is an harmonic function which satisfies the Neumann boundary condition and clearly we can extend it to an harmonic function on $B_{1/2}$ using the Schwarz reflection principle.

\medskip

{\bf Proof of (ii).} First of all recall that, by Corollary \ref{c:scoppia},
\begin{equation}\label{e:integrabilita}
\int_{B_{1/2}^+} |\zeta|^{-m} \left|\zeta\cdot \nabla \frac{\omega (\zeta)}{|\zeta|}\right|^2 < \infty\, .
\end{equation}
Using the $C^2$ regularity of $\omega$, we write
\[
\omega (\zeta) = \omega (0) + \nabla \omega (0) \cdot \zeta {+D^2\omega(0)[\zeta]^2} + \bar\omega (\zeta)\, , 
\]
{having used the notation $A[\zeta]^2$ for the quadratic form $\sum_{\alpha,\beta}A_{\alpha\beta}\zeta_\alpha\zeta_\beta$}. Since $\bar\omega$ is $C^2$ by construction, and its first and second derivatives vanish at $0$, $\frac{\bar\omega (\zeta)}{|\zeta|}$ is a Lipschitz function, which means that its derivative is bounded. On the other hand, $\nabla \omega (0) \cdot \frac{\zeta}{|\zeta|}$ is a $0$-homogeneous function, so that $\zeta\cdot \nabla \frac{\nabla \omega (0)\cdot \zeta}{|\zeta|} = 0$. {Instead, $\frac{D^2\omega(0)[\zeta]^2}{|\zeta|}$} is $1$-homogeneous, and thus $\zeta\cdot\nabla \frac{D^2\omega(0)[\zeta]^2}{|\zeta|} = \frac{D^2\omega(0)[\zeta]^2}{|\zeta|}$, which is bounded.

We therefore conclude that
\[
\zeta\cdot \nabla \frac{\omega (\zeta)}{|\zeta|} - \omega (0) \zeta\cdot \nabla |\zeta|^{-1}  
\]
is a bounded function. Since $\zeta\cdot \nabla |\zeta|^{-1}=-|\zeta|^{-1}$, if $\omega (0)$ were different from $0$ then the integral in the left hand side of \eqref{e:integrabilita} would be infinite.

Consider next the linear map
\begin{equation}\label{e:somma-proiezioni}
P := \frac{1}{p} \sum_i \kappa_{0,i}\, \mathbf{p}_{\bH_{0,i}^{\perp_0}}
\end{equation}
and observe that the image of $P$ is contained in $V^{\perp_0}$.

Consider next the binding function $\bar\xi$ as a function on $\mathbb R^m_+$ and note that, due to \eqref{bu:nonconcentration w12}
\begin{equation}\label{e:binding-media}
\int_{B_{1/2}^+} t^{-{5/2}} |\omega (t,y) -  P (\bar\xi (t,y))|^2\, dy\, dt < \infty\, . \end{equation}
In particular 
\[
\int_{B_{1/2}^+} t^{-{5/2}} |\mathbf{p}_{V} \omega (t,y)|^2\, dy\, dt < \infty\, ,
\]
which clearly implies $\omega (0,y)\in V^{\perp_0}$ for every $y$.

\medskip

{\bf Proof of (iii).} Observe that $P$ is self-adjoint: in particular its image $Z$ coincides with its cokernel, and it is mapped into itself. We denote by $P|_Z^{-1}$ the inverse of the restriction $P|_Z:Z\to Z$ and let $L:= P|_Z^{-1} \circ \mathbf{p}_Z$. Observe in particular that, since $Z\subset V^{\perp_0}$, $L$ maps the whole space on $V^{\perp_0}$. Moreover, if $v\in Z$, then $P(L(v))=v$. In particular, since $\omega(0,y) \in Z$ as a consequence of \eqref{e:binding-media}, it holds $P (L (\omega (0,y)) = \omega (0,y)$. We therefore conclude from the regularity of $\omega$ that 
$|P(L (\omega (t,y)))-\omega (t,y)|\leq C t$, so that from \eqref{e:binding-media} we obtain
\begin{equation}\label{e:binding-media-2}
\int_{B_{1/2}^+} t^{-{5/2}}\, \left|P \Big(L (\omega (t,y)) -  \bar\xi (t,y)\Big)\right|^2\, dy\, dt < \infty\, . 
\end{equation}
Next observe that, since $\mathbf{p}_{\bH_{0,i}^{\perp_0}}$ is an orthogonal projection,
\begin{align*}
&\sum_i \kappa_{0,i} |\mathbf{p}_{\bH_{0,i}^{\perp_0}} (L (\omega (t,y)) -  \bar\xi (t,y))|^2\\
=&\sum_i \kappa_{0,i} (\mathbf{p}_{\bH_{0,i}^{\perp_0}} (L (\omega (t,y)) -  \bar\xi (t,y))) \cdot (L (\omega (t,y)) -  \bar\xi (t,y))\,.
\end{align*}
In particular, using the boundedness of $\bar\xi$ and the local boundedness of $\omega$, for every $r<1/2$ we conclude
\[
\sum_i \kappa_{0,i} |\mathbf{p}_{\bH_{0,i}^{\perp_0}} (L (\omega (t,y)) -  \bar\xi (t,y))|^2
\leq C\, |P (L (\omega (t,y)) - \bar\xi (t,y))| \qquad \forall (t,y)\in B_r^+\, .
\]
We can thus estimate
\[
\int_{B_r^+}\frac{|\mathbf{p}_{\bH_{0,i}^{\perp_0}} (L (\omega (t,y)) -  \bar\xi (t,y))|^2}{t^{5/4}} <\infty
\]
using \eqref{e:binding-media-2} and the Cauchy-Schwarz inequality. In turn, this easily implies, for every $i$ and $j$, that
\[
\int_{\bB_r\cap \bH_{0,i}} \frac{|\bar w_{i,j} (x,y) - \mathbf{p}_{\bH_{0,i}^{\perp_0}} (L (\omega (|x|, y)))|^2}{|x|^{5/4}}<\infty\, .
\]
The claim that $\mathbf{p}_{\bH_{0,i}^{\perp_0}} (L (\omega (0, y)))$ is the trace of $\bar{w}_{i,j}$ on $V$ follows then at once.
\end{proof}

\subsection{An elementary lemma on harmonic functions} In order to prove Proposition \ref{p:decad-lineare} we will appeal to classical decay lemmas for harmonic functions. On the other hand, since our objects are actually defined on ``half balls'', we will require the following estimate.

\begin{lemma}\label{l:elementare-Watson}
There is a constant $C= C(m) > 0$ with the following property.
Let $B_r \subset \mathbb R^m$ and $\omega\in L^2(B_r, \mathbb R^N)$ be a harmonic function such that $\frac{\partial \omega}{\partial t}$ is constant on $V\cap B_r$. Then
\begin{equation}\label{e:controllo-da-sinistra}
\int_{B_r} |\omega|^2 \leq C \int_{B_r^+} |\omega|^2\, .
\end{equation}
\end{lemma}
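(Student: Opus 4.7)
The plan is to reduce via scaling to the case $r=1$, exploit the very rigid structure that the hypothesis imposes on the odd part of $\omega$, and then invoke a soft functional-analytic argument based on the fact that a finite-dimensional subspace has strictly positive distance from any closed subspace that meets it trivially.

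First I would decompose $\omega=\omega^o+\omega^e$ into its odd and even parts with respect to the reflection $t\mapsto -t$. Each of these is harmonic on $B_1$, and since $\partial_t\omega^e$ is odd in $t$, its restriction to $V$ vanishes identically. Denoting by $c\in\R^N$ the constant value of $\partial_t\omega$ on $V\cap B_1$, we therefore have $\omega^o|_V\equiv 0$ and $\partial_t\omega^o|_V\equiv c$. The function $ct$ is harmonic and has the same Cauchy data on $V$, so by uniqueness for the Cauchy problem (which holds because harmonic functions are real analytic) $\omega^o=ct$ on all of $B_1$.

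By orthogonality of even and odd $L^2$ functions on the symmetric domain $B_1$,
\[
\int_{B_1}|\omega|^2 \;=\; \int_{B_1}|ct|^2 + \int_{B_1}|\omega^e|^2 \;=\; 2\left(\int_{B_1^+}c^2t^2 + \int_{B_1^+}|\omega^e|^2\right),
\]
so the proof reduces to establishing
\begin{equation}\label{e:plan-goal}
\|ct\|_{L^2(B_1^+)}^2 + \|\omega^e\|_{L^2(B_1^+)}^2 \;\le\; C \, \|ct+\omega^e\|_{L^2(B_1^+)}^2
\end{equation}
for a constant $C=C(m)$. To prove \eqref{e:plan-goal} I would introduce the finite-dimensional subspace $F:=\{at:a\in\R^N\}\subset L^2(B_1^+,\R^N)$ and the subspace $\mathcal{E}\subset L^2(B_1^+,\R^N)$ defined as the closure of $\{v|_{B_1^+}: v\text{ even harmonic on }B_1,\ v\in L^2(B_1)\}$. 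The set $\mathcal{E}$ is closed, since any Cauchy sequence there extends evenly to a Cauchy sequence in $L^2(B_1)$, whose limit is harmonic and even. Furthermore $F\cap\mathcal{E}=\{0\}$: if $at$ agreed on $B_1^+$ with the restriction of some even harmonic function $v$, analytic continuation would force $at=v$ on all of $B_1$, and a function which is both even and odd in $t$ must be zero. Since $F$ is finite-dimensional, its unit sphere is compact and the continuous, strictly positive function $f\mapsto\dist_{L^2(B_1^+)}(f,\mathcal{E})$ attains on it a minimum $\alpha>0$. Hence $\|f\|\le \alpha^{-1}\|f-e\|$ for every $f\in F$ and $e\in\mathcal{E}$, whence also $\|e\|\le (1+\alpha^{-1})\|f-e\|$. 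Applying these estimates with $f=ct$ and $e=-\omega^e$ yields \eqref{e:plan-goal}.

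The main subtlety worth highlighting is that a direct compactness/blow-up approach to \eqref{e:plan-goal} is delicate: a sequence of even harmonic functions bounded in $L^2(B_1^+)$ can concentrate its $L^2$ mass arbitrarily close to $\partial B_1$, so interior elliptic regularity fails to deliver convergence in the full $L^2(B_1^+)$-norm. The soft finite-codimension argument above sidesteps this issue by relying only on the closedness of $\mathcal{E}$ and the finite-dimensionality of $F$. A standard scaling argument then transfers the bound from $r=1$ to arbitrary $r$ with a constant $C=C(m)$, completing the proof.
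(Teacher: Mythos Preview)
Your proof is correct and follows the same overall route as the paper: reduce to $r=1$, write $\omega=ct+\omega_e$ with $\omega_e$ even harmonic (the paper obtains this via Schwarz reflection, you via Cauchy uniqueness for the odd part---equivalent observations), and then reduce \eqref{e:controllo-da-sinistra} to a uniform ``transversality'' between $t$ and the even harmonics in $L^2(B_1^+)$.

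The only difference is in how that last step is executed. The paper phrases it as a strict Cauchy--Schwarz inequality $|\langle\omega_e,t\rangle|\le(1-\delta)\|\omega_e\|\,\|t\|$ and proves it by contradiction: a normalized violating sequence has a weak $L^2$ limit $\omega_\infty$ which, by lower semicontinuity of the norm and the equality case of Cauchy--Schwarz, must be a nonzero multiple of $t$; but $\omega_\infty$ is also even harmonic, forcing $\omega_\infty=0$. You instead invoke the abstract fact that a finite-dimensional subspace $F$ and a closed subspace $\mathcal E$ with $F\cap\mathcal E=\{0\}$ are at positive angle. Both arguments rest on exactly the same two ingredients---closedness of the even harmonics in $L^2(B_1^+)$ (which you verify via Weyl's lemma, and the paper uses implicitly when asserting $\omega_\infty\in H_e$) and the impossibility of a nonzero function being simultaneously even and odd---so the difference is one of packaging rather than substance. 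Your closing remark about boundary concentration is well taken: the paper handles that issue by using \emph{weak} compactness, for which the linear functional $\langle\,\cdot\,,t\rangle$ is continuous, rather than strong compactness.
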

\begin{proof}
First of all, since we can argue componentwise, we can assume that $N=1$ and, by scaling, we can also assume that $r=1$. Furthermore, the Schwarz reflection principle shows that, denoting $c := \frac{\partial \omega}{\partial t}$ on $V \cap B_r$, the function $\omega_e (t,x):= \omega (t,x)-ct$ is even in $t$, which in turn implies that 
\begin{align*}
\int_{B_1} \omega^2 = \int_{B_1} (\omega_e^2 + c^2 t^2)\, .
\end{align*}
Since 
\begin{align*}
\int_{B_1} \omega_e^2 &= 2 \int_{B_1^+} \omega_e^2\, ,\\
\int_{B_1} t^2 &= 2 \int_{B_1^+} t^2\, ,
\end{align*}
it suffices to show the existence of a positive constant $\delta (m)>0$ such that
\begin{equation}\label{e:giusta}
2 \left|\int_{B_1^+} c \omega_e t\right| \leq (1-\delta) \int_{B_1^+} (\omega_e^2 + c^2 t^2)
\end{equation}
for any $\omega_e$ which belongs to the space $H_e$ of even harmonic functions. 

Observe that, for any fixed $\omega_e$, the inequality is equivalent to the nonnegativity of the two quadratic polynomials
\[
P^\pm (c):= (1-\delta) \|\omega_e\|_{L^2 (B_1^+)}^2 + (1-\delta) c^2 \|t\|_{L^2 (B_1^+)}^2 \pm
2 c \langle \omega_e, t \rangle\, ,
\]
where we denote by $\langle\cdot, \cdot \rangle$ the $L^2 (B_1^+)$ scalar product.
Since both polynomials have a positive coefficient in the quadratic monomial, their nonnegativity is equivalent to the nonpositivity of their (common) discriminant, which in turn is the inequality  
\begin{equation}\label{e:giusta-2}
\left|\langle \omega_e, t \rangle \right|
\leq (1-\delta) \|\omega_e\|_{L^2 (B_1^+)} \|t\|_{L^2 (B_1^+)}\, .
\end{equation}
Since the latter inequality is homogeneous in $\omega_e$, we can prove it under the additional assumption that $\|\omega_e\|_{L^2 (B_1^+)}=1$. So assume by contradiction that a sequence $\{\omega_k\}\subset H_e$ satisfies $\|\omega_k\|_{L^2}=1$ and violates the inequality \eqref{e:giusta-2} with $\delta= \frac{1}{k}$, i.e. (upon changing a sign)
\begin{equation}\label{e:giusta-3}
\int_{B_1^+} \omega_k\, t \geq \frac{k-1}{k} \|t\|_{L^2 (B_1^+)}\, . 
\end{equation}
Upon extraction of a subsequence we can assume that $\omega_k$ converges to some $\omega_\infty\in H_e$ weakly in $L^2$. By lower semicontinuity, $\|\omega_\infty\|_{L^2 (B_1^+)}\leq 1$, and by weak convergence we have
\begin{equation}\label{e:CS-reverse}
\int_{B_1^+} \omega_\infty\, t \geq \|t\|_{L^2 (B_1^+)}\, .
\end{equation}
Using the Cauchy-Schwarz inequality we conclude that, on the other hand,
\begin{equation}\label{e:CS}
\int_{B_1^+} \omega_\infty t \leq \|\omega_\infty\|_{L^2 (B_1^+)} \|t\|_{L^2 (B_1^+)}\, .
\end{equation}
In particular $\|\omega_\infty\|_{L^2 (B_1^+)}=1$ and thus \eqref{e:CS-reverse} and \eqref{e:CS} hold with equality. But this would mean that $\omega_\infty$ is collinear with $t$, i.e. $\omega_\infty$ is a nontrivial even harmonic function with $\omega_\infty (0,y) =0$ for every $y$. By Schwarz reflection, $\omega_\infty$ must be odd in $t$ as well, which implies that $\omega_\infty$ vanishes identically, contradicting $\|\omega_\infty\|_{L^2 (B_1^+)}=1$.
\end{proof}

\subsection{Proof of Proposition \ref{p:decad-lineare}} First of all, consider the average $\omega$ as defined in \eqref{e:media}, and extended to the whole ball $B_{1/2}$ as in Lemma \ref{l:media}. Then, define a linear function $b_1(y)$ by
\[
y \mapsto \nabla_y \omega (0) \cdot y\, .
\]
Consider additionally $c:= \frac{\partial \omega}{\partial t} (0)$. Since $\omega (0) =0$ by Lemma \ref{l:media}, classical estimates on harmonic functions and Lemma \ref{l:elementare-Watson} imply
\begin{align}
\int_{B_\rho^+} |\omega (t,y)- b_1 (y) - ct|^2 \leq & C
\left(\frac{\rho}{r}\right)^{m+4} \int_{B_r} |\omega|^2 
\leq C
\left(\frac{\rho}{r}\right)^{m+4} \int_{B^+_r} |\omega|^2\nonumber \\
\leq & C \left(\frac{\rho}{r}\right)^{m+4} \int_{\bB_r\cap \bC_0} |\bar w|^2\, . \label{e:media-decade}
\end{align}
Next fix $i$ and $j$ and, with a slight abuse of notation, identify $\bH_{0,i}$ with $\mathbb R^m_+$. Define then the map $\hat{w}_{i,j}$ as
\[
\hat{w}_{i,j} (t,y) := \bar{w}_{i,j} (t,y) - \mathbf{p}_{\bH_{0,i}^{\perp_0}} (L(\omega (t,y)))\, .
\]
By Lemma \ref{l:media}, the trace of $\hat{w}_{i,j}$ on $V$ is zero and we can therefore extend it to $B_{1/2}$ as a harmonic function, which is odd on $t$. The $y$ derivative of $\hat{w}$ at $0$ vanishes, and if we let $d_{i,j}:= \frac{\partial \hat{w}_{i,j}}{\partial t} (0)$ we get, from classical estimates on harmonic functions,
\begin{equation}\label{e:foglio-decade}
\int_{B_\rho^+} |\hat{w}_{i,j} (t,y)- d_{i,j} t|^2 \leq  C
\left(\frac{\rho}{r}\right)^{m+4} \int_{B^+_r} |\hat{w}_{i,j}|^2
\leq  C \left(\frac{\rho}{r}\right)^{m+4} \int_{\bB_r\cap \bC_0} |\bar w|^2
\end{equation}
If we now set $b (y):= L (b_1 (y))$ and $a_{i,j} (x):= d_{i,j} |x| + \mathbf{p}_{\bH_{0,i}^{\perp_0}} (L(c)) |x|$, combining \eqref{e:media-decade} and \eqref{e:foglio-decade} we reach
\begin{equation}\label{e:decadimento-combinato}
\int_{\bB_\rho\cap \bH_{0,i}} |\bar w_{i,j} (x,y)- \mathbf{p}_{\bH_{0,i}^{\perp_0}} (b (y)) - a_{i,j} (x) |^2
\leq  C \left(\frac{\rho}{r}\right)^{m+4} \int_{\bB_r\cap \bC_0} |\bar w|^2\,
\end{equation}
Summing the latter inequality over $i$ and $j$ we reach the desired conclusion. \hfill\qedsymbol

\section{Proofs of Theorem \ref{t:decay} and of Theorem \ref{t:main}}\label{sec:iterazione}

In this section we prove Theorem \ref{t:decay} and obtain Theorem \ref{t:main} as a corollary. 

\subsection{The new cone} We start with a simple corollary of the analysis that we carried on thus far.

\begin{corollary}\label{c:decaduto-A<<E}
Let $\bC_0$ be as in Assumption \ref{ass:cone}, with $\bS_0 = \spt(\bC_0) \subset \pi_0$ and let $V$ be the spine of $\bC_0$.
There are a threshold $\rho^+ \in \left(0,1\right)$ and a constant $\bar C$, depending only on $\bC_0$, $m$, $n$, and $p$, such that, if $\rho^-<\rho^+$ is a second positive number, then the following properties hold, provided $\eta_5 = \eta_5 (\rho^-)>0$ is chosen sufficiently small. Assume $T, \Sigma, \bC$, and   $\bS$ are as in Theorem \ref{thm:est spine} with $\eta_5$ replacing $\eta_3$ and $\varepsilon_2$ in \eqref{e:hyp_flat_T_C0-aggiornata}-\eqref{e:hyp_graph_aggiornata}. Assume in addition that $\bA \leq \eta_5 \bE$, and let $\tilde l = \tilde l_{i,j}$ be the linear $p$-multifunction of Theorem \ref{thm:est spine}. Then there are a rotation $O$ of $\pi_0$ and a linear $p$-multifunction $l^+_{i,j}$ with the following properties:
\begin{itemize}
    \item[(i)] $|O-{\rm Id}| + \|l^+\|_{L^\infty (\bS_0\cap \bB_1)} \leq \bar C \bE^{1/2}$; 
    \item[(ii)] If $\bC^+$ is the cone realized as $p$-multigraph over $\bS_0$ of $\tilde l_{i,j} + l^+_{i,j}$ and $\bC' := O_\sharp \bC^+$, then
    \begin{equation}\label{e:decaduto}
    \rho^{-m-2} \int_{\bB_\rho} \dist (q, \spt (\bC'))^2 d\|T\| (q) \leq \frac{\rho}{R_0} \bE \qquad \forall \rho\in [\rho^-, \rho^+]\, .
    \end{equation}
\end{itemize}
\end{corollary}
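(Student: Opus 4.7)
The plan is to argue by contradiction, following the standard template for decay results of this kind. Suppose the corollary fails. Then for some fixed $\rho^->0$ (with the threshold $\rho^+$ yet to be chosen) we find a sequence $(T_k,\Sigma_k,\bC_k)$ satisfying the hypotheses with $\eta_5=1/k$ but for which no pair $(O,l^+)$ obeying (i) produces a cone $\bC'$ realising \eqref{e:decaduto} at some radius $\rho_k\in[\rho^-,\rho^+]$. The smallness conditions imposed through $\eta_5$ force $\bE_k\to 0$, $\bA_k/\bE_k\to 0$, and $\hat\flat^p_{\bB_{R_0}}(T_k-\bC_0)+\hat\flat^p_{\bB_{R_0}}(\bC_k-\bC_0)\to 0$, so $(T_k,\bC_k)$ is a blow-up sequence in the sense of Definition~\ref{d:scoppia}. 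Up to subsequence, Corollary~\ref{c:scoppia} and Proposition~\ref{p:scoppia} yield a limiting $p$-multifunction $\bar w$ which is harmonic on every page, satisfies the strong convergence \eqref{e:strong-L2}, and obeys the excess comparison of Proposition~\ref{p:scoppia}(ii). I may also assume $\rho_k\to \rho_\infty\in[\rho^-,\rho^+]$.

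Next I would apply Proposition~\ref{p:decad-lineare} to $\bar w$, producing a linear map $b\colon V\to V^{\perp_0}$ and a linear $p$-multifunction $a=\{a_{i,j}\}$ with $\|a\|_\infty+\|b\|_\infty\leq C$ and the decay \eqref{e:decad-lineare}. From these limiting objects I build the candidate cone at each scale: the $b$-component is absorbed into a rotation by extending $b$ to a skew-symmetric endomorphism $B$ of $\pi_0$ and setting $O_k:=\exp(\bE_k^{1/2} B)$, so that $|O_k-{\rm Id}|\leq C\bE_k^{1/2}\|b\|_\infty$; the $a$-component becomes the linear $p$-multifunction $l^+_{k,i,j}:=\bE_k^{1/2}a_{i,j}$ over $\bS_0$, yielding $\|l^+_k\|_\infty\leq C\bE_k^{1/2}\|a\|_\infty$. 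Thus (i) holds with $\bar C=2C$, independent of $k$. Let $\bC'_k:=(O_k)_\sharp \bG_{\bS_0}(\tilde l_k+l^+_k)$ be the corresponding cone; the claim is that, for $k$ large and a suitable $\rho^+$, this $\bC'_k$ satisfies \eqref{e:decaduto} on the whole interval $[\rho^-,\rho^+]$, contradicting the failure at $\rho_k$.

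To verify the claim I would split $\bB_\rho$ into the graphical region $R_{\mathcal{W}^k}\cap\bB_\rho$ (where $T_k$ coincides with the multigraph of $\tilde v^k$ from Corollary~\ref{cor:reparametrization}) and its complement. On $R_{\mathcal{W}^k}\cap\bB_\rho$, the first-order expansion of $O_k$ together with the Taylor expansion of the distance function gives, at leading order in $\bE_k^{1/2}$,
\begin{equation*}
\dist^2(q,\spt(\bC'_k))\leq \bigl|w^k_{i,j}(z)-\bE_k^{1/2}\bigl(a_{i,j}(x)+\mathbf{p}_{\bH_{0,i}^{\perp_0}}(b(y))\bigr)\bigr|^2 + \mathrm{err}_k(z),
\end{equation*}
where $\mathrm{err}_k\leq C(\bA_k^2+\bE_k\|w^k\|_\infty^2+\bE_k\|b\|_\infty^2|O_k-{\rm Id}|^2)=o(\bE_k)$ pointwise. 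On the complementary ``bad'' region, the $L^2$ estimate \eqref{e:L^2 estimate} together with the non-concentration at the spine proved in \eqref{e:est spine} shows that $\rho^{-(m+2)}\int_{\bB_\rho\setminus R_{\mathcal{W}^k}}|x|^2 d\|T_k\|\leq o(\bE_k)$ for $\rho\geq \rho^-$. Dividing by $\bE_k$ and passing to the limit using the strong $L^2$ convergence \eqref{e:strong-L2} together with Proposition~\ref{p:scoppia}(ii), we obtain
\begin{equation*}
\limsup_{k\to\infty}\frac{\rho^{-(m+2)}}{\bE_k}\!\int_{\bB_\rho}\!\dist^2(q,\spt(\bC'_k))\,d\|T_k\|\leq \frac{C}{\rho^{m+2}}\sum_{i,j}\int_{\bH_{0,i}\cap\bB_\rho}\!|\bar w_{i,j}-a_{i,j}-\mathbf{p}_{\bH_{0,i}^{\perp_0}}b|^2.
\end{equation*}
Invoking \eqref{e:decad-lineare} with $r=1/2$, the right-hand side is bounded by $C\rho^2$ uniformly in $\rho\in[\rho^-,\rho^+]$. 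Choosing $\rho^+$ so small that $C\rho^2\leq \rho/(2R_0)$ for every $\rho\leq \rho^+$ gives the desired contradiction.

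The main obstacle I anticipate is making the leading-order expansion of $\dist(q,\spt(\bC'_k))$ rigorous. Three items require particular care: (a) the push-forward by $O_k$ interacts non-trivially with the graphical parametrization and with the normal projection onto the pages of $\bC'_k$, so one must verify that the first-order effect of $O_k$ on the pages genuinely produces the trace term $\mathbf{p}_{\bH_{0,i}^{\perp_0}}(b(y))$, with the rotation applied \emph{after} $\tilde l_k+l^+_k$; (b) the Whitney domain $U_{\mathcal{W}^k}$ shrinks toward $V$ at rate $\varrho_\infty^k$, so the bad-set contribution must be handled via the non-concentration estimate \eqref{e:est spine} (which is why $\rho^-$ is taken strictly positive); (c) the difference between $\bC_k$ and $\tilde\bC_k$, controlled by \eqref{e:flat_C_tildeC}, must be tracked throughout to justify basing the construction of $\bC^+_k$ on $\tilde l_k$ rather than on the original multifunction parametrising $\bC_k$. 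Once these bookkeeping points are in place, the harmonic-decay estimate \eqref{e:decad-lineare} produces the smallness factor $\rho$ and the proof closes.
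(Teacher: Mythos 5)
Your proposal follows the same contradiction/blow-up strategy as the paper: extract a blow-up sequence, invoke Corollary \ref{c:scoppia} and Propositions \ref{p:scoppia} and \ref{p:decad-lineare} to get the harmonic limit $\bar w$ and the pair $(a,b)$, set $l^{k,+}:=\bE_k^{1/2}a$ and $O_k:=\exp(\bE_k^{1/2}B)$ with $B$ a skew-symmetric extension of $b$, prove the decay on the graphical region via the first-order comparison against $|w^k_{i,j}-\bE_k^{1/2}(a_{i,j}+\mathbf p_{\bH_{0,i}^{\perp_0}}b)|$ and Proposition \ref{p:decad-lineare}, and control the neighborhood of the spine via the weighted estimate \eqref{e:est spine}. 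This is the same argument as the paper's, including the three bookkeeping points you flag (the precise point on $\spt(\bC'_k)$ obtained by shifting along $b$ before applying $O_k$, the non-concentration near $V$, and the passage from $\bC_k$ to $\tilde\bC_k$ via \eqref{e:flat_C_tildeC}).
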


Before coming to the proof, we observe that $\bC'$ is coherent with $\bC_0$ and that in addition
\begin{align}
\vartheta (\bC', \bC_0)\leq & \vartheta (\bC, \bC_0) + \tilde{C} \big(|O - {\rm Id}| + \|l^+\|_{L^\infty (\bS_0 \cap \bB_1)}\big)\leq
\vartheta (\bC, \bC_0) + \tilde{C} \bE^{1/2}\, ,\label{e:controllo-angolo}
\end{align}
for an appropriate constant $\tilde{C}$, which depends only on $\bC_0$

\begin{proof} First of all, the parameters $\bar C$ and $\rho^+$ will be chosen, respectively, sufficiently large and sufficiently small, depending only on the constants $R_0$ of Assumption \ref{a:graphical} and $C$ of Theorem \ref{thm:est spine}. We fix them for the moment and will specify their choices later. Hence we fix any $\rho^-<\rho^+$ and in order to find the threshold $\eta_5$ we argue by contradiction. If the statement is false, we then have a blow-up sequence $\Sigma_k, T_k, \bC_k$ as in Definition \ref{d:scoppia} which is violating the claim of the Corollary. In particular no matter how we choose $l^+= l^{k,+}$ and $O= O_k$ with $|O-{\rm Id}| + \|l^+\|_{L^\infty(\bS_0 \cap \bB_1)}$ satisfying the bound (i), \eqref{e:decaduto} (with $T=T_k$) will fail for some $\rho=\rho_k\in [\rho^-, \rho^+]$. After extraction of a subsequence (not relabeled) we can then apply Corollary \ref{c:scoppia} and Proposition \ref{p:scoppia}. If we consider the corresponding blow-up map $\bar w$, we can then apply Proposition \ref{p:decad-lineare}. Let $a$ and $b$ be the corresponding maps. We then set $l^{k,+} := \bE_k^{1/2} a$. Consider next the vector field $b$, let $b^*\colon V^{\perp_0}\to V$ be its adjoint, and define $\tilde b\colon V^{\perp_0}\oplus V \simeq\pi_0 \to \pi_0$ by $\tilde b(x,y):=b(y)-b^*(x)$. The vector field $\tilde b$ is the infinitesimal generator of a one-parameter family of linear transformations $O (t, \cdot)$ of $\pi_0$, i.e. 
\[
\left\{
\begin{array}{ll}
O (0,x,y) &= (x,y)\\
\partial_t O(0,x,y) &= \tilde b(x,y)\, . 
\end{array}\right.
\]
Observe that, if we take the matrix $B$ which represents the linear transformation $\tilde b$, then $O (t, \cdot)$ is the linear transformation whose matrix is the exponential $\exp (t B)$. On the other hand, the definition of $\tilde b$ implies that $B$ is antisymmetric, and in particular $\exp (tB)$ is orthogonal. This means that $O$ is a one-parameter family of rotations.
We then define the rotation $O_k := O (\bE_k^{1/2}, \cdot)$. Observe that the bound (i) is satisfied with this choice of $l^{k,+}$ and $O_k$.

Next let $\bC^+_k$ be the graph over $\bS_0$ of the multifunction $l^k_{i,j} + l^{k,+}_{i,j}$ and $\bC'_k := (O_k)_\sharp \bC_k^+$.
We claim that, combining Corollary \ref{c:scoppia}, Proposition \ref{p:scoppia}, and 
Proposition \ref{p:decad-lineare}, for every fixed $\sigma > 0$, we conclude
\begin{equation}\label{e:sfava}
\limsup_{k\to \infty} \sup_{\rho\in [\rho^-, \rho^+]} \underbrace{\rho^{-m-3} \bE_k^{-1} \int_{\bB_\rho\setminus B_\sigma (V)} \dist (q, \spt (\bC'_k))^2 d \|T_k\| (q)}_{=: I_k} \leq C \rho^+\, .
\end{equation}
Note first that each $q\in \spt (T_k)\cap \bB_\rho\setminus B_\sigma (V)$ is contained in the graph of a function $v^k_{i,j}$ for all $k$ sufficiently large. Fix $q$ and let therefore $z=(x,y)\in \bH_{0,i}$ be such that $z+v^k_{i,j} (z) = q$. Consider now the following points
\begin{align*}
z^+ &:= z- \bE_k^{1/2}\mathbf{p}_{\bH_{0,i}} (b(y))+\bE_k^{1/2} b^*(x)\\
q^+ &:= z^++l^k_{i,j} (z^+) + l^{k,+}_{i,j} (z^+)\\
q' &:= O (\bE_k^{1/2}, q^+)\, .
\end{align*}
Observe that $q'\in \spt(\bC'_k)$ and thus 
\[
\dist (q, \spt(\bC'_k)) \leq |q-q'|\, .
\]
On the other hand $q-q'= (q-q^+) + (q^+-q')$. First of all notice that: 
\begin{align*}
q-q^+ &= \bE_k^{1/2} \mathbf{p}_{\bH_{0,i}} (b(y)) -\bE_k^{1/2} b^*(x) + u^k_{i,j} (z) - l^k_{i,j} (z^+) - l^{k,+}_{i,j} (z^+) + {\rm O} (\bA)\nonumber\\
&= \bE_k^{1/2} \mathbf{p}_{\bH_{0,i}} (b(y)) -\bE_k^{1/2} b^*(x) + u^k_{i,j} (z)- l^k_{i,j} (z) - l^{k,+}_{i,j} (z) + ({\rm o} (1)  + {\rm O (\bE_k^{1/2})}) |z-z^+| + {\rm O} (\bA)\, ,
\end{align*}
where we have used that $\|\nabla l^k_{i,j}\|_\infty \leq C \hat \flat^p_{\bB_1} (\bC_k, \bC_0) = {\rm o} (1)$ and $\|\nabla l^{k,+}_{i,j}\|_\infty = {\rm O} (\bE_k^{1/2})$.
In particular we conclude 
\begin{align*}
q-q^+ &= \bE_k^{1/2} \mathbf{p}_{\bH_{0,i}} (b(y))-\bE_k^{1/2} b^*(x) + w^k_{i,j} (z) - \bE_k^{1/2} a_{i,j} (x) + {\rm o} (\bE_k^{1/2}) \, .
\end{align*}
On the other hand, using that $\tilde b$ is the generator of $O(t, \cdot)$ we obviously have 
\begin{align*}
q^+-q' &= - \bE_k^{1/2} b (y) +\bE_k^{1/2} b^*(x) + {\rm O} (\bE_k)\, .   
\end{align*}
Summing the last two estimates we conclude that
\[
|q-q'| = |w^k_{i,j} (z) - \bE_k^{1/2} (a_{i,j} (x) + \mathbf{p}_{\bH_{0,i}}^{\perp_0} (b(y)))| + {\rm o} (\bE_k^{1/2})
\]
Thus we can estimate 
\begin{align*}
I_k &\leq \left(\rho^{-m-3} \bE_k^{-1} \sum_i \int_{\bH_{0,i} \cap \bB_\rho \setminus B_\sigma (V)}  \sum_j |w_{i,j}^k (z) - \bE_k^{1/2} (a_{i,j} (x) + \mathbf{p}_{\bH_{0,i}^{\perp_0}} (b(y)))|^2 \right)
+ {\rm o} (1)\, .
\end{align*}
We can now apply Proposition \ref{p:decad-lineare} to estimate the term in the parenthesis, and, using that the $L^2$ norm of $\bar w$ is bounded due to Proposition \ref{p:scoppia}, conclude \eqref{e:sfava}.

In addition, observe that in $\bB_1$ we have $\dist (q, \spt (\bC'_k))\leq \dist (q, \spt (\bC_k)) + C \bE_k^{1/2}$ and thus we can estimate
\begin{align*}
&\limsup_{k\to \infty} \sup_{\rho\in [\rho^-, \rho^+]}\rho^{-m-3} \bE_k^{-1} \int_{\bB_\rho\cap B_\sigma (V)} \dist (q, \spt (\bC'_k))^2 d\|T_k\| (q)\\ 
\leq & C (\rho^{-})^{-m-3} \sigma + \sigma^{1/2} \limsup_{k\to \infty} \sup_{\rho\in [\rho^-, \rho^+]}\rho^{-m-3} \bE_k^{-1} \int_{\bB_\rho\cap B_\sigma (V)} \frac{\dist (q, \spt (\bC_k))^2}{\sigma^{1/2}} d\|T_k\| (q)\\
\leq & C \sigma^{1/2} (\rho^{-})^{-m-3}\, .
\end{align*}
Since $\rho^-$ is fixed, we can now choose $\sigma$ arbitrarily small to conclude
\[
\limsup_{k\to \infty} \sup_{\rho\in [\rho^-, \rho^+]} \rho^{-m-3} \bE_k^{-1} \int_{\bB_\rho} \dist (q, \spt (\bC'_k))^2 d \|T_k\| (q) \leq C \rho^+
\]
Obviously, choosing $\rho^+$ so that $2C\rho^+< R_0^{-1}$, for a sufficiently large $k$ we actually reach a contradiction with $\bC_k=\bC^+$, as $\bC_k, O_k, l^{k,+}$ satisfy both the conclusions (i) and (ii) of the Corollary. 
\end{proof}

\subsection{Proof of Theorem \ref{t:decay}} First of all, by scaling, we can replace the outer radius $1$ by $R_0$, while of course the decay rate has to be replaced by $(\frac{\rho}{R_0})^{1/2}$. Choose now $\rho:=\rho^+$, coming from Corollary \ref{c:decaduto-A<<E}. The proof will distinguish between two regimes. If $\bA \leq \eta_5 \bE$, where $\eta_5$ comes from Corollary \ref{c:decaduto-A<<E}, we will be able to apply Corollary \ref{c:decaduto-A<<E}, while in the other regime we will let $\bC'$ be the cone $\tilde\bC$ obtained as the graph, over $\bC_0$, of the linear $p$-multifunction $\tilde l$ of Theorem \ref{thm:est spine}. Observe that in both cases the claim \eqref{e:incremento-angolo} holds: in the case $\bA\leq\eta_5 \bE$ it follows from \eqref{e:controllo-angolo}, while in the other case it follows because $\spt (\bC')\subset \spt (\bC)$. 

Consider now the case $\bA \leq \eta_5 \bE$, and choose $\bC'$ as in Corollary \ref{c:decaduto-A<<E}: then, the estimate \eqref{e:decay} is obvious, since both the quantities over which we maximize have the right decay. Consider next the situation in which
$\bA > \eta_5 \bE$. Here we will choose $\eta < \eta_5$ (and in fact much smaller than $\eta_5$). We know that
\[
(R_0 \bA)^{1/2} \geq R_0^{1/2}\eta^{-1} \bA 
\geq \eta_5 R_0^{1/2} \eta^{-1} \bE\, .
\]
Hence if we choose $\eta$ sufficiently small, depending only on $R_0$ e $\eta_5$, we conclude 
\begin{equation}\label{e:massimo-giusto}
(R_0 \bA)^{1/2} = \max \{\bE (T, \spt(\bC), 0, R_0), (R_0 \bA)^{1/2}\}\, .
\end{equation}
By our choice of $\bC'$, we also have
\begin{align*}
\bE (T, \spt (\bC'), 0, \rho) &\leq C \rho^{-m-2} (\bE + \bA) \leq C\eta_5^{-1} \rho^{-m-2} \bA\\
&\leq C \eta\, \eta_5^{-1} \rho^{-m-2-1/2}\, (\rho \bA)^{1/2}\, .
\end{align*}
Hence, by choosing $\eta$ so small that 
\[
C \eta \eta_5^{-1} \rho^{-m-2-1/2} \leq 1
\]
(which again is a choice depending only on $\eta_5$ and $\rho$, which have been fixed), we achieve
\begin{equation}\label{e:massimo-2}
(\rho \bA)^{1/2} = \max \{\bE (T, \spt(\bC'), 0, \rho), (\rho \bA)^{1/2}\}\, .
\end{equation}
\eqref{e:massimo-giusto} and \eqref{e:massimo-2} give thus the desired decay in the regime $\bA > \eta_5 \bE$.

We now come to estimate \eqref{e:flat-C'}. Observe first that, since now $\rho$ is fixed and $\hat\flat^p$ behaves nicely under restrictions and rescalings, it suffices to estimate $\hat\flat^p_{\bB_{1/2}} (T - \bC')$. Observe also that it suffices to estimate $\hat\flat^p_{\bB_{1/2}} (T - \tilde{\bC})$, since in one regime we have $\bC'=\tilde\bC$, while in the other regime we can estimate $\hat\flat^p_{\bB_{1/2}} (\bC'-\tilde\bC) \leq \vartheta (\bC', \tilde\bC)\leq C \bE^{1/2}$. Coming to $\tilde\bC$, we first wish to extend the multifunction $u= \{u_{i,j}\}$ so that its domain of definition is $\bS_0 \cap \bB_{1/2}$ and it satisfies the bounds
\[
|x|^{-1} |u_{i,j} (x,y)| + |\nabla u_{i,j} (x)|\leq C\, .
\]
In order to achieve the latter extension, we first claim that $u_{i,j}$ is globally Lipschitz. In fact pick two points $(x,y), (x',y')$ and denote by $Q$ and $Q'$ the corresponding cubes of the Whitney domain which include them. If the two cubes are neighbors then we obviously have
\[
|u_{i,j} (x,y) - u_{i,j} (x',y')|\leq (\|\nabla u_{i,j}\|_{L^\infty (Q)} + \|\nabla u_{i,j}\|_{L^\infty (Q')}) |(x,y)- (x',y')|\, .
\]
We can thus assume that they are not neighbors. In particular this implies that $|(x',y')- (x,y)|\geq c_0 \max \{d_Q, d_{Q'}\}$ for a suitable geometric constant. Thus we can estimate
\[
|u_{i,j} (x,y) - u_{i,j} (x',y')| \leq |u_{i,j} (x,y)|+ |u_{i,j} (x',y')|\leq C (d_Q+ d_{Q'})
\leq C |(x,y)-(x',y')|\, .
\]
Having established the global Lipschitz bound, it suffices to first extend $u_{i,j}$ to $V$ identically $0$ and observe that such extension is still Lipschitz. Hence we can further extend $u_{i,j}$ to a Lipschitz function defined on the whole $\bB_{1/2}\cap \bH_{0,i}$. The estimate $|u_{i,j} (x,y)|\leq C |x|$ follows from the Lipschitz regularity and $u_{i,j} (0,y) =0$.

Next we extend the map $v$ as well by simply setting 
\[
v_{i,j} (z) = u_{i,j} (z) + \Psi (z+ u_{i,j} (z))\, .
\]
Observe that the extension obeys the estimate
\begin{align*}
\int_{B_{\varrho} (V)\cap \bB_{1/2}} (|v|^2 + |\tilde l|^2) \leq & C
\int_{\bB_{1/2} \cap B_\varrho (V)} |x|^2 d\|\bG_{\bS_0} (v)\| +
C \int_{\bB_{1/2} \cap B_\varrho (V)} |x|^2 d\|\bG_{\bS_0} (\tilde{l})\| + {\rm O}(\bA)\\
\leq & C \int_{\bB_{1/2}\cap B_\varrho (V)} |x|^2 d\|T\| +  C \int_{\bB_{1/2}\cap B_\varrho (V)} |x|^2 d\|\bC\| + {\rm O}(\bA) \\ 
\leq & C (\bE + \bA)\, ,
\end{align*}
by Theorem \ref{thm:graph_v1}(iv). Notice that, in the second inequality of the estimate above, we have used, as in the proof of Theorem \ref{thm:graph_v1}(iv), that $\bB_{1/2}\cap B_\varrho (V)$ can be covered by balls $\mathbf B_{5r_i}(y_i)$ where $y_i \in V$, the $\mathbf B_{r_i}(y_i)$ are disjoint, and $r_i=\bar C \varrho_{\mathcal W}(y_i)$. By definition of $\varrho_{\mathcal W}$, each $B_{r_i}(y_i)$ contains the rotation of a cube in $\mathcal Q$ where the excess of $T$ from $\mathbf S$ is large, so that in each $\mathbf B_{5r_i}(y_i)$ the integral of $|x|^2$ on the graph $\bG_{\bS_0} (v)$ is controlled by $r_i^{m+2}$, and the latter is in turn controlled by the integral of $|x|^2$ with respect to $\|T\|$ over the rotated cube.

We now write
\[
T \res \bB_1 - \tilde\bC = \underbrace{T- \bG_{\bS_0} (v) \res \bB_1}_{=:R_1} +
\underbrace{\bG_{\bS_0} (v)\res \bB_1 - \bG_{\bS_0} (\tilde{l})\res \bB_1}_{=:R_2}\, .
\]
Then 
\begin{align*}
\hat\flat^p_{\bB_{1/2}} (R_2) \leq & \sum_i \int_{\bH_{0,i}\cap \bB_{1/2}} \sum_j |v_{i,j}- \tilde{l}_{i,j}|\\
\leq & C \bA + \int_{\bS_0\cap \bB_{1/2}\setminus B_{\varrho} (V)} |w| + \int_{\bS_0\cap \bB_{1/2} \cap B_\varrho (V)} (|v|+ |\tilde{l}|)
\leq C (\bE+\bA)^{1/2}\, .
\end{align*}
As for estimating $\hat\flat^p_{\bB_{1/2}} (R_1)$, observe that $R_1=0$ outside $B_{\varrho} (V)$. Hence consider the homotopy $H (t,x,y):= (tx,y)$, which is retracting $\mathbb R^{m+n}$ onto $V$. We then apply the homotopy formula and 
conclude that
\[
R_1\res \bB_1 = - \partial H_\sharp (\a{[0,1]}\times R_1)\res \bB_1 \quad\modp\, 
\]
and we can estimate
\begin{align*}
\hat\flat^p_{\bB_{1/2}} (R_1) \leq & \mass (H_\sharp (\a{[0,1]}\times R_1)\leq  C\int_{\bB_{1/2}\cap B_{\varrho} (V)} |x|\, d\|T\|
+ C\int_{\bB_{1/2}\cap B_\varrho (V)} |x|\, d\|\bG_{\bS_0} (v)\|\\
\leq & C (\bE + \bA)^{1/2}\, . \tag*{\qed}
\end{align*}

\subsection{Proof of Theorem \ref{t:main}} First of all, without loss of generality we assume $q=0$. Next we assume that $\bar \eta < \eta$ is sufficiently small, where $\eta$ is the constant of Theorem \ref{t:decay}, so that we can apply it setting $\bC = \bC_0$. We then find a new cone $\bC_1$ which satisfies
\begin{align*}
\max \{ \bE ((\eta_{0,\rho})_\sharp T, \spt (\bC_1), 0,1), (\rho \bA)^{1/2}\} &\leq \rho^{1/2} (\bE_0 + \bA^{1/2})\\
\hat\flat^p_{\bB_1} ((\eta_{0,\rho})_\sharp T - \bC_1) &\leq C (\bE_0^{1/2} + \bA^{1/4})\\
\vartheta (\bC_1, \bC_0) &\leq C (\bE_0^{1/2} + \bA^{1/4})\, .
\end{align*}
Assume now that for a certain number of steps $j =1, \ldots , k-1$ we can apply Theorem \ref{t:decay} to the triple $(T_j, \bC_j, \Sigma_j)$ where 
$T_j = (\eta_{0,\rho^j})_\sharp T$ and $\Sigma_j = \eta_{0, \rho^j} (\Sigma)$. Observe that $\|A_{\Sigma_j}\|_{L^\infty (\Sigma_j)}=\rho^j \bA$. Setting 
\begin{align*}
m (j) &:= \max \{ \bE (T_j, \spt (\bC_j), 0,1), (\rho^j \bA)^{1/2}\}
\end{align*}
we then get
\begin{align}
m (k) \leq & \rho^{k/2} (\bE_0 + \bA^{1/2})\, , \notag\\
\hat\flat^p_{\bB_1} (T_k - \bC_k) \leq & C\, m(k-1)^{1/2} \leq C \rho^{(k-1)/4} (\bE_0^{1/2} + \bA^{1/4})\, , \notag\\
\vartheta (\bC_k, \bC_0) \leq & C \sum_{j=0}^{k-1} m (j)^{1/2} \leq C \sum_{j=0}^{k-1} \rho^{j/4} (\bE_0^{1/2} + \bA^{1/4})\, .\label{e:Cauchy}
\end{align}
In particular we conclude that
\begin{align*}
\|A_{\Sigma_k}\|_{L^\infty} = & \rho^k \bA \leq \bar \eta \rho^k\\
\bE (T_k, \spt (\bC_k), 0,1) \leq & m(k) \leq \rho^{k/2} ( \bE_0 + \bA^{1/2}) \leq 2 \rho^{(k-1)/2} \bar \eta \, ,\\
\hat{\flat}^p_{\bB_1} (T_k - \bC_0) \leq & \hat \flat^p_{\bB_1} (T_k - \bC_k) 
+ \hat\flat^p_{\bB_1} (\bC_k - \bC_0)\\
\leq & \hat \flat^p_{\bB_1} (T_k - \bC_k) + C \vartheta (\bC_k, \bC_0) \leq C \sum_{j=0}^{k-1} \rho^{j/4} (\bE_0^{1/2} + \bA^{1/4}) \leq C \bar\eta^{1/2}\, ,
\end{align*}
where the constant $C$ is independent of both $\bar\eta$ and $k$. If $\bar \eta$ is chosen sufficiently small, the latter estimates guarantee that we can keep applying Theorem \ref{t:decay} for all $k\in \mathbb N$. 

The conclusions of Theorem \ref{t:main} thus follow at once, considering that the unique tangent cone to $T$ at $q$ is simply the unique limit of the sequence $\bC_k$ (which is a Cauchy sequence in the flat distance by \eqref{e:Cauchy}). \qed

\section{Proofs of the structure theorems}\label{sec:structure}

In this section we give the proof of the two structure Theorems \ref{t:odd} and \ref{t:even}, of Corollary \ref{c:flat-singular-points}, and of Proposition \ref{p:example}. In fact, the two theorems will be corollaries of the following more precise consequence of Theorem \ref{t:main}.

\begin{corollary}\label{c:struttura}
Consider $\Sigma, T$, and $\Omega$ as in Definition \ref{def:am_modp} and assume that $\dim (\Sigma) = \dim (T) +1 = m+1$. Assume $q\in \spt (T)$ is a point where a tangent cone $\bC_0$ has $(m-1)$-dimensional spine $V$, i.e. it takes the form
\[
\bC_0 = \sum_{i=1}^{N_0} \kappa_{0,i} \a{\bH_{0,i}}\, ,
\]
where $\bH_{0,i}$ are the (distinct) pages of the open book $\bS_0 = \spt (\bC_0)$ and $\kappa_{0,i}\in \mathbb N \cap \left[1,\frac{p}{2}\right)$ are such that $\sum_i \kappa_{0,i} = p$. Then there is a neighborhood $U$ of $q$ such that $\sing (T)\cap U$ is a classical free boundary as in Definition \ref{def:free-boundary}, with the additional information that:
\begin{itemize}
    \item[(i)] The coefficients $k_i$ in Definition \ref{def:free-boundary} coincide with $\kappa_{0,i}$;
    \item[(ii)] The tangent to $\sing (T)$ at $q$ is $V$;
    \item[(iii)] The tangent to each $\Gamma_i$ at $q$ is $\bH_{0,i}$.
\end{itemize}
\end{corollary}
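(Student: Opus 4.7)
\medskip

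\noindent\textbf{Proof plan.} The plan is to derive the corollary from Theorem \ref{t:main} by a Campanato-type argument, propagating the quantitative decay at $q$ to nearby singular points. As a first step, note that since $\bC_0$ has $(m-1)$-dimensional spine, Proposition \ref{lem:structure_cones} forces $\Theta_T(q)=\Theta_{\bC_0}(0)=p/2$ and $\kappa_{0,i}<p/2$. For $r>0$ small enough, the rescaling $T_{q,r}$ satisfies the hypotheses of Theorem \ref{t:main} with $\bar\eta$ arbitrarily small, so the tangent cone at $q$ is unique, equals $\bC_0$, and the decay estimates \eqref{e:decay-always}--\eqref{e:decay-flat} hold. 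Setting $\Lambda := \{q' \in \spt^p(T)\cap U : \Theta_T(q')\geq p/2\}$ for a suitable neighborhood $U \ni q$, the upper semicontinuity of density together with the flat decay at $q$ shows that, after rescaling, $T_{q',r}$ is arbitrarily close in $\hat\flat^p$ to a translate of $\bC_0$; combined with the compactness/rigidity in Proposition \ref{lem:structure_cones}, this forces every tangent cone to $T$ at $q'\in\Lambda$ to be coherent with $\bC_0$ in the sense of Definition \ref{d:opening} and, in particular, to carry the same structural multiplicities $\kappa_{0,i}$. Consequently Theorem \ref{t:main} applies at each $q'\in\Lambda$, producing a unique tangent cone $\bC(q')$ with $(m-1)$-dimensional spine $V(q')\subset T_{q'}\Sigma$.

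Second, we upgrade the pointwise decay into $C^{1,1/4}$ regularity of the spine. Given $q',q''\in\Lambda$ close to $q$, let $r:=2|q'-q''|$. Applying \eqref{e:decay-flat} at $q'$ and at $q''$ and using the triangle inequality for $\hat\flat^p$, together with the coherence of $\bC(q')$ and $\bC(q'')$ with $\bC_0$ (which supplies the rotations of Definition \ref{d:opening} that align the pages), yields
\begin{equation*}
|V(q')-V(q'')| \leq C\,|q'-q''|^{1/4}\,.
\end{equation*}
Now a standard Campanato-type argument, as in \cite{Simon}, applied to the modulus of flat decay $r\mapsto \hat\flat^p_{\bB_1}((\eta_{q',r})_\sharp T-\bC(q'))$ shows that $\Lambda$ is locally the graph of a $C^{1,1/4}$ map from $V\subset T_q\Sigma$ to $V^\perp$, and hence a $C^{1,1/4}$ orientable $(m-1)$-dimensional submanifold of $\Sigma$ with tangent space $V$ at $q$; this gives (ii). In addition, shrinking $U$ if needed, no singular point outside $\Lambda$ can survive: if $\bar q\in \sing(T)\cap U\setminus\Lambda$ then $\Theta_T(\bar q)<p/2$, but the flat closeness of $T_{\bar q,r}$ to a translate of $\bC_0$ then forces every tangent cone at $\bar q$ to be a flat multiplicity-$\kappa_{0,i}$ plane (a cone coherent with $\bC_0$ and centered off the spine), whence White's $\varepsilon$-regularity theorem \cite{White86} implies $\bar q\in \reg(T)$, a contradiction. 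Therefore $\sing(T)\cap U=\Lambda$.

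Third, outside $\Lambda$ the current $T$ has only flat tangent cones with multiplicity $\kappa_{0,i}\in\{1,\dots,\lfloor p/2\rfloor\}$, so by \cite{White86} the support of $T\res U\setminus\Lambda$ decomposes into $N_0$ smooth minimal hypersurfaces $\Gamma_i$ of $\Sigma$, each with constant multiplicity $\kappa_{0,i}$, whose limits at points of $\Lambda$ are controlled by the graphical approximation of Sections \ref{sec:graph}--\ref{sec:global}: the pages of the cones $\bC(q')$ at $q'\in\Lambda$ provide the one-sided tangent halfplanes of the $\Gamma_i$, upgrading the boundary regularity of $\Gamma_i$ up to $\Lambda$ to $C^{1,1/4}$ and giving $\partial\Gamma_i\cap U=\Lambda=\sing(T)\cap U$. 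At $q$ itself, the tangent halfplane of $\Gamma_i$ is $\bH_{0,i}$, proving (iii); the identification $S:=\sum_i\kappa_{0,i}\a{\Gamma_i}\equiv T\res U \,\modp$ with $\partial S\res U=p\a{\sing(T)\cap U}$ follows since both sides have the same (unique) tangent cone $\bC_0$ at every point of $\Lambda$ and coincide on $\reg(T)$ by construction, thus giving (i). The main technical difficulty is the coherent labeling across different base points that underlies the Hölder estimate on $V(q')$: this is precisely what the notion of coherence in Definition \ref{d:opening}, together with the selection algorithm of Sections \ref{sec:local}--\ref{sec:global}, was designed to provide.
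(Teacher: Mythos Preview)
Your overall strategy matches the paper's approach, but there are two genuine gaps.

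\textbf{First gap: the no-holes step.} The Campanato-type argument you invoke shows only that $\Lambda=\{q':\Theta_T(q')\geq p/2\}$ is \emph{contained in} the graph of a $C^{1,1/4}$ map $\psi:V\to V^\perp$; it does not show that $\Lambda$ equals the whole graph. Nothing in your proposal rules out that $\Lambda$ projects onto a proper closed subset of $V\cap\bB_{1/2}$, in which case $\sing(T)\cap U=\Lambda$ would not be an $(m-1)$-dimensional submanifold and Definition~\ref{def:free-boundary}(i) would fail. The paper closes this gap by an iterative application of Proposition~\ref{prop:no-holes}: for each $q_0\in V\cap\bB_{1/2}$ one finds a first high-density point $\bar q_1$ within $\delta$ of $q_0$, then reapplies the no-holes property at the scale $\sfrac12$ centered at $\bar q_1$ (using the decay at $\bar q_1$ to validate the hypotheses) to find $\bar q_2$, and so on, producing a Cauchy sequence converging to a point of $\Lambda$ that projects exactly onto $q_0$. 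You never invoke Proposition~\ref{prop:no-holes}, so this step is missing.

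\textbf{Second gap: collapsing the sheets.} Your sentence ``the support of $T\mres U\setminus\Lambda$ decomposes into $N_0$ smooth minimal hypersurfaces $\Gamma_i$, each with constant multiplicity $\kappa_{0,i}$'' is not justified by White's theorem alone: the graphical approximation of Sections~\ref{sec:graph}--\ref{sec:global} gives, over each page $\bH_{0,i}$, $\kappa_{0,i}$ ordered sheets $u_{i,1}\leq\cdots\leq u_{i,\kappa_{0,i}}$, and a priori these could be distinct. The paper shows that the decay to $\bC_0$ at $q$ forces all these sheets to share the same normal derivative on $\Lambda$ at $q$, and then the Hopf maximum principle (applied to the differences $u_{i,j+1}-u_{i,j}$, which are nonnegative solutions of a linear elliptic PDE vanishing on $\Lambda$) collapses them into a single sheet of multiplicity $\kappa_{0,i}$. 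Without this argument you cannot conclude (i), nor that the $\Gamma_i$ are connected $C^{1,\alpha}$ manifolds with boundary as required.
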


The corollary is obviously a stronger version of Theorem \ref{t:even} when $p$ is even. As for Theorem \ref{t:odd}, in the case of odd $p$, observe that, using the terminology of the proof of Lemma \ref{lem:technical_no_hole}, White's regularity theorem implies that $\mathcal{S}^m\setminus \mathcal{S}^{m-1}$ consists of regular points and thus $\mathcal{S}^{m-1}$ is closed. Next observe that any point $q\in \mathcal{S}^{m-1}\setminus \mathcal{S}^{m-2}$ falls in the assumptions of Corollary \ref{c:struttura}, hence $\sing^* (T) = \mathcal{S}^{m-1}\setminus \mathcal{S}^{m-2}$ is locally a classical free boundary of $T$. On the other hand, if $q \in \sing(T)$ has an open neighborhood $U$ such that $\sing(T) \cap U$ is a classical free boundary, then $T$ has, at $q$, a unique tangent cone with $(m-1)$-dimensional spine, which means that $q\in \sing^* (T)$. We conclude therefore that $\mathcal{S}=\mathcal{S}^{m-2}$ is relatively closed in $\mathcal{S}^{m-1}$. Furthermore, it is a simple consequence of Theorem \ref{t:main} and Corollary \ref{c:struttura} that, when $p$ is odd, $\mathcal{S}^{m-2}$ coincides, locally, with the \emph{quantitative stratum} $\mathcal{S}^{m-2}_{\eta}$ for some $\eta > 0$; see Appendix \ref{app:NV} for the terminology and the proof of this fact. Thus, by the Naber-Valtorta rectifiability theorem, cf. \cite{NV}, $\mathcal{S}=\mathcal{S}^{m-2}$ is $(m-2)$-rectifiable and it has locally finite $(m-2)$-dimensional measure, thus giving the conclusions of Theorem \ref{t:odd}. 

\subsection{Stucture} We now come to Corollary \ref{c:struttura}.

\begin{proof} Without loss of generality we let $q=0$. 
We fix a small threshold $\hat\eta$.
First of all, by rescaling, we can assume $\max\{ \bE (T, \bC_0, 0, 4), \bA\} < \bar\eta$ and since we are in the position of applying Theorem \ref{t:main}, we conclude that $\bC_0$ is the unique tangent cone to $T$ at $0$ and that we actually have
   \begin{equation}\label{e:decay-always-2}
    \frac{1}{r^{m+2}}\int_{\bB_r}\dist^2(\bar q, \spt(\bC_0)) \, d\|T\| (\bar q) \leq \hat\eta r^{1/2} \, ,
    \end{equation}
    \begin{equation}\label{e:decay-flat-2}
    \hat\flat^p_{\bB_1} ((\eta_{0,r})_\sharp T - \bC_0) \leq \hat \eta^{1/2} r^\frac{1}{4}\, .
    \end{equation}
    for every $r<4$. 
    We moreover denote by $V_0$ the spine of $\bC_0$.
    
    For every $q\in \bB_4 (0)\cap \Sigma$ consider $\pi_q := T_q \Sigma$ and let $O_q$ be a rotation of $\mathbb R^{m+n}$ which maps $T_q \Sigma$ onto $T_0\Sigma$. $O_q$ can be chosen to have a $C^1$ dependence on $q$ and to satisfy $O_q = {\rm Id}$ at $q=0$. Consider now the currents $(O_q)_\sharp (T_{q,1})$. Observe that the map $q\mapsto (O_q)\sharp (T_{q,1})$ is continuous in the flat topology. Hence, for a sufficiently small $\delta$ and for every $q\in \bB_\delta (0)\cap \spt (T)$, it holds
    \begin{equation}
    \int_{\bB_3} \dist^2 (\bar q, \bC_0)^2\, d\|(O_q)_\sharp T_{q,1}\| (\bar{q}) \leq C \hat \eta\,    
    \end{equation}
    \begin{equation}
    \hat\flat^p_{\bB_3} ((O_q)_\sharp T_{q,1} - \bC_0) \leq C \hat\eta\, .   
    \end{equation}    
    In particular, we are again in the position to apply Theorem \ref{t:main} with $\bC_0$ to the current $(O_q)_\sharp (T_{q,1})$, provided $\Theta_T (q) \geq \frac{p}{2}$. We thus conclude that, for every $q\in \bB_\delta$, the following alternative holds true:
    \begin{itemize}
        \item[(a)] Either $\Theta_T (q)<\frac{p}{2}$;
        \item[(b)] Or $\Theta_T (q) \geq \frac{p}{2}$, in which case we can apply Theorem \ref{t:main} and hence find a unique tangent cone $\bC_q$ to $T$ at $q$, with $m-1$-dimensional spine $V_q$, and the decay properties
        \begin{equation}\label{e:decay-always-3}
    \frac{1}{r^{m+2}}\int_{\bB_r (q)}\dist^2(\bar q - q, \spt(\bC_q)) \, d\|T\| (\bar q) \leq C \hat\eta r^{1/2} \, ,
    \end{equation}
    \begin{equation}\label{e:decay-flat-3}
    \hat\flat^p_{\bB_1} ((\eta_{q,r})_\sharp T - \bC_q) \leq C \hat \eta^{1/2} r^\frac{1}{4}\, ,
    \end{equation}
    for all radii $r<3$
    \end{itemize}
Since we can apply a further rescaling to the current, from now on we assume that the alternative in fact holds for every $q\in \bB_1\cap \spt (T)$.
Consider now two points $q, q'\in \bB_1$ with $\Theta_T (q), \Theta_T (q')\geq \frac{p}{2}$. If we set $|q-q'|=r$ and we denote by $\tau_v$ the translations $\tau_v (z)=z+v$, we conclude easily from \eqref{e:decay-flat-3} that
\[
\hat{\flat}^p_{\bB_1} (\bC_q - (\tau_{r^{-1} (q'-q)})_\sharp \bC_{q'}) \leq C \hat{\eta}^{1/2} r^{\frac{1}{4}}\, .
\]
After scaling, the latter estimate implies
\begin{itemize}
    \item $\dist (q, V_{q'}) + \dist (q', V_q) \leq C \hat{\eta}^{1/2} r^{5/4}$;
    \item $|\mathbf{p}_{V_q}-\mathbf{p}_{V_{q'}}| \leq C \hat{\eta}^{1/2} r^{1/4}$.
\end{itemize}
It thus turns out that the set $\{\Theta_T \geq \frac{p}{2}\}$ is contained in the graph of a $C^{1,1/4}$ map $\psi: V_0 \to V_0^\perp$ with $\|\psi\|_{1,1/4}\leq C \hat{\eta}^{1/2}$. 

Fix $\delta>0$ and consider now a point $q_0\in V_0\cap \bB_{1/2}$. Observe that Proposition \ref{prop:no-holes} implies that (if $\eta$ is chosen sufficiently small), then $\bB_\delta (q_0)$ contains a point $\bar q_1\in \spt (T)$ with $\Theta_T (\bar q_1)\geq \frac{p}{2}$. We now consider the point $q_1\in \bar{q}_1 + V_{\bar q_1}$ such that $\mathbf{p}_{V_0} (q_1) = \mathbf{p}_{V_0} (q_0) = q_0$ and observe that $|q_1 - \bar{q}_1|\leq 2 \delta$. We are thus in the position to apply again Proposition \ref{prop:no-holes} to the current $(\eta_{\bar q_1, 1/2})_\sharp T$, the cone $\bC_{\bar q_1}$, and the spine $V_{\bar q_1}$ to find a point $\bar q_2$ with $\theta_T (\bar q_2) \geq \frac{p}{2}$ such that $|\bar q_2 - q_1|\leq \frac{\delta}{2}$. Hence we consider the unique point $q_2\in \bar q_2 + V_{\bar q_2}$ such that $\mathbf{p}_{V_0} (q_2) = \mathbf{p}_{V_0} (q_1) = q_0$, for which we have $|q_2-\bar{q}_2| \leq 2 \frac{\delta}{2}$. Proceeding inductively we get two sequences of points $\{q_k\}, \{\bar q_k\}$, with the following properties:
\begin{align*}
|\bar q_k - q_{k-1}| \leq & 2^{-k+1}\delta\\
|q_k - \bar{q}_k| \leq & 2^{-k+2} \delta\\
\Theta_T (\bar{q}_k) \geq &\frac{p}{2}\\
\mathbf{p}_{V_0} (q_k) = & q_0\, .
\end{align*}
Both sequences converge to a unique point $q_\infty\in \bB_{4\delta} (q_0)$, with $\mathbf{p}_{V_0} (q_\infty) = q_0$. 
The latter argument implies that $\{\Theta \geq \frac{p}{2}\}\cap \bB_{1/2}$ is indeed the graph $\Lambda$ of a $C^{1,1/4}$ map $\psi: V_0\to V_0^\perp$. Observe moreover that $T_q \Lambda = V_q \subset \pi_q = T_q \Sigma$ and that indeed $\Theta (T, q) = \frac{p}{2}$ for all points $q\in \Lambda \cap \bB_{1/2}$. 

We now choose a second rotation $U_q$ of $\mathbb R^{m+n}$ with the property that $U_q (T_0 \Sigma) = T_q \Sigma$, $U_q (V_0) = V_q$, and $U_q$ has a $C^{1/4}$ dependence on $q$. In fact we can see that
\[
\hat\flat^p_{\bB_1} (\bC_q - (U_q)_\sharp \bC_0) \leq C \hat\eta^{1/2} |q|^{1/4}\, .
\]
We can now apply the approximation Theorem \ref{thm:graph_v1} to the current $(U_q^{-1})_\sharp T_{q,r}$ with $\bC = (U_q^{-1})_\sharp \bC_q$ and $\bC_0$, for every $r<R_0^{-1}$. 

Consider now the halfplanes $\bH_{q,i} := U_q (\bH_{0,i})$. Fix a point $z\in \bB_{(4R_0)^{-1}}\setminus \Lambda$ and let $q (z)\in \Lambda$ be a point such that $r = |q(z)-z|= \dist (z, \Lambda)$.
Theorem \ref{thm:graph_v1} implies then that:
\begin{itemize}
    \item $z$ is a regular point of $T$, and indeed, in a neighborhood of $z$, $T$ is a graph over $q (z) + \bH_{q(z),i}$ for some $i$;
    \item $|\mathbf{p}_{\bH_{q(z), i}^\perp} (z-q(z))|\leq \hat C \hat \eta |z-q(z)|^{5/4}$, where $\hat{C}$ is a geometric constant. 
\end{itemize}
We therefore consider the open sets
\[
\mathcal{U}_i :=\big\{q\in \bB_{(4R_0)^{-1}} : |\mathbf{p}_{\bH^\perp_{q(z), i}} (z-q(z))|\leq \hat C \hat \eta |z-q(z)| \big\}\, .
\]
We now restrict our attention to $T\res \mathcal{U}_i$. Let $\bH$ be the $m$-dimensional space which contains $\bH_{0,i}$, consider the projection $\lambda := \mathbf{p}_\bH (\Lambda)$ and let $e_i$ be the unit normal to $\lambda$ which is contained in $\bH_{0,i}$. Denote by $B_\rho$ the balls of radius $\rho$ in $\bH$ and observe that $\lambda$ divides each sufficiently small $B_\rho$ in two connected regions: we will call $B_\rho^+$ the one such that $e_i$ is the interior unit normal at $0$. We claim that, if $\rho$ is sufficiently small, in the intersection of the set $\mathcal{U}_i$ with the cylinder $\mathbf{p}_{\bH}^{-1} (B^+_\rho)$, the current $T$ is given by $\kappa_{0,i}$ Lipschitz graphs (which are Lipschitz up to the boundary $\lambda$) of functions $v_j$, $j\in \{1, \ldots , \kappa_{0,i}\}$. Observe that, if $\Psi_0: \pi_0\to \pi_0^\perp$ is the graphical parametrization of $\Sigma$ over $\pi_0$, each function will take the form $u_j (\xi) = (\xi, u_j (\xi), \Psi (\xi, u_j (\xi)))$, where $\xi\in \bH$ and $u_j (\xi)\in \bH^\perp\cap \pi_0$. Since $\bH^\perp \cap \pi_0$ is $1$-dimensional, we can identify it with $\mathbb R$ and order the functions $u_j$ from top to bottom. Observe also that there is a function $\psi_0: \lambda \to \bH^\perp \cap \pi_0$ such that $\Lambda$ is the graph of the map $\lambda \ni \xi \mapsto (\xi, \psi_0 (\xi), \Psi (\xi, \psi_0 (\xi)))$. As a consequence of our claim we will conclude that $u_j|_\lambda = \psi_0$ for every $j$.

Consider now the classical Whitney (or Calderon-Zygmund) decomposition of $B_{2\rho}^+$. In particular for each cube $Q$ in the decomposition, we let $\xi (Q)$ be a closest point on $\lambda$ and observe that the distance $d_Q$ of $Q$ to $\xi (Q)$ is comparable to the sidelength $\ell (Q)$ of the cube. We let $q$ be the point $q = (\xi (Q), \psi_0 (\xi (Q)), \Psi (\xi (q), \psi_0 (\xi (Q)))\in \Lambda$. Apply now Theorem \ref{thm:graph_v1} to $(U_q^{-1})_\sharp T_{q,\ell (Q)}$ with $\bC_0$ and $\bC = (U_q^{-1})_\sharp \bC_q$. If we enlarge slightly the cube $Q$ to a concentric cube $Q'$ with slightly larger sidelength (say $9\ell (Q)/8$) and we consider the region $R:= \mathbf{p}_{\bH}^{-1} (Q')$, then $(U_q^{-1})_\sharp T_{q,\ell (Q)}\res \mathcal{U}_i\cap R$ consists of pieces of $\kappa_{0,i}$ graphs over $\bH_{0,i}$ with controlled Lipschitz constant $C$. In rotating back to the original system of coordinates using $U_q$, the graphical representation still holds because $|U_q-{\rm Id}|$ is small, and the Lipschitz constants becomes slightly larger, but they are still controlled by a geometric constant. The claimed graphicality is thus correct over each enlarged cube $Q'$, and since for neighboring cubes the enlarged ones has a nontrivial overlap, the ordering of the sheets shows that the functions $u_j$ (and hence the $v_j$'s) can be defined coherently over the whole region $B^+_\rho$. The Lipschitz constant of the restriction of each $u_j$ (and hence $v_j$) to every cube in the Whitney decomposition is bounded by an absolute constant $C$. Observe however that we have as well the bound 
\[
\|v_j - \psi_0 (\xi (Q))\|_{L^\infty (Q)} \leq C \ell (Q) 
\]
for every cube $Q$, simply because the graphs are contained in the open sets $\mathcal{U}_i$. It is now simple to see that the graph $v_j$ is then globally Lipschitz. In fact consider $z, z'\in B^+_\rho$ and let $Q$ and $Q'$ be the cubes of the Whitney decomposition which contain them. If the two cubes are neighbors, then obviously 
\[
|v_j (z) - v_j (z)|\leq C |z-z'|\, .
\]
If the two cubes are disjoint, notice that $\ell (Q') +\ell (Q) \leq C |z-z'|$ and $|\xi (Q)-\xi (Q')|\leq C |z-z'|$. Hence
\begin{align*}
|v_j (z)-v_j (z')|\leq & |v_j (z)-\psi_0 (\xi (Q))| + |\psi_0 (\xi (Q)) - \psi_0 (\xi (Q'))|+
|v_j (z') - \psi_0 (\xi (Q'))|\\
\leq & C (\ell (Q) + |\xi (Q) - \xi (Q')| + \ell (Q')) \leq C |z-z'|\, .
\end{align*}
Having shown that each $v_j$ is a minimal Lipschitz graph and that $u_j|_\lambda = \psi_0$, the $C^{1,1/4}$ regularity of $v_j$ up to $\lambda$ in $B^+_{\rho/2}$ follows now from standard Schauder estimates. 

However, because of the decay to the cone $\bC_0$ at the point $0$, the normal derivatives of each $v_j$ at $0$ is in fact $0$. Observe that $u_1 \leq u_2 \leq \ldots \leq u_{\kappa_{0,i}}$ on their domain of definition. In particular the Hopf maximum principle implies that all of these functions coincide. Thus $T\res \mathcal{U}_i$ is, in a neighborhood $U$ of $0$, a single $C^{1,1/4}$ graph with boundary $\Lambda\cap U$ and multiplicity $\kappa_{0,i}$. 
\end{proof}

With the obvious modifications of the proof given above, one can prove the following $\eps$-regularity result:

\begin{corollary}[$\eps$-regularity] \label{eps_reg}
Let $p\in \N \setminus \{0,1,2\}$ and $\bC_0$ be  as in  Assumption  \ref{ass:cone}, with $(m-1)$-dimensional spine $V$, i.e. of the form
\[
\bC_0 = \sum_{i=1}^{N_0} \kappa_{0,i} \a{\bH_{0,i}}\, ,
\]
where $\bH_{0,i}$ are the (distinct) pages of the open book $\bS_0 = \spt (\bC_0)$ and $\kappa_{0,i}\in \mathbb N \cap \left[1,\frac{p}{2}\right)$ are such that $\sum_i \kappa_{0,i} = p$. Then there is a constant $\bar\eta>0$ depending  only  on $p,m,n$ and $\bC_0$ with  the  following  property.  If $T,\Sigma, \Omega$ and $q$ are as in  Assumption \ref{ass:cone} with $\eta=\bar{\eta}$, then $\sing (T)\cap B_{\sfrac1{10}}(q)$ is a classical free boundary as in Definition \ref{def:free-boundary}, with the additional information that:
\begin{itemize}
    \item[(i)] The coefficients $k_i$ in Definition \ref{def:free-boundary} coincide with $\kappa_{0,i}$;
    \item[(ii)] The tangent to $\sing (T)$ at $q$ is $V$;
    \item[(iii)] The tangent to each $\Gamma_i$ at $q$ is $\bH_{0,i}$.
\end{itemize}
\end{corollary}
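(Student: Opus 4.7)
The plan is to mimic, step by step, the argument given above for Corollary \ref{c:struttura}, replacing the qualitative input (existence of a tangent cone with $(m-1)$-dimensional spine) by the quantitative input coming from Assumption \ref{ass:cone} with $\eta=\bar\eta$. First I would invoke Theorem \ref{t:main} directly at the point $q$: choosing $\bar\eta$ small enough, it produces the unique tangent cone $\bC$ to $T$ at $q$, guarantees that $\bC$ has $(m-1)$-dimensional spine $V_q$, and supplies the decay estimates \eqref{e:decay-always} and \eqref{e:decay-flat} for every radius $r\le 1$. In particular, by choosing $\bar\eta$ suitably small, for every $r\le 1$ the rescaled current $(\eta_{q,r})_\sharp T$ falls into the hypotheses of Theorem \ref{t:main} with $\bC$ in place of $\bC_0$ and a uniformly small error. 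This plays exactly the role that \eqref{e:decay-always-2}--\eqref{e:decay-flat-2} played in the proof of Corollary \ref{c:struttura}.

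Next I would propagate the estimates to nearby points. For $q'\in\bB_{\sfrac{1}{10}}(q)\cap\spt^p(T)$, let $O_{q'}$ denote a small rotation of $\R^{m+n}$ with $O_{q'}(T_{q'}\Sigma)=T_q\Sigma$ and $|O_{q'}-\mathrm{Id}|\le C\bA\,|q-q'|$. The flat and $L^2$ decay from Theorem \ref{t:main} applied at $q$, combined with a continuity argument (identical to the one used in the proof of Corollary \ref{c:struttura}), yields that $(O_{q'})_\sharp T_{q',\rho_0}$ is still $C\bar\eta^{1/2}$-close, both in $\hat\flat^p_{\bB_3}$ and in $L^2$-excess, to $\bC$, for some absolute $\rho_0$. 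Then the same dichotomy of the previous proof applies: either $\Theta_T(q')<p/2$, or $\Theta_T(q')\ge p/2$ and we can reapply Theorem \ref{t:main} to obtain a unique tangent cone $\bC_{q'}$ at $q'$ with $(m-1)$-dimensional spine $V_{q'}$, together with \eqref{e:decay-always}--\eqref{e:decay-flat} centered at $q'$.

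From this point on the argument is verbatim the one used for Corollary \ref{c:struttura}. Comparing the decays at two high-density points $q',q''$ through the triangle inequality
\[
\hat\flat^p_{\bB_1}\bigl(\bC_{q'}-(\tau_{|q'-q''|^{-1}(q''-q')})_\sharp\bC_{q''}\bigr)\le C\bar\eta^{1/2}|q'-q''|^{1/4},
\]
one deduces both $\dist(q',V_{q''})+\dist(q'',V_{q'})\le C\bar\eta^{1/2}|q'-q''|^{5/4}$ and $|\p_{V_{q'}}-\p_{V_{q''}}|\le C\bar\eta^{1/2}|q'-q''|^{1/4}$. Hence $\{\Theta_T\ge p/2\}\cap\bB_{\sfrac{1}{10}}(q)$ is contained in the graph of a $C^{1,1/4}$ map $\psi\colon V_q\to V_q^\perp$ with small norm. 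The no-holes Proposition \ref{prop:no-holes}, applied at every scale and every point of $V_q\cap\bB_{\sfrac{1}{10}}(q)$ via the iterative density-point construction in the proof of Corollary \ref{c:struttura}, upgrades this inclusion to equality, giving a full $C^{1,1/4}$ $(m-1)$-submanifold $\Lambda$ with $T_{q'}\Lambda=V_{q'}$ at every $q'\in\Lambda$. Finally, for $z\in\bB_{\sfrac{1}{10}}(q)\setminus\Lambda$ one applies Theorem \ref{thm:graph_v1} to $(U_{q(z)}^{-1})_\sharp T_{q(z),|z-q(z)|}$, Whitney-decomposes each half-space region attached to a page $\bH_{q,i}$, patches the resulting Lipschitz sheets across cubes using the ordering induced by codimension one, and applies the Hopf maximum principle as in the last paragraph of the proof of Corollary \ref{c:struttura} to collapse the $\kappa_{0,i}$ sheets above each page to a single $C^{1,1/4}$ graph with multiplicity $\kappa_{0,i}$ and boundary $\Lambda$, oriented by $\bH_{0,i}$ at $q$. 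This gives precisely the classical free boundary structure with the claimed multiplicities and tangents.

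I expect the only nontrivial technical point, compared with the qualitative setting of Corollary \ref{c:struttura}, to be the quantitative bookkeeping that guarantees the fixed neighborhood $\bB_{\sfrac{1}{10}}(q)$ works for every $q$ satisfying the $\bar\eta$-hypothesis. This requires choosing $\bar\eta$ small enough so that (i) Theorem \ref{t:main} applies at $q$ with error below the threshold for Proposition \ref{prop:no-holes}, (ii) the continuity modulus of $q'\mapsto(O_{q'})_\sharp T_{q',\rho_0}$ keeps us inside the hypotheses of Theorem \ref{t:main} on all of $\bB_{\sfrac{1}{10}}(q)$, and (iii) the constants appearing in the Hölder estimates for $\Lambda$ and for the graphs $v_j$ are small enough that the Hopf lemma collapses the sheets. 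All three are matters of choosing $\bar\eta$ explicitly small in terms of $m,n,p,\bC_0$; no new ideas beyond those already developed in the proof of Corollary \ref{c:struttura} are required.
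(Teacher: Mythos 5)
Your overall plan is exactly the paper's: the authors themselves dismiss the proof of Corollary \ref{eps_reg} as following ``with the obvious modifications'' of the argument for Corollary \ref{c:struttura}, and your proposal spells those modifications out faithfully (apply Theorem \ref{t:main} at $q$, propagate via rotations to nearby points, dichotomy on $\Theta_T$, H\"older graph $\Lambda$ for the high-density set, no-holes for surjectivity, and Whitney/Hopf to collapse the sheets).

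One wrinkle deserves more care than either you or the paper's terse wording gives it: Theorem \ref{t:main} produces a unique tangent cone $\bC$ at $q$ that is only $\hat\flat^p$-close to $\bC_0$, not equal to it, and in particular its spine $V_q$, its pages, and their multiplicities need not coincide with $V$, $\bH_{0,i}$, $\kappa_{0,i}$ (indeed $\bC$ could have strictly more pages than $\bC_0$, splitting some $\bH_{0,i}$). Your construction therefore yields that $\sing(T)$ is tangent to $V_q$ at $q$ and that the sheets $\Gamma$ collapse onto the pages of $\bC$, carrying $\bC$'s multiplicities; this is what the Hopf step gives, since the normal derivatives at $q$ match $\bC$, not $\bC_0$. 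So your closing sentence ``this gives precisely \ldots the claimed multiplicities and tangents'' overreaches: conclusions (ii), (iii), and in general (i) hold with $V$, $\bH_{0,i}$, $\kappa_{0,i}$ replaced by the corresponding data of the unique tangent cone $\bC$ at $q$ (which is $C\bar\eta^{1/2}$-close to $\bC_0$). To get the literal statement one should either re-baseline the corollary around $\bC$ in place of $\bC_0$, or read (i)--(iii) modulo a small rotation; as written you should make this explicit rather than elide it.
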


\subsection{Flat singular points for even moduli} We next come to the proof of Corollary \ref{c:flat-singular-points}. Fix thus $T$, $p=2Q$, $\Sigma$, $\Omega$, and $q$ as in the statement. Clearly $\Theta_T (q) = Q$. Consider the set ${\rm Tan}\, (T, q)$ of cones which are tangent to $T$ at $q$ and subdivide it into 
\[
{\rm Tan}_f := \{S\in {\rm Tan}\, (T, q): S = Q \a{\pi} \quad\mbox{for some $m$-plane $\pi$}\}
\]
and 
\[
{\rm Tan}_{nf}:= {\rm Tan}\, (T,q) \setminus {\rm Tan}_{f}\, .
\]
Consider now $Z\in {\rm Tan}_{nf}$ and let $V$ be its spine, which is given by
\[
V:= \{x: \Theta_Z (x)=Q\}\, .
\]
Since $Z$ is not flat, ${\rm dim}\, (V) \leq m-1$. On the other hand if it were $m-1$, then Proposition \ref{lem:structure_cones} and Theorem \ref{t:even} would imply that $Z$ is the unique tangent cone to $T$ at $q$. So we must necessarily have ${\rm dim}\, (V) \leq m-2$. Consider now a point $x\in \spt (Z)$ which is not regular, i.e. $x\in \sing (Z)$. If a tangent cone to $Z$ at $x$ is $Q\a{\pi'}$ for some plane $\pi'$, then $x\in V$. If $x\in \sing (T)\setminus V$ then no tangent cone to $Z$ can be flat, because the multiplicity would have to be an integer $k\in \{1, \ldots , Q-1\}$ and then $x$ would be regular by White's theorem \cite{White86}. Next, by Proposition \ref{lem:structure_cones}, if a tangent cone to $Z$ at $x$ has $(m-1)$-dimensional spine, then $\Theta_Z (x) = Q$ and thus $x\in V$. We conclude that
\begin{itemize}
    \item if $x\in \sing (Z)\setminus V$, then the spine of any tangent cone to $Z$ at $x$ has dimension at most $m-2$.
\end{itemize}
From the Almgren's stratification theorem we then conclude that $\dim_{\Ha} (\sing (Z)\setminus V) \leq m-2$. But then the Hausdorff dimension of the whole $\sing (Z)$ is at most $m-2$. This in turn implies that $Z$ is a classical area-minimizing current without boundary. Indeed, take first a connected component $\mathcal{U}$ of $\reg (Z)$. On $\mathcal{U}$ the multiplicity of $Z$ must be a constant $\modp$. On the other hand, since this regular part is not part of the spine of the cone, such multiplicity cannot be congruent to $\frac{p}{2} = Q$. It is thus simple to see that it can be chosen to be constant as an integer valued function. Recalling that $Z$ is a precise representative, we conclude that the support of $\partial Z$ must be contained in the singular set $\sing (Z)$. Since $\partial Z$ is a flat chain supported in a set of zero $\mathcal{H}^{n-1}$-dimensional measure, a well-known theorem of Federer implies that $\partial Z =0$.  

We are now ready to show that ${\rm Tan}_{nf}=\emptyset$. Assume otherwise and for each $Z\in {\rm Tan}\, (T, q)$ consider now its spherical cross section $\langle Z, |\cdot|, 1\rangle$. Observe that the space of such cross sections is a compact subset of the space of $\modp$ cycles in $\partial \bB_1$ in the topology of $\hat\flat^p_{\bB_2}$. For each $Z\in {\rm Tan}_{nf}$ we consider the function
\[
d (Z):= \min \{ \hat\flat^p_{\bB_2} (\langle Z, |\cdot|, 1) -
\langle S, |\cdot|, 1\rangle): S \in {\rm Tan}_f\}\, .
\]
Now, $d(Z)>0$. We claim that:
\begin{itemize}
    \item[(Con)] if ${\rm Tan}_{nf}\neq \emptyset$ then there is $\sigma_0>0$ such that
    \begin{equation}\label{e:connectedness}
    \forall s\in (0, \sigma_0) \; \exists Z\in {\rm Tan} (T, q) \;\mbox{such that}\; d (Z) = s\, .
    \end{equation}
\end{itemize}
The latter is an easy consequence of the observation that the function
\[
d (r) :=  \min \{ \hat\flat^p_{\bB_2} (\langle T_{q,r}, |\cdot|, 1) -
\langle S, |\cdot|, 1\rangle): S \in {\rm Tan}_f\}
\]
is continuous in $r$ and that, if $T_{q,r_k} \to Z$, then $d (r_k) \to d (Z)$. Having these two properties in mind, we let $\sigma_0 := d (Z_0)$ for some fixed chosen $Z_0 \in {\rm Tan}_{nf}$. Then there is $\rho_k\downarrow 0$ such that $T_{q, \rho_k}\to Z_0$. On the other hand there is also $r_k\downarrow 0$ such that $T_{q, r_k}\to S_0\in {\rm Tan}_f$. W.l.o.g. we can assume $r_k < \rho_k$. Next, $d (\rho_k) \to d (Z_0) = \sigma_0$ and $d (r_k)\to 0$. Fix therefore $s\in (0, \sigma_0)$. Then for every sufficiently large $k$ there must be a $\tau_k\in (r_k, \rho_k)$ such that $d (\tau_k)=s$. Since by extraction of a subsequence we can assume $T_{q, \tau_k}\to Z$ for some $Z\in {\rm Tan}\, (T,q)$, we then conclude 
\[
d (Z) = \lim_{k\to\infty} d (\tau_k) = s\, ,
\]
thus proving \eqref{e:connectedness}. Next, consider $s = \frac{1}{k}$ and let $Z_k$ be the corresponding element of ${\rm Tan}(T,q)$ such that $d (Z_k)=\frac{1}{k}$. Then $Z_k \to Q \a{\pi}$ for some $m$-dimensional $\pi$ in the flat topology $\modp$ on every bounded open set. On the other hand since both $Z_k$ and $Q \a{\pi}$ are integral cycles, the latter convergence takes place in the usual flat topology of integer rectifiable cycles as well. In particular, since $Z_k$ is area-minimizing and has codimension $1$ in $T_q \Sigma$, for a sufficiently large $k$ the regularity theory implies that $Z_k$ is in fact everywhere regular. But since $Z_k$ is a cone, it must then be a flat cone, i.e. $Q \a{\pi_k}$ for some $m$-dimensional plane $\pi_k$. On the other hand the latter conclusion would imply $d (Z_k) =0$, while we know that $d (Z_k) = \frac{1}{k}>0$. \hfill\qedsymbol

\subsection{Proof of Proposition \ref{p:example}}

We fix coordinates $x_1, x_2, x_3$ in $\mathbb R^3$, consider a cycle $\modp$ $S$ which is invariant under rotations around the $x_3$ axis and let $T$ be an area-minimizing current $\modp$ with $\partial^p T = S$. The following is a well-known fact:

\begin{lemma}
$T$ is invariant with respect to rotations around the $x_3$ axis. 
\end{lemma}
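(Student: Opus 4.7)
The plan is to set $T_\theta := (\rho_\theta)_\sharp T$ for $\rho_\theta$ the rotation by angle $\theta$ around the $x_3$-axis, and to show that $T_\theta = T$ as currents $\modp$ for every $\theta \in S^1$. The argument splits into a structural/group-theoretic part and a rigidity part.

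First I would observe that since $\rho_\theta$ is an isometry of $\R^3$, each $T_\theta$ is area-minimizing $\modp$ in $\bB_2$, and since $S=\partial^p T$ is $\rho_\theta$-invariant one has $\partial^p T_\theta = (\rho_\theta)_\sharp S = S$ and $\mass^p(T_\theta) = \mass^p(T)$. Hence all $T_\theta$ are minimizers with the same boundary $S$ and the same mass. The set $H := \{\theta \in S^1 : T_\theta = T\}$ is then a subgroup of $S^1$; it is closed because the $S^1$-action is continuous with respect to $\hat\flat^p$ (push-forward under a uniformly continuous family of diffeomorphisms is $\hat\flat^p$-continuous). A closed subgroup of $S^1$ is either finite cyclic or all of $S^1$, so it suffices to exclude the finite case.

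To exclude the finite case I would argue by a rigidity statement: two codimension-$1$ area-minimizing currents $\modp$ in $\bB_2$ with the same boundary $S$ must coincide. Suppose $H$ is finite; then there exist $\theta_k \to 0$ with $T_{\theta_k}\neq T$ and $T_{\theta_k}\to T$ in $\hat\flat^p$. I would choose a regular point $q_0 \in \reg(T)$ away from the $x_3$-axis (such points exist and are open-dense in $\spt^p(T)\setminus\spt^p(S)$ since $\sing(T)$ has dimension $\leq 0$ by Theorem \ref{t:odd} applied with $m=2$). Near $q_0$, $T$ is a smooth minimal surface with integer multiplicity $\kappa\in\{1,\dots,\lfloor p/2\rfloor\}$. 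For $k$ large enough, the same is true of $T_{\theta_k}$ near $q_0$, and both are smooth graphs over the tangent plane $T_{q_0}\spt^p(T)$. Since both are minimal with the same boundary data on a large region (by taking $\theta_k$ small), the strong maximum principle for the minimal surface equation forces $T = T_{\theta_k}$ on a neighborhood of $q_0$. One then propagates the coincidence globally using White's strong maximum principle for codimension-$1$ minimizers $\modp$ (or, equivalently, unique continuation for minimal surfaces combined with the classical-free-boundary structure at the singular set from Definition \ref{def:free-boundary}), contradicting $T_{\theta_k}\neq T$. Hence $H$ is infinite and therefore $H = S^1$.

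The main obstacle is the propagation step, because the current $T$ can have singularities — albeit of low dimension — along which the two minimizers $T$ and $T_{\theta_k}$ could in principle separate, and the "sheets" with multiplicities $\kappa_i \leq \lfloor p/2\rfloor$ do not admit a single global graphical description. The robust way to handle this is to combine the local maximum principle with the fact, established in this paper, that $\sing(T)\setminus\mathcal S$ is a $C^{1,\alpha}$ free boundary along which all incident smooth sheets meet transversally (the Hopf condition following Definition \ref{def:free-boundary}); transversality implies that once a single sheet of $T$ coincides with the corresponding sheet of $T_{\theta_k}$ in an open set, their free boundaries must agree, and by the enumeration of sheets the entire pair of currents must agree on each connected component of $\spt^p(T)\setminus\mathcal S$.
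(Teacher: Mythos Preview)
Your approach is genuinely different from the paper's, and creative, but the rigidity step contains a real gap. The paper does not argue via uniqueness of minimizers or any maximum principle; instead it uses a short symmetrization argument: it slices $T$ by the angle function $\theta$, observes that
\[
\int_{-\pi}^{\pi}\mass\big(\langle T,\theta,\alpha\rangle\res r\big)\,d\alpha \le \mass(T),
\]
and then notes that rotating any single slice around the $x_3$-axis produces a competitor $T'$ with $\partial^p T'=S$ and $\mass(T')=2\pi\,\mass(\langle T,\theta,\alpha\rangle\res r)\ge \mass(T)$. The two inequalities together force $\mass(T\res r\,d\theta)=\mass(T)$, which is exactly rotational invariance. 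No regularity of $T$ beyond rectifiability is used.

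In your argument, the step ``both are minimal with the same boundary data on a large region, so the strong maximum principle forces $T=T_{\theta_k}$ near $q_0$'' does not go through. Locally near a regular point, $T$ and $T_{\theta_k}$ are graphs of two solutions $u$ and $v_k$ of the minimal surface equation over a common disc, but there is no reason whatsoever for $u$ and $v_k$ to share boundary values on that disc, nor for them to touch at an interior point, nor for one to lie on one side of the other. The strong maximum principle needs at least one of these ingredients; merely being $C^1$-close is not enough (two disjoint parallel planes, or two nearby leaves of a minimal foliation, are the obvious counterexamples). Consequently you never get the local coincidence set started, and the propagation argument has nothing to propagate. More broadly, uniqueness of codimension-one area minimizers $\modp$ with a prescribed boundary is simply false in general, so a direct ``rigidity'' route is not available without further input specific to the rotational symmetry --- which is precisely what the slicing argument supplies.
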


\begin{proof} Fix a minimizer $T$ which is a representative $\modp$ and let $r: \mathbb R^3 \to \mathbb R^+$ be given by $r (x_1, x_2, x_3) = \sqrt{x_1^2+x_2^2}$. We denote by $C_\delta$ the closed set $\{r\leq \delta\}$ and observe that, by the monotonicity formula,
\[
\|T\| ( C_\delta) \leq C \delta
\]
for some constant $C$ independent of $\delta$. Moreover, up to a rotation around $x_3$ we can assume that $\|T\| (\{x_2=0\})=0$. Introduce next the function $\theta (x_1, x_2, x_3)$ which gives the angle between $(x_1, x_2, 0)$ and the $x_1$ axis. We will assume that $\theta$ is defined on the complement of $H := \{x_2=0, x_1\leq 0\}$.
Even though $\theta$ is just locally Lipschitz, we can define the current $\langle T, \theta, \alpha\rangle \res r$ by taking the limit of the currents
    \[
    \langle T_\delta, \theta, \alpha \rangle \res r\, ,
    \]
    where $T_\delta = T \res (C_\delta \cup H)^c$. It follows easily that
    \[
    \int_{-\pi}^\pi \mass (\langle T, \theta, \alpha\rangle \res r)\, d\alpha = \mass (T \res rd\theta) \leq \mass (T)\, .
    \]
    So 
    \[
    {\rm ess inf}_\alpha \, \mass (\langle T, \theta, \alpha\rangle \res r) \leq \frac{1}{2\pi} \mass (T)\, .
    \]
    On the other hand, for a set of full measure of $\alpha$, if we construct an integer rectifiable current by rotating the current $\langle T, \theta, \alpha\rangle$ around the $x_3$ axis we find a current $T'$ with $\partial T' = S \modp$ and 
    \[
    \mass (T) \leq \mass (T') = 2\pi \mass (\langle T, \theta, \alpha\rangle \res r)\, .
    \]
    This shows that indeed $\mass (T\res d\theta) = \mass (T)$, which in turn shows that $T$ must be invariant under rotations around the $x_3$ axis.
\end{proof}

Using the notation of the Lemma we observe that $\langle S, \theta, 0\rangle$ is a sum of Dirac masses 
\[
\sum_i \kappa_i \a{P_i}\, .
\]
$T$ is then obtained by rotating around the $x_3$ axis the 
current $T_0$ in $\{x_1 >0\}$ with the property that $\partial T_0 = S$ $\modp$ and $T_0$ minimizes the mass relative to the Riemannian metric $\hat{g} = x_1 (dx_1^2 + dx_2^2)$. It is easy to see that $T_0$ consists of the union of finitely many geodesic arcs (in the metric $\hat{g}$) with integer weights and which meet in a finite number of singular points. In particular the singular set of $T$ consists of finitely many circles $\gamma_i$ contained in planes $\{x_3=c_i\}$ and centered at $(0,0, c_i)$. It suffices to show that for an appropriate choice of the $P_i$'s and of their weights at least one ``singular'' circle must be present. This however can be arranged by choosing $p$ distinct $P_i$'s and multiplicities $\kappa_i =1$, with the additional property that the $P_i$'s are not all contained in a single geodesic. \hfill\qedsymbol

\appendix

\section{On two notions of flat distance} \label{appendix-flat}

Recall that, for any (relatively) closed subset $C$ of an open set $\Omega \subset \R^{m+n}$, the group $\mathscr{F}_m(C)$ of \emph{$m$-dimensional integral flat chains in $C$} consists of all $m$-dimensional currents $T$ in $\Omega$ for which there exists a compact set $K \subset C$ such that
\[
T = R + \partial Z
\]
for some integer rectifiable currents $R$ and $Z$ (of the appropriate dimensions) with support $\spt(R), \spt(Z) \subset K$. Given an integer $p \geq 2$, and following Federer \cite{Federer69}, one endows $\mathscr{F}_m(C)$ with a family of pseudo-distances as follows: if $T \in \mathscr{F}_m(C)$ and $K \subset C$ is compact, then one sets
\begin{equation} \label{flat-federer}
\begin{split}
\flat^p_K(T) := \inf\Big\lbrace & \mass(R) + \mass(Z) \, \colon \, R \in \mathscr{R}_m(K),\, Z \in \mathscr{R}_{m+1}(K) \\
& \qquad \mbox{such that $T = R + \partial Z + pP$ for some $P \in \mathscr{F}_m(K)$} \Big\rbrace\,.
\end{split}
\end{equation}

Then, if $T,S \in \mathscr{F}_m(C)$ one defines the \emph{flat distance modulo $p$} between $T$ and $S$ in $K$ to be the quantity $\flat^p_K(T-S)$. Notice that such distance may be infinite. Given $T,S \in \mathscr{F}_m(C)$, we say that $T = S \, \modp$ if there exists a compact set $K \subset C$ such that $\flat^p_K(T-S)=0$.

\medskip

The definition of flat distance $\modp$ proposed in \eqref{flat-federer} is ill-behaved with respect to localization. Consider, as an examples, two integer rectifiable currents $T,S \in \mathscr{R}_m(\bB_2)$ such that $\spt(T-S) \subset \overline{\bB}_1$. The quantity $\flat^p_{\overline{\bB}_1}(T-S)$ is certainly finite, bounded above by $\mass(T-S)$. If, on the other hand, one wanted to measure the localized flat distance $\modp$ between $T$ and $S$ in, say, $\overline{\bB}_{1/2}$, the definition \eqref{flat-federer} would produce $\flat^p_{\overline{\bB}_{1/2}}(T-S) = \infty$ \emph{unless} $\spt(T-S) \subset \overline{\bB}_{1/2}$. An obvious solution to this apparently minor issue would be to modify the definition so that $\flat^p_{\overline{\bB}_{1/2}}(T-S)$ is given by
$\flat^p_{\overline{\bB}_{1/2}}((T-S)\mres \bB_{1/2})$. Although natural, such approach is not completely satisfactory either, since the resulting distances $\flat^p_{\overline{\bB}_1}(T-S)$ and $\flat^p_{\overline{\bB}_{1/2}}(T-S)$ \emph{may not be comparable}, and in particular it is false, in general, that the former controls the latter. To see this, let $R$ and $Z$ be almost optimal for $\flat^p_{\overline{\bB}_1}(T-S)$, so that
\[
T-S = R + \partial Z + pP \quad \mbox{and} \quad \mass(R) + \mass(Z) \leq \flat^p_{\overline{\bB}_1}(T-S) + \varepsilon\,.
\]
An obvious attempt would be to use $R$ and $Z$ as competitors to estimate the localized distance $\flat^p_{\overline{\bB}_{1/2}}(T-S)$. On one hand,
\[
(T-S) \mres \bB_{1/2} = R \mres \bB_{1/2} + (\partial Z) \mres \bB_{1/2} + p \, P \mres \bB_{1/2}\,,
\]
so that a competitor decomposition of $(T-S) \mres \bB_{1/2}$ may be obtained by applying the slicing formula \cite[Lemma 28.5]{Simon83} to get
\[
(T-S) \mres \bB_{1/2} = R \mres \bB_{1/2} - \langle Z, |\cdot|, 1/2 \rangle + \partial (Z \mres \bB_{1/2} ) + p \, P \mres \bB_{1/2}\,.
\]
On the other hand, the slice $\langle Z, |\cdot|, 1/2 \rangle$ at the given radius $r=1/2$ may not even be defined, and, even if it were, its mass may be arbitrarily large. Of course, any fixed neighborhood of $r=1/2$ contains radii $r'$ such that the corresponding slices enjoy good mass estimates (which degenerate as the neighborhoods shrink), but the fact that \eqref{flat-federer} does not allow to gain control at a fixed sub-scale makes its use rather inconvenient when it comes to the regularity statements that are needed in the present paper.

\medskip

In order to overcome these issues, we are going to define here an alternative notion of flat (pseudo)-distance $\modp$, inspired by that proposed by Simon in \cite{Simon83}. While the two definitions are not metrically equivalent in a given compact set, they induce the same notion of convergence on $\mathscr{F}_m(C)$ (see Proposition \ref{flat:same topology} below) and, in particular, the same equivalence classes $\modp$. Let $\Omega$ and $C$ be as above, and assume, for the sake of simplicity, that $C$ is a Lipschitz neighborhood retract of $\R^{m+n}$. For any $T \in \mathscr{F}_m(C)$, and for any open set $W \ssubset \Omega$, we define

\begin{equation} \label{flat-simon-2}
\begin{split}
    \hat\flat^p_{W}(T) := \inf\Big\lbrace & \|R\|(W) + \|Z\|(W) \, \colon \, R \in \mathscr{R}_m(\Omega),\, Z \in \mathscr{R}_{m+1}(\Omega) \\  
    & \qquad \mbox{such that $T = R + \partial Z + pP$ for some $P \in \mathscr{F}_m(\Omega)$} \Big\rbrace\,, 
\end{split}
\end{equation}
and then we let the \emph{modified flat distance modulo $p$} between $T,S \in \mathscr{F}_m(C)$ in $W$ to be $\hat\flat^p_W(T-S)$.

Notice that, since integral currents are dense in the space of integral flat chains with respect to (classical) flat distance, \eqref{flat-simon-2} is unchanged if we replace the condition $P \in \mathscr{F}_m(\Omega)$ with $P \in \mathscr{I}_m(\Omega)$. Furthermore, since $C$ is a Lipschitz neighborhood retract of $\R^{m+n}$, one could require the currents $R,Z$, and $P$ to be (compactly) supported in $C$ rather than in $\Omega$ and obtain a comparable definition of $\hat\flat^p_W(T)$ to the one in \eqref{flat-simon}, with comparison constant depending only on the Lipschitz constant of the retraction. Since we are only interested in the notion of convergence induced by the family $\{\hat\flat^p_W\}_W$ on $\mathscr{F}_m(C)$ and the corresponding equivalence classes $\modp$, we shall not enforce this requirement here (see also \cite[Remark 1.1]{DLHMS}). Observe that the quantity $\hat\flat^p$ is monotone non-decreasing with respect to set inclusion, namely $\hat\flat^p_{W'}(T) \leq \hat\flat^p_{W}(T)$ if $W' \subset W$.

\medskip

The following proposition shows that, when $T$ is integer rectifiable, the value of $\hat\flat^p$ in an open set depends \emph{only} on the restriction of $T$ to the open set itself.

\begin{proposition} \label{p:who cares about what is outside}

Let $\Omega$ and $C$ be as above, and let $T \in \mathscr{R}_m(C)$. For any open set $W \ssubset \Omega$, it holds
\begin{equation} \label{e:extension}
    \hat\flat^p_{W}(T) = \hat\flat^p_{W}(T\mres W)\,.
\end{equation}
\end{proposition}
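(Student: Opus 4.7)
The plan is to derive the two inequalities $\hat\flat^p_W(T\mres W) \leq \hat\flat^p_W(T)$ and $\hat\flat^p_W(T) \leq \hat\flat^p_W(T\mres W)$ by a symmetric bijective correspondence between admissible triples on each side, exploiting that since $T \in \mathscr{R}_m(C)$, its restriction $T\mres(\Omega\setminus W)$ is again an integer rectifiable current with compact support in $C \subset \Omega$, hence an element of $\mathscr{R}_m(\Omega)$. Moreover, being concentrated outside $W$, it satisfies $\|T\mres(\Omega\setminus W)\|(W) = 0$, and the decomposition $T = T\mres W + T\mres(\Omega\setminus W)$ holds since $W$ is open and $T$ has no boundary component orthogonal to this splitting.

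For the first inequality, I will take any admissible triple $(R,Z,P)$ for $\hat\flat^p_W(T)$, that is, $R \in \mathscr{R}_m(\Omega)$, $Z \in \mathscr{R}_{m+1}(\Omega)$, $P \in \mathscr{F}_m(\Omega)$ with $T = R + \partial Z + pP$, and produce the admissible triple $(R', Z, P)$ for $\hat\flat^p_W(T \mres W)$ where $R' := R - T\mres(\Omega\setminus W)$, using the tautological identity
\[
T \mres W = (R - T\mres(\Omega\setminus W)) + \partial Z + pP.
\]
The key numerical point is that $\|R'\|(W) = \|R\|(W)$, because the subtracted correction is supported in $\Omega\setminus W$ and contributes no mass to the measure restricted to the open set $W$; this follows from the basic fact that for integer rectifiable currents $R_1, R_2$ with $\|R_2\|(E) = 0$ one has $\|R_1 + R_2\|(E) = \|R_1\|(E)$. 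Passing to the infimum over triples then yields the claim. The reverse inequality follows by the dual construction: starting from an admissible triple $(R,Z,P)$ for $\hat\flat^p_W(T\mres W)$, the triple $(R + T\mres(\Omega\setminus W), Z, P)$ decomposes $T$ and carries the same value $\|R\|(W) + \|Z\|(W)$ by the same disjointness argument.

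I do not foresee any serious obstacle in this plan: the argument relies solely on the elementary facts that the restriction of an integer rectifiable current to a Borel set is again integer rectifiable, and that mass concentrated outside $W$ is invisible to $\|\cdot\|(W)$. It is precisely the absence of any control on the ``bulk'' chain $P$ in the definition \eqref{flat-simon-2} of $\hat\flat^p_W$ that allows this clean additive modification; the same manipulation would fail for Federer's original $\flat^p_K$ because there one would need the modification to live inside a prescribed compact set $K$.
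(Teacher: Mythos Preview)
Your proof is correct and follows essentially the same approach as the paper: both argue by modifying the $R$-component of an admissible triple via addition or subtraction of $T\mres(\Omega\setminus W)$, which is integer rectifiable with zero mass in $W$, leaving the value $\|R\|(W)+\|Z\|(W)$ unchanged. The paper phrases this with an $\varepsilon$-almost-optimal triple rather than passing directly to the infimum, but the content is identical.
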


\begin{proof}

For any $\delta > 0$, let $R^\delta \in \mathscr{R}_m(\Omega)$, $Z^\delta \in \mathscr{R}_{m+1}(\Omega)$, and $P^\delta \in \mathscr{I}_{m}(\Omega)$ be such that
\begin{equation} \label{almost optimal two}
T = R^\delta + \partial Z^\delta + p P^\delta \qquad \mbox{and} \qquad \|R^\delta\|(W) + \|Z^\delta\|(W) \le \hat\flat^p_{W}(T) + \delta\,.
\end{equation}

We can then write
\[
    T \mres W = T - T \mres (\R^{m+n} \setminus W) =R^\delta - T \mres (\R^{m+n} \setminus W) +  \partial Z^\delta + p P^\delta 
\]
so that
\[
\hat\flat^p_{W}(T \mres W) \leq \|R^\delta\|(W) + \|Z^\delta\|(W) \leq \hat\flat^p_{W}(T) + \delta\,,
\]
and thus the inequality
\begin{equation} \label{easy one}
    \hat\flat^p_{W}(T \mres W) \leq \hat\flat^p_{W}(T)
\end{equation}
follows from the arbitrariness of $\delta$.

\medskip

For the converse, for any $\delta > 0$ let now $R^\delta \in \mathscr{R}_m(\Omega)$, $Z^\delta \in \mathscr{R}_{m+1}(\Omega)$, and $P^\delta \in \mathscr{I}_{m}(\Omega)$ be such that
\begin{equation} \label{almost optimal converse}
T \mres W = R^\delta + \partial Z^\delta + p P^\delta \qquad \mbox{and} \qquad \|R^\delta\|(W) + \|Z^\delta\|(W) \le \hat\flat^p_{W}(T \mres W) + \delta\,.
\end{equation}

We can then write
\[
    T = T \mres W + T \mres (\R^{m+n} \setminus W) = R^\delta + T \mres (\R^{m+n}\setminus W) + \partial Z^\delta + p P^\delta\,,
\]
which, since $T \mres (\R^{m+n} \setminus W)$ is integer rectifiable with zero localized mass in $W$, yields
\begin{equation}\label{even easier one?}
    \hat\flat^p_{W}(T) \leq \|R^\delta\|(W) + \|Z^\delta\|(W) \leq \hat\flat^p_{W}(T \mres W) + \delta\,,
\end{equation}
and the conclusion follows again from the arbitrariness of $\delta$.
\end{proof}

The following proposition compares the values of $\flat^p$ and $\hat\flat^p$ for a given flat chain $T$.

\begin{proposition}\label{flat:same topology}
Let $C \subset \Omega$ be a Lipschitz neighborhood retract of $\R^{m+n}$, and let $T$ be in $\mathscr{F}_m(C)$.

\begin{itemize}

\item[(a)] Let $K \subset C$ be a compact set. Then
\begin{equation}\label{f2s}
    \hat\flat^p_W(T) \leq \flat^p_K(T)
\end{equation}
for all open sets $W \ssubset \Omega$. 

\vspace{0.2cm}

\item[(b)] Let $W \ssubset \Omega$ be an open set. For every $\varepsilon > 0$ there exists an open set $U_{\varepsilon} \subset C \cap W$ with $\dist(\overline{U_\eps}, \R^{m+n} \setminus W) < \eps$ such that
\begin{equation}\label{s2f}
    \flat^p_{\overline{U_\varepsilon}}(T \mres U_\varepsilon) \leq C_\varepsilon\, \hat\flat^p_W(T)\,,
\end{equation}
where $C_\varepsilon \to \infty$ as $\varepsilon \to 0^+$.

\end{itemize}
\end{proposition}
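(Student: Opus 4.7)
The plan is as follows. Part (a) is essentially a tautology: any admissible triple $(R, Z, P) \in \mathscr{R}_m(K) \times \mathscr{R}_{m+1}(K) \times \mathscr{F}_m(K)$ for the infimum defining $\flat^p_K(T)$ has $R, Z$ integer rectifiable with compact support in $K \subset C \subset \Omega$, so it also satisfies the constraints of \eqref{flat-simon-2}. The trivial bound $\|R\|(W) + \|Z\|(W) \leq \mass(R) + \mass(Z)$ and letting the slack go to zero yield \eqref{f2s}.

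The real content is part (b), which I will prove by a slicing argument. Let $f(q) := \dist(q, \R^{m+n} \setminus W)$ be the boundary distance, $1$-Lipschitz and positive on $W$, and for $r > 0$ set $\mathcal{U}_r := \{f > r\}$; note that $\overline{\mathcal{U}_r}$ is compactly contained in $W$ and $\dist(\overline{\mathcal{U}_r}, \R^{m+n} \setminus W) = r$. Given $\delta > 0$, pick an admissible triple $(R, Z, P)$ with $T = R + \partial Z + pP$ and $\|R\|(W) + \|Z\|(W) \leq \hat\flat^p_W(T) + \delta$; via the Lipschitz neighborhood retraction $\pi \colon \mathcal N \to C$ of the hypothesis, we may further arrange that $R, Z, P$ are supported in $C$, absorbing a multiplicative factor depending only on $\Lip(\pi)$ into the final constant.

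Applying the slicing formula \cite[Lemma 28.5]{Simon83} to $Z$ and $f$, for a.e. $r > 0$ the slice $\langle Z, f, r\rangle$ is an integer rectifiable $m$-current supported in $\{f = r\}$ and
\begin{equation*}
T \mres \mathcal{U}_r = \bigl(R \mres \mathcal{U}_r + \langle Z, f, r\rangle\bigr) + \partial(Z \mres \mathcal{U}_r) + p\,(P \mres \mathcal{U}_r),
\end{equation*}
with the slicing mass inequality $\int_{\eps/2}^\eps \mass(\langle Z, f, r\rangle)\, dr \leq \|Z\|(\{\eps/2 < f < \eps\}) \leq \|Z\|(W)$. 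Therefore there exists $r_* \in (\eps/2, \eps)$ with $\mass(\langle Z, f, r_*\rangle) \leq 2\|Z\|(W)/\eps$. Setting $U_\eps := \mathcal{U}_{r_*} \cap C$, the set $\overline{U_\eps}$ is compact with $\dist(\overline{U_\eps}, \R^{m+n}\setminus W) = r_* < \eps$, and the displayed identity at $r = r_*$ exhibits $T \mres U_\eps$ as $R' + \partial Z' + pP'$ with $R' := R \mres U_\eps + \langle Z, f, r_*\rangle$ and $Z' := Z \mres U_\eps$ integer rectifiable and supported in the compact set $\overline{U_\eps} \cap C$; hence
\begin{equation*}
\flat^p_{\overline{U_\eps}}(T \mres U_\eps) \leq \mass(R') + \mass(Z') \leq \left(1 + \tfrac{2}{\eps}\right)\bigl(\hat\flat^p_W(T) + \delta\bigr).
\end{equation*}

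Choosing $\delta = \hat\flat^p_W(T)$ whenever the latter is positive gives \eqref{s2f} with $C_\eps$ of order $1/\eps$; the degenerate case $\hat\flat^p_W(T) = 0$ is recovered by applying the above quantitative estimate to a sequence of decompositions whose localized masses vanish and running a diagonal argument to produce a single $U_\eps$ on which $T \equiv 0 \modp$. The principal obstacle is precisely that the slicing radius $r_*$, and therefore $U_\eps$, must be chosen in response to the near-optimal decomposition rather than fixed a priori: the slice masses are controlled only in an $L^1$-average over $r \in (\eps/2, \eps)$, and any attempt to pre-specify $r_*$ independently of the decomposition destroys the quantitative bound. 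This is why the statement allows $U_\eps$ to depend on $T$.
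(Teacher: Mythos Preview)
Your argument for (a) and the main slicing strategy for (b) are correct and match the paper's approach: slice with the boundary distance $f=\dist(\cdot,\R^{m+n}\setminus W)$, use the coarea-type bound on $\int\mass(\langle Z,f,r\rangle)\,dr$ to pick a good radius, and set $U_\eps$ to be the superlevel set. The paper differs only in how it disposes of the slack: it works with a \emph{sequence} $(R_h,Z_h,P_h)$ of near-optimal decompositions, applies Fatou's lemma to $\int_0^\eps \mass(\langle Z_h,f,r\rangle)\,dr \le \|Z_h\|(W)$ to locate a \emph{single} radius $\sigma\in(0,\eps)$ with $\liminf_h \mass(\langle Z_h,f,\sigma\rangle)\le \eps^{-1} L\,\hat\flat^p_W(T)$, fixes $U_\eps=\{f>\sigma\}\cap C$ independently of the slack, and only then lets the slack vanish along a subsequence. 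Your shortcut of using one decomposition and absorbing the slack by setting $\delta=\hat\flat^p_W(T)$ is a legitimate simplification in the case $\hat\flat^p_W(T)>0$, at the price of a harmless factor $2$ in $C_\eps$.

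The one genuine soft spot is the degenerate case $\hat\flat^p_W(T)=0$. Your ``diagonal argument'' produces, for each $h$, a radius $r_h\in(\eps/2,\eps)$ and a set $U_{\eps,h}=\{f>r_h\}\cap C$ with $\flat^p_{\overline{U_{\eps,h}}}(T\mres U_{\eps,h})\to 0$; but the $r_h$ vary with $h$, and passing to a convergent subsequence $r_h\to r_*$ does not by itself control $T\mres\{f>r_*\}$. Indeed the discrepancy $T\mres U_{\eps,h}-T\mres\{f>r_*\}$ is $T$ restricted to a thin shell, and since $T\in\mathscr{F}_m(C)$ is only assumed to be a flat chain (no local finiteness of mass), this restriction need not be small in any norm. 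The clean fix is precisely the paper's Fatou step, which selects one $r_*$ working simultaneously for a subsequence of decompositions; once you invoke it, the zero case follows immediately (and in fact the whole argument becomes uniform in $\delta$, so the case distinction disappears).
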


\begin{proof} {\bf Proof of (a).} We can assume that $\flat^p_K(T)<\infty$, otherwise the inequality is trivial. In particular, $\spt(T) \subset K$. For any $\delta > 0$, let $R^\delta \in \mathscr{R}_m(K)$, $Z^\delta \in \mathscr{R}_{m+1}(K)$, and $P^\delta \in \mathscr{F}_m(K)$ be such that 
    \[
 T = R^\delta + \partial Z^\delta + p P^\delta \qquad \mbox{and} \qquad \mass(R^\delta) + \mass (Z^\delta)\leq  \mathscr{F}_K^p(T)+\delta\,.
\]

In particular, since $R^\delta, Z^\delta$, and $P^\delta$ are supported in $K$, it holds
\[
\|R^\delta\|(W') + \|Z^\delta\|(W')=\mass(R^\delta) + \mass (Z^\delta) \leq \mathscr{F}_K^p(T)+\delta
\]
for all open sets $W' \ssubset \Omega$ such that $K \subset W'$. Thus, for any $W$ as in the statement, letting $W' \ssubset \Omega$ be any open set containing $W \cup K$ we have
\[
\hat\flat^p_W(T) \leq \|R^\delta\|(W) + \|Z^\delta\|(W) \leq \|R^\delta\|(W') + \|Z^\delta\|(W') \leq \mathscr{F}_K^p(T)+\delta\,,
\]
so that \eqref{f2s} follows by letting $\delta \downarrow 0$.
    
\medskip
    
{\bf Proof of (b).} Let $R_h \in \mathscr{R}_m(\Omega)$, $Z_h \in \mathscr{R}_{m+1}(\Omega)$, and $P_h \in \mathscr{I}_m(\Omega)$ be such that
\begin{equation} \label{s2f1}
T = R_h + \partial Z_h + p P_h\,, \qquad \|R_h\|(W) + \|Z_h\|(W) \leq \hat\flat^p_W(T) + \frac{1}{h}\,.
\end{equation}
Letting $\Pi \colon \R^{m+n} \to C$ be a Lipschitz retraction, we can first replace the currents $R_h,Z_h$, and $P_h$ with $\Pi_\sharp R_h$, $\Pi_\sharp Z_h$, and $\Pi_\sharp P_h$, respectively, in such a way that the first part of \eqref{s2f1} holds with currents $R_h, Z_h$, and $P_h$ supported on $C$, whereas the inequality in the second part still holds with the right-hand side multiplied by $L:=\Lip(\Pi)^{m+1}$ in case $\Lip(\Pi) > 1$. Next, fix $\eps > 0$, and let  ${\rm d}_W$ denote the function ${\rm d}_W(q) := \dist(q, \R^{m+n} \setminus W)$. By \cite[Lemma 28.5]{Simon83}, it holds
\begin{equation}\label{s2f3}
    \int_{0}^\eps \mass(\langle Z_h, {\rm d}_W, \sigma \rangle) \, d\sigma \leq \|Z_h\|(W) \leq L \left( \hat\flat^p_W(T) + \frac{1}{h} \right)\,,
\end{equation}
so that there exists $\sigma \in \left( 0, \varepsilon \right)$ and, for every $\delta > 0$ there exists a subsequence $h(\ell)$ such that
\begin{equation}\label{s2f:controlling the slices}
    \sup_{\ell \geq 1} \mass (\langle Z_{h(\ell)}, {\rm d}_W, \sigma \rangle) \leq \frac{L}{\varepsilon} \hat\flat^p_W(T) + \delta\,.
\end{equation}
Then, let
\begin{equation}\label{s2f5}
    U_\eps := \lbrace q \in C \, \colon \, {\rm d}_W(q) > \sigma \rbrace\,, \qquad K_\eps := \overline{U_\eps}\,.
\end{equation}

Notice that $U_\eps \subset C \cap W$ is open, $K_\eps$ is compact, and $\dist(K_\eps, \R^{m+n} \setminus W) < \eps$ by the choice of $\sigma$. Next, we can write from \eqref{s2f1} and the slicing formula

\[
\begin{split}
T \mres U_\eps &= R_{h(\ell)} \mres U_\eps + (\partial Z_{h(\ell)}) \mres U_\eps + p\, P_{h(\ell)}\mres U_\eps \\
& = R_{h(\ell)} \mres U_\eps + \langle Z_{h(\ell)}, {\rm d}_W, \sigma \rangle + \partial (Z_{h(\ell)}) \mres U_\eps) + p\, P_{h(\ell)}\mres U_\eps\,,
\end{split}
\]

so that combining \eqref{s2f1} and \eqref{s2f:controlling the slices} yields

\begin{equation}\label{s2f6}
\begin{split}
    \flat^p_{K_\eps}(T\mres U_\eps) &\leq \mass(R_{h(\ell)} \mres U_\eps) + \mass(Z_{h(\ell)} \mres U_\eps) + \mass(\langle Z_{h(\ell)}, {\rm d}_W, \sigma\rangle) \\
    &\leq L\, \left( 1 + \frac{1}{\varepsilon} \right) \,\hat\flat^p_W(T) + \frac{L}{h(\ell)} + \delta  \,.
\end{split}    
\end{equation}
and \eqref{s2f} follows by letting first $\ell \to \infty$ and then $\delta \to 0^+$.

\end{proof}

\begin{corollary} \label{cor:flat hat and congruence}
If $T \in \mathscr{R}_m(C)$ and $W \ssubset \Omega$ is such that $\hat\flat^p_W(T)=0$, then $T \mres U = 0 \; \modp$ for every $U \ssubset W$.
\end{corollary}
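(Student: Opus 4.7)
The plan is to derive the congruence $T\mres U \equiv 0 \modp$ via a slicing argument on an inner exhaustion of $U$, and then to pass to the limit using that $T$ is rectifiable and hence has locally finite mass. I would fix $U\ssubset W$, set $h(q):=\dist(q,\R^{m+n}\setminus U)$, and introduce the open sublevel sets $U_\tau := \{h>\tau\}$, which for $\tau\in (0,r_0)$ with $r_0$ small enough satisfy $U_\tau \ssubset U \ssubset W$ and increase to $U$ as $\tau\downarrow 0^+$.

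From the assumption $\hat\flat^p_W(T)=0$ I would extract, for each $j\in\N$, a decomposition $T = R_j+\partial Z_j + pP_j$ with $R_j\in\mathscr{R}_m(\Omega)$, $Z_j\in\mathscr{R}_{m+1}(\Omega)$, $P_j\in\mathscr{F}_m(\Omega)$, and $\|R_j\|(W)+\|Z_j\|(W)<1/j$. Applying the classical slicing formula \cite[Lemma 28.5]{Simon83} both to $Z_j$ and to the integer rectifiable pieces of a further decomposition $P_j = \tilde R_j + \partial \tilde Z_j$, for a.e. $\tau\in (0,r_0)$ (simultaneously for all $j$, after intersecting countably many full-measure sets) one obtains
\begin{equation*}
T\mres U_\tau \;=\; \bigl(R_j\mres U_\tau + \langle Z_j, h,\tau\rangle\bigr) + \partial(Z_j\mres U_\tau) + p\,(P_j\mres U_\tau),
\end{equation*}
where $P_j\mres U_\tau = \tilde R_j\mres U_\tau + \partial(\tilde Z_j\mres U_\tau) + \langle \tilde Z_j,h,\tau\rangle$ is a bona fide element of $\mathscr{F}_m(\overline{U_\tau})$. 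This yields the estimate $\flat^p_{\overline{U_\tau}}(T\mres U_\tau) \leq \|R_j\|(W)+\|Z_j\|(W)+\mass(\langle Z_j,h,\tau\rangle)$.

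Now the coarea bound $\int_0^{r_0}\mass(\langle Z_j,h,\tau\rangle)\,d\tau \leq \|Z_j\|(W) < 1/j$ combined with Fatou's lemma gives $\liminf_j\mass(\langle Z_j,h,\tau\rangle) = 0$ for a.e. $\tau$. For any such good $\tau$, passing to a subsequence along which the slice mass vanishes produces $\flat^p_{\overline{U_\tau}}(T\mres U_\tau) = 0$, i.e. $T\mres U_\tau \equiv 0 \modp$. To conclude, I would write $T\mres U = T\mres U_\tau + T\mres(U\setminus U_\tau)$ and combine the monotonicity $\flat^p_{\overline U}(T\mres U_\tau) \leq \flat^p_{\overline{U_\tau}}(T\mres U_\tau) = 0$ with the trivial bound $\flat^p_{\overline U}(T\mres(U\setminus U_\tau)) \leq \|T\|(U\setminus U_\tau)$. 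Letting $\tau\downarrow 0$ through the good values and invoking monotone convergence for the finite Radon measure $\|T\|$ on $\overline U$, the right-hand side vanishes, so $\flat^p_{\overline U}(T\mres U)=0$ as required.

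The main technical nuisance, and the only step requiring genuine care, is the verification that $P_j\mres U_\tau$ belongs to $\mathscr{F}_m(\overline{U_\tau})$: a generic restriction of a flat chain to an open set is not automatically a Federer flat chain, and this is precisely what the auxiliary slicing of $\tilde Z_j$ sketched above is designed to handle, provided $\tau$ is chosen generically with respect to both $Z_j$ and $\tilde Z_j$.
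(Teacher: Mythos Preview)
Your argument is correct. The paper's proof is shorter because it invokes Proposition~\ref{flat:same topology}(b) (which already packages the slicing work) with respect to $W$ rather than $U$: this yields an intermediate open set $U'$ with $U\subset U'\subset C\cap W$ and $\flat^p_{\overline{U'}}(T\mres U')=0$. The endgame is then different from yours: instead of a limit, the paper uses the structural fact that an integer rectifiable current congruent to $0\modp$ is of the form $pR$ for some rectifiable $R$, whence $T\mres U = p(R\mres U) = 0\modp$ immediately. Your route---slicing inside $U$ and passing to the limit $\tau\to 0$ via $\|T\|(U\setminus U_\tau)\to 0$---is equally valid and self-contained; the paper simply trades that limiting step for the elementary representation $T\mres U' = pR$, which is available precisely because $T$ is rectifiable.
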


\begin{proof}
From Proposition \ref{flat:same topology}(b) it follows that there exists $U'$ with $C \cap U \subset U' \subset C \cap W$ such that $\flat^p_{\overline{U'}}(T \mres U') = 0$, so that $T \mres U' = 0 \; \modp$. In particular, since $T$ is integer rectifiable there exists a rectifiable current $R$ such that $T \mres U' = p\,R$, which in turn gives $T \mres U = p\, R \mres U = 0\;\modp$.
\end{proof}

\section{Proof of Lemma \ref{lem:qualitative flat_excess}}
The Lemma will be a simple consequence of a compactness argument and of the following extreme case.

\begin{lemma} \label{l:zero_excess}
Let $\bC$ and $\bS$ be as in Lemma \ref{lem:qualitative flat_excess}. There exists $\eta_1 = \eta_1(\bS) > 0$ with the following property. Let $T$ be a representative $\modp$ in $\bB_1 \subset \R^{m+n}$ with $\pa T \mres \bB_1 = 0 \, \modp$. If \begin{equation} \label{hp:zero excess}
\bE (T, \bS, 0, 1) = 0 \qquad \mbox{and} \qquad \hat\flat^p_{{\bB}_1} (T-\bC) < 2\,\eta_1
\end{equation}
then 
\begin{equation} \label{e:identical currents}
T \mres \bB_1 = \bC \mres \bB_1\,.
\end{equation}
\end{lemma}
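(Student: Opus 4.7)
My plan is to exploit the extreme rigidity forced by $\bE(T,\bS,0,1) = 0$: this equation forces $\spt(T) \cap \bB_1 \subset \bS$, so $T \mres \bB_1$ is a rectifiable current supported on the finite union of half-planes $\bH_1, \ldots, \bH_N$ comprising $\bS$. Combined with the bound $|\theta| \leq p/2$ coming from the fact that $T$ is a representative $\modp$, and the mod-$p$ constancy lemma applied on each page separately, this pins $T$ down to one of finitely many configurations $\sum_i \theta_i \a{\bH_i \cap \bB_1}$ with $\theta_i \in \mathbb Z \cap [-p/2,p/2]$. A separation argument in $\hat\flat^p$ then isolates $\bC$ from the (finitely many) other configurations in this list, and fixing $\eta_1$ below the separation constant yields $T = \bC$.

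The first step is to run constancy $\modp$ on each page. The key observation is that the hypothesis $\partial T \mres \bB_1 = 0 \, \modp$, together with the monotonicity of $\hat\flat^p$ in the localizing set, upgrades to $\partial T = 0 \, \modp$ on the open set $\bB_1 \setminus V$, where $V$ denotes the spine of $\bS$. Since the pages $\bH_i$ are pairwise disjoint off $V$, every point of $\bH_i \cap \bB_1 \setminus V$ has a neighborhood disjoint from all other pages, and the classical constancy lemma $\modp$ applied there forces the multiplicity of $T$ to be a constant element of $\mathbb Z/p\mathbb Z$ on each page. The canonical-representative bound $|\theta| \leq p/2$ then lifts this to a unique integer $\theta_i \in \mathbb Z \cap [-p/2, p/2]$. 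Orienting the pages consistently with the structure of $\bC$, I obtain
\[
T \mres \bB_1 = \sum_{i=1}^N \theta_i \a{\bH_i \cap \bB_1}.
\]

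Next I would observe that the set $\mathcal F$ of all such integer rectifiable currents, as $(\theta_1, \ldots, \theta_N)$ ranges over $(\mathbb Z \cap [-p/2,p/2])^N$, is finite and contains both $T$ and $\bC = \sum_i \kappa_i \a{\bH_i \cap \bB_1}$. I then claim that $\hat\flat^p_{\bB_1}(S - \bC) > 0$ for every $S \in \mathcal F$ different from $\bC$: if not, Corollary \ref{cor:flat hat and congruence} would give $S - \bC \equiv 0 \, \modp$ on every $U \ssubset \bB_1$; restricting to a small ball $\bB_\rho(q_0)$ centered at a relative interior point of a single page $\bH_{i_0}$ disjoint from all other pages would yield $(\theta_{i_0} - \kappa_{i_0}) \a{\bH_{i_0} \cap \bB_\rho(q_0)} \equiv 0 \, \modp$, hence $\theta_{i_0} \equiv \kappa_{i_0} \, \modp$. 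Since both integers lie in $[-p/2, p/2]$, this forces $\theta_{i_0} = \kappa_{i_0}$, and varying $i_0$ yields $S = \bC$, a contradiction.

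Finally, the finiteness of $\mathcal F \setminus \{\bC\}$ makes $c(\bS) := \min\{\hat\flat^p_{\bB_1}(S - \bC) : S \in \mathcal F \setminus \{\bC\}\}$ strictly positive, and I close by choosing $\eta_1(\bS) := c(\bS)/3$: the bound $\hat\flat^p_{\bB_1}(T-\bC) < 2\eta_1 < c(\bS)$ combined with $T \in \mathcal F$ then forces $T = \bC$. The main subtlety I expect is in the first step: correctly applying the $\modp$ constancy lemma near (but not at) the spine $V$, using disjointness of pages together with monotonicity of $\hat\flat^p$ to justify the vanishing boundary condition in $\bB_1 \setminus V$ rather than $\bB_1$ itself. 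A secondary delicate point is that the chain of implications crucially uses the canonical-representative normalization (multiplicity in $[-p/2, p/2]$), which is what prevents mod-$p$ ambiguity from generating configurations arbitrarily close to $\bC$.
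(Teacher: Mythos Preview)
Your proof is correct and follows essentially the same strategy as the paper's: zero excess forces $\spt(T)\cap\bB_1\subset\bS$, constancy $\modp$ on each page yields finitely many candidate configurations, and separation in $\hat\flat^p$ via Corollary~\ref{cor:flat hat and congruence} isolates $\bC$. One minor wrinkle: your implication ``$\theta_{i_0} \equiv \kappa_{i_0}\ \modp$ with both in $[-p/2,p/2]$ forces $\theta_{i_0} = \kappa_{i_0}$'' fails when $p$ is even and the values are $\pm p/2$; the paper avoids this by working at the level of $\modp$ congruence classes throughout and upgrading to exact equality of currents only at the very end, using that the multiplicities on $\bC$ satisfy $\kappa_i < p/2$ strictly.
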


\begin{proof}
To fix the notation, let $\kappa_i \in \left[1,\sfrac{p}{2}\right) \cap \Z$ and $\bH_i$ be such that 
\[
\bC = \sum_{i=1}^N \kappa_i \a{\bH_i}\,.
\]
 Let $T$ be as in the statement. The first hypothesis in \eqref{hp:zero excess} implies that $\spt(T) \cap \bB_1 \subset \bS$. Given that $\pa T = 0 \, \modp$, the constancy lemma for currents $\modp$ \cite[Lemma 7.4]{DLHMS} applied on each page $\bH_i$ of the book $\bS$ implies that there are integers $\theta_i$ with $\abs{\theta_i} \in \left[0, \frac{p}{2} \right]$ such that
\begin{equation} \label{e:congruence_modp}
T \mres \bB_1 = \sum_{i=1}^N \theta_i \a{\bH_i} \mres \bB_1 \quad \modp\,.
\end{equation}
Since there are only finitely many classes of integer rectifiable representatives $\modp$ having the structure \eqref{e:congruence_modp} which are not congruent to $\bC$ $\modp$ in $\bB_1$, the minimum of their $\hat\flat^p_{\bB_1}$-distance from $\bC$ is positive by Corollary \ref{cor:flat hat and congruence}. If we let $2\,\eta_1$ be this value, the condition $\hat\flat^p_{\bB_1}(T - \bC) < 2\,\eta_1$ forces  
\begin{equation} \label{e:identical_modp}
T \mres \bB_1 = \bC\mres \bB_1 \quad \modp\,.
\end{equation}
The conclusion in \eqref{e:identical currents} then follows from the fact that $T$ is representative and all multiplicities on $\bC$ satisfy $\kappa_i < \sfrac{p}{2}$.
\end{proof}

\begin{proof}[Proof of Lemma \ref{lem:qualitative flat_excess}]
By \eqref{compactness assumption} and standard slicing and compactness, there exist $\sigma \in \left( 1, 3/2 \right)$ and a rectifiable current $T$ in $\bB_\sigma$ which is representative $\modp$ with $(\pa T) \mres \bB_\sigma = 0  \; \modp$ such that $\flat^p_{\overline{\bB}_\sigma}(T_j \mres \bB_\sigma - T) \to 0$. In particular, $\hat\flat^p_{\bB_1}(T_j-T) \to 0$ by Propositions \ref{p:who cares about what is outside} and \ref{flat:same topology}.

For every $\lambda > 0$, setting $U_\lambda := \left\lbrace q \in \bB_1 \, \colon \, \dist^2(q,\bS) > \lambda \right\rbrace$ we have that
\begin{equation} \label{cebicev}
\|T_j\|(U_\lambda) \leq \lambda^{-1} \, \int_{\bB_1} \dist^2(\cdot, \bS) \, d\|T_j\| = \lambda^{-1} \bE (T_j, \bS, 1)\,.
\end{equation}
Since $T_j,T$ are representatives $\modp$, and the mass $\modp$ is lower semicontinuous with respect to the flat convergence $\modp$ in $\overline{U_\lambda}$ for almost every $\lambda > 0$, we conclude from \eqref{cebicev} that $\spt(T) \cap \bB_1 \subset \bS \cap \bB_1$, and thus $\bE (T, \bS, 1)=0$.
Lemma \ref{l:zero_excess} then implies that $T \mres \bB_1 = \bC \mres \bB_1$, and the proof is complete.
\end{proof}

\section{Proof of Lemma \ref{lem.flat-L^2 estimate}}

We first prove the statement for $R=1$, and then we show how to deduce \eqref{eq.flat_L2 estimate} in full generality. Let $\bC$ be the cone $\bC = \bC' \times \a{V^{m-1}}$, where $\bC'$ is a singular one-dimensional cone in the orthogonal complement $V^\perp$ of $V$ in $\R^{m+n}$ (with $\bS':=\spt(\bC')$ contained in some two-dimensional linear subspace of $V^\perp$). We consider a retraction $F' \colon V^\perp \simeq \R^{n+1} \to \bS'$ satisfying the following properties:
\begin{itemize}
\item[(i)] $F'$ is 1-homogeneous
\item[(ii)] $\left.F'\right|_{\pa B_1^{n+1}}$ agrees with the closest point projection onto $\bS'\cap \partial B_1^{n+1} = \{v(i)\}_{i=1}^N$ in a tubular neighborhood of this set;
\item[(iii)] $F'$ is smooth outside of $0$ and $L$-Lipschitz with Lipschitz constant $L=L(\bS')$; 
\item[(iv)] $\abs{F'(x)-x}\le C\, \dist(x, \bS')$ for some $C=C(\bS')$.
\end{itemize}
For instance $F'$ can be constructed as follows (cf. Figure \ref{figura-3}): 
we fix $\rho \in \left(0,\sfrac{1}{8}\right)$ and a corresponding tubular neighborhood $U_{2\rho}:=\{ x \in \pa B^{n+1}_1 \, \colon \, \dist(x, \{v(i)\}_{i=1}^N)<2\rho\}$ where a closest point projection $F'_0: U_{2\rho} \to \{v(i)\}_{i=1}^N$ is uniquely defined. Then we extend $F'_0$ to $\partial B^{n+1}_1$ by setting \[F'_1(x) := \phi(\abs{x-F'_0(x)}) F'_0(x)\,,\] where $0 \leq \phi(t) \leq 1$ is a smooth cut-off with $\phi(t)=1$ for $t<\rho$ and $\phi(t)=0$ for $t\ge 2\rho$. Finally, we let $F' \colon \R^{n+1} \to \bS'$ be the 1-homogenous extension \[F'(x) = \abs{x} \,F'_1\left(\frac{x}{\abs{x}}\right)\,.\] 
We remark that $\rho$, and thus the Lipschitz constants of $\phi$ and $F'$, depend on the smallest opening angle between two distinct branches of $\bC'$. Now, if $x$ is such that $\frac{x}{\abs{x}} \in U_{\rho}$ we have that \[\abs{F'(x)-x} = \abs{x}\, \Abs{ F_1'\left(\frac{x}{\abs{x}}\right)- \frac{x}{\abs{x}}}\le 2\, \dist(x, \bS')\,.\] If, instead, $\frac{x}{\abs{x}} \notin U_{\rho}$ then $\dist(x,\bS')\ge c_\rho \,\abs{x}$, whereas $\abs{F'(x)-x} \le  \abs{x}$, so that (iv) holds for an appropriate constant $C(\bS')$. 

\begin{figure}
\begin{tikzpicture}
\fill[gray!10] (0,0)--({2*sqrt(2)*cos(100)},{2*sqrt(2)*sin(100)}) arc (100:140:{2*sqrt(2)}) -- (0,0);
\fill[gray!10] (0,0)--({2*sqrt(2)*cos(-120)},{2*sqrt(2)*sin(-120)}) arc (-120:-160:{2*sqrt(2)}) -- (0,0);
\fill[gray!10] (0,0)--({2*sqrt(2)*cos(5)},{2*sqrt(2)*sin(5)}) arc (5:-35:{2*sqrt(2)}) -- (0,0);
\fill[gray!10] (0,0)--({2*sqrt(2)*cos(25)},{2*sqrt(2)*sin(25)}) arc (25:65:{2*sqrt(2)}) -- (0,0);
\fill[gray!30] (0,0)--({2*sqrt(2)*cos(110)},{2*sqrt(2)*sin(110)}) arc (110:130:{2*sqrt(2)}) -- (0,0);
\fill[gray!30] (0,0)--({2*sqrt(2)*cos(-130)},{2*sqrt(2)*sin(-130)}) arc (-130:-150:{2*sqrt(2)}) -- (0,0);
\fill[gray!30] (0,0)--({2*sqrt(2)*cos(-5)},{2*sqrt(2)*sin(-5)}) arc (-5:-25:{2*sqrt(2)}) -- (0,0);
\fill[gray!30] (0,0)--({2*sqrt(2)*cos(35)},{2*sqrt(2)*sin(35)}) arc (35:55:{2*sqrt(2)}) -- (0,0);
\draw (0,0) circle [radius = 2];
\draw[very thick] (0,0) -- (2,2);
\node[right] at ({sqrt(2)+0.2},{sqrt(2)}){$v(1)$};
\draw[very thick] (0,0) -- ({2*sqrt(2)*cos(120)},{2*sqrt(2)*sin(120)});
\node[above right] at ({2.2*cos(120)},{2.2*sin(120)}){$v(2)$};
\draw[very thick] (0,0) -- ({2*sqrt(2)*cos(-140)},{2*sqrt(2)*sin(-140)});
\node[left] at  ({2*cos(-140)-0.2},{2*sin(-140)}){$v(3)$};
\draw[very thick] (0,0) -- ({2*sqrt(2)*cos(15)},{-2*sqrt(2)*sin(15)});
\node[below right] at ({2*cos(-15)-0.2},{2*sin(-15)-0.2}){$v(4)$};
\draw[very thick] ({2*cos(110)},{2*sin(110)}) arc (110:130:2);
\draw[very thick] ({2*cos(-130)},{2*sin(-130)}) arc (-130:-150:2);
\draw[very thick] ({2*cos(-5)},{2*sin(-5)}) arc (-5:-25:2);
\draw[very thick]({2*cos(35)},{2*sin(35)}) arc (35:55:2);
\end{tikzpicture}
\caption{A visual illustration of the map $F$. In the nonshaded areas $F$ takes the constant value $0$. In each of the four darker shaded areas $F(x)$ is the unique point in the central halfline such that $|F(x)|=|x|$ (hence each thick arc is mapped into its middle point $v(i)$). In the remaining lighter shaded areas the map $F$ is extended to be Lipschitz, while still taking values in the nearest thick halfline.}\label{figura-3}
\end{figure}
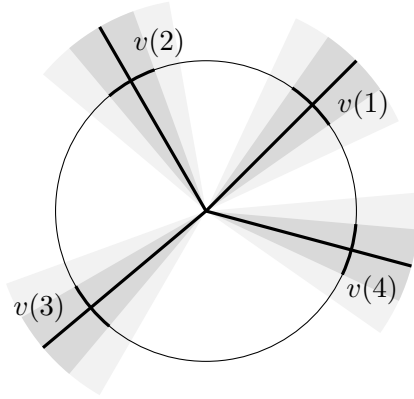

Next, we extend $F'$ to a retraction $F \colon \R^{m+n} \to \bS$ by setting \[F(q) := \left(F'(x),y\right) \qquad \mbox{for $q=(x,y) \in V^\perp \times V$}\,,\]
so that property (iv) implies
\begin{equation} \label{key property}
\abs{F(q)-q} \leq C(\bS) \, \dist(q,\bS) \qquad \mbox{for all $q \in \R^{m+n}$}\,.
\end{equation}

%
%

Consider now the linear homotopy $H: [0,1] \times \R^{m+n} \to \R^{m+n}$ defined by \[H(t,q):=(1-t)\, q + t\, F(q)\,,\] and deduce from \eqref{key property} that choosing $\eta_2=\eta_2(\bC)$ suitably small we can ensure that
\begin{equation} \label{key property 2}
\mbox{$\abs{q} \geq \frac{3}{4}$ and $\dist(q,\bS) < 2\,\eta_2$} \quad\implies\quad \abs{H(t,q)} \geq \frac{1}{2} \quad \mbox{for all $t \in \left[0,1\right]$}\,.
\end{equation}

Let now $T$ be as in the statement, and apply the polyhedral approximation theorem $\modp$ \cite[Theorem 3.4]{MS_a} to the restriction $T \mres \bB_1$ in order to determine, by exploiting the assumptions \eqref{hp0:flat_L2 estimate} and \eqref{hp1:flat_L2 estimate}, a sequence $\{\delta_k\}_{k=1}^\infty$ of positive numbers with $\delta_k \to 0^+$ as $k \to \infty$ and a sequence $\{P_k\}_{k=1}^\infty$ of representative $\modp$ integral polyhedral chains in $\bB_1$ such that
\begin{eqnarray*}
&\flat^p_{\overline{\bB}_1}(T\mres \bB_1-P_k) \leq \delta_k\,,\\ 
&\mass(P_k) \leq \mass^p(T \mres \bB_1) + \delta_k\,,\\ 
&\|P_k\| \weakstar \|T\| \mbox{ in $\bB_1$ as $k \to \infty$}\,,\\
&\mass^p((\pa P_k) \mres \bB_{1-\delta_k}) \leq \delta_k\,,\\ 
&\spt^p(\pa P_k) \setminus \bB_{1-\delta} \subset \{\dist(\cdot,\bS) \leq \eta_2 + \delta_k\}\,.
\end{eqnarray*}
We can then apply the $H$-homotopy formula to each $P_k$, and if $S_k$ is any representative $\modp$ of $\pa P_k$ we can write
\begin{equation} \label{homootopy formula}
F_\sharp P_k - P_k = \pa\left( H_\sharp(\a{(0,1)} \times P_k) \right) - H_\sharp(\a{(0,1)} \times S_k) =: \pa Z_k + W_k \, \modp\,.
\end{equation}
By the properties of $P_k$ and \eqref{key property 2}, $\|W_k\|(\bB_{\sfrac{1}{2}})\leq \delta_k$ for all $k$ sufficiently large. Hence, we can estimate for all such $k$
\begin{equation} \label{homotopy estimate k}
\hat\flat^p_{\bB_{\sfrac12}} (F_\sharp P_k - P_k) \leq \delta_k + \|Z_k\|(\bB_{\sfrac12}) \,,
\end{equation}
so that, letting $T'$ denote a representative $\modp$ of $F_{\sharp}(T\mres \bB_1)$, we have
\begin{equation}\label{eq.homotopy between T and C} 
\begin{split}
\hat\flat^p_{\bB_{\sfrac12}}(T' - T) &\le \liminf_{k \to \infty} \left(\hat\flat^p_{\bB_{\sfrac12}}(T'-F_\sharp P_k) + 2\,\delta_k +C(\bS)\int_{\bB_1} \abs{F(q) - q} \, d\|P_k\|(q) \right)\\
 &\le\liminf_{k \to \infty} \left( \hat\flat^p_{\bB_{\sfrac12}}(T'-F_\sharp P_k) + 2\,\delta_k +C(\bS)\int_{\bB_1} \dist(\cdot,\bS) \, d\|P_k\|  \right) \\ & = C(\bS) \, \int_{\bB_1} \dist(\cdot, \bS) \, d\|T\|\,.
\end{split}
\end{equation}

Finally we note that $T'$ is supported in $\bS$, and that $\spt^p(\pa T') \cap \bB_{\sfrac12} = \emptyset$. Estimating
\begin{equation} \label{flat distance estimate}
\begin{split}
\hat\flat^p_{\bB_{\sfrac12}}(T'-\bC) &= \hat\flat^p_{\bB_{\sfrac12}}(T'-F_\sharp \bC) \\
& \leq \liminf_{k \to \infty} \left(  \hat\flat^p_{\bB_{\sfrac12}}(T' - F_\sharp P_k) + \Lip(F)^{m+1}\, \hat\flat^p_{\bB_{\sfrac12}}(P_k-\bC)  \right)\\
&\overset{\eqref{hp2:flat_L2 estimate}}{\leq} \Lip(F)^{m+1}\,\eta_2
\end{split}
\end{equation}
we see then that, modulo possibly choosing a smaller $\eta_2(\bS)$, we can apply Lemma \ref{l:zero_excess} and conclude that $T' \mres \bB_{\sfrac12} = \bC\mres \bB_{\sfrac12}$, which proves the statement for $R=1$.\\

When $R < 1$, we can repeat the above proof replacing $T\mres \bB_1$ with $T_R \mres \bB_1$, where $T_R := (\eta_{0,R})_\sharp T$. The assumptions \eqref{hp0:flat_L2 estimate} to \eqref{hp2:flat_L2 estimate} hold for $T_R$ by scaling, and if $P_k$ is a polyhedral approximation of $T_R \mres \bB_1$ we can let $T'_R$ be a representative $\modp$ of $F_{\sharp}(T_R\mres\bB_1)$ so that, setting $T' := (\eta_{0,R^{-1}})_\sharp T'_R$, \eqref{eq.homotopy between T and C} becomes
\begin{equation} \label{final homotopy rescaled}
\hat\flat^p_{\bB_{\sfrac{R}{2}}} (T' - T) \leq C(\bS) \, R^{m+1} \, \int_{\bB_1} \dist(\cdot,\bS) \, d\|T_R\| = C(\bS)\,\int_{\bB_R} \dist(\cdot,\bS) \, d\|T\|\,,
\end{equation}
and the statement follows by arguing as above that $T' \mres \bB_{\bB_{\sfrac{R}{2}}} = \bC \mres \bB_{\sfrac{R}{2}}$ by Lemma \ref{l:zero_excess}.\qed

\section{Proofs of Lemma \ref{lem.L^infty-flat estimate} and Lemma \ref{lem.L^2 controls L^infty}}

\begin{proof}[Proof of Lemma \ref{lem.L^infty-flat estimate}] Let $W,Z,P$ be such that $T-S=W+\partial Z + p\,P$ in $\bB_{3R}$ and
\begin{equation}\label{eq:non lo so 01} \norm{W}(\bB_{2R}) + \norm{Z}(\bB_{2R}) \le 2\,\hat\flat^p_{\bB_{2R}}(T-S)\,. \end{equation}
Since both $T$ and $S$ have finite mass in $\bB_{3R}$, we can assume that $\|\pa Z\|(\bB_{2R}) < \infty$. Pick $q \in (\spt(T)\setminus \spt(S))\cap \bB_R$, and set $d=d(q)$ as in the statement. Observe that by assumption we have $0 < d < 2\,R$. By slicing theory, we may select $\frac{d}{4} < \sigma < \frac{d}{2}$ such that $\M(\langle Z, \mathbf{\varrho}_q, \sigma \rangle) \le 4\,d^{-1}\, \norm{Z}(\bB_{d}(q))$, where $\mathbf{\varrho}_Y(q')=\abs{q'-q}$. Note that $\bB_\sigma(q) \cap \spt(S) =\emptyset$, so that
\[ T\res \bB_\sigma(q) = W\res \bB_\sigma(q) - \langle Z, \mathbf{\varrho}_q, \sigma \rangle + \partial (Z\res \bB_\sigma(q)) +p\,P \res \bB_\sigma(q).  \]
Let us fix a Lipschitz retraction $F: \R^{m+n} \to \Sigma$. Since $\spt(T) \subset \Sigma$ we have 
\begin{equation} \label{competitor} 
T\res \bB_\sigma(q) = F_\sharp\left(W\res\bB_\sigma(q) - \langle Z, \mathbf{\varrho}_q, \sigma \rangle\right) + \partial F_\sharp (Z\res \bB_\sigma(q)) +p \, F_\sharp (P \res \bB_\sigma(q))\,.  \end{equation}

Since $T$ is area minimizing $\modp$, \eqref{competitor} implies that, for some constant $C=C(m)$,
\begin{equation}\label{Haus_flat_final} \begin{split}\frac1C\, \sigma^m \le \M(T\res \bB_\sigma(q)) & \le \M(F_\sharp(W\res \bB_\sigma(q) - \langle Z, \mathbf{\varrho}_q, \sigma \rangle)) \\ & \le \Lip(F)^m \left(\norm{W}(\bB_{2R}) +  \frac{2}{\sigma} \norm{Z}(\bB_{2R})\right)\, \end{split}\end{equation}
where in the first inequality we have used the almost monotonicity of the mass density ratio stemming from the minimality together with the assumption that $\|A_\Sigma\|_{L^\infty} \leq 1$. Plugging \eqref{eq:non lo so 01} into \eqref{Haus_flat_final}, we conclude that
\[ \min\{1,\sigma\}\, \sigma^{m} \le C \, \hat\flat^p_{\bB_{2R}}(T-S)\, \]
which completes the proof.
\end{proof}

\begin{proof}[Proof of Lemma \ref{lem.L^2 controls L^infty}] 
Let $q \in \spt(T) \cap \bB_{\sfrac12}\setminus K$, and set $2\rho:=\dist(q,K)$. Note that $\rho < \sfrac14$, and that $\dist(q', K)>\rho$ for all $q' \in \bB_\rho(q)$. Hence using minimality and the resulting almost monotonicity of the mass density ratio of $T$ we deduce
\begin{equation} \label{conticino}
\frac1C \rho^m \le \M(T\res \bB_\rho(q)) \le \rho^{-2} \int_{\bB_\rho(q)} \dist^2(q',K) \, d\norm{T}(q')\,, 
\end{equation}
which completes the proof.
\end{proof}


\section{Proof of Lemma \ref{l:monot-Jonas}} In order to simplify our notation we write $R$ for $R_1$. For a fixed $a\geq 0$, and for any $0<r<R$, we consider the vector field
\[
W_{a,r} (q):= \left(\frac{1}{\max(r,\abs{q})^{m+a}}-\frac{1}{R^{m+a}}\right)^+ q\,.
\]
We then insert $g^2 W_{a,r}$ in the first variation formula \eqref{e:H in L infty} to derive
\begin{align*}
- &\int_{\bB_R} g^2 W_{a,r} \cdot \vec{H}_T\, d\norm{T} =\frac{m}{r^{m+a}} \int_{\bB_r} g^2 \, d\norm{T} - \frac{m}{R^{m+a}} \int_{\bB_R} g^2 \, d\norm{T}\\&-a \int_{\bB_R\setminus \bB_r} \frac{g^2 (q)}{\abs{q}^{m+a}}\, d\norm{T} (q) + (m+a) \int_{\bB_R\setminus \bB_r} g^2 (q)\frac{\abs{q^\perp}^2}{\abs{q}^{m+a+2}}\, d\norm{T} (q)\\
&+ \int_{\bB_R} W_{a,r}^T \cdot \nabla g^2 \, \norm{T} \, ,
\end{align*}
where $W_{a,r}^T (q)$ denotes the projection on the tangent plane to $T$ at $q$ of the vector $W_{a,r} (q)$.
Observe that $W_{a,r}^T (q)$ is in fact parallel to $q^T$. Now we can use the homogeneity of $g$ and the identity $q= q^T + q^\perp$ to deduce that 
\[
\nabla g^2 (q) \cdot q^T= 2k g^2 (q) - 2g (q) \nabla g (q) \cdot q^\perp \ge \left(2k-\frac\epsilon2\right) g^2 (q) - \frac{2}{\epsilon}\abs{\nabla g (q)}^2 \abs{q^\perp}^2\,.
\]
In particular we may choose $a=2k-\alpha$, $\epsilon = \alpha$ to estimate 
\begin{align*}
- &\int_{\bB_R} g^2 W_{a,r} \cdot \vec{H}_T\, d\norm{T} \ge \frac{m+2k-\sfrac\alpha2}{r^{m+2k-\alpha}} \int_{\bB_r} g^2 \, d\norm{T} - \frac{m+2k-\sfrac\alpha2}{R^{m+2k-\alpha}} \int_{\bB_R} g^2 \, d\norm{T}\\&\frac{\alpha}{2} \int_{\bB_R\setminus \bB_r} \frac{g^2 (q)}{\abs{q}^{m+2k-\alpha}}\, d\norm{T} (q) + (m+2k-\alpha) \int_{\bB_R\setminus \bB_r} g^2\frac{\abs{q^\perp}^2}{\abs{q}^{m+2k+2-\alpha}}\, d\norm{T} (q)\\&- \frac{2}{\alpha}\int_{\bB_R}\left(\frac{1}{\max(r,\abs{q})^{m+2k-\alpha}}-\frac{1}{R^{m+2k-\alpha}}\right)^+ \abs{\nabla g (q)}^2 \abs{q^\perp}^2\,d \norm{T} (q) \,\,.
\end{align*}
To bound the left hand side $|g^2 W_{a,r} \cdot \vec{H}_T| (q)\leq C \|\hat{g}\|_\infty^2\bA R^\alpha |q|^{1-m}$, valid for $|q|\leq R$, and exploit the monotonicity formula to estimate
\[
\int_{\bB_R} |q|^{1-m} d\|T\| (q) \leq C \frac{\|T\| (\bB_R)}{R^m}\, .
\]
We thus conclude
\begin{align*}
\frac{\alpha}{2} \int_{\bB_R\setminus \bB_r} \frac{g^2 (q)}{\abs{q}^{m+2k-\alpha}}\, d\norm{T} (q)\leq
&\frac{m+2k}{R^{m+2k-\alpha}} \int_{\bB_{R}} g^2 \,d\norm{T} + C \bA \|\hat g\|_\infty^2 \frac{\norm{T}(B_{R})}{R^{m-\alpha}}\\
&+ \frac{2}{\alpha}\int_{\bB_R}\frac{\abs{\nabla g (q)}^2 \abs{q^\perp}^2}{\max(r,\abs{q})^{m+2k-\alpha}} \,d \norm{T} (q)\, .
\end{align*}
Letting $r\downarrow 0$ we then conclude \eqref{e.h_k monotonicity}. 

Next recall the "classical" monotonicity formula (which in fact is a particular case of the identities above, where we set $a=0$, $h=1$, $R=\rho$, and let again $r\downarrow0$):  
\begin{equation}\label{e.classical monotonicity}
	\rho^m \Theta_T(0)- \norm{T}(\bB_\rho) + \rho^m \int_{\bB_\rho} \frac{\abs{q^\perp}^2}{\abs{q}^{m+2}}\, d\norm{T} (q) = - \frac{\rho^m}{m} \int_{\bB_\rho} \frac{q^\perp\cdot \vec{H}_T (q)}{\abs{q}^{m}}\, d\norm{T} (q)\,.
\end{equation}
Next recall that
\begin{itemize}
    \item $\Theta_T (0) \geq \Theta_{\bC} (0) = \rho^{-m} \|\bC\| (\bB_\rho)$;
    \item The identities
    \begin{align*}
    \int_{\bB_R} f (|q|)d\mu (q) & = \int_0^R f(t) \frac{d}{dt} (\mu (\bB_t))\, dt\, ,\\
    \int_{\bB_R} F (|q|)d\mu (q) &= \int_0^R f(t) \frac{d}{dt} (t^m \mu (\bB_t))\, dt\, ,
    \end{align*}
    valid for any nonnegative Radon measure $\mu$ such that $\mu (\{0\}) = 0$ (provided we interpret the derivative $\frac{d}{dt} (\mu (\bB_t))$ distributionally as a nonnegative Radon measure $\nu$ on $[0,R]$).  
\end{itemize}
We conclude \eqref{e.classical monotonicity with f} by first differentiating \eqref{e.classical monotonicity} in $\rho$, then multiplying by $f (\rho)$, and finally integrating in $\rho$ between $0$ and $R$.

\section{Quantitative stratification and proof of $\mathcal{S}^{m-2}=\mathcal{S}^{m-2}_\eta$} \label{app:NV}

Recall the definition of the classical stratification 
\[
\mathcal{S}^0 \subset \mathcal{S}^1 \subset \ldots \subset \mathcal{S}^{m-1} \subset \mathcal{S}^m = \spt(T) \setminus \spt^p(\partial T)
\]
of $\spt(T) \setminus \spt^p(\partial T)$ introduced in Section \ref{sec:cones}. Following \cite{NV} (see also \cite{DLHMS}), we give the following definition of a notion of local almost symmetry for an integral varifold $V$. 

\begin{definition} \label{def: local symmetries}
Let $V$ be an $m$-dimensional integral varifold in $\R^{m+n}$. For $k \in \{1,\ldots,m\}$ and $\eta > 0$, we say that $V$ is $(k,\eta)$-almost symmetric in a ball $\bB_s(q)$ if there exists a varifold cone $\bC$ with spine of dimension $k$ such that the varifold distance between $\bC \mres \bB_1(0)$ and $((\eta_{q,s})_\sharp V)\mres \bB_1(0)$ is smaller than $\eta$.
\end{definition}

Let now $T$ be as in Definition \ref{def:am_modp}, and suppose that $\spt^p(\partial T) \cap \bB_2(0) = \emptyset$, so that the associated varifold $\|T\|$ has bounded generalized mean curvature in $\bB_2(0)$. For $k=0,\ldots,m-1$, $\eta > 0$, and $r > 0$ we then introduce the set
\[
\begin{split}
\mathcal{S}^{k,r}_\eta := \Big\lbrace & q \in \bB_1(0) \cap \spt (T) \, \colon \, \mbox{$\|T\|$ is not $(k+1,\eta)$-almost symmetric} \\
& \qquad \qquad \mbox{in $\bB_s(q)$ for all $s \in \left[r,1\right)$} \Big\rbrace\,,
\end{split}
\]
as well as the \emph{quantitative strata}
\[
\mathcal{S}^k_\eta := \bigcap_{r > 0} \mathcal{S}^{k,r}_\eta\,,
\]
so that
\[
\mathcal{S}^k \cap \bB_1(0) = \bigcup_{\eta > 0} \mathcal{S}^k_\eta\,.
\]
In this section we prove the following result, which follows as a simple consequence of the theory developed in the paper.

\begin{proposition} \label{p:strata vs q-strata}
Let $p \geq 3$ be odd, and let $T$ be as in Definition \ref{def:am_modp}. Suppose that $\dim(\Sigma) = m+1$, and that $\spt^p(\partial T) \cap \bB_2(0) = \emptyset$. Then, $\mathcal{S}^{m-2} \cap \bB_1(0) = \mathcal{S}^{m-2}_\eta$ for some $\eta > 0$.
\end{proposition}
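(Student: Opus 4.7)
The inclusion $\mathcal{S}^{m-2}_\eta \subset \mathcal{S}^{m-2} \cap \bB_1(0)$ is immediate from the definitions (since $\mathcal{S}^{m-2} \cap \bB_1(0) = \bigcup_{\eta > 0} \mathcal{S}^{m-2}_\eta$), so the content is the reverse inclusion for some fixed $\eta > 0$. I would argue by contradiction, assuming sequences $\eta_k \downarrow 0$, $q_k \in \mathcal{S}^{m-2} \cap \bB_1(0)$, $s_k \in (0,1)$, and varifold cones $\bC_k$ with $(m-1)$-dimensional spines such that the rescaled varifolds $(\eta_{q_k, s_k})_\sharp \|T\|$ lie within varifold distance $\eta_k$ of $\bC_k \mres \bB_1$.

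Set $T_k := (\eta_{q_k,s_k})_\sharp T$ and $\Sigma_k := (\Sigma - q_k)/s_k$, so that $T_k$ is area-minimizing $\modp$ in $\Sigma_k \cap \bB_1$ with no boundary $\modp$ in $\bB_1$ (using $\spt^p(\partial T) \cap \bB_2(0) = \emptyset$). By the compactness for area-minimizing currents $\modp$ from \cite[Proposition 5.2]{DLHMS}, up to subsequences one has $q_k \to q_\infty$, $s_k \to s_\infty \in [0,1]$, $T_k \to T_\infty$ both in the flat $\modp$ topology and in the varifold sense, and $\bC_k \to \bC_\infty$ as varifolds. Then $\|T_\infty\| = \bC_\infty$ on $\bB_1$, and $T_\infty$ is area-minimizing $\modp$ with zero boundary $\modp$ in $\bB_1$ relative to the limit ambient (either the tangent plane $T_{q_\infty}\Sigma$ when $s_k \to 0$, or a rescaled copy of $\Sigma$ otherwise), while $\bC_\infty$ is a varifold cone whose spine has dimension at least $m-1$.

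If $\bC_\infty$ has spine of dimension $m$, then $\|T_\infty\| \mres \bB_1$ is $k_0$ times a planar disc with $k_0 \in \{1, \ldots, (p-1)/2\}$ (using that $p$ is odd), and White's $\varepsilon$-regularity theorem \cite[Theorem 4.5]{White86} implies that $T_k$ is regular at $0$ for all large $k$. This forces $q_k$ to be a regular point of $T$, contradicting $\mathcal{S}^{m-2} \subset \sing(T)$. If instead $\bC_\infty$ has spine of dimension exactly $m-1$, I would argue that $T_\infty$ is itself an area-minimizing cone $\modp$ with $(m-1)$-dimensional spine: the fact that $\|T_\infty\| = \bC_\infty$ is a cone together with the monotonicity formula for minimizing currents $\modp$ forces $T_\infty$ to be a cone, and Proposition \ref{lem:structure_cones} then gives its explicit structure, in particular $\Theta_{T_\infty}(0) = p/2$.

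For $k$ sufficiently large, $T_k$ is close to $T_\infty$ in the modified flat distance $\hat\flat^p_{\bB_1}$ (combining the flat $\modp$ convergence with Lemma \ref{lem:qualitative flat_excess}), and the excess $\bE(T_k, \spt(T_\infty), 0, 1)$ is small. The hypothesis $\Theta_{T_k}(0) \geq p/2$ may fail, but by the no-holes property (Proposition \ref{prop:no-holes}) we can select $\tilde q_k \in \bB_{\delta_k}(0)$ with $\delta_k \to 0$ and $\Theta_{T_k}(\tilde q_k) \geq p/2$. After reducing the ambient curvature by a harmless preliminary rescaling, Corollary \ref{eps_reg} applies at $\tilde q_k$ with reference cone $T_\infty$, yielding that $\sing(T_k) \cap B_{1/10}(\tilde q_k)$ is a classical free boundary. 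Since $0 \in \sing(T_k)$ lies in this neighborhood for large $k$, $q_k$ must admit a tangent cone with $(m-1)$-dimensional spine, contradicting $q_k \in \mathcal{S}^{m-2}$. The main obstacle in the argument is precisely the density hypothesis at the distinguished point, which is bypassed via the no-holes condition; the remaining steps are routine compactness and transfer between varifold, flat, and excess notions of closeness established throughout the paper.
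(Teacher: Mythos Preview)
Your argument is correct and follows essentially the same route as the paper's: contradiction, compactness to a limit cone, rule out the flat case via White's theorem, and in the non-flat case combine the no-holes property with the $\varepsilon$-regularity Corollary~\ref{eps_reg} to force $q_k \in \mathcal{S}^{m-1}\setminus \mathcal{S}^{m-2}$.

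The one place where your treatment is looser than the paper's is the case $s_\infty > 0$. The paper separates this off and handles it by a direct observation: since $(\eta_{q_k,s_k})_\sharp T \to (\eta_{q_\infty,s_\infty})_\sharp T$, the current $T$ literally coincides with the cone $\bC_0$ in a ball around $q_\infty$, so either $q_\infty$ is regular (flat case) or every nearby singular point has a tangent cone with $(m{-}1)$-dimensional spine, contradicting $q_k \in \mathcal{S}^{m-2}$. You instead try to absorb this case into the same machinery via a ``preliminary rescaling'', but note that Proposition~\ref{lem:structure_cones} and Corollary~\ref{eps_reg} both require the reference cone to be area-minimizing $\modp$ in Euclidean space, whereas when $s_\infty>0$ your $T_\infty$ is only minimizing in a curved ambient; one has to pass to the tangent cone of $T_\infty$ at $0$ (which is $T_\infty$ itself and is minimizing in the flat $T_0\Sigma_\infty$) to close this. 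The paper's split avoids that detour. The paper also explicitly inserts a rotation $O_h$ mapping $T_0\Sigma$ onto $T_{\tilde q_h}\Sigma_h$ before invoking Corollary~\ref{eps_reg}, a routine alignment you left implicit.
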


\begin{proof}
Suppose that the statement is false, and let, for $h \geq 1$ integer, $q_h \in \bB_1(0) \cap \mathcal{S}^{m-2} \setminus \mathcal{S}^{m-2}_{\eta_h}$, where $\eta_h \to 0^+$. By definition of quantitative strata, there are then radii $r_h \in \left( 0, 1 \right)$ and cones $\bC_h$ with $(m-1)$-dimensional spine such that
\begin{equation} \label{qs:converging}
    \dist_{{\bf var}}(T_h \mres \bB_1(0), \bC_h \mres \bB_1(0)) \leq \eta_h\,,
\end{equation}
where $T_h := (\eta_{q_h,r_h})_\sharp T$ and $\dist_{{\bf var}}$ denotes varifold distance. By the slicing formula $\modp$ and \eqref{qs:converging}, both $\mass(T_h \mres \bB_1(0))$ and $\mass^p(\partial (T_h \mres \bB_1(0)))$ are uniformly bounded in $h$, so that combining \eqref{qs:converging} with \cite[Proposition 5.2]{DLHMS} and Lemma \ref{lem.L^infty-flat estimate} we deduce the existence of a (not relabeled) subsequence such that, when $h \to \infty$, the currents $T_h$ converge, both with respect to the topology induced by $\hat\flat^p_{\bB_1}$ and in the sense of varifolds in $\bB_1(0)$, to a representative $\modp$ current $\bC_0$ which is (the restriction to $\bB_1(0)$ of) an area minimizing cone $\modp$ with no boundary $\modp$ in $\bB_1(0)$ and spine of dimension \emph{at least} $m-1$, and such that the excess of $T_h$ in $\bB_1(0)$ with respect to $\spt(\bC_0)$ converges to zero. 

Let now $q$ be the limit of a (not relabeled) subsequence of $q_h$. If $\rho := \limsup_{h \to \infty} r_h > 0$, then evidently
\begin{equation} \label{qs:first case}
T = q + \bC_0 \qquad \mbox{in $\bB_{\sfrac{\rho}{2}}(q)$}\,.
\end{equation}
In particular, $\bC_0$ cannot be a flat plane, since $q$ is a limit of singular points $q_h$. Hence, $\bC_0$ has $(m-1)$-dimensional spine, and thus, in a neighborhood of $q$, all singular points of $T$ belong to $\mathcal{S}^{m-1} \setminus \mathcal{S}^{m-2}$, contradicting the assumption on $q_h \to q$. 

Therefore, we can assume that $r_h \to 0^+$. Also in this case, we can exclude that $\bC_0$ is a flat plane: indeed, should that happen, White's regularity theorem would readily imply that $T_h$ are regular for all $h$ sufficiently large, a contradiction. Hence, we can assume that $\bC_0$ has $(m-1)$-dimensional spine $V_0$, and, by minimality, that its support is contained in $\pi_0 := T_q\Sigma$.

Now, fix $\delta \in \left( 0, \sfrac{1}{8}\right)$. By Proposition \ref{prop:no-holes}, we then have that for all $h \geq h_0(\delta)$ there is a point $\tilde q_{h} \in \bB_{\delta}(0)$ such that $\Theta_{T_{h}}(\tilde q_{h}) \geq \frac{p}{2}$. For $\delta$ sufficiently small and for all $h$ sufficiently large, the currents $T_h$ (and the manifolds $\Sigma_h = r_h^{-1}(\Sigma - q_h)$) satisfy the Assumptions of Corollary \ref{eps_reg} with $q=\tilde q_h$ and with $\bC_0$ replaced by $(O_h)_\sharp \bC_0$, where $O_h$ is a rotation of $\R^{m+n}$ such that $O_h(\pi_0) = T_{\tilde q_h}\Sigma_h$. In particular, $\sing(T_h) \cap \bB_{\sfrac{1}{10}}(\tilde q_h)$ is a classical free boundary. Rescaling back, we then deduce that, setting $\bar q_h := q_h + r_h\,\tilde q_h$, $\sing (T) \cap \bB_{\sfrac{r_h}{10}}(\bar q_h)$ is a classical free boundary for all $h$ sufficiently large. Since $\abs{\bar q_h - q_h} \leq \delta\, r_h$, up to possibly choosing $\delta$ smaller, we then have that $T$ has, at $q_h$, a unique tangent cone with $(m-1)$-dimensional spine, a contradiction to $q_h \in \mathcal{S}^{m-2}$ which concludes the proof.
\end{proof}


\section{Proof of Theorem \ref{t:p=3}}\label{s:appendimi}

First of all, irrespectively of the codimension of $T$, note that at every point $x\in \mathcal{S}^m\setminus \mathcal{S}^{m-1}$ there is at least one tangent cone which is flat, and which, because $p=3$, has multiplicity $1$. By Allard's regularity Theorem, cf. \cite{Allard72}, every such point is thus regular. Next, at every point $x\in \mathcal{S}^{m-1}\setminus \mathcal{S}^{m-2}$ at least one tangent cone consists of three half $m$-dimensional planes meeting at 120 degrees at an $(m-1)$-dimensional linear subspace. We can thus apply the theory in \cite{Simon} (because the multiplicity on the regular part is always $1$) and thus conclude that $\mathcal{S}^{m-1}\setminus \mathcal{S}^{m-2}$ is locally a classical free boundary. Now, in general codimension, Appendix \ref{app:NV} and \cite{NV} imply that $\mathcal{S}^{m-2}$ is rectifiable and has locally finite $\mathcal{H}^{m-2}$ measure, while in codimension $1$, \cite{Taylor} implies that $\mathcal{S}^{m-2}\setminus \mathcal{S}^{m-3}$ is empty (because there are no codimension $1$ area minimizing cones $\mod$ $3$ with $(m-2)$-dimensional spine. We can thus apply Appendix \ref{app:NV} and the theory in \cite{NV} and conclude that $\mathcal{S}^{m-3}$ is rectifiable and has locally finite $\mathcal{H}^{m-3}$ Hausdorff measure.

\bibliographystyle{plain}
\bibliography{references}

\begin{thebibliography}{10}

\bibitem{Allard72}
William~K. Allard.
\newblock {On the first variation of a varifold}.
\newblock {\em Ann. of Math. (2)}, 95:417--491, 1972.

\bibitem{CoEdSp}
Maria Colombo, Nick Edelen, and Luca Spolaor.
\newblock The singular set of minimal surfaces near polyhedral cones.
\newblock {\em J. Differential Geom.}, 120(3):411--503, 2022.

\bibitem{DG}
Ennio De~Giorgi.
\newblock {\em Frontiere orientate di misura minima}.
\newblock Seminario di Matematica della Scuola Normale Superiore di Pisa,
  1960-61. Editrice Tecnico Scientifica, Pisa, 1961.

\bibitem{DLHMSS-even}
Camillo De~Lellis, Jonas Hirsch, Andrea Marchese, Luca Spolaor, and Salvatore
  Stuvard.
\newblock Fine structure of the singular set of area minimizing hypersurfaces
  modulo $p$.
\newblock 2022.
\newblock Preprint arXiv:2201.10204.

\bibitem{DLHMSS-uniqueflat}
Camillo De~Lellis, Jonas Hirsch, Andrea Marchese, Luca Spolaor, and Salvatore
  Stuvard.
\newblock Excess decay for minimizing hypercurrents {${\rm mod}\, 2 Q$}.
\newblock {\em Nonlinear Anal.}, 247:47 pp., 2024.

\bibitem{DLHMS}
Camillo De~Lellis, Jonas Hirsch, Andrea Marchese, and Salvatore Stuvard.
\newblock Regularity of area minimizing currents {${\rm mod}\,p$}.
\newblock {\em Geom. Funct. Anal.}, 30(5):1224--1336, 2020.

\bibitem{DLHMS_linear}
Camillo De~Lellis, Jonas Hirsch, Andrea Marchese, and Salvatore Stuvard.
\newblock Area-minimizing currents {${\rm mod}\,2Q$}: linear regularity theory.
\newblock {\em Comm. Pure Appl. Math.}, 75(1):83--127, 2022.

\bibitem{DLMSk}
Camillo De~Lellis, Paul Minter, and Anna Skorobogatova.
\newblock Fine structure of singularities in area-minimizing currents mod$(q)$.
\newblock 2024.
\newblock Preprint arXiv:2403.15889.

\bibitem{Federer69}
Herbert Federer.
\newblock {\em {Geometric measure theory}}.
\newblock {Die Grundlehren der mathematischen Wissenschaften, Band 153}.
  Springer-Verlag New York Inc., New York, 1969.

\bibitem{GilbargTrudinger}
David Gilbarg and Neil~S. Trudinger.
\newblock {\em Elliptic partial differential equations of second order}.
\newblock Classics in Mathematics. Springer-Verlag, Berlin, 2001.
\newblock Reprint of the 1998 edition.

\bibitem{KNS}
D.~Kinderlehrer, L.~Nirenberg, and J.~Spruck.
\newblock Regularity in elliptic free boundary problems.
\newblock {\em J. Analyse Math.}, 34:86--119 (1979), 1978.

\bibitem{MS_a}
Andrea Marchese and Salvatore Stuvard.
\newblock On the structure of flat chains modulo {$p$}.
\newblock {\em Adv. Calc. Var.}, 11(3):309--323, 2018.

\bibitem{MW}
Paul Minter and Neshan Wickramasekera.
\newblock A structure theory for stable codimension 1 integral varifolds with
  applications to area minimising hypersurfaces {${\rm mod}\,p$}.
\newblock {\em J. Amer. Math. Soc.}, 37(3):861--927, 2024.

\bibitem{NV}
Aaron Naber and Daniele Valtorta.
\newblock The singular structure and regularity of stationary varifolds.
\newblock {\em J. Eur. Math. Soc. (JEMS)}, 22(10):3305--3382, 2020.

\bibitem{SchoenSimon}
Richard Schoen and Leon Simon.
\newblock Regularity of stable minimal hypersurfaces.
\newblock {\em Comm. Pure Appl. Math.}, 34(6):741--797, 1981.

\bibitem{ASS}
Richard Schoen, Leon Simon, and Frederick J.~Jr. Almgren.
\newblock {Regularity and singularity estimates on hypersurfaces minimizing
  parametric elliptic variational integrals. {I}, {II}}.
\newblock {\em Acta Math.}, 139(3-4):217--265, 1977.

\bibitem{Simon83}
Leon Simon.
\newblock {\em {Lectures on geometric measure theory}}, volume~3 of {\em
  {Proceedings of the Centre for Mathematical Analysis, Australian National
  University}}.
\newblock Australian National University, Centre for Mathematical Analysis,
  Canberra, 1983.

\bibitem{Simon}
Leon Simon.
\newblock Cylindrical tangent cones and the singular set of minimal
  submanifolds.
\newblock {\em J. Differential Geom.}, 38(3):585--652, 1993.

\bibitem{Sko}
Anna Skorobogatova.
\newblock Rectifiability of flat singular points for area-minimizing {${\rm
  mod}(2Q)$} hypercurrents.
\newblock {\em Int. Math. Res. Not. IMRN}, (11):9237--9255, 2024.

\bibitem{SSS25}
Anna Skorobogatova, Luca Spolaor, and Salvatore Stuvard.
\newblock Structure of two-dimensional mod$(q)$ area-minimizing currents near
  flat singularities: the codimension one case.
\newblock 2025.
\newblock Preprint arXiv:2506.17813.

\bibitem{Taylor}
Jean~E. Taylor.
\newblock {Regularity of the singular sets of two-dimensional area-minimizing
  flat chains modulo {$3$} in {$R^{3}$}}.
\newblock {\em Invent. Math.}, 22:119--159, 1973.

\bibitem{White79}
Brian White.
\newblock {The structure of minimizing hypersurfaces mod {$4$}}.
\newblock {\em Invent. Math.}, 53(1):45--58, 1979.

\bibitem{White86}
Brian White.
\newblock {A regularity theorem for minimizing hypersurfaces modulo {$p$}}.
\newblock In {\em {Geometric measure theory and the calculus of variations
  ({A}rcata, {C}alif., 1984)}}, volume~44 of {\em {Proc. Sympos. Pure Math.}},
  pages 413--427. Amer. Math. Soc., Providence, RI, 1986.

\bibitem{White_deformation}
Brian White.
\newblock The deformation theorem for flat chains.
\newblock {\em Acta Math.}, 183(2):255--271, 1999.

\bibitem{Wic}
Neshan Wickramasekera.
\newblock A general regularity theory for stable codimension 1 integral
  varifolds.
\newblock {\em Ann. of Math. (2)}, 179(3):843--1007, 2014.

\end{thebibliography}

\end{document}